\documentclass[a4paper,11pt]{article}
\usepackage{amsmath, amsfonts, amscd, amssymb, amsthm, enumerate, xypic}

\def\bfB{\mathbf{B}}

\def\ortho{o}

\DeclareMathOperator{\Fbar}{\overline{\mathbb{F}}}
\DeclareMathOperator{\id}{\operatorname{id}}

\DeclareMathOperator{\card}{\#\,}
\DeclareMathOperator{\Mat}{\operatorname{M}}
\DeclareMathOperator{\Mats}{\operatorname{S}}
\DeclareMathOperator{\Mata}{\operatorname{A}}
\DeclareMathOperator{\End}{\operatorname{End}}

\DeclareMathOperator{\NT}{\operatorname{NT}}
\DeclareMathOperator{\GL}{\operatorname{GL}}
\DeclareMathOperator{\Ker}{\operatorname{Ker}}

\DeclareMathOperator{\trk}{\operatorname{trk}}
\DeclareMathOperator{\Hom}{\operatorname{Hom}}
\DeclareMathOperator{\Vect}{\operatorname{span}}
\DeclareMathOperator{\im}{\operatorname{Im}}

\DeclareMathOperator{\tr}{\operatorname{tr}}
\DeclareMathOperator{\car}{\operatorname{char}}
\DeclareMathOperator{\Sp}{\operatorname{Sp}}
\DeclareMathOperator{\rk}{\operatorname{rk}}

\renewcommand{\setminus}{\smallsetminus}
\renewcommand{\epsilon}{\varepsilon}

\def\F{\mathbb{F}}
\def\K{\mathbb{K}}

\def\Q{\mathbb{Q}}

\def\Z{\mathbb{Z}}

\renewcommand{\L}{\mathbb{L}}

\def\calA{\mathcal{A}}
\def\calB{\mathcal{B}}
\def\calC{\mathcal{C}}

\def\calM{\mathcal{M}}

\def\calS{\mathcal{S}}
\def\calT{\mathcal{T}}
\def\calU{\mathcal{U}}
\def\calV{\mathcal{V}}
\def\calW{\mathcal{W}}
\def\calX{\mathcal{X}}

\def\lcro{\mathopen{[\![}}
\def\rcro{\mathclose{]\!]}}

\theoremstyle{definition}
\newtheorem{Def}{Definition}[section]
\newtheorem{Not}[Def]{Notation}

\theoremstyle{plain}
\newtheorem{theo}{Theorem}[section]
\newtheorem{prop}[theo]{Proposition}

\newtheorem{lemma}[theo]{Lemma}
\newtheorem{claim}{Claim}[section]

\theoremstyle{plain}
\newtheorem{conj}{Conjecture}[section]

\theoremstyle{remark}
\newtheorem{Rems}{Remarks}[section]
\newtheorem{Rem}[Rems]{Remark}

\title{Spaces of matrices with few eigenvalues (II)}
\author{Cl\'ement de Seguins Pazzis\footnote{Universit\'e de Versailles Saint-Quentin-en-Yvelines, Laboratoire de Math\'ematiques
de Versailles, 45 avenue des Etats-Unis, 78035 Versailles cedex, France}
\footnote{e-mail address: clement.de-seguins-pazzis@ac-versailles.fr}}

\begin{document}

\thispagestyle{plain}

\maketitle
\begin{abstract}
Let $\F$ be a field, and $\calM$ be a linear subspace of matrices of $\Mat_n(\F)$ that have at most two eigenvalues in $\F$
(respectively, at most one non-zero eigenvalue in $\F$). In a previous article, we have determined the greatest possible dimension for $\calM$ when the characteristic of $\F$ is not $2$.
In this article, we solve this problem for all fields with characteristic $2$.
\end{abstract}

\vskip 2mm
\noindent
\emph{AMS MSC:} 15A30, 15A18

\vskip 2mm
\noindent
\emph{Keywords:} linear subspace, spectrum, dimension, simultaneous triangularization, fields with characteristic $2$.

% Relectures achevées. Prêt à être soumis à LAA

\section{Introduction and main results}

\subsection{Main issues}

In this article, we let $\F$ be an arbitrary field and we choose an algebraic closure $\Fbar$ of it.
We denote by $\Mat_n(\F)$ the algebra of all $n$-by-$n$ square matrices with entries in $\F$, by
$\GL_n(\F)$ the group of invertible elements of $\Mat_n(\F)$,
by $\NT_n(\F)$ the linear subspace of $\Mat_n(\F)$ consisting of its strictly upper-triangular matrices,
by $\Mats_n(\F)$ the linear subspace consisting of its symmetric matrices, by $\Mata_n(\F)$ the linear subspace consisting
of its alternating matrices (i.e., skew-symmetric matrices with zero entries on the diagonal)
and by $\mathfrak{sl}_n(\F)$ the linear subspace of $\Mat_n(\F)$ consisting of its trace zero matrices.
Given a finite-dimensional $\F$-algebra $\calA$, an element $x$ of $\calA$ and an extension $\L$ of the field $\F$,
the $\L$-spectrum of $x$, denoted by $\Sp_\L(x)$, is the set of all roots in $\L$ of the minimal polynomial of $x$.

Two subsets $\calV$ and $\calW$ of $\Mat_n(\F)$ are called \textbf{similar},
and we write $\calV \simeq \calW$, if $\calW=P\calV P^{-1}$ for some $P \in \GL_n(\F)$
(this means that $\calV$ and $\calW$ represent, in different bases, the same set of endomorphisms of $\F^n$).

The present article deals with extensions of Gerstenhaber's seminal work on
spaces of nilpotent matrices \cite{Gerstenhaber}. Following Radwan and Loewy,
we extend the scope to spaces of matrices with an upper bound on the cardinality of the spectrum.
Let $\calS$ be a linear subspace of an $\F$-algebra $\calA$.
We say that $\calS$ is :
\begin{itemize}
\item a \textbf{$k$-spec space} when $\card \Sp_\F(x) \leq k$ for all $x \in \calS$;
\item a \textbf{$\overline{k}$-spec space} when $\card \Sp_{\Fbar}(x) \leq k$ for all $x \in \calS$;
\item a \textbf{$k^\star$-spec space} when $\card (\Sp_\F(x) \setminus \{0\}) \leq k$ for all $x \in \calS$;
\item a \textbf{$\overline{k}^\star$-spec space} when $\card (\Sp_{\Fbar}(x) \setminus \{0\}) \leq k$ for all $x \in \calS$.
\end{itemize}
A $k$-spec linear subspace (respectively, a $\overline{k}$-spec space, etc) is called \textbf{optimal} when it has the greatest possible
dimension among the $k$-spec linear subspaces (respectively, the $\overline{k}$-spec linear subspaces, etc) of the algebra under consideration.

In particular, the $\overline{0}^\star$-spec linear subspaces of endomorphisms (respectively, of matrices) are also the spaces of nilpotent endomorphisms
(respectively, of nilpotent matrices); the $0^\star$-spec linear subspaces are sometimes called the \emph{trivial spectrum subspaces} (see e.g.\ \cite{dSPgivenrank}).
Here is the seminal result on spaces of nilpotent matrices:

\begin{theo}[Gerstenhaber \cite{Gerstenhaber}, Serezhkin \cite{Serezhkin}]
The maximal possible dimension for a $\overline{0}^\star$-spec subspace of $\Mat_n(\F)$ is $\dbinom{n}{2}$,
and the optimal $\overline{0}^\star$-spec subspaces of $\Mat_n(\F)$ are the conjugates of $\NT_n(\F)$.
\end{theo}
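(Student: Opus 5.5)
We argue by induction on $n$, proving the dimension bound and the characterization of the equality case together. Throughout, we use two standard facts. First, a matrix is nilpotent exactly when its characteristic polynomial is $t^n$. Second, if $\calV$ is a linear space of nilpotent matrices and $A,B\in\calV$, then $A+\lambda B$ is nilpotent for every $\lambda\in\F$. Hence $\tr((A+\lambda B)^k)=0$ and all coefficients of the characteristic polynomial of $A+\lambda B$ vanish.

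When $\F$ has at least $n+1$ elements, this polynomial identity in $\lambda$ gives mixed trace identities. The simplest one is $\tr(AB)=0$ for all $A,B\in\calV$. In characteristic $0$, or when $\car\F>n$, this already yields the bound $\dim\calV\le\binom{n}{2}$. The reason is that the trace form $(A,B)\mapsto\tr(AB)$ is nondegenerate on $\Mat_n(\F)$, so a totally isotropic subspace containing no nonzero diagonalizable-type elements can be compared with the symmetric/alternating decomposition. Concretely, $\calV\cap\calV^\perp=\calV$ gives $\dim\calV\le n^2/2$. To sharpen this to $\binom n2$, we further observe that $\calV\cap\mathfrak{t}=0$, where $\mathfrak t$ is the $n$-dimensional space of diagonal matrices, since nonzero diagonal matrices are not nilpotent. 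This gives $\calV+\mathfrak t$ direct, with $\calV\subset\calV^\perp$, whence $2\dim\calV+n\le n^2$ once we check $\mathfrak t\cap\calV^\perp$ behaves well. Since this argument fails over small fields and in small characteristic, I would not rely on it. Instead I would use Serezhkin's characteristic-free approach, described next.

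\emph{Characteristic-free induction.} Let $\calV$ be a nilpotent subspace with $\dim\calV\ge\binom n2$. Pick a nonzero vector $x$ and consider the map $\calV\to\F^n$, $A\mapsto Ax$, together with its kernel. The key step is to find a nonzero $x\in\F^n$ such that $\dim(\calV x)\le n-1$ in a suitably strong sense, namely that the subspace $\{A\in\calV : Ax=0\}$ has codimension at most $n-1$. We then want to block-triangularize. The aim is to find a common invariant hyperplane or a common kernel vector, i.e.\ a nonzero $x$ with $Ax=0$ for all $A\in\calV$. Suppose we have such an $x$. Completing $x$ to a basis writes every $A\in\calV$ as $\begin{bmatrix}0&*\\0&A'\end{bmatrix}$, where $A'$ ranges over a nilpotent subspace $\calV'\subset\Mat_{n-1}(\F)$. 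Then $\dim\calV\le\dim\calV'+(n-1)\le\binom{n-1}{2}+n-1=\binom n2$. In the equality case, $\calV'$ is optimal, hence conjugate to $\NT_{n-1}(\F)$ by induction, and the top row is full. This gives $\calV\simeq\NT_n(\F)$.

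\emph{Main obstacle: producing the common kernel vector.} Linear spaces of nilpotent matrices need not be simultaneously triangularizable in general. So the argument must use the dimension hypothesis. I would follow Serezhkin and Mathes–Omladič–Radjavi and argue by contradiction, counting dimensions. For each nonzero $x$, set $\calV x=\{Ax : A\in\calV\}$. The trace-of-product identity can be replaced by a characteristic-free identity. Namely, for $A,B\in\calV$ with $B$ of rank one, $B=yz^T$, the coefficient of $\lambda$ in $\det(tI-A-\lambda B)$ shows $z^TA^ky=0$ for all $k$. This holds over every field, because the relevant coefficient is linear in $\lambda$ when $\rk B=1$.

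I would first take a nonzero $A\in\calV$ of maximal rank. Splitting along $\Ker A^{n-1}$ and using the identity above, one shows that the rank-one members of $\calV$, and more generally the set $\{x : \dim\calV x \text{ small}\}$, force $\calV$ to stabilize a flag once $\dim\calV\ge\binom n2$. Executing this counting argument over arbitrary fields, including $\F_2$ where polynomial identities in $\lambda$ degenerate, is the delicate step. There I would fall back on linearization through extension of scalars to $\F(\lambda)$. This is legitimate because $\calV\otimes\F(\lambda)$ is still a nilpotent space: nilpotency is characterized by the identity $M^n=0$, which is multilinear in the entries. After the extension, all polynomial identities in $\lambda$ become available regardless of $\card\F$.
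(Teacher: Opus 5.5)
The paper does not prove this statement; it only cites Gerstenhaber and Serezhkin. Judged on its own terms, your proposal has two genuine gaps.

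\textbf{First gap: the central step is asserted, not proved.} The reduction you carry out is routine and correct. A common kernel vector gives the block form with zero first column. That yields the bound $\binom{n-1}{2}+(n-1)=\binom n2$, and the equality case follows by induction. But the existence of such a vector whenever $\dim\calV\geq\binom n2$ is the entire content of the theorem. You only assert that ``one shows'' the rank-one members of $\calV$ force a stable flag, and no argument is given.

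The step you single out as key is in fact vacuous. We have $\calV x\neq\F^n$ for every $x\neq 0$, since $Ax=x$ is impossible for nilpotent $A$. So $\{A\in\calV : Ax=0\}$ always has codimension at most $n-1$.

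The real obstruction is the last term in $\dim\calV=\dim(\calV x)+\dim\overline{\calV_{x,0}}+\dim\bigl(\calV\cap(x^\bot\otimes x)\bigr)$. Here $\overline{\calV_{x,0}}$ is the nilpotent space induced on $\F^n/\F x$ by $\{A\in\calV : Ax=0\}$. To close the induction for the bound, you need some $x$ such that $\calV$ contains no nonzero rank-one matrix with range $\F x$. This is an adapted vector in the paper's terminology, and it is a different condition from being a common kernel vector. In $\NT_n(\F)$ the adapted vectors are those outside $\Vect(e_1,\dots,e_{n-1})$, whereas $e_1$ is the common kernel vector. Your identity $z^TA^ky=0$ is correct over every field and is the natural tool for producing such a vector, but you never use it to do so.

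\textbf{Second gap: the small-field fallback is false.} Write $M=\sum_i c_iA_i$ in a basis $(A_i)$ of $\calV$. The condition $M^n=0$ is a homogeneous identity of degree $n$ in the $c_i$, not a multilinear one. Such an identity can hold on all of $\F^m$ without holding over $\F(\lambda)$ once $|\F|<n$.

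For instance, in $\Mat_3(\F_2)$ take $A=E_{12}+E_{23}$ and $B=E_{12}+E_{31}$. Then $A$, $B$ and $A+B=E_{23}+E_{31}$ are all nilpotent, so $\Vect(A,B)$ is a nilpotent subspace. Yet $(A+\lambda B)^3=\lambda(\lambda+1)I_3$, which is invertible over $\F_2(\lambda)$.

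So extension of scalars is only guaranteed to work when $|\F|\geq n$, which is Gerstenhaber's original setting. Removing that restriction is exactly Serezhkin's contribution, and your plan offers nothing for it.

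Separately, your characteristic-zero remark is unjustified as written, though you already set it aside. For $\calV=P\,\NT_n(\F)P^{-1}$, the space $\calV^\perp$ is a conjugate of the upper-triangular algebra, which need not contain the diagonal matrices.
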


In geometric terms, this means that for an $n$-dimensional vector space $V$, every $\overline{0}^\star$-spec subspace $\calS$ of
$\End(V)$ has its dimension less than or equal to $\dbinom{n}{2}$, and equality occurs if and only if there is a complete flag
$\{0\} \subset V_0 \subset \cdots \subset V_n=V$ of linear subspaces such that $\calS$ is the set of all $u \in \End(V)$ that map
$V_i$ into $V_{i-1}$ for all $i \in \lcro 1,n\rcro$.

Here is a partial extension to $0^\star$-spec spaces:

\begin{theo}[de Seguins Pazzis \cite{dSPfeweigenvalues}, Quinlan \cite{Quinlan}]\label{theo:0starspec}
The maximal possible dimension for a $0^\star$-spec subspace of $\Mat_n(\F)$ is $\dbinom{n}{2}$.
\end{theo}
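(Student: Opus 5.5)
Since $\NT_n(\F)$ is already a $0^\star$-spec subspace of dimension $\dbinom{n}{2}$, the whole task is the upper bound: every $0^\star$-spec subspace $\calS$ of $\Mat_n(\F)$ has $\dim \calS \leq \dbinom{n}{2}$. I would argue by induction on $n$, using a dimension-counting argument based on a well-chosen nonzero vector and the subspace of matrices that annihilate it. The case $n=1$ is immediate: a $0^\star$-spec subspace of $\Mat_1(\F)=\F$ cannot contain a nonzero scalar, because that scalar would be a nonzero eigenvalue.

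The key observation is that no matrix of $\calS$ can have a nonzero eigenvalue in $\F$. Now fix a nonzero vector $x \in \F^n$ and consider the linear map $\varphi : M \in \calS \mapsto Mx \in \F^n$. This splits $\dim \calS$ into $\dim \Ker \varphi + \rk \varphi$.

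For the kernel, choose a basis whose first vector is $x$. Every $M$ in $\Ker \varphi$ then has the block form $\begin{bmatrix} 0 & L(M) \\ 0 & K(M)\end{bmatrix}$. The spectrum of $M$ is $\{0\}\cup \Sp_\F(K(M))$, so $K(\Ker\varphi)$ is a $0^\star$-spec subspace of $\Mat_{n-1}(\F)$. By induction its dimension is at most $\dbinom{n-1}{2}$. The kernel of $K$ restricted to $\Ker\varphi$ consists of matrices with a single nonzero row, the first row, and zero in the $(1,1)$ slot, so it has dimension at most $n-1$. This gives
\[
\dim \Ker \varphi \leq \dbinom{n-1}{2} + \dim\{M\in\Ker\varphi : K(M)=0\}.
\]
Summing the pieces and comparing with $\dbinom{n}{2}=\dbinom{n-1}{2}+(n-1)$, it remains to show
\[
\rk \varphi + \dim\{M\in \calS: Mx=0,\ K(M)=0\} \leq n-1
\]
for a suitable choice of $x$.

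This last inequality is the main obstacle. Note that $\rk\varphi \leq n-1$ would already hold if $x\notin \calS x$, since $Mx=\lambda x$ with $\lambda\neq 0$ is forbidden. However, the extra rank-one terms $x e^T$-type matrices (here $e_1^T$-rows) can push the total above $n-1$. My plan is to exploit the trace-zero-like pairing. For $M$ with $Mx=y$ and $N=x\,\ell^T$ in $\calS$ with $\ell(x)=0$, the sum $M+tN$ still lies in $\calS$. Then $(M+tN)x=y$, and more usefully $\ell$ applied along $M$ gives constraints: the $2\times 2$ compression on $\Vect(x,y)$ must have no nonzero eigenvalue in $\F$ for all $t$. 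This forces $\ell(y)=0$ whenever $y$ and $x$ are independent, which yields $\ell \perp \calS x + \F x$. Hence the space of such $\ell$ has dimension at most $n-1-\rk\varphi$, which is exactly the needed bound.

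I expect the delicate point to be that $2\times 2$ argument. One must check that a quadratic $z^2-(\tau+t\ell(y))z+\dots$ acquires a nonzero root in $\F$ for some $t$ whenever $\ell(y)\neq 0$, and this has to work over every field, including $\F_2$ and characteristic $2$. The first thing I would try is to choose $t$ so that the determinant of the compression vanishes while its trace is nonzero. That gives an eigenvalue equal to the trace, hence nonzero. If the field is too small for this choice, I would fall back on perturbing $M$ by other elements of $\calS$.
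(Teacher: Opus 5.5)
Your reduction is sound. With $\varphi : M\mapsto Mx$, induction on $K(\Ker\varphi)$ and the remark $x\notin\calS x$, everything hinges on
\[
\rk\varphi+\dim\{M\in\calS : Mx=0,\ K(M)=0\}\leq n-1,
\]
that is, on your claim that $x\ell^T\in\calS$ forces $\ell(\calS x)=0$. The paper does not reprove this theorem; it quotes it. The same decomposition does appear in its proof-strategy subsection (Section 1.4), and there too the whole difficulty is the choice of $x$.

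Your justification of the claim does not work. The plane $\Vect(x,y)$ is in general not invariant under $M$, so the eigenvalues of the $2\times 2$ compression are not eigenvalues of $M+tN$. The argument is only valid for $n=2$, or when $My\in\Vect(x,y)$. The correct computation is the rank-one determinant formula
\[
\chi_{M+t\,x\ell^T}(z)=\chi_M(z)-t\,q(z),\qquad q(z):=\ell\bigl(\operatorname{adj}(zI_n-M)\,x\bigr)=\ell(Mx)\,z^{n-2}+(\text{lower terms}).
\]
Suppose $\ell(Mx)\neq 0$ and $q(z_0)\neq 0$ for some $z_0\in\F\setminus\{0\}$. Then $t:=\chi_M(z_0)/q(z_0)$ makes $z_0$ a nonzero eigenvalue of $M+t\,x\ell^T\in\calS$. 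Since $q$ has at most $n-2$ roots, this repairs your argument when $|\F|\geq n$, and only then.

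For small fields the claim is false, and so is the inequality for the given $x$. Over $\F_2$ with $n=3$, let $M_1:=\begin{bmatrix}0&0&1\\1&1&0\\0&1&1\end{bmatrix}$ and $\calS:=\Vect(M_1,E_{3,1},E_{1,2})$.
\begin{itemize}
\item Every $aE_{3,1}+bE_{1,2}$ is nilpotent.
\item $\chi_{M_1+aE_{3,1}+bE_{1,2}}(z)=z^3+z+1+(a+b)(z+1)$, which never vanishes at $1$.
\end{itemize}
So $\calS$ is $0^\star$-spec. For $x=e_1$ you get $\calS x=\Vect(e_2,e_3)$, while $E_{1,2}=e_1e_2^T\in\calS$. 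The left-hand side of the inequality above is therefore $2+1=3>n-1$. Here $q(z)=z+1$ vanishes at the only nonzero scalar, so no perturbation inside $\calS$ can help.

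In this example $x=e_2$ does work, since no nonzero element of $\calS$ has range $\F e_2$. Your framework therefore needs a genuine argument for \emph{selecting} $x$. For instance, you could produce an $x$ such that $\calS$ contains no nonzero operator with range $\F x$ (an adapted vector, in the paper's terminology). When $|\F|<n$ this selection is exactly the nontrivial part of the theorem, and the proposal does not address it.

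As it stands, you have a proof only under $|\F|\geq n$. The statement concerns arbitrary fields, and the paper uses it in that generality in the general-field part (Section 5), e.g.\ over $\F_4$ with $n$ large.
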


See also \cite{dSPlargeaffinenonsingular} for a full classification of the optimal $0^\star$-spec subspaces of $\Mat_n(\F)$ provided that $|\F|>2$.

The $1$-spec subspaces were studied in \cite{dSPsoleeigenvalue}:

\begin{theo}[de Seguins Pazzis \cite{dSPsoleeigenvalue}]\label{theo:1spec}
The greatest possible dimension for a $1$-spec subspace of $\Mat_n(\F)$ is $\dbinom{n}{2}+1$ unless $n=\car(\F)=2$,
in which case the greatest possible dimension is $3$ and $\mathfrak{sl}_2(\F)$ is the sole optimal $1$-spec subspace of $\Mat_2(\F)$.
\end{theo}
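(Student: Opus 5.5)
We prove Theorem~\ref{theo:1spec} in two parts: first the upper bound, then its attainment together with the special case $n=\car(\F)=2$.

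\emph{Attainment.} For $n \geq 1$, the space $\F I_n \oplus \NT_n(\F)$ has dimension $\binom{n}{2}+1$. Each of its elements is upper-triangular with constant diagonal, so it has exactly one eigenvalue. For $n=\car(\F)=2$, we show that $\mathfrak{sl}_2(\F)$ is $1$-spec. A trace-zero $2\times 2$ matrix has characteristic polynomial $t^2-\det$. In characteristic $2$, any root $\lambda$ of $t^2-d$ satisfies $(t-\lambda)^2=t^2-d$, so there is at most one eigenvalue in $\F$ (indeed in $\Fbar$). This gives dimension $3=\binom{2}{2}+2$.

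\emph{Upper bound.} Let $\calM$ be a $1$-spec subspace with $\dim \calM \geq \binom{n}{2}+2$; we derive a contradiction unless $n=\car(\F)=2$. Set $\calM_0:=\calM \cap \mathfrak{sl}_n(\F)$ when $n \neq 0$ in $\F$ (trace is well defined), so that $\dim \calM_0 \geq \binom n2+1$.

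The key reduction is to show that $\calM_0$ is $0^\star$-spec, which is absurd by Theorem~\ref{theo:0starspec}. Take $A\in\calM_0$ with a nonzero eigenvalue $\lambda \in \F$. Since $\calM$ is $1$-spec, $A$ has only the eigenvalue $\lambda$ in $\F$, so the $\F$-rational part of its characteristic polynomial is $(t-\lambda)^k$. We then want to exploit $A+\mu N$ for other elements $N\in\calM$, or the fact that a dimension-$\binom n2+2$ space must contain a matrix with two distinct rational eigenvalues, using a counting argument.

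A cleaner route is the following. Choose $x\in\F^n$ and consider $\calM x$. By dimension counting, there is a nonzero $x$ such that $\{M\in\calM : Mx\in \F x\}$ is large. Reduce to the block form
\[
M=\begin{bmatrix} \alpha(M) & ? \\ 0 & K(M)\end{bmatrix},
\]
where $\alpha$ is a linear form. The compression $K(\calM')$ is again $1$-spec, and moreover its eigenvalue must equal $\alpha(M)$ whenever $K(M)$ has an eigenvalue in $\F$. Induct on $n$ using the bound $\dim \calM' \leq \dim K(\calM') + \dim\Ker K + \ldots$, in the style of the Gerstenhaber--Serezhkin/Mathes--Omladi\v c--Radjavi proof. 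The base case $n=2$ is a direct computation: a $1$-spec subspace of $\Mat_2(\F)$ of dimension $3$ must avoid matrices with two distinct rational eigenvalues. Via the discriminant quadratic form, this forces $\car(\F)=2$ and the space to be $\mathfrak{sl}_2(\F)$. Indeed, in characteristic $\neq 2$, a $3$-dimensional subspace meets the diagonal matrices nontrivially outside scalars.

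The main obstacle is the inductive step. We must locate a common eigenvector for a large subspace and control how the eigenvalue on the first vector interacts with the spectrum of the compression. In characteristic $2$, with $n=3$, the exceptional $\mathfrak{sl}_2$ block might propagate, so the induction must treat $n=3$ separately. We would first try to handle this by showing that if $K(\calM')$ is optimal and equals $\mathfrak{sl}_2$-type, then the off-diagonal entries are forced to be small enough to keep the bound $\binom n2+1$.
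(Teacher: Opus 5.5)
\textbf{There is a genuine gap: the upper bound is not proved.} Your attainment half is correct. $\F I_n\oplus\NT_n(\F)$ is $1$-spec of dimension $\binom n2+1$, and $\mathfrak{sl}_2(\F)$ is $1$-spec in characteristic $2$ because $t^2-d$ is a square in $\Fbar[t]$. (The paper itself only cites this theorem and gives no proof of it.)

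Your first route, ``$\calM_0$ is $0^\star$-spec'', carries the whole weight of the theorem. Lying in a $1$-spec space does not stop a trace-zero matrix from having a nonzero eigenvalue in $\F$: a single nonzero eigenvalue in $\F$ with an irreducible cofactor of suitable trace suffices, and if $n=0$ in $\F$ then $I_n$ itself has trace zero. So the only usable input is the dimension hypothesis, and you give no way to use it.

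Your second route has the same hole in a different place. Choosing $x$ so that $\calM'=\{M\in\calM : Mx\in\F x\}$ is ``large'' is vacuous, since $\dim\calM'\geq\dim\calM-(n-1)$ for every $x$. The term that must be controlled is the one hidden in your ``$\ldots$'': $\Ker K$, the matrices of $\calM$ whose range lies in $\F x$. The rank theorem and the induction hypothesis only give
\[
\dim\calM\leq (n-1)+\dim K(\calM')+\dim\Ker K\leq \binom n2+1+\dim\Ker K,
\]
and $\dim\Ker K$ can be as large as $n$.

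\textbf{The missing idea} is to take $x$ \emph{adapted}, meaning that $\calM$ contains no nonzero trace-zero matrix with range $\F x$. Then $\dim\Ker K\leq 1$.
If $\Ker K=\{0\}$, the induction closes.
If $\Ker K=\F N$ with $\tr N\neq 0$, adding multiples of $N$ moves $\alpha(M)$ freely while fixing $K(M)$. By the very compatibility you observed, $K(\calM')$ then has no eigenvalue in $\F$. Theorem~\ref{theo:0starspec} gives $\dim K(\calM')\leq\binom{n-1}{2}$, hence $\dim\calM\leq\binom n2+1$, as in the scheme of Section~\ref{section:strategy}.

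The real work is proving that such an $x$ exists once $n\geq 3$, or otherwise handling spaces that have none. For $n=2$ existence genuinely fails, since $\mathfrak{sl}_2(\F)$ has no adapted vector in characteristic $2$. Your proposal contains nothing toward this step.

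By contrast, the $n=3$, characteristic-$2$ issue you flag is easy once $\Ker K=\{0\}$. If $K(\calM')=\mathfrak{sl}_2(\F)$, then $\alpha(M)=0$ whenever $K(M)$ is nilpotent. Hence $\alpha=0$ identically by Lemma~\ref{lemma:generatesln}. The $M$ with $K(M)=I_2$ then has eigenvalues $0$ and $1$, a contradiction.

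Finally, your base case is only asserted, and the stated reason is wrong. The $3$-dimensional space of matrices $\bigl[\begin{smallmatrix} a & b\\ c & a\end{smallmatrix}\bigr]$ meets the diagonal matrices only in the scalars. You need a change of basis, for instance by writing $\calM=\{M : \tr(AM)=0\}$ and working in a basis where the diagonal of $A$ vanishes. The uniqueness of $\mathfrak{sl}_2(\F)$ in characteristic $2$ is not argued at all.
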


Note that if $\car(\F)=2$ then a $2$-by-$2$ matrix with two distinct eigenvalues in $\Fbar$ must have its trace nonzero.
In the remaining cases, a typical optimal $1$-spec subspace of $\Mat_n(\F)$ is $\F I_n \oplus \NT_n(\F)$,
or more generally $\F I_n\oplus \calT$ where $\calT$ is an optimal $0^\star$-spec subspace.
However, there are examples of optimal $1$-spec subspaces that are not of this type (see \cite{dSPsoleeigenvalue} for various examples).

From the above considerations, it is obvious that exactly the same bounds hold for $\overline{1}$-spec subspaces. The classification of the optimal $\overline{1}$-spec subspaces
was achieved in \cite{dSPsoleeigenvalue} over all fields, with an error for $n=4$ and fields with characteristic $2$ that was recently corrected
\cite{dSPsoleeigenvaluecorr}.

In \cite{dSPfeweigenvalues}, we went even further by studying $1^\star$-spec and $2$-spec spaces over fields with characteristic other than $2$. Here was our main result then:

\begin{theo}[Theorems 1.4 and 1.5 of \cite{dSPfeweigenvalues}]\label{theorem:car<>2}
Assume that $\car(\F) \neq 2$. Let $n \geq 2$. Then:
\begin{enumerate}[(i)]
\item The greatest possible dimension for a $1^\star$-spec subspace of $\Mat_n(\F)$ is $\dbinom{n}{2}+1$, and it is also the
greatest possible dimension for a $\overline{1}^\star$-spec subspace of $\Mat_n(\F)$.
\item If $n>2$ then the greatest possible dimension for a $2$-spec subspace of $\Mat_n(\F)$ is $\dbinom{n}{2}+2$, and it is also the
greatest possible dimension for a $\overline{2}$-spec subspace of $\Mat_n(\F)$.
\end{enumerate}
\end{theo}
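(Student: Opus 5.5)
The plan is to prove the lower bounds by explicit triangular examples, and the upper bounds by an induction on $n$ built on a block decomposition along a well-chosen vector. The characteristic assumption will enter through the discriminant of $2$-by-$2$ characteristic polynomials.

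\emph{Lower bounds.} The space $\F I_n \oplus \NT_n(\F)$ has dimension $\binom{n}{2}+1$. Each of its elements is triangular with a constant diagonal, so it has a single eigenvalue in $\Fbar$, which lies in $\F$. Hence it is both $1^\star$-spec and $\overline{1}^\star$-spec. Similarly, let $E_{1,1}$ be the first elementary matrix. Then $\F I_n \oplus \F E_{1,1} \oplus \NT_n(\F)$ has dimension $\binom{n}{2}+2$, and its elements are triangular with at most two distinct diagonal entries. So it is both $2$-spec and $\overline{2}$-spec. Since every $\overline{k}$-spec space (resp.\ $\overline{k}^\star$-spec space) is a $k$-spec space (resp.\ $k^\star$-spec space), it then suffices to prove the upper bounds for $1^\star$-spec and $2$-spec spaces.

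\emph{Upper bounds, reductions.} For a $2$-spec space $\calM$, adding $I_n$ preserves the property, because $\Sp_\F(A+tI_n)=t+\Sp_\F(A)$. So I may assume $I_n \in \calM$. Then I would try to show that $\calM$ contains a $1^\star$-spec subspace of codimension at most $1$, which would reduce (ii) to (i). The natural candidate is the set of $A \in \calM$ having $0$ as an eigenvalue, after suitable normalisation. This set is not linear in general, so instead I would run the same induction for both statements in parallel.

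\emph{Upper bounds, induction.} Write each $A \in \calM$ in block form
\[
A=\begin{bmatrix} a(A) & L(A) \\ C(A) & K(A)\end{bmatrix}
\]
with respect to a basis $(e_1,\dots,e_n)$ still to be chosen. Let $\calM'$ be the subspace of matrices with $C(A)=0$. Its $K$-part is again a space of the same type in $\Mat_{n-1}(\F)$, since eigenvalues of $K(A)$ are eigenvalues of $A$. The rank theorem then gives
\[
\dim \calM \le \dim C(\calM) + \dim \ker K_{|\calM'} + \dim K(\calM').
\]
The induction hypothesis bounds $\dim K(\calM')$. The kernel term is bounded by $n$, through the first row together with the scalar $a$. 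I would therefore aim to choose $e_1$ so that $\dim C(\calM)$ plus the excess in the kernel term is at most $n-1$. Following Gerstenhaber's method, I would choose $e_1$ so that $\calM e_1$ is small. This relies on the fact that the map $x \mapsto \calM x$ cannot have large image for every $x$: otherwise some $A\in\calM$ would have too many eigenvalues. To rule that out, I would produce matrices whose compression to a $2$-dimensional invariant subquotient has prescribed trace and determinant.

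The base case $n=2$ of (i), and the $2$-by-$2$ blocks appearing in the induction, are where $\car(\F)\neq 2$ is used. A $2$-by-$2$ matrix has characteristic polynomial $x^2-tx+d$, and it has two distinct eigenvalues in $\F$ exactly when $t^2-4d$ is a nonzero square. Completing the square requires $2$ to be invertible. With that, one can exhibit, inside any $2$-dimensional space of $2$-by-$2$ matrices outside the expected forms, an element with two distinct nonzero eigenvalues (resp.\ with two distinct eigenvalues) in $\F$.

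\emph{Main obstacle.} I expect the hardest step to be the exceptional case of the induction, where no good vector $e_1$ exists, i.e.\ $\calM x$ is large for every $x$. This is where (ii) needs $n>2$, since $\Mat_2(\F)$ itself can be $2$-spec. There I would first try to show that $\calM$ contains a $0^\star$-spec subspace of codimension at most $1$ (resp.\ at most $2$). I would then invoke Theorem~\ref{theo:0starspec}, with a careful treatment of small $n$ (such as $n=3$) by direct computation.
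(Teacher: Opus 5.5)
\paragraph{Verdict.} The proposal has a genuine gap: the choice of the vector $e_1$, which is the whole difficulty, is left as a heuristic, and the missing argument is the existence of adapted vectors.

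\paragraph{What is fine, and where the gap is.} Your lower-bound examples are correct, and so is the reduction from the barred properties to the unbarred ones. Your block decomposition is exactly the one underlying the adapted vectors method of Section~\ref{section:strategy}: your $\calM'$, $K(\calM')$ and kernel term are the paper's $\calS_x$, $\overline{\calS_x}$ and $\{u\in\calS : \im u\subseteq \F x\}$. However, your heuristic for choosing $e_1$ aims at the wrong quantity. Since $\dim C(\calM)\le n-1$ trivially, what must be controlled is the kernel term $\calM\cap\Hom(\F^n,\F e_1)$, not the size of $\calM e_1$. For instance, in $\F I_n\oplus\NT_n(\F)$ the vector $x=e_n$ is a perfectly good choice although $\calM x=\F^n$, because no nonzero element of $\calM$ has range in $\F e_n$.

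\paragraph{The missing trick.} Counting alone is not enough even with the right target. If the kernel term is a line spanned by an operator of nonzero trace, your inequality only gives $\binom{n}{2}+2$. The fix is to add multiples of that operator to $A\in\calM'$: this changes $a(A)$ freely while fixing $K(A)$. Hence $K(\calM')$ is $0^\star$-spec, and Theorem~\ref{theo:0starspec} bounds it by $\binom{n-1}{2}$. So the right notion is an \emph{adapted} vector: one for which $\calM$ contains no nonzero trace-zero operator with range $\F x$. For such an $x$ the count closes at $\binom{n}{2}+1$.

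\paragraph{Where $\car(\F)\neq 2$ really enters.} Proving that adapted vectors exist is the real content, and this is where $\car(\F)\neq 2$ is used, not only at $n=2$. Let $a=\varphi\otimes x$ and $b=\psi\otimes y$ be rank-one trace-zero elements of the space. Then $a+\lambda b$ acts on $\Vect(x,y)$ with characteristic polynomial $t^2-\lambda\,\tr(ab)$. If $\tr(ab)\neq 0$, the choice $\lambda=\mu^2/\tr(ab)$ gives eigenvalues $\pm\mu$, which are distinct and nonzero because $\car(\F)\neq 2$. This violates the $1^\star$-spec property, and, together with the eigenvalue $0$ when $n\geq 3$, also the $2$-spec property.
\begin{itemize}
\item Consequently, the span of such elements is totally isotropic for the nondegenerate form $(A,B)\mapsto\tr(AB)$, so its dimension is at most $n^2/2$.
\item In the cited proof, too few adapted vectors would produce such a span of dimension at least $n(\lfloor n/2\rfloor+1)>n^2/2$, which is the contradiction.
\end{itemize}
Your proposal contains neither this lemma nor the covering argument that exploits it. The assertion that $\calM x$ cannot be large for every $x$ is unsupported, and as the example above shows it is not what is needed. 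The fallback of finding a $0^\star$-spec subspace of small codimension is equally unsupported.

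\paragraph{The $2$-spec case.} Your parallel induction for $2$-spec spaces breaks down at $n=3$, since every subspace of $\Mat_2(\F)$ is $2$-spec. The reduction that works uses the linear subspace $\{A\in\calM : Ae_1=0\}$, whose induced space on $\F^n/\F e_1$ is $1^\star$-spec. An adapted $e_1$ then gives $\dim\calM\le n+\binom{n-1}{2}+1=\binom{n}{2}+2$.
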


There is even a full classification of the optimal $\overline{1}^\star$-spec subspaces of $\Mat_n(\F)$
and of the optimal $\overline{2}$-spec subspaces of $\Mat_n(\F)$, still assuming that $\car(\F) \neq 2$
(see \cite{dSPfeweigenvalues} for details).

Both these results and their proofs heavily rely on the condition $\car(\F) \neq 2$,
and in fact the upper bounds on the dimensions of $1^\star$-spec and $2$-spec spaces no longer hold over fields with characteristic $2$.
Of course, we must immediately rule out fields with $2$ elements, as in that case every linear subspace of $\Mat_n(\F)$ is $1^\star$-spec
and $2$-spec.

At this point, our aims are as expected: for an arbitrary field $\F$ of characteristic $2$, and with more than $2$ elements:
\begin{itemize}
\item Find the greatest possible dimension for a $1^\star$-spec subspace of $\Mat_n(\F)$, and ditto for $2$-spec subspaces.
\item Classify the optimal $\overline{1}^\star$-spec subspaces of $\Mat_n(\F)$, and ditto for the $\overline{2}$-spec subspaces.
\end{itemize}

To state the results, it is useful to introduce the joint of two spaces of square matrices:

\begin{Not}
Let $n$ and $p$ be positive integers, and $\calA$ and $\calC$ be respective subsets of $\Mat_n(\F)$ and $\Mat_p(\F)$.
The \textbf{joint} of $\calA$ and $\calC$, denoted by $\calA \vee \calC$, is the set of all matrices of the form
$$\begin{bmatrix}
A & B \\
0 & C
\end{bmatrix} \quad \text{with $A \in \calA$, $C \in \calC$ and $B \in \Mat_{n,p}(\F)$.}$$
\end{Not}

Noting that this operation is associative, we can also generalize it to whole lists of sets of square matrices.

Assume that $\car(\F)=2$. We have already seen that $\mathfrak{sl}_2(\F)$ is a $\overline{1}$-spec subspace of $\Mat_2(\F)$ with dimension $3$,
and more generally it is clear that, for all $n \geq 2$, the joint
$\mathfrak{sl}_2(\F) \vee \NT_{n-2}(\F)$ is a $\overline{1}^\star$-spec subspace of $\Mat_n(\F)$ with dimension $\dbinom{n}{2}+2$, which is exactly one unit
above the upper bound from Theorem \ref{theorem:car<>2}.

Before we move forward, we must pay tribute to the recent resolution by Omladi\v c and \v Sivic of a conjecture of Loewy and Radwan \cite{LoewyRadwan} on $\overline{k}$-spec subspaces:

\begin{theo}[Omladi\v c and \v Sivic \cite{OmladicSivic}]\label{theorem:OmladicSivic}
Assume that $\F$ is algebraically closed with characteristic $0$.
Let $k \in \lcro 1,n-1\rcro$. Then the greatest possible dimension for a $\overline{k}$-spec subspace of
$\Mat_n(\F)$ is $\dbinom{n}{2}+\dbinom{k}{2}+1$.
\end{theo}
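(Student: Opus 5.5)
The plan has two parts: an explicit construction for the lower bound, and a degeneration argument that reduces the upper bound to a combinatorial count.

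\textbf{Lower bound.} Let $\calA_k$ be the subspace of $\Mat_k(\F)$ consisting of the matrices whose last row is zero. Consider
$$\calS:=\F I_n+\bigl(\calA_k \vee \NT_{n-k}(\F)\bigr).$$
Every element of $\calA_k$ has $0$ as an eigenvalue. Hence for $x=\lambda I_n+M$ with $M\in \calA_k\vee\NT_{n-k}(\F)$, the spectrum $\Sp_{\Fbar}(x)$ is contained in $\lambda+\Sp_{\Fbar}(A)$, where $A$ is the upper-left block of $M$ and this set already contains $\lambda$. So $\card \Sp_{\Fbar}(x)\leq k$.

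Since $\calA_k$ does not contain $I_k$, the sum defining $\calS$ is direct. Using $\binom{n}{2}=\binom{k}{2}+k(n-k)+\binom{n-k}{2}$, we get
$$\dim\calS=1+k(k-1)+k(n-k)+\binom{n-k}{2}=\binom{n}{2}+\binom{k}{2}+1.$$

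\textbf{Upper bound, reduction step.} Let $d:=\dim \calS$ for a $\overline{k}$-spec subspace $\calS$. The set $\calX_k$ of matrices with at most $k$ distinct eigenvalues is Zariski closed in $\Mat_n(\F)$: it is cut out by the vanishing of the appropriate subresultants of the characteristic polynomial and its derivative, and this is where $\car(\F)=0$ is used. Therefore the set of $d$-dimensional subspaces contained in $\calX_k$ is a closed subvariety of the Grassmannian $\mathrm{Gr}(d,\Mat_n(\F))$. This subvariety is stable under conjugation by $\GL_n(\F)$.

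By Borel's fixed point theorem, the Borel subgroup of invertible upper-triangular matrices has a fixed point there. So we may assume that $\calS$ is stable under conjugation by upper-triangular matrices, and in particular under the diagonal torus. Torus-stability makes $\calS$ spanned by its diagonal part $D\subset$ (diagonal matrices) together with matrix units $E_{ij}$, $i\neq j$, ranging over some set $\Sigma$. Stability under the unipotent radical then shows that $\Sigma$ is an upper set: it is closed under moving up in rows and right in columns. It also imposes compatibility conditions between $D$ and the positions of $\Sigma$ that are adjacent to the diagonal.

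\textbf{Upper bound, combinatorial step.} It remains to show $\card\Sigma+\dim D\leq \binom{n}{2}+\binom{k}{2}+1$, where we now only need to control the subdiagonal positions in $\Sigma$. If $\Sigma$ contains a subdiagonal unit $E_{ij}$ with $i>j$, upward closure forces $\Sigma$ to contain the full rectangular block of positions above and to the right of $(i,j)$. One then exhibits explicit matrices in $\calS$ having a block companion-like structure on indices in $\lcro j,i\rcro$. Each such subdiagonal ``hook'' lets one build elements with many distinct eigenvalues, by combining a diagonal element of $D$ with the cyclic structure and choosing generic scalars.

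I expect to prove that the subdiagonal part of $\Sigma$ together with $D$ behaves like a $\overline{k}$-spec space in a smaller block, namely a staircase contained in a $k\times k$ region. This should give at most $\binom{k}{2}$ subdiagonal positions, with $\dim D\leq k$ but effectively only one extra unit beyond what the lost upper positions compensate.

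\textbf{Main obstacle.} The delicate step is this last count. One must trade off the diagonal dimension $\dim D$ against the staircase shape of $\Sigma$ below the diagonal, and do so uniformly. I would first try induction on $n$: remove the first column and last row, which are extremal for the upper-set order, and apply the induction hypothesis with $k$ or $k-1$, according to whether deleting them reduces the number of distinct eigenvalues.
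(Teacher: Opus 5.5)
The paper does not prove this theorem; it only quotes it from Omladi\v c and \v Sivic, so the proposal has to stand on its own. Your lower bound does: $\F I_n+(\calA_k\vee\NT_{n-k}(\F))$ is $\overline{k}$-spec, the sum is direct, and the dimension count is right. The Borel fixed-point reduction to a space $D\oplus\bigoplus_{(i,j)\in\Sigma}\F E_{i,j}$, stable under conjugation by invertible upper-triangular matrices, is also legitimate.

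\textbf{The gap.} The upper bound is not proved. Everything after the reduction is a statement of what you expect, and that count is the whole content of the theorem. Your heuristic numbers also do not give the target. At most $\binom{k}{2}$ subdiagonal positions and $\dim D\leq k$, with all $\binom{n}{2}$ upper positions present, would allow $\binom{n}{2}+\binom{k}{2}+k$. The extremal example has only $\binom{k-1}{2}$ subdiagonal positions, with $\dim D=k$. What you need is an inequality of the shape (number of subdiagonal positions in $\Sigma$) $+\dim D\leq\binom{k}{2}+1+$ (number of upper positions missing from $\Sigma$). You have neither stated it precisely nor proved it. The induction you suggest does not work as written either. Deleting the first column and the last row does not leave a principal submatrix, so the resulting $(n-1)\times(n-1)$ matrices carry no information about spectra. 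An induction has to go through a block-triangular splitting, and knowing when one is available (i.e.\ which subdiagonal positions are absent) is part of the missing analysis.

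\textbf{Characteristic $0$ is never used where it matters.} The Zariski-closedness of $\calX_k$ holds over every algebraically closed field. The monic degree-$n$ polynomials with at most $k$ distinct roots form a finite union of images of the finite morphisms $(a_1,\dots,a_k)\mapsto\prod_i(t-a_i)^{m_i}$, and $\calX_k$ is the preimage of that set under $A\mapsto\chi_A$. So your reduction runs verbatim in characteristic $2$, where the statement is false:
\begin{itemize}
\item $\F I_n+(\mathfrak{sl}_2(\F)\vee\NT_{n-2}(\F))$ is Borel-stable, $\overline{2}$-spec, and has dimension $\binom{n}{2}+3$;
\item for $n=2$, $k=1$, the space $\mathfrak{sl}_2(\F)$ has $D=\F I_2$, $\Sigma=\{(1,2),(2,1)\}$, and dimension $3>2$.
\end{itemize}
Hence the missing lemma must contain a genuinely characteristic-dependent computation. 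For instance, you would need to show, from explicit characteristic polynomials, that a subdiagonal position $(i,j)\in\Sigma$, together with $E_{j,j}-E_{i,i}\in D$ and the rectangle of positions it generates, yields elements with more than $k$ distinct eigenvalues unless the subdiagonal part fits into a suitable staircase. This is exactly where your ``generic scalars'' would fail in characteristic $2$, since a trace-zero $2\times 2$ block has a single eigenvalue there. The hypothesis $k\leq n-1$ must also be used somewhere, because the bound is false for $k=n$, and your sketch never touches it. As it stands, you have proved only the lower bound.
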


It is even possible to replace the ``algebraically closed field" condition by $\car(\F)=0$ (we will prove this in a future article).
Omladi\v c and \v Sivic also classified the optimal spaces (using the classification for $k=2$ that we had obtained in \cite{dSPfeweigenvalues}).
Alas, the characteristic $0$ provision is unavoidable for this theorem, and the methods also do not allow for a generalization to
$k$-spec spaces, even assuming that $\F$ has characteristic zero. So, although Theorem \ref{theorem:OmladicSivic} is a major achievement, we believe there remains much interesting research to be done on this topic.

\subsection{Main results}

We are now ready to state our main results.
\emph{From that point on, we systematically assume that $\car(\F)=2$ and $|\F|>2$ (and will only repeat this assumption in the statement of the main theorems).}

\begin{theo}\label{theorem:maintheodim1starspec}
Let $n \geq 2$, and $\F$ be a field with $\car(\F)=2$ and $|\F|>2$.
Then the greatest possible dimension for a $1^\star$-spec subspace of $\Mat_n(\F)$ is $\dbinom{n}{2}+2$,
and it is also the greatest possible dimension for a $\overline{1}^\star$-spec subspace.
\end{theo}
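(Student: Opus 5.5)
We have to show two things: the bound $\dim\calM\le\binom{n}{2}+2$ for every $1^\star$-spec subspace $\calM$ of $\Mat_n(\F)$, and the existence of a $\overline{1}^\star$-spec subspace of that dimension. Every $\overline{1}^\star$-spec space is $1^\star$-spec, so the lower bound is given by the example noted before the statement: $\mathfrak{sl}_2(\F)\vee\NT_{n-2}(\F)$ has dimension $\binom{n}{2}+2$ and is $\overline{1}^\star$-spec. Indeed, in characteristic $2$ a trace-zero $2\times 2$ matrix has a single eigenvalue in $\Fbar$, and the lower diagonal block is nilpotent. So all the work is in the upper bound. I would prove it by induction on $n$, in the style of \cite{dSPfeweigenvalues}.

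\textbf{Base case $n=2$.} Here the target is $\binom{2}{2}+2=3$. We must rule out $\calM=\Mat_2(\F)$. Since $|\F|>2$, pick distinct nonzero $a,b\in\F$. Then $\operatorname{diag}(a,b)\in\Mat_2(\F)$ has two nonzero eigenvalues, so $\Mat_2(\F)$ is not $1^\star$-spec.

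\textbf{Inductive step, setup.} Let $n\ge 3$ and let $\calM$ be $1^\star$-spec with $\dim\calM>\binom{n}{2}+2$. I would use a vector-picking argument. For a nonzero $x\in\F^n$, consider the subspace of $\calM$ of matrices having $x$ as an eigenvector. After conjugating so that $x=e_1$, these are the matrices
$$M=\begin{bmatrix}\alpha(M) & ?\\ 0 & K(M)\end{bmatrix}.$$
The first step is to show there is a basis vector, or more generally a suitable $x$, for which the subspace $\calM'$ of such matrices has codimension at most $n-1$ in $\calM$. To get this, use a dual-space dimension count on the map $M\mapsto$ (last $n-1$ entries of the first column). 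Alternatively, pass to $\calM^\perp$ for the trace form and apply Theorem~\ref{theo:0starspec}-type counting. Then $\dim\calM'>\binom{n-1}{2}+2$.

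\textbf{Inductive step, the two cases.}
\begin{enumerate}[(i)]
\item On the subspace of $\calM'$ where $\alpha=0$, the map $M\mapsto K(M)$ is $1^\star$-spec in $\Mat_{n-1}(\F)$. Its image therefore has dimension at most $\binom{n-1}{2}+2$ by induction. Combined with a bound on its kernel, this forces $\alpha$ to be nonzero on $\calM'$ and yields a matrix in $\calM'$ with $K(M)$ having a nonzero eigenvalue $\lambda$.
\item If $\alpha\neq 0$ somewhere, a $1^\star$-spec constraint links $\alpha(M)$ and $\Sp_\F K(M)$. Whenever $K(M)$ has a nonzero eigenvalue $\lambda\neq\alpha(M)$, we obtain a contradiction, using $|\F|>2$ to perturb by scalars along a line $M+tN$.
\end{enumerate}
Concretely, I expect to prove that either $\alpha$ vanishes on $\calM'$, or $K(\calM')$ is $0^\star$-spec apart from eigenvalue $\alpha$. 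In the second case Theorem~\ref{theo:0starspec} applied to $\{K(M):\alpha(M)=0\}$ gives the dimension $\binom{n-1}{2}$, and adding $1$ for $\alpha$ keeps us within bounds.

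\textbf{Expected main obstacle.} The hard part is the characteristic-$2$ phenomenon where equality can exceed the characteristic-$\neq 2$ bound by one. Generic-polynomial arguments (a trace polynomial of degree $2$ vanishing identically) break in characteristic $2$, since $\lambda^2=\mu^2$ does not separate eigenvalues. I would first try to handle this by exploiting $|\F|>2$ through a line argument: for $M,N\in\calM$, the matrix $M+tN$ must be $1^\star$-spec for at least three values of $t$. I would also isolate the case where the relevant blocks are $2\times 2$, where $\mathfrak{sl}_2(\F)$ appears, and treat it separately using the known optimality of $\mathfrak{sl}_2(\F)$ from Theorem~\ref{theo:1spec}.
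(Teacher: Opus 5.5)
\textbf{There is a genuine gap in the inductive step.} Your lower bound, your base case $n=2$, and your reduction of the $\overline{1}^\star$ case to the $1^\star$ case are all fine. The gap sits at the phrase ``combined with a bound on its kernel''.

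Fix $x\neq 0$. Write $\calS$ for the operator space and $\calS_x$ for the operators that have $x$ as an eigenvector. The rank theorem gives
\[
\dim\calS\le (n-1)+\dim\bigl(\calS\cap(V^\star\otimes x)\bigr)+\dim\overline{\calS_x}.
\]
Here $\calS\cap(V^\star\otimes x)$ is exactly the kernel of $M\mapsto K(M)$ on $\calM'$, i.e.\ the matrices supported on the first row.

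\begin{itemize}
\item Your outline only bounds $\overline{\calS_x}$. Nothing controls the kernel, whose dimension can be as large as $n$. For the induction to close you need $\calS\cap(x^\bot\otimes x)=\{0\}$, i.e.\ you need $x$ to be $\calS$-adapted.
\item Your dichotomy is also unjustified. The $1^\star$-spec condition on $M$ only forbids nonzero eigenvalues of $K(M)$ other than $\alpha(M)$. Knowing that $\alpha$ is not identically zero does not make $\{K(M):\alpha(M)=0\}$ a $0^\star$-spec space. To shift $\alpha$ independently of $K$, you need some $\varphi\otimes x\in\calS$ with $\varphi(x)\neq 0$.
\end{itemize}

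\textbf{Why this cannot be patched by choosing $x$ well.} Existence of an adapted vector is precisely what breaks in characteristic $2$. Take the space $\NT_{n-2}(\F)\vee\mathfrak{sl}_2(\F)$, which attains $\binom{n}{2}+2$. For every nonzero $x$ there is a nonzero $\varphi\in\Vect(e_{n-1}^\star,e_n^\star)$ with $\varphi(x)=0$, and $\varphi\otimes x$ lies in the space. So no vector is adapted. Since every element of this space has trace zero, your count with the inductive bound gives only $\binom{n}{2}+3$, whatever $x$ you choose. Perturbing along lines $M+tN$ does not help, because the defect lies in the kernel, not in the spectra of the induced blocks.

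\textbf{The missing idea.} You need the dichotomy ``$\calS$ is a hurdle or $\calS$ has an adapted vector''.
\begin{itemize}
\item Hurdles are bounded separately via the Splitting Lemma. There $u\mapsto u_G$ has $0^\star$-spec range, and its kernel has dimension at most $2n-1$. This gives $\binom{n-2}{2}+2n-1=\binom{n}{2}+2$.
\item The existence of an adapted vector for non-hurdles is the real content of the proof. The paper gets it through a strengthened induction. First, weakly adapted vectors avoid every $3$-complex, which yields the intermediate bound $\binom{n}{2}+3$. Then, unless $\calS$ is a hurdle, adapted vectors avoid every $2$-complex. The tools are trace-orthogonality, the Vanishing Lemma for homogeneous polynomials, Atkinson's theorem, and the Second and Third Confinement Lemmas.
\end{itemize}
Nothing in your outline replaces this. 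As written, it yields at best the rough bound $\binom{n}{2}+n+1$.
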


As stated earlier, an example of $\overline{1}^\star$-spec subspace of $\Mat_n(\F)$ with dimension $\dbinom{n}{2}+2$ is
$\mathfrak{sl}_2(\F) \vee \NT_{n-2}(\F)$.

For $2$-spec spaces, the situation is further complicated by the presence of a very special situation when $n=4$.
Indeed, the space $\mathfrak{sl}_2(\F) \vee \mathfrak{sl}_2(\F)$ has dimension
$10=\dbinom{4}{2}+4$ and is obviously $2$-spec (and even $\overline{2}$-spec).

\begin{theo}\label{theorem:maintheodim2spec}
Let $n \geq 3$, and $\F$ be a field with $\car(\F)=2$ and $|\F|>2$.
Then the greatest possible dimension for a $2$-spec subspace of $\Mat_n(\F)$ is $\dbinom{n}{2}+3$ if $n\neq 4$,
and $\dbinom{n}{2}+4$ if $n=4$.
The same bound holds if we replace the $2$-spec property with the $\overline{2}$-spec property.
\end{theo}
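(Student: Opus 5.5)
Two things have to be shown: a lower bound by construction, and a matching upper bound. Since every $\overline{2}$-spec space is $2$-spec, it suffices to bound $2$-spec spaces from above and to exhibit $\overline{2}$-spec spaces attaining the bound.

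\emph{Constructions.} For $n=4$, the space $\mathfrak{sl}_2(\F)\vee\mathfrak{sl}_2(\F)$ works. For $n\neq 4$ with $n\geq 3$, I would take $\calM:=\F I_n+\bigl(\mathfrak{sl}_2(\F)\vee \NT_{n-2}(\F)\bigr)$, which has dimension $\binom{n}{2}+3$. A matrix of $\calM$ is $\lambda I_n+N$ with $N$ in the $\overline{1}^\star$-spec space $\mathfrak{sl}_2(\F)\vee\NT_{n-2}(\F)$. Hence $\Sp_{\Fbar}(N)\subset\{0,\mu\}$, and so $\Sp_{\Fbar}(\lambda I_n+N)\subset\{\lambda,\lambda+\mu\}$, which shows that $\calM$ is $\overline{2}$-spec.

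\emph{Upper bound.} I would argue by induction on $n$, following the strategy of \cite{dSPfeweigenvalues}. The base cases $n=3$ and $n=4$ would be handled directly, probably by explicit analysis using that $|\F|>2$. Let $\calM$ be a $2$-spec subspace of $\Mat_n(\F)$, and set $\calM_0:=\calM\cap\mathfrak{sl}_n(\F)$ or, better, a complement of $\F I_n$ intersected with $\calM$.

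The key reduction is the following. If $I_n\in\calM$ (which we may try to assume after adding $\F I_n$, checking that the $2$-spec property is preserved only when needed), then for $x\in\calM$ and $t\in\F$ the matrix $x-tI_n$ lies in $\calM$. Take $t$ to be an eigenvalue of $x$ when one exists in $\F$. This suggests relating $\calM$ to a $1^\star$-spec space of dimension $\dim\calM-1$, and Theorem~\ref{theorem:maintheodim1starspec} then gives $\dim\calM\leq\binom{n}{2}+3$. This does not quite work, because the $1^\star$ condition must hold uniformly rather than after a matrix-dependent shift.

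So instead I would use the standard approach. Pick a vector $e_1$ and consider the subspace $\calM'$ of matrices of $\calM$ with first column in $\F e_1$. This gives block decompositions $x=\begin{bmatrix} a(x) & ? \\ 0 & K(x)\end{bmatrix}$ where $K(\calM')\subset\Mat_{n-1}(\F)$ is again $2$-spec. The rank theorem yields
$$\dim\calM\leq (n-1)+\dim K(\calM')+\dim\Ker K\cap\calM'.$$
Then one applies the induction hypothesis to $K(\calM')$ and bounds the kernel part by the dimension of a space of row vectors, using a Gerstenhaber-type lemma (the \emph{first column} trick of \cite{dSPsoleeigenvalue}).

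\emph{Main obstacle.} The hard step is controlling the ``excess'': the induction loses one unit at each step, so a naive count gives $\binom{n}{2}+O(n)$. One must show that the excess $3$ (or $4$ when $n=4$) is not compounded. Equality-type information must be propagated by analyzing when $K(\calM')$ is optimal. This requires the classification of optimal spaces in dimension $n-1$, and in particular handling $n=5$, where $K(\calM')$ might be the exceptional $\mathfrak{sl}_2\vee\mathfrak{sl}_2$ type. I would first try to prove a stronger statement: a joint classification of optimal $\overline{1}^\star$-spec and $\overline{2}$-spec spaces. I would then use an ``$|\F|>2$'' polynomial argument, choosing a generic vector so that $\calM'$ is large, to rule out the exceptional configuration from lifting.
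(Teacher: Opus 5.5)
\textbf{There is a genuine gap in the upper bound.}

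Your lower-bound constructions are correct. Since $n\geq 3$, $I_n\notin\mathfrak{sl}_2(\F)\vee\NT_{n-2}(\F)$, so $\F I_n\oplus(\mathfrak{sl}_2(\F)\vee\NT_{n-2}(\F))$ has dimension $\binom{n}{2}+3$ and is $\overline{2}$-spec. The space $\mathfrak{sl}_2(\F)\vee\mathfrak{sl}_2(\F)$ handles $n=4$.

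The upper bound, however, is not proved, and the route you sketch cannot close as stated. Take $\calM'=\{M\in\calM : Me_1\in\F e_1\}$ and a $2$-spec induction hypothesis. You only get $\dim\calM\leq\binom{n}{2}+3+\dim\bigl(\calM\cap(V^\star\otimes e_1)\bigr)$, plus one more unit when $n=5$ because of the exceptional value at $n-1=4$. The last term is not small: in your own optimal example it equals $n-1$ when $e_1$ is the first basis vector.

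For the count to close, you need a vector $e_1$ such that $\calM$ contains essentially no trace-zero rank-$1$ matrix with range $\F e_1$, i.e.\ an adapted vector. In characteristic $2$ such vectors need not exist: a hurdle (any space containing $\{0_{n-2}\}\vee\mathfrak{sl}_2(\F)$ in some basis) has none. That, rather than the ``compounding of the excess'', is the real obstacle, and nothing in your proposal addresses it. The remedy you suggest, a classification of optimal spaces in size $n-1$, is unsubstantiated. It is also mismatched: $K(\calM')$ is only $2$-spec, while the announced classifications concern $\overline{2}$-spec spaces and are themselves derived from this theorem. Finally, your base cases $n=3,4$ are left entirely open.

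The paper relies on two ingredients that your proposal lacks.

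\emph{First ingredient: use $\calS_{x,0}$ instead of $\calS_x$.} Set $\calS_{x,0}=\{u\in\calS : u(x)=0\}$. The induced space on $V/\F x$ is $1^\star$-spec, because $0$ is already an eigenvalue. Theorem \ref{theorem:maintheodim1starspec} then gives $\dim\calS\leq n+\binom{n-1}{2}+2+\dim\bigl(\calS\cap(x^\bot\otimes x)\bigr)$, which is $\binom{n}{2}+3$ for an adapted $x$. This is the uniform version of the eigenvalue-shift idea you discarded. It involves no induction on $2$-spec spaces, so the $n=4$ exception never propagates to $n=5$.

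\emph{Second ingredient: the hurdle/adapted-vector dichotomy.} By Theorem \ref{theorem:2specwithadapted}, for $n\geq 5$ a $2$-spec space is either a hurdle or has an adapted vector. Hurdles are bounded directly via the Splitting Lemma: $\dim\calS\leq 3+2(n-2)+\dim\calT$, where $\calT$ is a $1$-spec subspace of $\End(G)$ and $\dim G=n-2$. This is exactly where the extra unit at $n=4$ comes from. Proving the dichotomy over every field with $|\F|>2$ is the bulk of the paper. It uses weakly adapted vectors, $2$- and $3$-complexes, the Vanishing Lemma for homogeneous polynomials, and the Second and Third Confinement Lemmas, all run inside the induction for $1^\star$-spec spaces.

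\emph{Base cases in the paper.} For $n=3$, the paper notes that in characteristic $2$ the trace-zero part of a $2$-spec space of $3\times 3$ matrices is $1^\star$-spec. For $n=4$, it applies Atkinson's theorem to $\calS^\bot$, which is available because $|\F|\geq 4$.
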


Before we state the classification of optimal spaces, it is useful that we give another intriguing example of
a $\overline{2}$-spec subspace of $\Mat_4(\F)$ with dimension $10$.
To start with, let us consider a finite-dimensional vector space $V$ with even dimension $2m>0$, equipped with a symplectic form $s$,
i.e., a non-degenerate alternating bilinear form. An endomorphism $u$ of $V$ is called $s$-symmetric
whenever the bilinear form $(x,y) \mapsto s(x,u(y))$ is symmetric i.e., $\forall (x,y)\in V^2, \; s(u(x),y)=-s(x,u(y))$.
The set of all such endomorphisms is denoted by $\calB_s$: it is a linear subspace of $\End(V)$.
Classically, the space $V$ has a symplectic basis $(e_1,\dots,e_m,f_1,\dots,f_m)$, i.e., a basis
such that $s(e_i,f_j)=\delta_{i,j}$ and $s(e_i,e_j)=s(f_i,f_j)=0$ for all $(i,j)\in \lcro 1,m\rcro^2$.
The matrix of $s$ in that basis is
$$K_{2m}:=\begin{bmatrix}
[0]_{m \times m} & I_m \\
-I_m & [0]_{m \times m}
\end{bmatrix}.$$
In such a basis, the elements of $\calB_s$ are represented by the matrices of the form
$(K_{2m})^{-1} S$ with $S \in \Mats_{2m}(\F)$, hence the following block form for these matrices:
$$\begin{bmatrix}
A & S_2 \\
S_1 & -A^T
\end{bmatrix} \quad \text{with $A \in \Mat_m(\F)$ and $(S_1,S_2)\in \Mats_m(\F)^2$.}$$
Now, we recall and reprove a classical result, which holds over all fields:

\begin{prop}
Let $s$ be a symplectic form on a finite-dimensional vector space $V$ over an arbitrary field $\K$.
Let $u \in \End(V)$ be $s$-symmetric. Then the characteristic polynomial $\chi_u$ is an even polynomial.
\end{prop}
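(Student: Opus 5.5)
We work in a symplectic basis and argue with matrices. In that basis $s$ has matrix $K:=K_{2m}$, and $u$ is represented by $M=K^{-1}S$ for some $S\in\Mats_{2m}(\K)$. The claim is that $\chi_M(-t)=\chi_M(t)$. Equivalently, $\chi_{-M}=\chi_M$, because $\chi_{-M}(t)=\det(tI+M)=(-1)^{2m}\chi_M(-t)=\chi_M(-t)$.

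The first step is to show that $M^T$ is similar to $-M$. Since $K^T=-K$, we get $(K^{-1})^T=(K^T)^{-1}=-K^{-1}$, hence
$$M^T=S^T(K^{-1})^T=-SK^{-1}=-K\,(K^{-1}S)\,K^{-1}=K(-M)K^{-1}.$$
Therefore $\chi_M=\chi_{M^T}=\chi_{K(-M)K^{-1}}=\chi_{-M}$, and $\chi_M$ is even.

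A cleaner, basis-free version of the same idea uses the adjoint with respect to $s$. The relation $s(u(x),y)=-s(x,u(y))$ says that the $s$-adjoint of $u$ is $-u$. The $s$-adjoint of any endomorphism is represented, in a suitable identification $V\simeq V^\star$ given by $s$, by the transpose. So $-u$ is conjugate to the transpose of $u$, and the two have the same characteristic polynomial.

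The only subtle point is characteristic $2$, which is exactly the case of interest. There $-u=u$, so "even" must be read as "the odd-degree coefficients vanish", not as $\chi(-t)=\chi(t)$, which holds trivially. The argument above then proves nothing, and a separate argument is needed.

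To handle all fields at once, we use a generic argument. Work over $R=\Z[a_{ij},b_{ij},c_{ij}]$ with generic $A$ and generic symmetric $S_1,S_2$. The argument above is valid over $\Q(a,b,c)$, so the generic characteristic polynomial $\chi(t)\in R[t]$ satisfies $\chi(-t)=\chi(t)$. Since $R$ has characteristic zero and no $2$-torsion, this forces every odd coefficient of $\chi$ to be the zero polynomial in $R$. Every $s$-symmetric matrix over $\K$ is a specialization of the generic matrix via the ring morphism $R\to\K$. The characteristic polynomial commutes with specialization, so the odd coefficients vanish over $\K$ too, and $\chi_u$ is a polynomial in $t^2$.

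We expect this specialization step to be the main obstacle. It works only because the generic $s$-symmetric matrix is parametrized by free variables (entries of $A$, and upper-triangular entries of $S_1,S_2$) with integer coefficients. This parametrization is exactly the block form recalled above, $\begin{bmatrix} A & S_2\\ S_1 & -A^T\end{bmatrix}$. We must check that this form describes all $s$-symmetric matrices over any field, including characteristic $2$. The check is immediate from $M=K^{-1}S$ with $S$ symmetric.
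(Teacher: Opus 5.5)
Your proposal is correct and follows essentially the paper's route: the identity $M^T=K(-M)K^{-1}$ gives $\chi_M(-t)=\chi_M(t)$ in characteristic zero. The odd coefficients are then given by integer-coefficient polynomials in the free parameters, and specializing those polynomials to $\K$ transfers their vanishing to characteristic $2$. The only cosmetic difference is that you conclude vanishing directly in $\Z[a,b,c]$ from the absence of $2$-torsion, whereas the paper notes that the polynomial vanishes at every rational point and hence is zero.
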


\begin{proof}
Let $m \geq 1$.
Let $S \in \Mats_{2m}(\K)$. We need to prove that the characteristic polynomial of $M:=K_{2m}^{-1}S$ is even.
This is obvious if $\car(\F) \neq 2$: in that case indeed we have $M^T=-S K_{2m}^{-1}=-K_{2m} M K_{2m}^{-1}$, and hence
$\chi_{M}(t)=\chi_{M^T}(t)=\chi_{-M}(t)=(-1)^{2m}\chi_M(-t)=\chi_M(-t)$, and the result is proved.

For the general case, we fix an arbitrary integer $k \in \lcro 1,m\rcro$, and denote by $c_k(M)$ the coefficient on $t^{2m-2k+1}$ in the characteristic polynomial of $M$,
which is known to equal the sum of all the principal $(2k-1) \times (2k-1)$ minors of $M$.
Hence there is a polynomial $Q_k \in \Z[t_{i,j}]_{1 \leq i \leq j \leq 2m}$ with integral coefficients
such that, for every field $\K$ and every matrix $S \in \Mats_{2m}(\K)$, one has $c_k(K_{2m}^{-1} S)=Q_k[s_{i,j}]$.
As the first part of the proof applies to $\K=\Q$, we obtain that $Q_k$ vanishes at every specialization of the indeterminates $t_{i,j}$ at rationals, and hence $Q_k=0$. This yields the claimed result.
\end{proof}

Now, let us come back to $\F$ (which has characteristic $2$). Consider a $4$-dimensional vector space $V$ equipped with a symplectic form $s$.
Then $\calB_s$ has dimension $\dbinom{5}{2}=10$. Moreover, for every $u \in \calB_s$, we have $\chi_u(t)=t^4+\alpha t^2+\beta$
for some $(\alpha,\beta)\in \F^2$, and because the Frobenius endomorphism $x \mapsto x^2$ of $\Fbar$ is injective it follows that $u$ has at most
$2$ distinct eigenvalues in $\Fbar$ (the square roots of the roots of $t^2+\alpha t+\beta$).
Hence $\calB_s$ is $\overline{2}$-spec. In matrix terms, this shows that the space
$\calB_4(\F)$
of all matrices of the form
$$\begin{bmatrix}
A & S_2 \\
S_1 & -A^T
\end{bmatrix}, \quad \text{with $A \in \Mat_2(\F)$ and $(S_1,S_2)\in \Mats_2(\F)^2$,}$$
is $2$-spec. It can be proved that $\calB_4(\F)$ is not conjugated to $\mathfrak{sl}_2(\F) \vee \mathfrak{sl}_2(\F)$
(e.g.\ by proving that it is irreducible), but we will not deal with this issue here.

In a forthcoming article, we will extend Theorems \ref{theorem:maintheodim1starspec} and
\ref{theorem:maintheodim2spec} by giving a full classification of the optimal
$\overline{1}^\star$-spec subspaces and of the optimal $\overline{2}$-spec subspaces, thanks to a key result that will be proved in the present manuscript.
Here, we will also prove the easiest parts of this classification (oddly enough, among these parts is
the very special case $n=4$ for $\overline{2}$-spec subspaces!).
Let us state the future results, insisting that they will not be proved here
with the exception of Theorem \ref{theo:dim4}.

\begin{theo}
Let $n \geq 2$, and $\F$ be a field with characteristic $2$ and $|\F|>2$.
Let $\calS$ be an optimal $\overline{1}^\star$-spec subspace of $\Mat_n(\F)$.
Then there is a unique integer $k \in \lcro 0,n-2\rcro$ such that $\calS$ is similar to
$\NT_k(\F) \vee \mathfrak{sl}_2(\F) \vee \NT_{n-k-2}(\F)$.
\end{theo}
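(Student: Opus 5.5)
We argue by induction on $n$, using the dimension bound of Theorem \ref{theorem:maintheodim1starspec}: $\dim \calS=\dbinom{n}{2}+2$. For $n=2$ the claim is that an optimal $\overline{1}^\star$-spec subspace of $\Mat_2(\F)$ with dimension $3$ must be $\mathfrak{sl}_2(\F)$. First we show $\calS$ contains no matrix with nonzero trace and two distinct nonzero eigenvalues in $\Fbar$. A $3$-dimensional subspace meets $\mathfrak{sl}_2(\F)$ in dimension at least $2$. If $\calS\neq\mathfrak{sl}_2(\F)$, then $\calS$ contains some $M$ with $\tr M\neq 0$. Adding suitable elements of $\calS\cap\mathfrak{sl}_2(\F)$ to $M$, and using $|\F|>2$, we produce a matrix whose characteristic polynomial $t^2+at+b$ has $a\neq 0$ and $b\neq 0$. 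Since $a\neq 0$ in characteristic $2$, the roots are distinct and both nonzero, a contradiction. So $\calS=\mathfrak{sl}_2(\F)$.

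For $n\geq 3$, the first goal is to show that $\calS$ is reducible, i.e.\ that there is a nontrivial $\calS$-invariant subspace, and in fact either a common eigenvector or a common invariant hyperplane. The plan is to use the key structural result the paper announces (which is proved later in the manuscript). Failing that, we would use the standard Gerstenhaber-type device: consider $\calS\cap\mathfrak{sl}_n(\F)$-type subspaces and the rank-one/first-column maps, writing
$$M=\begin{bmatrix} a(M) & L(M)\\ C(M) & K(M)\end{bmatrix}.$$
The aim is to show that, after conjugation, $C$ vanishes on $\calS$ (or, dually, the last row vanishes). Once that holds, $\calS\subset \Mat_1(\F)\vee\calS'$. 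Counting dimensions gives
$$\dim\calS\leq \dim a(\calS)+(n-1)+\dim K(\calS).$$
The eigenvalue condition forces the following dichotomy. Either $a$ is zero on the relevant part and $K(\calS)$ is $\overline{1}^\star$-spec of dimension $\dbinom{n-1}{2}+2$, hence optimal. Or $a\neq 0$, in which case $K(\calS\cap\Ker a)$ has dimension large enough to contradict the bounds of Theorem \ref{theo:0starspec} and Theorem \ref{theorem:maintheodim1starspec}. In the first case the induction hypothesis gives $K(\calS)\simeq \NT_{k'}\vee\mathfrak{sl}_2\vee\NT_{n-k'-3}$. Equality in the dimension count then forces $L(\calS)$ to be all of $\Mat_{1,n-1}(\F)$, with $a=0$ and $L$ independent of $K$. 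This yields $\calS\simeq\NT_{k'+1}\vee\mathfrak{sl}_2\vee\NT_{n-k'-3}$. The symmetric case with an invariant hyperplane gives $k=k'$.

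Uniqueness of $k$ follows from an invariant of the space. The subspaces $\NT_k\vee\mathfrak{sl}_2\vee\NT_{n-k-2}$ for different $k$ are not similar, because the dimension of the common kernel of $\calS$ equals $1$ when $k\geq1$ and $0$ when $k=0$. More generally, the position of the unique $2$-dimensional irreducible factor in the (essentially unique) invariant flag is preserved by similarity. Concretely, $k$ equals the largest dimension of a subspace $W$ with $M W\subset W$ and $M$ nilpotent on $W$ for all $M\in\calS$. This quantity is a similarity invariant and is computed directly on the model.

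The main obstacle is the reducibility step: proving that an optimal $\overline{1}^\star$-spec space for $n\geq 3$ has a common eigenvector or a common invariant hyperplane. This is exactly where characteristic $2$ breaks the arguments of \cite{dSPfeweigenvalues}. I would first try to show that $\calS$ contains a nonzero rank-one matrix or many nilpotent matrices of small rank, by intersecting $\calS$ with $\mathfrak{sl}_n(\F)$ and applying the $0^\star$-spec bound. I would then try to propagate this into an invariant line using trace identities of the form $\tr(MN)$ for $M,N\in\calS$.
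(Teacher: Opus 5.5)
There is a genuine gap, and it is the step you flag yourself as the main obstacle. Nothing in your proposal proves that an optimal $\overline{1}^\star$-spec space with $n\geq 3$ has a common eigenvector or a common invariant hyperplane, and the whole induction depends on it. The paper does not prove this theorem either; it is explicitly deferred to a forthcoming article. What the paper supplies is the bound of Theorem \ref{theorem:maintheodim1starspec} and the dichotomy of Theorem \ref{theorem:1starspecwithadapted}: $\calS$ is a hurdle or has an adapted vector.

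That dichotomy cannot give you $C(\calS)=0$ directly. Let $x$ be $\calS$-adapted with $\calS$ optimal, and argue as in Section \ref{section:strategy}:
\begin{itemize}
\item First, $\calS\cap\Hom(V,\F x)=\{0\}$. Otherwise $\overline{\calS_x}$ would be nilpotent, giving $\dim\calS\leq\dbinom{n}{2}+1$.
\item Then equality in $\dim\calS\leq\dim\overline{\calS_x}+(n-1)$ forces $\calS x+\F x=V$.
\end{itemize}
So an adapted vector of an optimal space is never a common eigenvector, and dually an adapted vector of $\calS^t$ never has an invariant kernel. In the models with $k\leq n-3$, the adapted vectors are exactly the vectors outside the invariant hyperplane.

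The hurdle alternative is easy. The bar version of the Splitting Lemma plus the Gerstenhaber--Serezhkin equality case forces $\calS\simeq\NT_{n-2}(\F)\vee\mathfrak{sl}_2(\F)$. The same argument applied to $\calS^t$ gives $k=0$.

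The remaining case, where neither $\calS$ nor $\calS^t$ is a hurdle, contains all the models with $1\leq k\leq n-3$. There you must actually construct an invariant line or hyperplane from the adapted vectors and the inductive description of the optimal quotient $\overline{\calS_x}$. Intersecting with $\mathfrak{sl}_n(\F)$ and using trace identities $\tr(MN)$ is not an argument for this, and this is where the real work of the classification lies.

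There is also a smaller flaw in your lifting step, once $C(\calS)=0$ is known.
\begin{itemize}
\item Your dichotomy ``$a=0$ or $a\neq 0$'' is not the right one. If $a\not\equiv 0$, then $K(\calS\cap\Ker a)$ only has dimension at least $\dbinom{n-1}{2}+1$, which contradicts nothing.
\item The correct split is on whether $\calS\cap\Ker K$ contains an element with $a\neq 0$. If it does, adding its multiples shows every $K(M)$ is nilpotent, so $\dim\calS\leq\dbinom{n}{2}+1$.
\item If it does not, equality shows three things: $\calS\cap\Ker K$ consists of all matrices $\begin{bmatrix} 0 & L\\ 0 & 0\end{bmatrix}$; $K(\calS)$ is optimal; and $a=\ell\circ K$ for some linear form $\ell$ on $K(\calS)$. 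It does not force $a=0$.
\item Killing $\ell$ needs a spectral argument. Let $D_\nu\in K(\calS)$ have $\mathfrak{sl}_2$-block $\nu I_2$ (trace zero in characteristic $2$) and zeros elsewhere. Adding $D_\nu$ to a nilpotent $K_0\in K(\calS)$ gives $\ell(K_0)+\nu\,\ell(D_1)\in\{0,\nu\}$ for all $\nu\in\F^\times$. Taking $K_0=0$ first gives $\ell(D_1)\in\{0,1\}$, and then $\ell(K_0)=0$ since $|\F^\times|\geq 3$.
\item So $\ell$ vanishes on nilpotent elements, which span $K(\calS)$ by Lemma \ref{lemma:generatesln}.
\end{itemize}

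Your base case $n=2$ is fine; it is Proposition \ref{prop:1starn=2}. Your uniqueness invariant is also fine: the largest dimension of a common invariant subspace on which $\calS$ acts nilpotently.
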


For $\overline{2}$-spec subspaces, the classification is more complicated, and not only due to the case $n=4$.

\begin{theo}
Let $n \geq 3$ differ from both $4$ and $6$, and $\F$ be a field with characteristic $2$ and $|\F|>2$.
Let $\calS$ be an optimal $\overline{2}$-spec subspace of $\Mat_n(\F)$.
Then there is a unique integer $k \in \lcro 0,n-2\rcro$ such that $\calS$ is similar to
$\F I_n\oplus (\NT_k(\F) \vee \mathfrak{sl}_2(\F) \vee \NT_{n-k-2}(\F))$.
\end{theo}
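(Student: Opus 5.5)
The plan is to combine the dimension bound of Theorem \ref{theorem:maintheodim2spec} with an induction on $n$, reducing to the classification of optimal $\overline{1}^\star$-spec subspaces stated just before. Let $\calS$ be an optimal $\overline{2}$-spec subspace of $\Mat_n(\F)$ with $n\geq 3$ and $n\notin\{4,6\}$. By Theorem \ref{theorem:maintheodim2spec}, $\dim\calS=\dbinom{n}{2}+3$.

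\emph{Step 1: $I_n\in\calS$.} For $x\in\calS$ and $\lambda\in\F$, we have $\Sp_{\Fbar}(x+\lambda I_n)=\lambda+\Sp_{\Fbar}(x)$, so $\calS+\F I_n$ is still $\overline{2}$-spec. Optimality then forces $I_n\in\calS$.

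\emph{Step 2: reducibility.} This is the step I expect to be the main obstacle, and I would expect to rely on the key result announced in the paper. The goal is to show that $\calS$, or at least the associated space $\calS\otimes\Fbar$, leaves invariant a nontrivial subspace of $\F^n$; concretely, either a common invariant line or a common invariant hyperplane. My first attempt would follow the Gerstenhaber–Serezhkin strategy used in \cite{dSPfeweigenvalues}:
\begin{itemize}
\item pick a matrix of $\calS$ with the largest number of distinct eigenvalues in $\Fbar$;
\item look at its generalized eigenspace decomposition;
\item show that the $\overline{2}$-spec condition forces the off-diagonal parts of $\calS$ relative to this decomposition to be small, unless an invariant subspace exists.
\end{itemize}
The dimension $\dbinom{n}{2}+3$ should be too large for an irreducible configuration once $n\neq 4,6$. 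The exceptions reflect that $\calB_4(\F)$ is irreducible of the extremal dimension $10$, and, for $n=6$, that $\calB_4(\F)$ or $\mathfrak{sl}_2(\F)\vee\mathfrak{sl}_2(\F)$ can sit inside a block.

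\emph{Step 3: induction.} Assume $\calS$ stabilizes a line, say $\F e_1$ after conjugation. The case of a hyperplane is symmetric, by transposition. Then
$$\calS\subset \Mat_1(\F)\vee\Mat_{n-1}(\F),$$
and we consider the following data:
\begin{itemize}
\item the projection $\calS'\subset\Mat_{n-1}(\F)$ onto the lower-right block;
\item the space $\calU$ of matrices of $\calS$ whose lower-right block is zero, which has dimension at most $n$;
\item the projection onto the $(1,1)$-entry.
\end{itemize}
The projection $\calS'$ is $\overline{2}$-spec, so $\dim\calS'\leq\dbinom{n-1}{2}+3$, with $+4$ when $n-1=4$. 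Counting gives $\dim\calS\leq\dim\calS'+\dim\calU$. Equality with $\dbinom{n}{2}+3$ forces $\calS'$ to be optimal and $\calU$ to contain the full first row $\F e_1 e_1^T\oplus\Mat_{1,n-1}(\F)$, or else forces a defect which is compensated elsewhere.

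The cases $n-1=4$ and $n-1=6$ require separate care:
\begin{itemize}
\item For $n=5$, the count must rule out $\calS'$ of dimension $10$, i.e.\ similar to $\calB_4(\F)$ or to $\mathfrak{sl}_2(\F)\vee\mathfrak{sl}_2(\F)$. Here one checks that adjoining a full first row and a free $(1,1)$-entry to these spaces produces a matrix with three distinct eigenvalues.
\item For $n=7$, one handles the exceptional $6\times 6$ optimal spaces the same way.
\end{itemize}

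Once $\calS'$ is known by induction to be $\F I_{n-1}\oplus(\NT_k\vee\mathfrak{sl}_2\vee\NT_{n-k-3})$, use Step 1 to write $\calS=\F I_n\oplus\calT$ with $\calT$ a suitable hyperplane. Then show that $\calT$ can be chosen $\overline{1}^\star$-spec of dimension $\dbinom{n}{2}+2$. To do so, use the full first row and the $\overline{2}$-spec condition on $\calS$: the eigenvalue $0$, attached to the nilpotent part, and the eigenvalue of the $\mathfrak{sl}_2$-block must account for both eigenvalues. The classification of optimal $\overline{1}^\star$-spec subspaces then identifies $\calT$, up to similarity, as $\NT_k(\F)\vee\mathfrak{sl}_2(\F)\vee\NT_{n-k-2}(\F)$.

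\emph{Step 4: uniqueness of $k$.} The integer $k$ is a similarity invariant: it is the dimension of the maximal subspace annihilated by all elements of $\calT$ in the canonical flag. This subspace can be recovered intrinsically from $\calS$ as a common invariant subspace on which the restriction of $\calS$ has a trivial nonzero spectrum.
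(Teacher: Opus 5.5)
The paper does not prove this statement: it is one of the classification results explicitly deferred to a forthcoming article, and only Theorem~\ref{theo:dim4} is proved here. So your proposal has to stand on its own, and it does not.

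\textbf{Step~2 is the decisive gap.} The existence of a common invariant line or hyperplane is essentially the whole content of the theorem, and you offer only a heuristic. A generalized eigenspace decomposition of a single matrix says nothing about subspaces invariant under all of $\calS$. Nothing in your sketch uses $\dim\calS=\dbinom{n}{2}+3$ in a way that could exclude irreducible configurations.

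The tool the paper prepares for this is the dichotomy of Theorem~\ref{theorem:2specwithadapted}, valid for $n\geq 5$. Either $\calS$ is a hurdle, or it has an adapted vector $x$.
\begin{itemize}
\item If $\calS$ is a hurdle, the bar version of the Splitting Lemma and the count in Lemma~\ref{lemma:ineqdimhurdles} give an invariant subspace of codimension $2$. They also give $\calS\simeq\calW\vee\mathfrak{sl}_2(\F)$ with $\calW$ an optimal $\overline{1}$-spec subspace of $\Mat_{n-2}(\F)$.
\item If $\calS$ has an adapted vector $x$, equality in $\dim\calS\leq n+\dim\overline{\calS_{x,0}}$ forces $\calS x=V$. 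So $x$ is certainly not a common eigenvector, and the structure must be extracted from the optimal $\overline{1}^\star$-spec space $\overline{\calS_{x,0}}$.
\end{itemize}
Neither branch appears in your proposal, and the second is where the real work lies.

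\textbf{Step~3 fails even granting reducibility.}
\begin{itemize}
\item Your count is off by one: $\dbinom{n-1}{2}+3+n=\dbinom{n}{2}+4$. So ``$\calS'$ optimal and $\calU$ the full first row'' would exceed the bound.
\item What the count actually gives is this. If some element of $\calU$ has a nonzero $(1,1)$-entry, then $\calS'$ is $\overline{1}$-spec and $\dim\calS\leq\dbinom{n}{2}+2$ for $n\geq 4$. Hence $\dim\calU\leq n-1$, and you only get $\dim\calS'\geq\dbinom{n-1}{2}+3$.
\item At $n=5$ this is fatal. For the target $\F I_5\oplus(\NT_1(\F)\vee\mathfrak{sl}_2(\F)\vee\NT_2(\F))$ with invariant line $\F e_1$, one gets $\calS'=\F I_4\oplus(\mathfrak{sl}_2(\F)\vee\NT_2(\F))$. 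This has dimension $9$ and is not optimal in $\Mat_4(\F)$.
\item Consequently neither an induction hypothesis nor Theorem~\ref{theo:dim4} applies at $n=5$. Nothing available classifies $9$-dimensional $\overline{2}$-spec subspaces of $\Mat_4(\F)$, e.g.\ hyperplanes of $\mathfrak{sl}_2(\F)\vee\mathfrak{sl}_2(\F)$ or of $\calB_4(\F)$.
\item Your special treatment of $n=5$ and $n=7$ (full first row plus free $(1,1)$-entry) rules out a configuration the count already excludes, not the one that actually occurs.
\item The passage to $\calS=\F I_n\oplus\calT$ with $\calT$ $\overline{1}^\star$-spec is asserted without argument. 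For even $n$ one has $I_n\in\mathfrak{sl}_n(\F)$, so there is no canonical complement.
\end{itemize}

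\textbf{The Step~4 invariant is wrong.} The common kernel of $\NT_k(\F)\vee\mathfrak{sl}_2(\F)\vee\NT_{n-k-2}(\F)$ is $\F e_1$ (for $k\geq 1$) or $\{0\}$ (for $k=0$). It is never $k$-dimensional in general, so it does not detect $k$.

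\textbf{A shorter route for odd $n$.} Here $\calS\cap\mathfrak{sl}_n(\F)$ is a complement of $\F I_n$ in $\calS$. It is $\overline{1}^\star$-spec by a parity argument: two distinct nonzero eigenvalues whose multiplicities sum to an odd $n$ force a nonzero trace in characteristic $2$. So the odd case, including uniqueness of $k$, reduces at once to the (stated) $\overline{1}^\star$-classification. The genuine difficulty is even $n\geq 8$, which your proposal does not address.
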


\begin{theo}\label{theo:dim4}
Let $\F$ be a field with characteristic $2$ such that $|\F|>2$.
Let $\calS$ be an optimal $2$-spec subspace of $\Mat_4(\F)$.
Then $\calS$ is similar to $\mathfrak{sl}_2(\F) \vee \mathfrak{sl}_2(\F)$ or to $\calB_4(\F)$.
\end{theo}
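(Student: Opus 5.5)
We must classify optimal $2$-spec subspaces $\calS$ of $\Mat_4(\F)$. By Theorem \ref{theorem:maintheodim2spec}, optimality means $\dim \calS=10$. I would split the argument according to whether $\calS$ is reducible, i.e.\ whether the matrices of $\calS$ have a common nontrivial invariant subspace of $\F^4$.

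\emph{Reducible case.} Suppose $\calS$ stabilizes a subspace of dimension $p\in\{1,2,3\}$. Up to similarity, $\calS$ then consists of block upper-triangular matrices
$\begin{bmatrix} A & B\\ 0 & C\end{bmatrix}$ with $A\in\Mat_p(\F)$ and $C\in\Mat_{4-p}(\F)$. Let $\calA$ and $\calC$ be the projections onto the diagonal blocks, and let $\calB$ be the space of $B$ blocks occurring with $A=0$ and $C=0$. Then $\dim\calS\le \dim\calB+\dim(\text{projection to }(A,C))\le p(4-p)+\dim \text{(proj)}$. The spectrum of a block-triangular matrix is the union of the spectra of its diagonal blocks. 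Hence the projection is a $2$-spec subspace of $\Mat_p(\F)\times\Mat_{4-p}(\F)$.

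For $p=2$, I need to show that this projection has dimension at most $6$, with equality only when it equals $\mathfrak{sl}_2(\F)\times\mathfrak{sl}_2(\F)$. Then $\calB$ is all of $\Mat_2(\F)$, and $\calS=\mathfrak{sl}_2(\F)\vee\mathfrak{sl}_2(\F)$.

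For $p=1$ or $p=3$, I need to show that the projection has dimension at most $6$, so that $\dim\calS\le 3+6=9<10$, which is a contradiction. This bound comes from the $1$- or $2$-spec constraint on the $3\times3$ block combined with the scalar block. The space $\mathfrak{sl}_2(\F)\times\mathfrak{sl}_2(\F)$ is the natural extremal one here. These bounds come from Theorems \ref{theo:1spec} and \ref{theorem:maintheodim2spec} for $n=3$ (a bound of $6$), together with the fact that the second factor must lower the admissible dimension.

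The key lemma is a bound on $2$-spec subspaces of a product $\Mat_p\times\Mat_q$. A pair $(A,C)$ of $2\times2$ blocks must have at most $2$ eigenvalues in total. I would project onto the first factor, bound its dimension by $3$ using Theorem \ref{theo:1spec}, and look at the kernel in the second factor. In the kernel, $A=0$, so $C$ has at most one nonzero eigenvalue. That kernel is thus a $1^\star$-spec subspace of $\Mat_2(\F)$, so it has dimension at most $3$ by Theorem \ref{theorem:maintheodim1starspec}. Equality would then need to be analysed using $|\F|>2$, to show that both factors are $\mathfrak{sl}_2$: one perturbs by scalars and uses trace arguments, since a $2\times2$ matrix with nonzero trace has two distinct eigenvalues in $\Fbar$.

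\emph{Irreducible case.} This is the main obstacle. For $u\in\calS$, the $2$-spec condition together with $|\F|>2$ should force strong trace conditions. Consider the polynomial identities in a generic element $a+tb$. I would first try to show that $\tr u=0$ and that the coefficient of $t$ in $\chi_u$ vanishes, so that $\chi_u$ is even, by a counting/degree argument on the lines $a+tb$. Then I would show that the symmetric bilinear form $(u,v)\mapsto \tr(uv)$, restricted to $\calS$, is highly degenerate. From that degeneracy I would construct a nondegenerate alternating form $s$ on $\F^4$ such that $\calS\subseteq\calB_s$. Since both spaces have dimension $10$, this gives $\calS=\calB_s\simeq\calB_4(\F)$.

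I expect building $s$ from $\calS$ (for instance via the orthogonal complement of $\calS$ under the trace pairing, which is $6$-dimensional and should be $\{K^{-1}A: A\in\Mata_4(\F)\}$-like) to be the hardest step. I would first try to identify $\calS^\perp$ with a space of alternating forms.
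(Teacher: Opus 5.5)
\textbf{Verdict: there is a genuine gap.} The irreducible case is the heart of the theorem, since it is where $\calB_4(\F)$ lives, and your proposal gives no mechanism for it. It consists of ``should force'', ``I would first try'' and ``I expect building $s$ to be the hardest step''.

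\textbf{The irreducible case.} Nothing in your sketch explains how evenness of $\chi_u$ would follow from a degree count on pencils $a+tb$. The $2$-spec hypothesis only constrains roots in $\F$, and $\chi_u$ may have no roots in $\F$ for many $u$. Even granting evenness and a large radical for the trace form on $\calS$, nothing produces a bilinear form on $\F^4$, let alone a symplectic one. The missing ideas, as the paper uses them, are these:
\begin{enumerate}[(1)]
\item An optimal $\calS$ has no adapted vector. If $x$ were adapted, then $\dim \calS \leq 4+\dim \overline{\calS_{x,0}} \leq 4+5=9$, because $\overline{\calS_{x,0}}$ is $1^\star$-spec and Theorem \ref{theorem:maintheodim1starspec} applies in dimension $3$.
\item By \eqref{eq:transrank}, $\calS^\bot$ is then intransitive, and in fact primitively intransitive. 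Take an intransitivity veil of codimension $s$:
  \begin{itemize}
  \item $s=1$ contradicts the First Confinement Lemma;
  \item $s=3$ gives $9 \leq \dim \calS_W \leq 3+5$, a contradiction;
  \item $s=2$ makes $\calS$ a hurdle, which by point (a) of Lemma \ref{lemma:splittinghurdles} has an invariant subspace of codimension $2$. In your split this case is therefore excluded by irreducibility.
  \end{itemize}
\item Since $\dim \calS^\bot=6=\dbinom{4}{2}$ and $|\F|\geq 4$, point (b) of Atkinson's theorem gives a nondegenerate alternator $b$ of $\calS^\bot$. Hence $\calS$ is represented by $\Mats_4(\F)P$, where $P$ is a Gram matrix of $b$.
\item The absence of adapted vectors forces $X^TPX=\tr(XX^TP)=0$ for all $X$. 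So $P$ is alternating and $\calS$ is the space of $b$-symmetric operators.
\end{enumerate}
This is exactly your ``identify $\calS^\bot$ with alternating forms''. However, it needs Atkinson's theorem (or an equivalent structure result on intransitive spaces), which your plan never invokes.

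\textbf{The reducible case with $p=2$.} This case also contains a false step. The projection $\calA$ onto the first diagonal block is not $1$-spec in general, so Theorem \ref{theo:1spec} does not bound it by $3$. For instance, $\{(A,A) \mid A\in \Mat_2(\F)\}$ is $2$-spec in your sense, with $\calA=\Mat_2(\F)$. Only the kernels
\[
\calA_0=\{A : (A,0)\ \text{occurs}\}, \qquad \calC_0=\{C : (0,C)\ \text{occurs}\}
\]
are $1^\star$-spec. For the projection $\mathcal{P}$ onto the pair of diagonal blocks this yields only $\dim \mathcal{P} \leq 4+3=7$. You would still have to exclude $\dim\mathcal{P}=7$. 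When $\dim \mathcal{P}=6$, you would also have to exclude the mixed configuration $\calA=\calC=\Mat_2(\F)$ with $\dim \calA_0=\dim \calC_0=2$.

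The paper avoids this bookkeeping by splitting into hurdle and non-hurdle cases instead. If $\calC_0=\mathfrak{sl}_2(\F)$, then $\calS$ is a hurdle (or $\calS^t$ is, if $\calA_0=\mathfrak{sl}_2(\F)$). The Splitting Lemma then applies: adding $\lambda I_2$ in the quotient block, which is allowed because $\lambda I_2\in \mathfrak{sl}_2(\F)$ in characteristic $2$, shows that the first block is $1$-spec and that the quotient block has trace zero. This gives $\calS=\mathfrak{sl}_2(\F)\vee\mathfrak{sl}_2(\F)$.

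\textbf{The cases $p\in\{1,3\}$.} These are fine once you argue through the kernel rather than your vague appeal to Theorem \ref{theorem:maintheodim2spec}. The kernel $\{C : (0,C)\ \text{occurs}\}$ is a $1^\star$-spec subspace of $\Mat_3(\F)$, hence has dimension at most $5$. Therefore $\dim \calS \leq 3+1+5=9$.
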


In \cite{dSPfeweigenvalues}, we formulated a conjecture on the case $n=4$, but this conjecture missed the example of $\calB_4(\F)$.
Note that Theorem \ref{theo:dim4} holds for $2$-spec subspaces, not only for $\overline{2}$-spec subspaces.

\begin{theo}
Let $\F$ be a field with characteristic $2$ such that $|\F|>2$.
Let $\calS$ be an optimal $\overline{2}$-spec subspace of $\Mat_6(\F)$.
Then exactly one of the following four situations occurs:
\begin{enumerate}[(i)]
\item There is an integer $k \in \lcro 1,3\rcro$ such that $\calS$ is similar to
$\F I_6\oplus (\NT_k(\F) \vee \mathfrak{sl}_2(\F) \vee \NT_{4-k}(\F))$, and in this case $k$ is uniquely determined by $\calS$.
\item There is an optimal $\overline{1}$-spec subspace $\calW$ of $\Mat_4(\F)$ such that
$\calS \simeq \mathfrak{sl}_2(\F) \vee \calW$, in which case the similarity class of $\calW$ is uniquely determined by $\calS$.
\item There is an optimal $\overline{1}$-spec subspace $\calW$ of $\Mat_4(\F)$ such that $\calS \simeq \calW \vee \mathfrak{sl}_2(\F)$,
in which case the similarity class of $\calW$ is uniquely determined by $\calS$.
\item The space $\calS$ is similar to the linear subspace of $\mathfrak{sl}_2(\F)\vee \mathfrak{sl}_2(\F) \vee \mathfrak{sl}_2(\F)$
consisting of its matrices with equal first and third diagonal blocks.
\end{enumerate}
\end{theo}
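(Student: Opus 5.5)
We treat the last statement (optimal $\overline{2}$-spec subspaces of $\Mat_6(\F)$), which the paper announces as a future result, so this is only a plan. By Theorem \ref{theorem:maintheodim2spec}, $\dim \calS=\dbinom{6}{2}+3=18$, and the first thing to check is that each of the four families is $\overline{2}$-spec with dimension $18$. For (i) this is immediate: $1+2+\dbinom{6}{2}=18$. For (ii), $\calW$ is optimal $\overline{1}$-spec in $\Mat_4(\F)$, so $\dim\calW=\dbinom{4}{2}+1=7$ by Theorem \ref{theo:1spec}, and $3+7+8=18$. Each element of $\mathfrak{sl}_2(\F)$ in characteristic $2$ has a single eigenvalue $\sqrt{\det}$, so the joint has at most $2$ eigenvalues. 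For (iv), the dimension is $3+3+12=18$, and the equal outer blocks force at most two eigenvalues.

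The core of the proof is to show that $\calS$ is reducible, i.e.\ that it stabilizes a proper nonzero subspace. Here we would use the key result the paper promises to prove: an optimal $\overline{2}$-spec space should have an invariant line or hyperplane unless $n=4$ (the $\calB_4$ exception). We would argue via the standard Gerstenhaber-type approach.

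\begin{itemize}
\item Take the subspace $\calS_0$ of elements with a given eigenvalue structure, for instance $\calS\cap\mathfrak{sl}$-type slices, or the nilpotent-modulo-scalars part.
\item Apply Theorem \ref{theo:0starspec} and the $\overline{1}^\star$-bound of Theorem \ref{theorem:maintheodim1starspec} to $\calS_0$ to pin down its dimension.
\item Use the counting inequality $\dim \calS \le \dim(\text{block diagonal part})+\dim(\text{off-diagonal part})$ with respect to an invariant line $D$ obtained from a rank/kernel argument.
\end{itemize}

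Once we have an invariant subspace of dimension $p\in\lcro 1,5\rcro$, $\calS$ becomes block upper triangular with diagonal compressions $\calA\subset\Mat_p(\F)$ and $\calC\subset\Mat_{6-p}(\F)$. This gives
$$18=\dim\calS\le \dim\calA+\dim\calC+p(6-p).$$

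We then split by the eigenvalues carried by the two blocks. If both blocks carry two eigenvalues, the union must still have at most two. Since $\calA\oplus\calC$ is then jointly $\overline{2}$-spec, we bound its dimension via an auxiliary lemma. That lemma says that pairs $(A,C)$ whose union spectrum has size at most $2$ span at most $\max(\dim\text{opt }\overline{1}\text{-spec of one}+\dim\text{opt }\overline{2}\text{-spec of other},\dots)$, with equality cases forcing coupled $\mathfrak{sl}_2$ blocks, as in (iv).

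Going through $p=1,\dots,5$ and using the optimal values $3,3$ (for $n=2$, the $\mathfrak{sl}_2$ case), $\dbinom{3}{2}+3$, $10$ (for $n=4$), and $\dbinom{5}{2}+3$, equality in the displayed inequality forces:

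\begin{itemize}
\item $p\in\{1,5\}$ with the diagonal block itself optimal, which we handle by induction and which leads to (i);
\item $p\in\{2,4\}$ with one block $\mathfrak{sl}_2(\F)$ and the other an optimal $\overline{1}$-spec space of $\Mat_4(\F)$, where the total spectrum count forces the $\Mat_4$ block to be $\overline{1}$-spec, giving (ii) and (iii);
\item or further splitting into three $2\times 2$ blocks, giving (iv).
\end{itemize}

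In the $\Mat_4$ block, the $\calB_4$ and $\mathfrak{sl}_2\vee\mathfrak{sl}_2$ possibilities from Theorem \ref{theo:dim4} must be ruled out by the spectrum constraint with the remaining $\mathfrak{sl}_2$ block. This is plausible because these are genuinely $2$-eigenvalue spaces. We would also need to show that the off-diagonal part is full, which follows from the dimension equality.

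Uniqueness (the ``exactly one'' claim and the uniqueness of $k$ or of the class of $\calW$) comes from similarity invariants. These include the lattice of invariant subspaces, whether $\calS$ contains $I_6$, and whether the eigenvalues of the two outer $2\times 2$ blocks are tied.

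The main obstacle is the reducibility step together with the equality analysis in the coupled case. The spectrum constraint couples the diagonal blocks non-linearly: the eigenvalue of an $\mathfrak{sl}_2$ block is $\sqrt{\det}$, a square-root function. We would first try to linearize by passing to $\Fbar$, using that the Frobenius map is injective. Then we would show that the eigenvalue maps are semilinear functions of the blocks that must coincide on a large subspace, which forces the identification of the first and third blocks in (iv).
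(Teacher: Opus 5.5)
Your dimension checks for the four families are fine. The reducibility step, which you rightly call the core, is not actually argued: the three bullets are placeholders. The result you attribute to the paper, that an optimal $\overline{2}$-spec space has an invariant line or hyperplane unless $n=4$, is not proved there, and it is false for $n=6$. Take $\calW:=\Bigl\{\begin{bmatrix} A & B \\ 0 & A\end{bmatrix} : A\in\mathfrak{sl}_2(\F),\ B\in\Mat_2(\F)\Bigr\}$, which is $\overline{1}$-spec of dimension $7$ and hence optimal. Both $\mathfrak{sl}_2(\F)\vee\calW$ and the space in (iv) have neither an invariant line nor an invariant hyperplane, because $\mathfrak{sl}_2(\F)$ acts irreducibly on $\F^2$ and all off-diagonal blocks are full.

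What the paper does supply is Theorem \ref{theorem:2specwithadapted}: $\calS$ is either a hurdle or has an adapted vector. The hurdle branch closes quickly. By the bar version of the Splitting Lemma for Hurdles, $\calS$ stabilizes a codimension-$2$ subspace $G$. The map $u\mapsto u_G$ has $\overline{1}$-spec range, of dimension $\leq 7$ by Theorem \ref{theo:1spec}, and kernel of dimension $\leq 11$, so $\dim\calS=18$ forces (iii); a hurdle $\calS^t$ likewise forces (ii). The adapted-vector branch, however, gives no invariant subspace. It only gives the equality conditions $\calS x=V$ and $\overline{\calS_{x,0}}$ optimal $\overline{1}^\star$-spec in $\End(V/\F x)$ with $\dim V/\F x=5$. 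Turning this into structure needs something like the classification of optimal $\overline{1}^\star$-spec subspaces of $\Mat_5(\F)$, which the paper only announces, and nothing in your outline replaces it.

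The block-counting step is also wrong as written. Once you have an invariant subspace of dimension $p$, what is $\overline{2}$-spec is the joint compression $\mathcal{J}:=\{(A(M),C(M)) : M\in\calS\}\subseteq\calA\oplus\calC$, not $\calA\oplus\calC$. The relevant bound is therefore $\dim\calS\le\dim\mathcal{J}+p(6-p)$. Your bound $\dim\calA+\dim\calC+p(6-p)$ is not attained on the spaces you are aiming for. In (iv) with $p=2$ you get $\calA=\mathfrak{sl}_2(\F)$ and $\calC=\mathfrak{sl}_2(\F)\vee\mathfrak{sl}_2(\F)$, so the bound is $3+10+8=21>18$: there is no equality case to exploit, and $\calA\oplus\calC$ contains matrices with three eigenvalues. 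So the unstated ``auxiliary lemma'' bounding $\dim\mathcal{J}$ and describing its extremal cases is the real content. It is exactly where the identification of the first and third blocks in (iv) has to come from, and you give neither a precise statement nor a proof.

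There are related slips. Your list of optimal values mixes $\overline{1}$- and $\overline{2}$-spec bounds; $\Mat_2(\F)$ itself is $\overline{2}$-spec of dimension $4$. Your plan to rule out $\mathfrak{sl}_2(\F)\vee\mathfrak{sl}_2(\F)$ as the $4\times 4$ compression when $p\in\{2,4\}$ cannot work, since that is precisely the compression in (iv) and in (ii) with the $\calW$ above. Finally, treating $p\in\{1,5\}$ ``by induction'' presupposes the $n=5$ classification, which is also unproved here, and applies it to a $5\times 5$ block that is coupled to the scalar block.
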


Again, case (iv) was missing from the conjectures formulated in \cite{dSPfeweigenvalues}.
Besides, the classification of the optimal $\overline{1}$-spec subspaces of $\Mat_4(\F)$ was achieved in \cite{dSPsoleeigenvaluecorr}.

\subsection{Why leave $\F_2$ out?}

Throughout this article, we consider only fields with characteristic $2$, but we systematically leave out those with cardinality $2$.
The main reason for this is that $\F_2$ behaves completely differently from the fields with greater cardinality with respect to the problems under consideration.

First of all, it is obvious that every element of an $\F_2$-algebra is $2$-spec and $1^\star$-spec, so for these properties there is simply no problem worth speaking of.
In contrast, the $\overline{2}$-spec and $\overline{1}^\star$-spec properties are not trivial.
Recall that every irreducible polynomial over a finite field is separable, and in particular if such a polynomial has degree more than $1$
then it has several roots in an algebraic closure of the ground field. Hence, an element of a finite-dimensional $\F_2$-algebra is $\overline{1}^\star$-spec if and only if its minimal polynomial is of the form $t^a (t-1)^b$. It follows that a square matrix with entries in $\F_2$
is $\overline{1}^\star$-spec if and only if it is triangularizable over $\F_2$.
Hence, an example of a large $\overline{1}^\star$-spec linear subspace of $\Mat_n(\F_2)$ is the linear subspace of all upper-triangular matrices, which has dimension
$\dbinom{n+1}{2}$: this dimension is substantially greater, for large values of $n$, than the bound $\dbinom{n}{2}+2$ from Theorem
\ref{theorem:maintheodim1starspec}, which illustrates that fields with cardinality $2$ require a completely different analysis than the other fields with characteristic $2$.
Moreover, the problem now coincides with the special case of fields with cardinality $2$ in the study of the so-called
\emph{weakly triangularizable} subspaces, which was recently initiated \cite{dSPtriangularizable}.
At the present, the known results on such spaces leave out fields with characteristic $2$ and small cardinality, and all the known techniques fail to
handle fields with cardinality $2$ with the exception of matrices of very small dimension (up to $n=3$).

The following conjecture seems reasonable, but so far all our attempts have failed to prove it:

\begin{conj}
Let $n \geq 2$.
The greatest possible dimension for a $\overline{1}^\star$-spec linear subspace of $\Mat_n(\F_2)$ is $\dbinom{n+1}{2}$.
\end{conj}

Moreover, a plausible conjecture is that the only spaces of greatest possible dimension are conjugates of joins of copies of
the spaces $\Mat_1(\F_2)$ and $\mathfrak{sl}_2(\F_2)$. We believe that, if true, proving it is a formidable challenge.

Let us finish with a discussion of $\overline{2}$-spec subspaces over $\F_2$. Over $\F_2$ there is a unique irreducible polynomial with degree $2$, namely $t^2+t+1$.
Hence, a matrix of $\Mat_n(\F_2)$ is $\overline{2}$-spec if and only if its characteristic polynomial is of either one of the forms $t^a(t-1)^b$ or $(t^2+t+1)^c$.
The latter case is possible only if $n$ is even. In particular, if $n$ is odd then a matrix of  $\Mat_n(\F_2)$ is $\overline{2}$-spec if and only if it is
$\overline{1}^\star$-spec, and hence the study of $\overline{2}$-spec subspaces is entirely reduced to the one of weakly triangularizable subspaces.
If $n$ is even then a $\overline{2}$-spec linear subspace of $\Mat_n(\F)$ can contain non-triangularizable matrices, which further complicates the problem.
We suspect however that this does not affect the value of the greatest possible dimension, and we suggest the following conjecture (but warn the reader that it is probably
extremely difficult to prove it):

\begin{conj}
Let $n \geq 3$.
The greatest possible dimension for a $\overline{2}$-spec linear subspace of $\Mat_n(\F_2)$ is $\dbinom{n+1}{2}$,
with equality attained only for $\overline{1}^\star$-spec subspaces.
\end{conj}

\subsection{Proof strategy}\label{section:strategy}

We now lay out the strategy for proving the results stated in the previous section.
We have seen that they are slightly different from the ones that hold for fields with characteristic other than $2$,
and it is critical to explain what fails in the proof method that was used in \cite{dSPfeweigenvalues} to deal with those fields.
The key strategy in \cite{dSPfeweigenvalues} was the so-called \emph{adapted vectors method}.
The essence of the adapted vectors method is:
\begin{itemize}
\item an inductive proof on the size of the matrices;
\item a very close look at the structure of the subset of rank $1$ elements in the matrix space $\calM$ under consideration.
\end{itemize}
Here, it will prove very useful to adopt a more geometric viewpoint, and we will think in terms of spaces of endomorphisms of a finite-dimensional vector space.
Now, let us see how the typical inductive proof works, for $1^\star$-spec subspaces.
So, we let $n \geq 3$ be an integer and assume that we have proved that every $1^\star$-spec subspace of endomorphisms of an $(n-1)$-dimensional vector
space has dimension at most $\binom{n-1}{2}+2$. We take an $n$-dimensional vector space $V$ and a $1^\star$-spec subspace $\calS$ of $\End(V)$.
Then, we take an arbitrary nonzero vector $x \in V \setminus \{0\}$, and we consider the linear subspace
$$\calS_x:=\{u \in \calS : u(x) \in \F x\}$$
of $\calS$. By the rank theorem, we have
$$\dim \calS \leq \dim (\calS_x)+(n-1).$$
Every endomorphism $u \in \calS_x$ induces an endomorphism $\overline{u}$ of the quotient space $V/\F x$,
obviously with at most one non-zero eigenvalue in $\F$,
and we recover a $1^\star$-spec subspace
$$\overline{\calS_x}:=\{\overline{u} \mid u \in \calS_x\} \subseteq \End(V/\F x).$$
Now, by the rank theorem we find
$$\dim \calS_x=\dim \overline{\calS_x}+\dim \{u \in \calS: \im u \subseteq \F x\}.$$
Hence by induction
\begin{align*}
\dim \calS & \leq \dim \{u \in \calS : \im u \subseteq \F x\}+(n-1)+\dbinom{n-1}{2}+2\\
& \leq \dim \{u \in \calS : \im u \subseteq \F x\}+\dbinom{n}{2}+2.
\end{align*}

To move forward efficiently, some additional notation is in order:

\begin{Not}
Let $V$ be a finite-dimensional vector space, with dual vector space denoted by $V^\star$.
For a vector $y \in V$ and a linear form $\varphi \in V^\star$, we set
$$\varphi \otimes y : x \in V \mapsto \varphi(x)\,y \in V,$$
which is an endomorphism of $V$ with trace $\varphi(y)$.
\end{Not}

Note that most authors would rather use the notation $y \otimes \varphi$, but our notation is more convenient because it is respectful of the natural order in the expression for $(\varphi \otimes y)(x)$.

\begin{Not}
For a vector $x \in V \setminus \{0\}$, we also denote by $x^\bot$ the set of all linear forms $\varphi$ on $V$ such that $\varphi(x)=0$:
it is a linear hyperplane of $V^\star$.
\end{Not}

For an arbitrary vector $x \in V \setminus \{0\}$, the set $x^\bot \otimes x:=\{\varphi \otimes x \mid \varphi \in x^\bot\}$
is actually the linear subspace of $\End(V)$ that consists of all the endomorphisms of $V$ with trace zero and range included in $\F x$.
We can now formulate the key definition:

\begin{Def}
Let $\calS$ be a linear subspace of $\End(V)$, where $V$ is a finite-dimensional vector space.
A nonzero vector $x \in V \setminus \{0\}$ is called \textbf{$\calS$-adapted} whenever
$\calS \cap (x^\bot \otimes x)=\{0\}$, i.e., $\calS$ contains no operator with trace zero and range $\F x$.
\end{Def}

Now, assume for a moment that $x$ is $\calS$-adapted.
Then there are two cases:
\begin{itemize}
\item Either $\calS \cap \Hom(V,\F x)=\{0\}$, in which case we directly obtain
$\dim \calS \leq \dbinom{n}{2}+2$ by induction;
\item Or $\calS \cap \Hom(V,\F x)$ has dimension $1$, and it contains in particular a nonzero element $\varphi \otimes x$, which must then satisfy $\varphi(x) \neq 0$.
Let then $u \in \calS_x$. We have $u(x)=\alpha x$ for some $\alpha \in \F$, and it follows that
$\overline{u}$ has at most one nonzero eigenvalue that differs from $\alpha$.
For all $\lambda \in \F$, we apply this principle to $v:= u+\lambda\, \varphi \otimes x$, which satisfies $\overline{v}=\overline{u}$
and $v(x)=(\alpha+\lambda \varphi(x))\, x$, and we deduce that $\overline{u}$ has no nonzero eigenvalue in $\F$.
Then we directly apply Theorem \ref{theo:0starspec} to see that $\dim \overline{\calS} \leq \dbinom{n-1}{2}$,
and this time around we deduce from the rank theorem that $\dim \calS \leq \dbinom{n}{2}+1$.
\end{itemize}

The key to the proof of the main theorems in \cite{dSPfeweigenvalues} was
that, in the context of that article (i.e., fields with characteristic other than $2$) every $1^\star$-spec subspace of endomorphisms
of $V$ has an adapted vector provided that $\dim V \geq 2$ (and ditto for every $2$-spec subspace provided that $\dim V \geq 3$).
Unfortunately, this surely fails in the present context, as is made clear not only by the example of $\mathfrak{sl}_2(\F)$,
but also by the one of $\Mat_{n-2}(\F) \vee \mathfrak{sl}_2(\F)$: indeed, in the latter case it is quite easy to prove that
$\calS$ contains a trace-zero matrix with range $\F x$ for \emph{all} $x \in \F^n \setminus \{0\}$.

Here, instead of trying a completely different method, we will face this difficulty and try to get around it, in two main ways.
The first way, which is inspired by recent work on spaces of triangularizable matrices \cite{dSPtriangularizable}, consists in using the
supposed lack of an adapted vector to our \emph{advantage}: indeed, the lack of an $\calS$-adapted vector can be translated
into a property of intransitivity on the part of the trace-dual orthogonal space
$$\calS^\bot:=\{v \in \End(V) : \; \tr(uv)=0\},$$
of which we recall the properties $\dim \calS^\bot=n^2-\dim \calS$ and $\calS=(\calS^\bot)^\bot$.
Given $x \in V \setminus \{0\}$, it is classical (see e.g.\ lemma 3.3 in \cite{dSPtriangularizable}) that
\begin{equation}\label{eq:transrank}
\dim (\calS^\bot x)=\dim V- \dim (\calS \cap (V^\star \otimes x)),
\end{equation}
and hence having $x$ non-$\calS$-adapted readily yields $\dim (\calS^\bot x) < \dim V$.
Hence, if $\calS$ has no adapted vector then $\calS^\bot$ is intransitive in the following meaning of the word:

\begin{Def}
Let $\calT$ be a linear subspace of $\Hom(U,V)$, where $U$ and $V$ are finite-dimensional vector spaces.
We say that $\calT$ is \textbf{intransitive} whenever $\calT x \neq V$ for all $x \in U$.

In any case, we define the \textbf{transitive rank} of $\calT$ as $\max\{\dim (\calT x) \mid x \in U\} \in \lcro 0,\dim V\rcro$
and denote it by $\trk(\calT)$.

We say that $\calT$ is \textbf{primitively intransitive} when $V$ is nonzero, $\calT$ is intransitive and there is no nonzero linear subspace $H$ of
$V$ such that, for the standard projection $\pi : V \twoheadrightarrow V/H$, the operator space $\pi \calT$ is intransitive.
\end{Def}

Provided that the cardinality of $\F$ is large enough, a theorem of Atkinson on (semi)-primitive spaces of bounded rank matrices can be translated into
a theorem on primitively intransitive operator spaces, which can help us understand the structure of $\calS^\bot$.
The formulation (and a proof) of this result can be found in \cite{dSPsemilin} (there, combine point (b) of proposition 4.5
with the case $d=1$ in theorem 1.20).

\begin{theo}[Atkinson's theorem for intransitive operator spaces]
Let $U$ and $V$ be finite-dimensional vector spaces, with $n:=\dim V>0$, and
let $\calT$ be a primitively intransitive linear subspace of $\Hom(U,V)$.
Assume that $|\F| \geq n$.
Then:
\begin{enumerate}[(a)]
\item One has $\dim \calT \leq \dbinom{n}{2}$, and even $\dim \calT \leq \dbinom{n-1}{2}$ if $\trk(\calT) <n-1$.
\item If $\dim \calT \geq \dbinom{n}{2}-(n-3)$ then there exists a bilinear form $b : U \times V \rightarrow \F$
that is right-nondegenerate and such that $\forall x\in U, \; \forall f \in \calT, \; b(x,f(x))=0$:
we say that $b$ is an \textbf{alternator} of $\calT$.
\end{enumerate}
\end{theo}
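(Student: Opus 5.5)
The plan is to reduce the statement to Atkinson's classical results on primitive spaces of bounded-rank matrices by the usual duality that swaps the roles of operators and vectors. To each $x\in U$ we attach the evaluation map
$$\widehat{x} : f\in \calT \longmapsto f(x)\in V,$$
and we set $\widehat{U}:=\{\widehat{x}\mid x\in U\}$, a linear subspace of $\Hom(\calT,V)$. By construction $\im \widehat{x}=\calT x$. Hence $\calT$ is intransitive if and only if every element of $\widehat{U}$ has rank at most $n-1$, and more generally $\trk(\calT)$ is exactly the maximal rank in $\widehat{U}$. The common kernel of $\widehat{U}$ is $\{f\in\calT : f(x)=0 \text{ for all } x\in U\}=\{0\}$, so $\widehat{U}$ is nondegenerate on the source side, and $\dim\calT$ is the dimension of the source space of $\widehat{U}$.

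The first step is to check that primitive intransitivity of $\calT$ translates into the range-side primitivity (``semi-primitivity'') conditions in Atkinson's sense. Suppose some nonzero $H\subseteq V$ had $\pi\calT$ intransitive, where $\pi : V\twoheadrightarrow V/H$. Then the compressed space $\pi\widehat{U}\subseteq\Hom(\calT,V/H)$ would have rank bounded by $\dim(V/H)-1=n-\dim H-1$, i.e.\ the rank would drop by at least $\dim H$ after composing with $\pi$. That is precisely the reducibility that primitivity excludes. The remaining Atkinson conditions (no proper subspace of the source on which the rank drops, no kernel) are either automatic here or harmless: source-side degeneracies only decrease $\dim \calT$ relative to the bound, so it suffices to use the semi-primitive version of Atkinson's theorem. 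I would record this dictionary as a lemma.

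With $r:=\trk(\calT)\le n-1$, point (a) follows from Atkinson's theorem: for a semi-primitive space of matrices of rank at most $r$, with $|\F|\ge r+1$, the source dimension is at most $\binom{r+1}{2}$. This gives $\dim\calT\le\binom{n}{2}$, and $\dim\calT\le\binom{n-1}{2}$ when $r\le n-2$. The hypothesis $|\F|\ge n$ is exactly what Atkinson's theorem needs for $r=n-1$.

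For point (b), the idea is to use the refined part of Atkinson's theory. When the source dimension is within $r-2=n-3$ of the extremal value $\binom{r+1}{2}$, one can show that $r=n-1$ and that $\widehat{U}$ is equivalent to a subspace of the ``generic'' extremal example. That example is the space of maps $x\mapsto (f\mapsto f(x))$ built from alternating forms, and it carries a nondegenerate pairing $b$ on $U\times V$ with $b(x,\widehat{x}(f))=0$. Pulling this back gives the alternator: $b(x,f(x))=0$ for all $x\in U$ and $f\in\calT$, with $b$ right-nondegenerate because $V$ is the full target of a primitive space. I expect this step to be the main obstacle. It requires the delicate near-extremal classification, where small fields and the characteristic-$2$ subtleties in identifying alternating versus symmetric structure must be tracked carefully. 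In practice I would invoke point (b) of proposition 4.5 of \cite{dSPsemilin} together with the case $d=1$ of its theorem 1.20, which package exactly this near-extremal structure result.
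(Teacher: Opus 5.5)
Your proposal takes essentially the same route as the paper, which gives no independent proof: it presents this result as the translation, via the evaluation maps $x \mapsto \widehat{x}$, of Atkinson's theorem on semi-primitive spaces of bounded rank matrices, and refers to point (b) of proposition 4.5 combined with the case $d=1$ of theorem 1.20 in \cite{dSPsemilin}, which are exactly the references you invoke for (b). Your dictionary for (a) is a correct unpacking of what the paper leaves to that reference: the common kernel is trivial, $\trk(\calT)$ plays the role of the upper rank, primitive intransitivity becomes the target-side primitivity condition, and $|\F| \geq n = r+1$.
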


Recall that a bilinear form $b : U \times V \rightarrow \F$ is right-nondegenerate if and only if $b(-,y) \neq 0$ for all $y \in V \setminus \{0\}$.

It might seem odd to apply Atkinson's theorem to $\calS^\bot$, because using point (a) results in giving a \emph{lower} bound for $\calS$, not an upper one!
But now comes the trick: we need only consider the case where $\calS$ has its dimension just one unit larger than the expected upper-bound $\dbinom{n}{2}+2$, as if larger spaces exist then we can extract such a subspace and try to find a contradiction;
in the critical case then, the dimension of $\calS^\bot$ is exactly $n^2-\dbinom{n}{2}-3$, that is $\dbinom{n}{2}+(n-3)$, which is larger than the bound from point (a) of Atkinson's theorem unless $n \leq 3$. Hence, and if $\calS^\bot$ is primitively intransitive of course, either we obtain a contradiction or we can use point (b) to gain a sharp understanding of the structure of $\calS^\bot$ for $n=3$.
There are two difficulties of course with that strategy:
\begin{itemize}
\item[Q1.] What to do if $\calS^\bot$ is not primitively intransitive?
\item[Q2.] What if $|\F|<n$?
\end{itemize}
The first point is entirely manageable, and it will systematically involve the following construction:
if $\calS^\bot$ is not primitively intransitive then we can take a proper subspace $W \subset V$ of greatest possible dimension
such that, for the canonical projection $\pi : V \twoheadrightarrow V/W$, the space $\pi \calS^\bot$ is intransitive:
we say that $W$ is an \textbf{intransitivity veil} of $\calS^\bot$.
\label{def:intransitivityveil}
Then $\pi \calS^\bot$ is primitively intransitive due to the maximality of $W$, and one has
$$\dim \calS^\bot=\dim(\pi \calS^\bot) +\dim(\calS^\bot \cap \Hom(V,W))$$
by the rank theorem. Further still, we will be able to apply Atkinson's theorem to $\pi \calS^\bot$, while the
dimension of the space
$$(\calS^\bot)^W:=\calS^\bot \cap \Hom(V,W)$$
is critically connected to the one of the space of restrictions
$$\calS_{|W}:=\{u_{|W} \mid u \in \calS\} \subseteq \Hom(W,V)$$
through the identity
\begin{equation}\label{eq:dualrestriction}
\dim \left((\calS^\bot)^W\right)+\dim(\calS_{|W})=(\dim V)(\dim W)
\end{equation}
(see, e.g., lemma 3.4 in \cite{dSPtriangularizable}).
From there, and thanks to an induction hypothesis, it will be possible to find a direct contradiction with the assumptions on the dimension of $\calS$.

However, the answer to Q2 will probably be a disappointment to the reader: we must accept that the strategy completely fails in that case!

To obtain the results for larger values of $n$ and whatever the field under consideration (but still with characteristic $2$ and more than $2$ elements),
we have to use a completely different strategy. We will try to classify, in a direct way, the
situations where $\calS$ has no adapted vector. This involves the following definition:

\begin{Def}
Let $V$ be a vector space with finite dimension $n \geq 2$. A linear subspace $\calS$ of $\End(V)$ is called a \textbf{hurdle} when
it satisfies the following equivalent conditions:
\begin{enumerate}[(i)]
\item There exists a linear subspace $G$ of $V$ with codimension $2$ and such that $\calS$ contains \emph{all} the trace-zero endomorphisms $u$ of $V$
such that $G \subset \Ker u$.
\item There exists a $2$-dimensional subspace $P$ of $V^\star$ such that $\calS$ contains all the tensors $\varphi \otimes y$
with $\varphi\in P$ and $y \in V$ such that $\varphi(y)=0$.
\item In some basis of $V$, the matrix space $\calM$ that represents $\calS$ includes $\{0_{n-2}\} \vee \mathfrak{sl}_2(\F)$.
\end{enumerate}
\end{Def}

It is clear from  (ii) that a hurdle has no $\calS$-adapted vector, and the key is to prove precisely
that among the $1^\star$-spec subspaces, only hurdles have that property.

However, this will turn out to be very difficult, requiring us to considerably refine the method devised in \cite{dSPfeweigenvalues}.
Until this day, one of the distinctive features of the adapted vectors method was to completely separate the search for adapted vectors
from the considerations on the dimension of the matrix space under consideration: one performed an inductive proof to obtain the existence of an adapted vector, and once a general existence theorem was obtained, one applied it (again, inductively) to prove the dimension bound.
Here, we will completely break away from this strategy, and we will prove all the properties \emph{simultaneously} by induction.
This is made necessary by another key point in the proof of the existence of adapted vectors from \cite{dSPfeweigenvalues}, which is the final argument: there, the consequences of having too few adapted vectors was the existence of a subspace of $\calS$ spanned by matrices of rank $1$ and trace $0$, and with dimension greater than or equal to $n \left(\lfloor \frac{n}{2}\rfloor+1\right)$;
thanks to the assumption that $\car(\F) \neq 2$, one could then observe that any two matrices in the subspace must be trace-orthogonal (lemma 2.3 there) and obtain a contradiction with standard orthogonality results. Here, there is no analogue of lemma 2.3 from \cite{dSPfeweigenvalues} because of the characteristic $2$ assumption (precisely, since every matrix with rank $2$ and trace $0$ has at most one eigenvalue in $\overline{\F}$). Here, we rescue the overall idea that having too few adapted vectors either leads to $\calS$ being a hurdle or $\calS$ having its dimension too large, but we must use another way of bounding the dimension of $\calS$ beforehand. The key new insight is to obtain ever smaller upper-bounds on $\dim(\calS)$, which involves the following weak version of an $\calS$-adapted vector:

\begin{Def}
Let $\calS$ be a linear subspace of $\End(V)$, where $V$ is a finite-dimensional vector space.
A nonzero vector $x \in V \setminus \{0\}$ is called \textbf{weakly-$\calS$-adapted} whenever
$\dim(\calS \cap (x^\bot \otimes x)) \leq 1$.
\end{Def}

The proof will then essentially run as follows:
\begin{itemize}
\item By taking an arbitrary nonzero vector, we can use the induction hypothesis to immediately obtain the
upper-bound $\dim \calS \leq \dbinom{n}{2}+2+(n-1)$, which is already quite good (yet insufficient);
\item Then we will show (which uses a reinforced inductive assumption) that we can \emph{always} guarantee that $\calS$ has (many) weakly-$\calS$-adapted vectors;
\item Using a weakly $\calS$-adapted vector, we will reuse the induction hypothesis to obtain the improved inequality
$\dim \calS \leq \dbinom{n}{2}+3$;
\item This inequality will then be sufficient to prove (again thanks to a reinforced inductive assumption) that
either $\calS$ has (many) adapted vectors or it is a hurdle. When it is a hurdle, we will obtain the
inequality $\dim \calS \leq \dbinom{n}{2}+2$ directly, whereas in the other case the conclusion follows by induction.
\end{itemize}

At this point, we note that we could completely avoid using Atkinson's theorem, yet it is very powerful to handle the situation of matrices of small size
(in which the requirements on the cardinality of $\F$ are fulfilled).

So far, we have only discussed $1^\star$-spec subspaces. The strategy is very similar for $2$-spec subspaces, with only minor differences.
One directly considers
$$\calS_{x,0}:=\{u \in \calS : u(x)=0\},$$
and the induced space
$$\overline{\calS_{x,0}}:=\{\overline{u} \mid u \in \calS_{x,0}\} \subseteq \End(V/\F x).$$
The rank theorem readily yields
$$\dim \calS_x =\dim (\calS x)+\dim \overline{\calS_{x,0}}+\dim (\calS \cap (x^\bot \otimes x)) \leq n+\dim \overline{\calS_{x,0}}+\dim (\calS \cap (x^\bot \otimes x)).$$
Assume now that $\calS$ is $2$-spec. Then $\calS_{x,0}$ is $1^\star$-spec (because,
for every operator in $\calS_{x,0}$, the vector $x$ is obviously an eigenvector for the eigenvalue $0$). If $x$ is $\calS$-adapted, and provided Theorem \ref{theorem:maintheodim1starspec} holds true, then
$$\dim \calS \leq \dbinom{n}{2}+3.$$
If $x$ is only weakly-$\calS$-adapted then we have the rougher estimate
$$\dim \calS \leq \dbinom{n}{2}+4.$$
One essential difference, which the above exemplifies, lies in the fact that the proofs for $2$-spec subspaces are not inductive: rather, they rely upon the validity of the results
for $1^\star$-spec spaces.

\subsection{Structure of the article}

The article is organized as follows.

We have gathered a series of basic lemmas in Section \ref{section:basiclemmas}: these lemmas deal with
the way of localizing the non-adapted vectors for a matrix space, and some structural results on hurdles: they are used repeatedly in the article.

Section \ref{section:largefields} features a proof of the main theorems for fields with large cardinality, by using Atkinson's theorem on intransitive operator spaces.

The remainder of the article deals with the generalization to all fields with characteristic $2$ and more than $2$ elements.
Some preparatory work is done in Section \ref{section:preparationforlastpart}, which mainly deals with giving a condition for a polynomial in several variables to vanish once
we know that it vanishes outside of a union of certain linear subspaces. The last section (Section \ref{section:anyfield}), by far the longest and the most intricate of all,
concludes the proof of the main theorems by eliminating all restrictions of cardinality (except of course the requirement that $|\F|>2$),
thanks in part to the fact that the situation for small matrices is already covered in Section \ref{section:largefields}.

\section{Some lemmas}\label{section:basiclemmas}

Here, we will collect various lemmas that deal with situations in which we have an operator space
that contains a large subset of rank $1$ operators. Their proofs require the use of cyclic matrices, of which we will recall several classical properties.
We start with a short review of duality techniques, to be used throughout the article.

\subsection{On dualities}

We will frequently have to use duality arguments. Let $V$ be a finite-dimensional vector space.
For $u \in \End(V)$, we consider its transposed endomorphism
$$u^t : \varphi \in V^\star \mapsto \varphi \circ u \in V^\star.$$
Classically, $u$ and $u^t$ have the same characteristic polynomial. It follows that
the space $\calS^t:=\{u^t \mid u \in \calS\}$ is $k$-spec (respectively, $\overline{k}$-spec and so on)
if and only if so is $\calS$.
Denote by
$$\mathfrak{i}_V : x \in V \mapsto (\varphi \mapsto \varphi(x)) \in V^{\star\star}$$
the biduality isomorphism.
Throughout, it will be very useful to observe that $(\varphi \otimes x)^t=\mathfrak{i}_V(x) \otimes \varphi$ for all $\varphi \in V^\star$ and all $x \in V$.

For a linear subspace $F$ of $V$, we set
$$F^\ortho:=\{\varphi \in V^\star : \forall x \in F, \; \varphi(x)=0\},$$
whereas, for a linear subspace $G$ of $V^\star$ we set
$${}^\ortho G:=\{x \in V : \forall \varphi \in G, \; \varphi(x)=0\}.$$
We recall that $\dim(F^\ortho)=\dim V-\dim F$, $\dim ({}^\ortho G)=\dim V-\dim G$, $G=({}^\ortho G)^\ortho$ and $F={}^\ortho (F^\ortho)$.

Next, we consider some standard results on trace orthogonality. The first one generalizes 
identity \eqref{eq:dualrestriction} on page \pageref{eq:dualrestriction}.
In both, we need the general notion of the trace-orthogonal of an operator space. Given finite-dimensional vector spaces $U$ and $V$
and a linear subspace $\calS$ of $\Hom(U,V)$, we set
$$\calS^\bot:=\{v \in \Hom(V,U) : \tr(vu)=0\}.$$

\begin{lemma}[First Trace Orthogonality Lemma]\label{lemma:traceortho1}
Let $U$ and $V$ be finite-dimensional vector spaces and $V_0$ be a linear subspace of $V$. Let $\calS \subseteq \Hom(U,V)$ be a linear subspace.
Then
$$\dim(\calS \cap \Hom(U,V_0))+\dim\{v_{|V_0} \mid v \in \calS^\bot\}=(\dim U)(\dim V_0).$$
\end{lemma}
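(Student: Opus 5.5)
We reduce the statement to a rank computation for the restriction map $\rho : v \in \Hom(V,U) \mapsto v_{|V_0} \in \Hom(V_0,U)$ applied to $\calS^\bot$. The plan is to identify the kernel of $\rho$ restricted to $\calS^\bot$ as the trace-orthogonal of a suitable space. Then the rank theorem together with standard duality for the nondegenerate trace pairing will give the identity.

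\textbf{Setting up the pairing.} The bilinear pairing $(u,v) \in \Hom(U,V)\times \Hom(V,U) \mapsto \tr(vu)$ is nondegenerate. Hence for every linear subspace $\calA$ of $\Hom(U,V)$ we have $\dim \calA^\bot=(\dim U)(\dim V)-\dim \calA$, and symmetrically for subspaces of $\Hom(V,U)$, with $(\calA^\bot)^\bot=\calA$. Set $\calH:=\Hom(U,V_0)$, viewed inside $\Hom(U,V)$. A tensor computation shows that $\calH^\bot=\{v \in \Hom(V,U) : v_{|V_0}=0\}=\Ker \rho$. Indeed, one inclusion is clear: if $v_{|V_0}=0$ and $\im u \subseteq V_0$, then $vu=0$. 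For the converse, test $v$ against the rank-one maps $\varphi\otimes y$ with $y \in V_0$; the trace of $v(\varphi\otimes y)$ is $\varphi(v(y))$, so its vanishing for all $\varphi$ forces $v(y)=0$. The same argument shows $\Ker\rho$ has the right dimension, since $\dim \Ker\rho=(\dim V-\dim V_0)\dim U$.

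\textbf{Counting.} The rank theorem applied to $\rho$ on $\calS^\bot$ gives
$$\dim \rho(\calS^\bot)=\dim \calS^\bot-\dim(\calS^\bot\cap \calH^\bot).$$
Next, $\calS^\bot\cap\calH^\bot=(\calS+\calH)^\bot$, whose dimension is $(\dim U)(\dim V)-\dim(\calS+\calH)$. Using Grassmann's formula,
$$\dim(\calS+\calH)=\dim\calS+(\dim U)(\dim V_0)-\dim(\calS\cap\calH).$$
Substituting this together with $\dim\calS^\bot=(\dim U)(\dim V)-\dim\calS$ yields
$$\dim\rho(\calS^\bot)=(\dim U)(\dim V_0)-\dim(\calS\cap\calH),$$
which is exactly the claimed identity.

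The only step needing care is the identification $\calH^\bot=\Ker\rho$, that is, that the annihilator of the maps with range in $V_0$ is precisely the set of maps vanishing on $V_0$. The rank-one test vectors and the dimension match settle this. Everything else is linear-algebra bookkeeping.
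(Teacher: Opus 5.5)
Your proof is correct. The identification of the trace-orthogonal of $\Hom(U,V_0)$ with the maps vanishing on $V_0$ is properly justified by testing against the rank-one maps $\varphi\otimes y$ with $y\in V_0$. The bookkeeping is also right: the rank theorem for the restriction map on $\calS^\bot$, the equality $\calS^\bot\cap\Hom(U,V_0)^\bot=(\calS+\Hom(U,V_0))^\bot$, and Grassmann's formula combine to give exactly the claimed identity.

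The paper does not prove this lemma itself; it only refers to lemma 3.4 of \cite{dSPtriangularizable}, so there is no in-paper argument to match. A slightly shorter route is to work in the smaller nondegenerate pairing between $\Hom(U,V_0)$ and $\Hom(V_0,U)$. For $u\in\Hom(U,V_0)$ and $v\in\Hom(V,U)$ one has $\tr(v_{|V_0}\,u)=\tr(vu)$. Hence the orthogonal of $\{v_{|V_0}\mid v\in\calS^\bot\}$ inside $\Hom(U,V_0)$ is $\Hom(U,V_0)\cap(\calS^\bot)^\bot=\calS\cap\Hom(U,V_0)$, and nondegeneracy gives the dimension identity at once. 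That version relies on double orthogonality $(\calS^\bot)^\bot=\calS$ but avoids computing $\Hom(U,V_0)^\bot$ and using Grassmann's formula. Yours does the reverse and never needs $(\calS^\bot)^\bot=\calS$. Either is a complete proof.

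One minor remark: your aside that ``the same argument'' shows $\Ker\rho$ has the right dimension is superfluous. You had already proved both inclusions, so the dimension count is only a consistency check.
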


See lemma 3.4 in \cite{dSPtriangularizable} and the proof therein. The next result is a dual variation of the previous one:

\begin{lemma}[Second Trace Orthogonality Lemma]\label{lemma:traceortho2}
Let $U$ and $V$ be finite-dimensional vector spaces and $U_0$ be a linear subspace of $U$. Let $\calS \subseteq \Hom(U,V)$ be a linear subspace.
Denote by $\pi : U \twoheadrightarrow U/U_0$ the standard projection. Set
$$\calS_{U_0}:=\{u \in \calS : \; \forall x \in U_0, \; u(x)=0\}.$$
Then
$$\dim \calS_{U_0}+\dim (\pi \calS^\bot)=(\dim V)(\dim U-\dim U_0).$$
\end{lemma}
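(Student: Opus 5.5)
The plan is a direct linear-algebra computation based on the nondegeneracy of the trace pairing
$$\Hom(U,V)\times \Hom(V,U)\rightarrow \F, \quad (u,v)\mapsto \tr(vu).$$
This pairing is nondegenerate, as one sees in bases. Hence for every linear subspace $\calW$ of $\Hom(U,V)$ we have $\dim \calW^\bot=(\dim U)(\dim V)-\dim \calW$. We also have $(\calS+\calW)^\bot=\calS^\bot\cap \calW^\bot$. One could alternatively deduce the statement from the First Trace Orthogonality Lemma by transposing, but the direct route seems cleaner.

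First, introduce the subspace $\calW:=\{u \in \Hom(U,V) : u_{|U_0}=0\}$. Then $\calS_{U_0}=\calS\cap \calW$, and $\dim \calW=(\dim V)(\dim U-\dim U_0)$. Next, identify $\calW^\bot$ with $\Hom(V,U_0)$.
\begin{itemize}
\item If $v\in \Hom(V,U_0)$ and $u\in \calW$, then $uv=0$ because $u$ vanishes on $U_0$, and so $\tr(vu)=\tr(uv)=0$. This gives the inclusion $\Hom(V,U_0)\subseteq \calW^\bot$.
\item Equality then follows by comparing dimensions: $(\dim V)(\dim U_0)=(\dim U)(\dim V)-\dim \calW$.
\end{itemize}

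Then compute both terms.
\begin{itemize}
\item The map $v\mapsto \pi\circ v$ from $\Hom(V,U)$ to $\Hom(V,U/U_0)$ has kernel $\Hom(V,U_0)$. By the rank theorem,
$$\dim(\pi\calS^\bot)=\dim \calS^\bot-\dim(\calS^\bot\cap \Hom(V,U_0)).$$
\item By the Grassmann formula, $\dim(\calS\cap\calW)=\dim \calS+\dim\calW-\dim(\calS+\calW)$.
\item Moreover,
$$\dim(\calS+\calW)=(\dim U)(\dim V)-\dim\bigl(\calS^\bot\cap \Hom(V,U_0)\bigr),$$
since $(\calS+\calW)^\bot=\calS^\bot\cap\calW^\bot=\calS^\bot\cap\Hom(V,U_0)$.
\item Finally, $\dim \calS=(\dim U)(\dim V)-\dim \calS^\bot$.
\end{itemize}

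Adding up, the terms $\dim \calS^\bot$ and $\dim(\calS^\bot\cap\Hom(V,U_0))$ cancel, leaving $\dim \calS_{U_0}+\dim(\pi\calS^\bot)=\dim \calW=(\dim V)(\dim U-\dim U_0)$, as claimed. The only point requiring care is the identification $\calW^\bot=\Hom(V,U_0)$, which rests on the cyclicity $\tr(vu)=\tr(uv)$ of the trace; everything else is bookkeeping.
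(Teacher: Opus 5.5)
Your proof is correct, but it takes a different route from the paper. The paper does not compute directly; it transposes. It uses $(\calS^t)^\bot=(\calS^\bot)^t$, $(\calS_{U_0})^t=\calS^t\cap\Hom(V^\star,U_0^\circ)$ and $(\pi\calS^\bot)^t=(\calS^t)^\bot\pi^t$. Since $\pi^t$ is injective with range $U_0^\circ$, the last space is identified with the restrictions to $U_0^\circ$ of elements of $(\calS^t)^\bot$. The statement then becomes the First Trace Orthogonality Lemma applied to $\calS^t$ with $V_0:=U_0^\circ$.

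You stay inside $\Hom(U,V)$ instead. Nondegeneracy of the trace pairing, the identification $\calW^\bot=\Hom(V,U_0)$, the Grassmann formula and the rank theorem for $v\mapsto\pi\circ v$ do all the work.

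What your route buys is self-containment. The paper's argument rests on the First Lemma, which it only cites from an earlier article, whereas yours uses nothing beyond the trace pairing. Moreover, the same computation with $\calW:=\Hom(U,V_0)$, whose orthogonal is $\{v : v_{|V_0}=0\}$, proves the First Lemma as well. Both lemmas are thus instances of one fact: for a nondegenerate pairing, $\dim(\calS\cap\calW)+\dim\bigl(\calS^\bot/(\calS^\bot\cap\calW^\bot)\bigr)=\dim\calW$.

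The paper's route is shorter once the First Lemma is granted. It also makes visible that the two lemmas are transposes of each other, with kernel conditions on $U_0$ becoming range conditions in $U_0^\circ$, a duality the paper exploits repeatedly.
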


\begin{proof}
Although a direct proof is possible, we will simply deduce this result from the previous one by a duality argument.

We consider the transposed space $\calS^t \subseteq \Hom(V^\star,U^\star)$ and we note, since the trace of an endomorphism is invariant under transposition,
that $(\calS^t)^\bot=(\calS^\bot)^t$. We also note that $(\calS_{U_0})^t=\calS^t \cap \Hom(V^\star,U_0^\circ)$
and that $(\pi \calS^\bot)^t=(\calS^\bot)^t \pi^t=(\calS^t)^\bot \pi^t$.
The range of $\pi^t$ is exactly $U_0^\circ$, and $\pi^t$ is injective, so the space
$(\calS^t)^\bot \pi^t$ is naturally isomorphic to $\{v_{|U_0^\circ} \mid v \in (\calS^t)^\bot\}$.
Applying Lemma \ref{lemma:traceortho1} to $\calS^t$ with the linear subspace $V_0:=U_0^\circ$ of $U^\star$,
we obtain
$$\dim (\calS^t \cap \Hom(V^\star,U_0^\circ))+\dim \{v_{|U_0^\circ} \mid v \in (\calS^t)^\bot\}=(\dim V^\star)(\dim U_0^\circ).$$
The left-hand side of the equality equals
$$\dim (\calS_{U_0})^t+\dim ((\calS^t)^\bot \pi^t)=\dim \calS_{U_0}+ \dim (\pi \calS^\bot),$$
while the right-hand side equals $(\dim V)(\dim U-\dim U_0)$.
\end{proof}

\subsection{On cyclic matrices}

The \textbf{companion matrix} of a monic polynomial $r(t)=t^n-\underset{k=0}{\overset{n-1}{\sum}} a_k t^k$ of $\F[t]$
is defined as
$$C(r):=\begin{bmatrix}
0 &   & & (0) & a_0 \\
1 & 0 & &   & a_1 \\
0 & \ddots & \ddots & & \vdots \\
\vdots & \ddots & & 0 & a_{n-2} \\
(0) & \cdots & 0 &  1 & a_{n-1}
\end{bmatrix}\in \Mat_n(\F).$$
Classically, its minimal polynomial and characteristic polynomial equal $r(t)$.
We also recall that the \textbf{trace} of $r(t)$ is then defined as $a_{n-1}$ (the opposite of its coefficient on $t^{n-1}$)
and denoted by $\tr(r(t))$.

The next lemma will be critical to us. It is very classical, and can be found, e.g., as lemma 11 in \cite{dSP3idempotent}.

\begin{lemma}[Choice Lemma]\label{label:choicelemma}
Let $M=(m_{i,j})_{i,j} \in \Mat_n(\F)$ be a regular Hessenberg matrix, i.e.,
$m_{i,j}=0$ for all $i,j$ such that $i>j+1$, and $m_{j+1,j} \neq 0$ for all $j \in \lcro 1,n-1\rcro$.
Let $r(t) \in \F[t]$ be a monic polynomial of degree $n$ such that $\tr (r(t))=\tr M$.
Let $p \in \lcro 1,n-1\rcro$. Then there exists a matrix $R \in \Mat_{p,n-p}(\F)$ such that
$$M+\begin{bmatrix}
[0]_{p \times p} & R \\
[0]_{(n-p) \times p} & [0]_{(n-p) \times (n-p)}
\end{bmatrix}$$
has characteristic polynomial $r(t)$.
\end{lemma}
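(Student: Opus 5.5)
I would prove the Choice Lemma by induction on $n$, using cofactor expansions of $\det(tI_n-N)$ with $N:=M+\begin{bmatrix} 0 & R\\ 0 & 0\end{bmatrix}$. The starting observation is that $\det(tI_n-N)$ is affine in the entries of any single row of $N$, and the cofactors of the first row have a rigid shape. Deleting row $1$ and column $j$ from $tI_n-N$ gives a block upper-triangular matrix. Its first block is lower triangular of size $j-1$, with diagonal entries $-m_{2,1},\dots,-m_{j,j-1}$. Its second block is $tI_{n-j}-N_{>j}$, where $N_{>j}$ is the trailing principal submatrix on indices $j+1,\dots,n$. Hence the cofactor of the entry $(1,j)$ equals $\pm\, m_{2,1}\cdots m_{j,j-1}\,\chi_{N_{>j}}(t)$. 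Since the subdiagonal entries are nonzero, this is a polynomial of degree exactly $n-j$ with nonzero leading coefficient. For $j>p$ the submatrix $N_{>j}$ only involves rows $>p$, so this cofactor does not depend on $R$ at all.

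The base case is $p=1$, with free entries $(1,j)$ for $j=2,\dots,n$. The map $R\mapsto \chi_N$ is then affine, with linear part $R\mapsto \sum_j \mp r_{1,j}\,c_j(t)$. The $c_j$ have degrees $n-2,\dots,0$, so they form a triangular basis of $\F[t]_{\le n-2}$. Every monic $r$ with $\tr(r)=\tr M$ differs from $\chi_M$ by a polynomial of degree at most $n-2$, so the required $R$ exists, and is even unique. The case $n=2$ is contained in this.

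For the inductive step with $p\ge 2$, I would apply the induction hypothesis to the trailing submatrix $M'$ on indices $2,\dots,n$. It is regular Hessenberg of size $n-1$, and the part of $R$ lying in rows $2,\dots,p$ is exactly a block of the required shape for $M'$ with $p'=p-1$. So we can prescribe $q:=\chi_{N'}$ freely among monic polynomials of degree $n-1$ with $\tr q=\tr M'$. Expanding along the first column of $tI_n-N$ then gives
$$\chi_N(t)=(t-m_{1,1})\,q(t)-m_{2,1}\,\Delta(t),$$
where $\Delta$ is the minor obtained by deleting row $2$ and column $1$. Expanding $\Delta$ along its first row (row $1$ of $N$), the free entries $r_{1,j}$ with $j>p$ contribute the fixed polynomials of degrees $0,\dots,n-p-1$ described above. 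So those coefficients of $\chi_N$ can be corrected last, exactly as in the base case.

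The remaining task, which I expect to be the main obstacle, is to control the coefficients of degrees $n-p,\dots,n-1$. These come from $(t-m_{1,1})q$ together with the terms of $\Delta$ attached to the fixed entries $m_{1,j}$, $j\le p$, and those terms depend on $R'$ through lower trailing minors. I would try first to organize this triangularly, working downward in degree. The coefficient of $t^{n-1}$ is forced by the trace. For $k=n-2,\dots,n-p$, the coefficient of $t^k$ should depend on the choice of $q$ through its coefficient of $t^{k-1}$ with the nonzero factor $1$, plus terms depending only on higher-degree data already fixed. This would let us choose $q$ coefficient by coefficient, and then invoke the induction hypothesis to realise $q$. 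If that bookkeeping becomes unwieldy, the fallback is a cleaner statement: for fixed first row, $R'\mapsto\chi_N$ is affine with a unipotent-triangular linear part in suitable bases, obtained by expanding along the first row of each trailing block in turn.
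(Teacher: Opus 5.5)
The paper does not prove this lemma (it quotes it as classical), so your argument has to stand on its own. Your case $p=1$ is correct, and so is the final adjustment of the coefficients of degree $<n-p$ through the entries $r_{1,j}$, $j>p$. (One harmless slip: the first diagonal block of that minor is upper triangular, not lower.)

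The gap is exactly the step you call the main obstacle, and it is the whole content of the inductive step. With $\pi_j:=m_{2,1}m_{3,2}\cdots m_{j,j-1}$, expanding along the first row gives
$$\chi_N=(t-m_{1,1})\,q-\sum_{j=2}^{p} m_{1,j}\,\pi_j\,\chi_{N_{>j}}-\sum_{j=p+1}^{n}(m_{1,j}+r_{1,j})\,\pi_j\,\chi_{M_{>j}}.$$
For $2\le j\le p-1$ the polynomial $\chi_{N_{>j}}$ depends on $R'$. Your induction hypothesis only provides \emph{some} $R'$ with $\chi_{N_{>1}}=q$, and says nothing about the other trailing characteristic polynomials. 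For $p\ge 4$ these terms reach the range you want to control through $q$. For instance, the coefficient of $t^{n-4}$ in $\chi_N$ contains $-m_{1,2}m_{2,1}$ times the coefficient of $t^{n-4}$ in $\chi_{N_{>2}}$, and that coefficient depends on $r_{p,p+1}$. So your assertion that the coefficient of $t^k$, $n-p\le k\le n-2$, depends on $R'$ only through coefficients of $q$ of degree $\ge k-1$ does not follow from your hypothesis. Without a proof of it, the scheme ``fix the coefficients of $q$ from the top down, then realize $q$'' is circular, because those extra terms are only known once $R'$ has been chosen.

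The assertion is in fact true. Consider the recursion $\chi_{N_{>j-1}}=(t-m_{j,j})\chi_{N_{>j}}-\sum_{k>j}n_{j,k}\,m_{j+1,j}\cdots m_{k,k-1}\,\chi_{N_{>k}}$. Its coefficient of $t^{n-j+1-c}$ involves row $j$ only through entries $n_{j,k}$ with $k\le j+c-1$, and these are entries of $M$ as soon as $j+c\le p+1$. Inverting this relation and inducting on $(c,j)$ shows the following: for $j+c\le p+1$, the coefficient of $t^{n-j-c}$ in $\chi_{N_{>j}}$ is a fixed affine function of the coefficients of $t^{n-1},\dots,t^{n-1-c}$ in $q$. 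That is what your triangular choice needs.

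But the induction is unnecessary, and your fallback points to the clean argument. Your cofactor computation for the entries $(1,j)$ works verbatim for every $(i,j)$ with $i\le p<j$. Deleting row $i$ and column $j$ of $tI_n-N$ leaves a block upper-triangular matrix whose diagonal blocks are:
\begin{itemize}
\item $tI-M_{<i}$, the leading principal block on the indices $1,\dots,i-1$;
\item an upper-triangular block with diagonal $-m_{i+1,i},\dots,-m_{j,j-1}$;
\item $tI-M_{>j}$.
\end{itemize}
Hence the cofactor $C_{i,j}=\pm\,m_{i+1,i}\cdots m_{j,j-1}\,\chi_{M_{<i}}\chi_{M_{>j}}$ does not depend on $R$ and has exact degree $n-1-(j-i)$. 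Expanding along the entries of $R$ one at a time then yields $\chi_N=\chi_M-\sum_{i\le p<j}r_{i,j}\,C_{i,j}$. As $(i,j)$ ranges over $\lcro 1,p\rcro\times\lcro p+1,n\rcro$, the difference $j-i$ takes every value in $\lcro 1,n-1\rcro$. So the $C_{i,j}$ span all polynomials of degree at most $n-2$, and $r-\chi_M$ is among them. This proves the lemma for all $p$ at once, with no bookkeeping.
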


\subsection{The first confinement lemma}

\begin{lemma}[First Confinement Lemma]
Let $\calS$ be a $2$-spec linear subspace of $\End(V)$ for some $n$-dimensional vector space $V$, with $n \geq 3$.
Assume that $\varphi \otimes V \subseteq \calS$ for some $\varphi \in V^\star \setminus \{0\}$.
Then all the non-$\calS$-adapted vectors of $V$ belong to the linear hyperplane $\Ker \varphi$.
\end{lemma}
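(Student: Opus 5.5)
The plan is to argue by contradiction, exhibiting in $\calS$ an operator with three distinct eigenvalues in $\F$, namely $0$, $\lambda$ and $\lambda+1$. This uses only elements of the span of $\varphi\otimes V$ and one rank-$1$ trace-zero tensor.

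Assume that some non-$\calS$-adapted vector $x$ satisfies $\varphi(x)\neq 0$. Rescaling $\varphi$ (which does not change $\varphi\otimes V$), we may assume $\varphi(x)=1$. Since $x$ is not $\calS$-adapted, there is a nonzero $\psi\in x^\bot$ with $\psi\otimes x\in\calS$. As $\psi(x)=0\neq\varphi(x)$, the forms $\varphi$ and $\psi$ are linearly independent.

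We choose a basis $(e_1,\dots,e_n)$ of $V$ adapted to these forms:
\begin{itemize}
\item $e_1:=x$;
\item $e_2$ is chosen with $\varphi(e_2)=0$ and $\psi(e_2)=1$, which is possible by the independence of $\varphi$ and $\psi$;
\item $(e_3,\dots,e_n)$ is a basis of $\Ker\varphi\cap\Ker\psi$.
\end{itemize}
In this basis, $\varphi$ is the first coordinate form and $\psi$ is the second one. Hence, for $y\in V$ with coordinates $(y_1,\dots,y_n)$ and for $a\in\F$, the operator $\varphi\otimes y+a\,\psi\otimes x$, which lies in $\calS$ by assumption, is represented by the matrix whose first column is $(y_1,\dots,y_n)^T$, whose entry at position $(1,2)$ is $a$, and whose other entries are all zero.

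The characteristic polynomial of this matrix is then computed by expanding; the rows $3,\dots,n$ contribute only through the first column, so one finds
$$t^{n-2}\,(t^2-y_1t-ay_2).$$
Now we use $|\F|>2$ to pick $\lambda\in\F\setminus\{0,1\}$. Then $\lambda$ and $\lambda+1$ are nonzero and distinct, with sum $1$ and product $\lambda(\lambda+1)$ (recall $\car(\F)=2$). We take
$$y_1=1,\qquad y_2=1,\qquad a=\lambda(\lambda+1),$$
which makes the quadratic factor equal to $(t-\lambda)(t-\lambda-1)$. Since $n\geq 3$, the factor $t^{n-2}$ contributes the eigenvalue $0$. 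So this element of $\calS$ has the three distinct eigenvalues $0$, $\lambda$ and $\lambda+1$ in $\F$, contradicting the $2$-spec assumption. Therefore every non-$\calS$-adapted vector lies in $\Ker\varphi$.

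The only step needing care is the characteristic polynomial computation. It is straightforward because the matrix is zero outside its first column and the entry $(1,2)$, so expansion reduces it to the $2\times 2$ block on $(e_1,e_2)$. The hypotheses $n\geq 3$ (so that $0$ is an eigenvalue) and $|\F|>2$ (so that $\lambda$ exists) are exactly what the argument consumes.
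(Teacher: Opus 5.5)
Your argument is correct. It is essentially the paper's proof carried out directly on $\calS$ instead of through transposition: the paper proves a dual version (assuming $V^\star\otimes x\subseteq\calS$, working in the invariant plane $\Vect(x,u(x))$ and tuning the free first row of $\begin{bmatrix} a & b\\ 1 & \alpha\end{bmatrix}$) and applies it to $\calS^t$, whereas you tune $y_1,y_2,a$ in the $2\times 2$ block of $\varphi\otimes y+a\,\psi\otimes x$ in a basis adapted to $\varphi,\psi$; in both cases the third eigenvalue $0$ comes from the operator having rank at most $2$ with $n\geq 3$. Your specific choice $\lambda,\lambda+1$ uses $\car(\F)=2$, where the paper takes any two distinct nonzero scalars, which is harmless given the standing assumption.
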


\begin{lemma}[First Confinement Lemma, dual version]
Let $\calS$ be a $2$-spec linear subspace of $\End(V)$ for some $n$-dimensional vector space $V$, with $n \geq 3$.
Assume that $V^\star \otimes x \subseteq \calS$ for some $x \in V\setminus \{0\}$.
Then every rank $1$ and trace zero operator in $\calS$ must vanish at $x$.
\end{lemma}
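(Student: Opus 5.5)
The plan is to deduce this statement from the First Confinement Lemma (primal version) by passing to the transposed space $\calS^t \subseteq \End(V^\star)$, using the duality facts recalled in the subsection on dualities. The key observations are that transposition preserves the $2$-spec property (since $u$ and $u^t$ have the same characteristic polynomial), and that it exchanges the roles of range and kernel-form in rank $1$ tensors through the identity $(\varphi \otimes x)^t=\mathfrak{i}_V(x)\otimes \varphi$.

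First, I check the hypotheses of the primal lemma for $\calS^t$, with $V^\star$ (of dimension $n \geq 3$) in place of $V$. Since $\calS$ is $2$-spec, so is $\calS^t$. From $V^\star \otimes x \subseteq \calS$ we get
$$\{(\varphi\otimes x)^t \mid \varphi \in V^\star\}=\mathfrak{i}_V(x)\otimes V^\star \subseteq \calS^t,$$
and $\mathfrak{i}_V(x)$ is a nonzero linear form on $V^\star$ because $x \neq 0$. The First Confinement Lemma therefore shows that every non-$\calS^t$-adapted vector of $V^\star$ lies in $\Ker \mathfrak{i}_V(x)=\{\psi \in V^\star : \psi(x)=0\}$.

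Next, let $\psi\otimes y$ be a rank $1$ operator of $\calS$ with trace zero, so $\psi \neq 0$, $y\neq 0$ and $\psi(y)=0$. Its transpose $\mathfrak{i}_V(y)\otimes \psi$ is a nonzero element of $\calS^t$. Its range is $\F\psi$, and it has trace $\mathfrak{i}_V(y)(\psi)=\psi(y)=0$. In other words, $\mathfrak{i}_V(y)\otimes\psi \in \calS^t \cap (\psi^\bot \otimes \psi)$, where $\psi^\bot$ is taken in $V^{\star\star}$. Hence $\psi$ is not $\calS^t$-adapted. By the previous step $\psi(x)=0$, which means $(\psi\otimes y)(x)=0$, as claimed.

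There is no real obstacle here. The only points needing care are the bookkeeping with the biduality map $\mathfrak{i}_V$ and making sure that the definition of adapted vector is applied in $\End(V^\star)$ with the correct orthogonal $\psi^\bot \subseteq V^{\star\star}$. If one preferred to avoid relying on the primal lemma, the fallback would be to mimic its argument directly, using the operators $\varphi\otimes x$ to adjust a matrix so as to create three distinct eigenvalues via the Choice Lemma. The transposition route is much shorter, though.
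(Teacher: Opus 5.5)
\textbf{Gap: the argument is circular.} Your transposition bookkeeping is correct. $\calS^t$ is $2$-spec, $\mathfrak{i}_V(x)\otimes V^\star\subseteq\calS^t$, and a trace-zero rank-one $\psi\otimes y\in\calS$ makes $\psi$ a non-$\calS^t$-adapted vector of $V^\star$. The trouble is that in the paper the First Confinement Lemma has no proof of its own. It is obtained by exactly this transposition in the other direction, i.e.\ by applying \emph{this} dual version to $\calS^t$. So your argument only shows that the two versions are equivalent under $u\mapsto u^t$. The actual content, where $n\geq 3$ and $|\F|>2$ really do work, is never established. Your ``fallback'' is where the proof lives, and you leave it unexecuted.

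\textbf{The missing direct argument.} It is short, and you should supply it. Suppose $u=\varphi\otimes y\in\calS$ with $\varphi(y)=0$ and $\varphi(x)\neq 0$.
\begin{enumerate}[(1)]
\item Then $x$ and $u(x)=\varphi(x)\,y$ are linearly independent, and $u^2=\varphi(y)\,u=0$.
\item For every $\psi\in V^\star$, the operator $v:=u+\psi\otimes x$ lies in $\calS$ and has range in $P:=\Vect(x,y)$. Hence $P$ is $v$-invariant and $\rk v\leq 2<n$, so $0\in\Sp_\F(v)$. This is where $n\geq 3$ is used.
\item In the basis $(x,u(x))$ of $P$, the endomorphism induced by $v$ has matrix $\begin{bmatrix} a & b\\ 1 & 0\end{bmatrix}$ with $(a,b)=(\psi(x),\psi(u(x)))$. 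This pair ranges over all of $\F^2$ as $\psi$ varies.
\item Take distinct $t_1,t_2\in\F\setminus\{0\}$, which is possible because $|\F|>2$, and choose $\psi$ with $a=t_1+t_2$ and $b=-t_1t_2$. Then $\{0,t_1,t_2\}\subseteq\Sp_\F(v)$, contradicting the $2$-spec assumption.
\end{enumerate}
No Choice Lemma is needed. This $2\times 2$ computation, together with the forced eigenvalue $0$, is the whole proof, and it is what the paper does.
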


\begin{proof}[Proof of the dual version]
Assume on the contrary that $\calS$ contains a rank $1$ and trace zero operator $u=\varphi \otimes y$ such that $\varphi(x) \neq 0$.
Since $u$ has trace zero, we have $\varphi(y)=0$, and hence $x$ and $y$ are linearly independent, to the effect that $x$ and $u(x)$ are linearly independent.
Moreover $u^2(x)=\alpha u(x)$ for some $\alpha \in \F$. We extend the family $(x,u(x))$ into a whole basis
of $V$. The matrix of $u$ in this basis now looks as
$$M=\begin{bmatrix}
C(p) & [?]_{2 \times (n-2)} \\
[0]_{(n-2) \times 2} & [0]_{(n-2) \times (n-2)}
\end{bmatrix} \quad \text{where $p(t):=t^2-\alpha t$.}$$
Let $\psi \in V^\star$. Then $u+\psi \otimes x$, which belongs to $\calS$, has rank at most $2$, and hence has $0$ in its spectrum (recall that $n \geq 3$).
Hence the endomorphism of $\Vect(x,u(x))$ induced by $u+\psi \otimes x$ has at most one non-zero eigenvalue.
In varying $\psi$, we deduce that the matrix $\begin{bmatrix}
a & b \\
1 & \alpha
\end{bmatrix}$ has at most one nonzero eigenvalue for all $(a,b)\in \F^2$.
Choose distinct elements $t_1$ and $t_2$ in $\F \setminus \{0\}$. Setting $a:=t_1+t_2-\alpha$, we see that $b$ can be adjusted so that the characteristic polynomial
of $\begin{bmatrix}
a & b \\
1 & \alpha
\end{bmatrix}$ equals $(t-t_1)(t-t_2)$, resulting in a matrix with two distinct nonzero eigenvalues in $\F$. This contradiction completes the proof.
\end{proof}

\begin{proof}[Proof of the First Confinement Lemma]
The assumptions translate into $V^{\star \star} \otimes \varphi \subseteq \calS^t$, and $\calS^t$ is a $2$-spec linear subspace of $\End(V^\star)$.

Let $x \in V \setminus \{0\}$ be non-$\calS$-adapted. Then there exists a nonzero linear form $\psi$ such that $\psi \otimes x \in \calS$ and $\psi(x)=0$.
Hence $\mathfrak{i}_V(x) \otimes \psi$ is a trace zero and rank $1$ operator in $\calS^t$.
By the dual version of the First Confinement Lemma, the operator $\mathfrak{i}_V(x) \otimes \psi$ must then vanish at $\varphi$,
to the effect that $\varphi(x)=0$.
\end{proof}

\subsection{On hurdles}

\begin{lemma}[Splitting Lemma for Hurdles]\label{lemma:splittinghurdles}
Let $n \geq 3$ be an integer, $V$ be an $n$-dimensional vector space,
$G$ be a linear subspace of $V$ with codimension $2$, and
$\calS$ be a linear subspace of $\End(V)$. Assume that
$\varphi \otimes x \in \calS$ for all $\varphi \in G^o$ and all $x \in V$ such that $\varphi(x)=0$.
Assume finally that $\calS$ is $1^\star$-spec (respectively, $2$-spec).
Then, for every $u \in \calS$:
\begin{enumerate}[(a)]
\item $u$ leaves $G$ invariant;
\item The induced endomorphism $u_G$ has no nonzero eigenvalue in $\F$ (respectively, at most one eigenvalue in $\F$);
\item The induced endomorphism $u_G$ has no eigenvalue in $\F$ if the induced endomorphism $u^{V/G}$ of $V/G$ has its trace nonzero;
\item The induced endomorphism $u^{V/G}$ of $V/G$ has its trace zero whenever $u_G=0$.
\end{enumerate}
\end{lemma}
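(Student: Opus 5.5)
The plan is to work in a basis adapted to $G$ and use the hurdle part of $\calS$ to adjust freely the action of operators on a complement of $G$. Choose a basis $(e_1,\dots,e_n)$ of $V$ with $G=\Vect(e_1,\dots,e_{n-2})$. The assumption says exactly that $\calS$ contains every operator whose matrix is $\begin{bmatrix} 0 & B' \\ 0 & C'\end{bmatrix}$ with $B'\in \Mat_{n-2,2}(\F)$ arbitrary and $C'\in \mathfrak{sl}_2(\F)$. Write $u\in \calS$ as $\begin{bmatrix} A & B \\ D & E\end{bmatrix}$.

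Adding such elements lets us replace $B$ by anything, and replace $E$ by any matrix with the same trace $s:=\tr E$. In particular, for any monic quadratic $t^2-st+c$ we can make the $V/G$-block have that characteristic polynomial. Throughout, the counting fact used is $|\F|\geq 3$: once an element of $\F$ is prescribed, at most two values are forbidden.

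Point (a) is the main step. Suppose $D\neq 0$, and pick $x\in G$ with $u(x)\notin G$. Set $a:=u(x)$ and choose $b$ so that $(a,b)$ lifts a basis of $V/G$. Modifying $u$ by hurdle elements leaves $u(x)$ unchanged, because hurdle elements vanish on $G$. It also lets us prescribe $u'(a),u'(b)$ as arbitrary vectors of $W:=\Vect(x,a,b)$, subject only to one constraint: the sum of the $a$-coordinate of $u'(a)$ and the $b$-coordinate of $u'(b)$ must equal $s$. This is because the $V/G$ part of $u'$ has fixed trace.

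Then $W$ is $u'$-invariant. If we insist that $u'(a)$ has nonzero $b$-coordinate, the matrix of $u'_{|W}$ in the basis $(x,a,b)$ is a regular Hessenberg matrix with trace $s$. We can then apply the Choice Lemma (Lemma \ref{label:choicelemma}), with $p=1$, to this $3\times 3$ matrix. The entries it modifies are exactly the free coordinates of $u'(a),u'(b)$ along $x$, so it yields any monic cubic of trace $s$ as the characteristic polynomial of $u'_{|W}$. Its roots are eigenvalues of $u'$.

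We then choose the roots $\alpha,\beta,\gamma=s+\alpha+\beta$ in $\F$ as follows:
\begin{itemize}
\item In the $1^\star$-spec case, take $\alpha\neq\beta$, both nonzero.
\item In the $2$-spec case, take $\alpha\neq\beta$ with $\alpha,\beta\neq s$, which forces $\gamma\notin\{\alpha,\beta\}$.
\end{itemize}
Either way this contradicts the spectral assumption, so $D=0$. The delicate point is checking that the Choice Lemma's free entries really correspond to hurdle modifications; I expect this bookkeeping to be the only obstacle.

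For (b)–(d), now $D=0$ and the eigenvalues of $u$ are those of $A$ together with those of the $V/G$-block, which we may replace by any $2\times 2$ matrix of trace $s$.

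For (b), suppose first that $\calS$ is $1^\star$-spec and $A$ has a nonzero eigenvalue $\lambda$. Make the $V/G$-block have a root $\mu\notin\{0,\lambda\}$. Now suppose $\calS$ is $2$-spec and $A$ has two distinct eigenvalues $\lambda_1,\lambda_2$. Take $\mu\notin\{\lambda_1,\lambda_2\}$. Both give contradictions.

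For (c), with $s\neq 0$, the roots $\mu$ and $\mu+s$ are always distinct. In the $1^\star$-spec case, pick $\mu\notin\{0,s\}$: this yields two distinct nonzero eigenvalues, so $s\neq 0$ is impossible and (c) is vacuous. In the $2$-spec case, if $A$ has an eigenvalue $\lambda$, pick $\mu\notin\{\lambda,\lambda+s\}$. This gives three distinct eigenvalues $\lambda,\mu,\mu+s$, a contradiction.

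For (d), if $u_G=0$ then $0$ is an eigenvalue of $u$ since $G\neq\{0\}$. If $s\neq 0$, choose $\mu\notin\{0,s\}$; this gives eigenvalues $0,\mu,\mu+s$, which are three distinct values, two of them nonzero. That contradicts both the $1^\star$-spec and the $2$-spec assumptions, so $s=0$.
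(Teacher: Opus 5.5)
Your proof is correct, but for point (a) it takes a genuinely different route from the paper.

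\textbf{How the paper proves (a).} It first proves a dual statement: if a $2$-spec space contains all trace-zero operators with range in a $2$-dimensional subspace $P$, then $P$ is invariant. It then applies this to $\calS^t$ with $P:=G^\circ$. That dual proof is heavier, because there the perturbations move $u(x)$ for $x\in P$. It needs the Choice Lemma on a cyclic subspace $\F[u](x)$ of arbitrary dimension $d\geq 3$ to show that $x,u(x),u^2(x)$ are always dependent. It then needs a separate argument (coprime quadratics, then a rank-one case) to rule out $\pi\circ u_{|P}\neq 0$.

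\textbf{How you prove (a).} You stay in $V$ and use the fact that the hurdle perturbations vanish on $G$. So $u(x)$ is frozen for $x\in G$, while $u$ can be redefined freely on a complement of $G$ up to a single trace condition. This directly produces a $3$-dimensional invariant subspace $\Vect(x,u(x),b)$ with a regular Hessenberg matrix, and one application of the Choice Lemma finishes. The bookkeeping you worried about is fine. With $p=1$ the Choice Lemma only modifies the entries $(1,2)$ and $(1,3)$, i.e.\ the $x$-coordinates of $u'(a)$ and $u'(b)$, and these do not enter the trace constraint.

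\textbf{One small imprecision.} When $D\neq 0$, the trace of the lower-right block depends on the chosen complement of $G$. So the constant in your constraint for the decomposition $G\oplus\F a\oplus\F b$ need not equal the $\tr E$ from the original basis, and ``the $V/G$ part of $u'$'' is not yet defined. Either choose the basis with $e_{n-1}=a$, $e_n=b$ from the start, or simply note that only the fact that this constant is fixed matters.

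\textbf{Points (b)--(d).} Your argument is essentially the paper's. You treat all values of $s$ uniformly rather than splitting into $\tr(s(M))=0$ versus $\tr(s(M))\neq 0$. You also make the correct extra observation that in the $1^\star$-spec case $\tr(u^{V/G})=0$ always holds, so (c) is vacuous there.

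\textbf{Comparison.} The paper's route yields a reusable invariance statement for the dual configuration. Yours gives a shorter, transposition-free proof of exactly what is needed.
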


\begin{lemma}[Splitting Lemma for Hurdles (dual version)]
Let $n \geq 3$ be an integer, $V$ be an $n$-dimensional vector space,
$P$ be a $2$-dimensional linear subspace of $V$, and
$\calS$ be a $2$-spec linear subspace of $\End(V)$. Assume that
$\varphi \otimes x \in \calS$ for all $\varphi \in V^\star$ and all $x \in P$ such that $\varphi(x)=0$.
Then $P$ is $\calS$-invariant.
\end{lemma}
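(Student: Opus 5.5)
The plan is to deduce this statement from the Splitting Lemma for Hurdles (Lemma \ref{lemma:splittinghurdles}), point (a), by transposition, exactly as the First Confinement Lemma was deduced from its dual version. Consider the transposed space $\calS^t=\{u^t \mid u\in \calS\}\subseteq \End(V^\star)$. Since $u$ and $u^t$ have the same characteristic polynomial, $\calS^t$ is a $2$-spec linear subspace of $\End(V^\star)$, and $\dim V^\star=n\geq 3$.

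Next, we translate the hypothesis on $\calS$ into the hurdle-type hypothesis of Lemma \ref{lemma:splittinghurdles} for $\calS^t$. For $x\in P$ and $\varphi\in V^\star$ with $\varphi(x)=0$, we have $(\varphi\otimes x)^t=\mathfrak{i}_V(x)\otimes \varphi$. Put $G:=P^\ortho\subseteq V^\star$, which is a linear subspace of codimension $2$. Its annihilator in $V^{\star\star}=(V^\star)^\star$ is $G^\ortho=\mathfrak{i}_V(P)$, by the standard duality relations recalled above. The condition $\varphi(x)=0$ reads $\mathfrak{i}_V(x)(\varphi)=0$. Hence the assumption says precisely that $\Phi\otimes \varphi\in \calS^t$ for every $\Phi\in G^\ortho$ and every $\varphi\in V^\star$ with $\Phi(\varphi)=0$. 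This is the hypothesis of Lemma \ref{lemma:splittinghurdles} (in its $2$-spec version), applied to the space $V^\star$, the codimension-$2$ subspace $G$, and the operator space $\calS^t$.

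Point (a) of that lemma then yields $u^t(P^\ortho)\subseteq P^\ortho$ for every $u\in \calS$. It remains to dualize back. If $\varphi\in P^\ortho$ and $x\in P$, then $\varphi(u(x))=u^t(\varphi)(x)=0$, so $u(x)\in {}^\ortho(P^\ortho)=P$. Therefore $P$ is $\calS$-invariant.

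The only point needing real care is the bookkeeping of the biduality $\mathfrak{i}_V$: one must check that $(P^\ortho)^\ortho=\mathfrak{i}_V(P)$ and that the trace-zero condition is preserved. Both are routine.

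If one preferred not to rely on Lemma \ref{lemma:splittinghurdles} (for instance, if its proof itself used the present statement), I would argue directly. Suppose $u\in\calS$ and $y\in P$ satisfy $u(y)\notin P$. Build a basis beginning with a suitable vector so that, after adding elements $\varphi\otimes x$ with $x\in P$ and $\varphi(x)=0$, the matrix of $u$ plus such a correction becomes a regular Hessenberg block on a $3$-dimensional subspace spanned by $P$ and $u(y)$. Then use the Choice Lemma (Lemma \ref{label:choicelemma}), together with $|\F|>2$, to prescribe a characteristic polynomial with three distinct eigenvalues in $\F$, contradicting the $2$-spec property. The main obstacle in that direct route is controlling the trace constraint of the Choice Lemma with only trace-zero rank-one corrections. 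Since the transposition argument sidesteps this entirely, it is the route I would take.
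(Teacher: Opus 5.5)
Your transposition bookkeeping is correct: with $G=P^\ortho$ one has $G^\ortho=\mathfrak{i}_V(P)$ and $(\varphi\otimes x)^t=\mathfrak{i}_V(x)\otimes\varphi$. But this only shows that the statement is equivalent to point (a) of Lemma \ref{lemma:splittinghurdles}. In the paper the dependence runs the other way: point (a) of Lemma \ref{lemma:splittinghurdles} is proved precisely by applying the present dual version to $\calS^t$. Your main route is therefore circular. The real content, namely why the $2$-spec condition forces $u(P)\subseteq P$, is exactly what this lemma must supply. You anticipated this, but your fallback is only a sketch, and it does not work as written.

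First, $\Vect(P,u(y))$ is in general not invariant under $u$, nor under the corrections (which all map into $P$). So there is no $3\times 3$ block to feed to the Choice Lemma. The right object is the cyclic subspace $W=\F[u](x)$ of a single $x\in P$. The corrections $\varphi\otimes x$ with $\varphi(x)=0$ map into $\F x\subseteq W$. In the basis $(x,u(x),\dots,u^{d-1}(x))$ they fill the first row of the companion matrix, except the $(1,1)$ entry. So the Choice Lemma with $p=1$ reaches every monic degree-$d$ polynomial with the given trace. The trace is no obstacle, since for $d\geq 3$ such a polynomial with three distinct roots in $\F$ always exists.

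This step alone only shows that $x,u(x),u^2(x)$ are dependent for each $x\in P$, which does not exclude $u(x)\notin P$. Two further arguments are needed, and your sketch has neither. Let $\pi:V\twoheadrightarrow V/P$. If $\pi\circ u_{|P}$ has rank $2$, take a basis $(x,y)$ of $P$ and perturb $u$ by $\varphi\otimes x+\psi\otimes y$, with $\varphi,\psi$ vanishing on $P$. Choose them so that $x$ and $y$ get coprime local minimal polynomials $t^2-\alpha t$ and $t^2-\beta t-\gamma$. Then $x+y$ has a degree-$4$ local minimal polynomial, contradicting the previous step. If the rank is $1$, the previous step forces some $x\in P\setminus\{0\}$ with $u(x)=\alpha x$. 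Then $u+\varphi\otimes y$ with $\varphi(x)=1$ and $\varphi(y)=0$ keeps the rank equal to $1$ but sends $x$ to $\alpha x+y\notin\F x$. The same argument applied to this operator forbids that.
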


\begin{proof}[Proof of the dual version]
Let $u \in \calS$.
We assume first that there is a vector $x \in P$ such that $x,u(x),u^2(x)$ are linearly independent.
Then, we consider the invariant subspace $W:=\F[u](x)$, which has a basis of the form $(x,u(x),\dots,u^{d-1}(x))$ for some integer $d \geq 3$, and the matrix of $u_W$ in that basis
is the companion matrix of some polynomial $p \in \F[t]$ (monic with degree $d$).
When $\varphi$ ranges over $x^\bot$, the matrices of $(u+\varphi \otimes x)_W$ in the basis
$(x,u(x),\dots,u^{d-1}(x))$ range over all the matrices of the form
$C(p)+\begin{bmatrix}
0 & [?]_{1 \times (d-1)} \\
[0]_{(d-1) \times 1} & [0]_{(d-1) \times (d-1)}
\end{bmatrix}$. Hence, by the Choice Lemma, their characteristic polynomials range over all the monic polynomials of degree $d$
with trace $\tr(p)$. To obtain a contradiction, it suffices to observe that at least one of these polynomials has at least three distinct roots in $\F$,
which is easy: we take three distinct elements $t_1,t_2,t_3$ in $\F$, and we take the polynomial $q:=(t-\tr(p)+t_1+t_2+t_3)t^{d-4}(t-t_1)(t-t_2)(t-t_3)$
if $d \geq 4$, whereas if $d=3$ we take $(t-t_1+\alpha)(t-t_2+\alpha)(t-t_3+\alpha)$ where $\alpha=\frac{1}{3}\left((t_1+t_2+t_3)-\tr(p)\right)$.

Hence $x,u(x),u^2(x)$ are linearly dependent for all $x \in P$, and this holds whatever the choice of $u$.
Now, consider the projection $\pi : V \twoheadrightarrow V/P$ and the composite $\pi \circ u_{|P} : P \rightarrow V/P$.
Assume first that $\pi \circ u_{|P}$ has rank $2$. Hence, by taking a basis $(x,y)$ of $P$, we find that $x,y,u(x),u(y)$ are linearly independent.
Then by the above $u^2(x)=\alpha u(x)+\lambda x$ and $u^2(y)=\beta u(y)+\mu y$ for some scalars $\alpha,\beta,\lambda,\mu$.
We choose $\gamma \in \F$ such that $\gamma \neq 0$ and $\gamma \neq \alpha^2-\alpha \beta$, to the effect that the polynomials $t^2-\alpha t$ and $t^2-\beta t-\gamma$
are relatively prime.
Then, we can adjust $\varphi$ and $\psi$ in $V^\star$ so that both $\varphi$ and $\psi$ vanish at $x$ and $y$, while requiring that
$\varphi(u(x))=-\lambda$, $\varphi(u(y))=0$, $\psi(u(x))=0$ and $\psi(u(y))=\gamma-\mu$. Hence $v:=u+\varphi\otimes x+\psi \otimes y$
satisfies $v(x)=u(x)$, $v^2(x)=\alpha u(x)=\alpha v(x)$, $v(y)=u(y)$ and $v^2(y)=\beta u(y)+\gamma y=\beta v(y)+\gamma y$.
Then classically, since $t^2-\alpha t$ and $t^2-\beta t-\gamma$ are relatively prime, the vector $x+y$ has minimal polynomial
$(t^2-\alpha t)(t^2-\beta t-\gamma)$ with respect to $v$, and in particular $x+y,v(x+y),v^2(x+y)$ are linearly independent.
This contradicts the previous step of proof since $v \in \calS$.

We deduce that $\pi \circ u_{|P}$ has rank at most $1$, which yields a vector $x \in P \setminus \{0\}$ such that $u(x) \in P$.
Assume finally that $\pi \circ u_{|P}$ has rank $1$. If $x,u(x)$ were linearly independent, then as $u^2(x) \in \Vect(x,u(x))$
we would deduce that $P$ is invariant under $u$, contradicting the fact that $\pi \circ u_{|P}$ has rank $1$.
Hence $u(x)=\alpha x$ for some $\alpha \in \F$. Then we extend $x$ into a basis $(x,y)$ of $P$.
Take finally an arbitrary linear form $\varphi$ on $V$ such that $\varphi(y)=0$ and $\varphi(x)=1$.
Then $v:=u+\varphi \otimes y$ belongs to $\calS$, $\pi \circ v_{|P}=\pi \circ u_{|P}$ has rank $1$ and $v(x) \in P$.
Applying the previous result to $v$, we deduce that $v(x) \in \F x$, and hence $y=v(x)-u(x) \in \F x$.
This is absurd, and we conclude that $\pi \circ u_{|P}=0$, which means that $P$ is invariant under $u$.
\end{proof}

\begin{proof}[Proof of the Splitting Lemma for Hurdles]
Applying the dual version to $\calS^t$ yields that the latter leaves $G^\circ$ invariant, and hence
$\calS$ leaves $G$ invariant.
Now, we use matrix representations to simplify the discourse: we represent $\calS$ by a matrix space $\calM$ in a basis
of $V$ whose first $n-2$ vectors constitute a basis of $G$.
Hence every matrix in $\calM$ has the form
$$M=\begin{bmatrix}
K(M) & [?]_{(n-2) \times 2} \\
[0]_{2 \times (n-2)} & s(M)
\end{bmatrix} \quad \text{with $K(M) \in \Mat_{n-2}(\F)$ and $s(M) \in \Mat_2(\F)$,}$$
with $K(M)$ and $s(M)$ representing, respectively, the induced endomorphisms $u_G$ and $u^{V/G}$.
The starting assumptions on $\calS$ yield that $\calM \supseteq \{0_{n-2}\} \vee \mathfrak{sl}_2(\F)$.

Let now $M \in \calM$.
We distinguish between two cases :
\begin{itemize}
\item Assume first that $\tr(s(M))=0$. Then, for every $\lambda \in \F$ we can add an appropriate matrix of $\{0_{n-2}\} \vee \mathfrak{sl}_2(\F)$ to $M$ and obtain that
$\calM$ contains
$\begin{bmatrix}
K(M) & [0]_{(n-2) \times 2} \\
[0]_{2 \times (n-2)} & \lambda I_2
\end{bmatrix}$, which yields that $K(M)$ has no nonzero eigenvalue in $\F$ (respectively, at most one eigenvalue in $\F$).

\item Assume next that $\tr(s(M)) \neq 0$, and set $\alpha:=\tr(s(M))$. Assume also that $K(M)$ has an eigenvalue $b$ in $\F$.
Choose $\lambda \in \F \setminus \{b,\alpha+b\}$,
Then, by adding an appropriate matrix of
$\{0_{n-2}\} \vee \mathfrak{sl}_2(\F)$, we gather that $\calM$ contains
$\begin{bmatrix}
K(M) & [0]_{(n-2) \times 2} \\
[0]_{2 \times (n-2)} & D
\end{bmatrix}$
where $D=\begin{bmatrix}
\alpha+\lambda & 0 \\
0 & \lambda
\end{bmatrix}$. Clearly, this matrix has at least three distinct eigenvalues in $\F$, which contradicts the assumption that $\calM$
is $2$-spec. This proves point (c), and in particular it yields point (d) (since the zero matrix of $\Mat_{n-2}(\F)$ has an eigenvalue in $\F$).
\end{itemize}
\end{proof}

For $\overline{1}^\star$-spec and $\overline{2}$-spec subspaces, the adaptation of the proof is straightforward, and one obtains the following result:

\begin{lemma}[Splitting Lemma for Hurdles (bar version)]
Let $n \geq 3$ be an integer, $V$ be an $n$-dimensional vector space,
$G$ be a linear subspace of $V$ with codimension $2$, and
$\calS$ be a linear subspace of $\End(V)$. Assume that
$\varphi \otimes x \in \calS$ for all $\varphi \in G^o$ and all $x \in V$ such that $\varphi(x)=0$.
Assume finally that $\calS$ is $\overline{1}^\star$-spec (respectively, $\overline{2}$-spec).
Then, for every $u \in \calS$:
\begin{enumerate}[(i)]
\item $u$ leaves $G$ invariant;
\item The induced endomorphism $u_G$ is nilpotent (respectively, has exactly one eigenvalue in $\Fbar$);
\item The induced endomorphism $u^{V/G}$ of $V/G$ has its trace zero.
\end{enumerate}
\end{lemma}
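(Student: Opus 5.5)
The plan is to follow the proof of the Splitting Lemma for Hurdles, working over $\Fbar$ wherever that is needed.

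\emph{Step (i).} A $\overline{1}^\star$-spec or $\overline{2}$-spec space is in particular $2$-spec, because $\Sp_\F(x)\subseteq \Sp_{\Fbar}(x)$. So I would apply the dual version of the Splitting Lemma to $\calS^t$, which is $2$-spec, with the $2$-dimensional subspace $G^\ortho$ of $V^\star$. The tensor hypothesis transfers because $(\varphi\otimes x)^t=\mathfrak{i}_V(x)\otimes\varphi$. We get that $G^\ortho$ is $\calS^t$-invariant, hence $G$ is $\calS$-invariant, exactly as in the original proof.

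\emph{Step (iii).} Represent $\calS$ by $\calM$ in a basis adapted to $G$, with $\calM\supseteq\{0_{n-2}\}\vee\mathfrak{sl}_2(\F)$ and blocks $K(M)$, $s(M)$. Let $M\in\calM$ and suppose $\alpha:=\tr s(M)\neq 0$. For every $\lambda\in\F$ we can add an element of $\{0_{n-2}\}\vee\mathfrak{sl}_2(\F)$ so that the lower block becomes $D=\operatorname{diag}(\alpha+\lambda,\lambda)$ while $K(M)$ is unchanged; the off-diagonal block does not affect the spectrum. The resulting matrix has spectrum $\Sp_{\Fbar}(K(M))\cup\{\lambda,\lambda+\alpha\}$.
\begin{itemize}
\item In the $\overline{2}$-spec case, fix an eigenvalue $b\in\Fbar$ of $K(M)$ (it exists since $n-2\geq 1$). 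Because $|\F|>2$, we can choose $\lambda\in\F\setminus\{b,b+\alpha\}$. This gives three distinct eigenvalues in $\Fbar$, a contradiction.
\item In the $\overline{1}^\star$-spec case, choose $\lambda\in\F\setminus\{0,\alpha\}$, again possible since $|\F|>2$. Then $\lambda$ and $\lambda+\alpha$ are two distinct nonzero eigenvalues, a contradiction.
\end{itemize}
Hence $\tr s(M)=0$ for all $M\in\calM$.

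\emph{Step (ii).} Since $\tr s(M)=0$, adding an element of $\{0_{n-2}\}\vee\mathfrak{sl}_2(\F)$ turns the lower block into $\lambda I_2$ for any $\lambda\in\F$. The spectrum is then $\Sp_{\Fbar}(K(M))\cup\{\lambda\}$.
\begin{itemize}
\item In the $\overline{1}^\star$-spec case, take $\lambda\neq 0$. Then $K(M)$ can have no nonzero eigenvalue in $\Fbar$ other than $\lambda$. Doing this for two distinct nonzero values of $\lambda$, available since $|\F|>2$, forces $\Sp_{\Fbar}(K(M))\subseteq\{0\}$, i.e.\ $K(M)$ is nilpotent.
\item In the $\overline{2}$-spec case, suppose $K(M)$ has two distinct eigenvalues $b,c\in\Fbar$. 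Any $\lambda\in\F\setminus\{b,c\}$ then gives three distinct eigenvalues, and such a $\lambda$ exists since $|\F|\geq 4$ (a field of characteristic $2$ with more than two elements). Hence $K(M)$ has exactly one eigenvalue in $\Fbar$.
\end{itemize}

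The main point needing care is that (iii) must be established before (ii), and that both steps need a scalar avoiding two prescribed values. This is where $|\F|>2$ is used, together with the fact that $|\F|\geq 4$ in characteristic $2$.
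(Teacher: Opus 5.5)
Your proof is correct and follows the paper's approach. The paper only says that the proof of the Splitting Lemma for Hurdles adapts straightforwardly: invariance of $G$ comes from the dual version applied to $\calS^t$, and one then adds elements of $\{0_{n-2}\}\vee\mathfrak{sl}_2(\F)$ to reach lower blocks $\lambda I_2$ or $\operatorname{diag}(\alpha+\lambda,\lambda)$. It rules out $\tr(u^{V/G})\neq 0$ precisely because $u_G$ always has an eigenvalue in $\Fbar$, which is your argument for (iii); your direct handling of the $\overline{1}^\star$ case there, using the two distinct nonzero eigenvalues $\lambda$ and $\lambda+\alpha$, is a valid shortcut.
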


Here the possibility of having $\tr(u^{V/G}) \neq 0$ is ruled out because it would imply that $u_G$ has no eigenvalue in $\Fbar$.
This lemma will not be used in the present article, but will be required in a future analysis of the optimal spaces.

We finish with consequences of these lemmas on bounding the dimension of hurdles.

\begin{lemma}[Dimension inequality for hurdles]\label{lemma:ineqdimhurdles}
Let $\calS \subseteq \End(V)$ be a hurdle, with $n:=\dim V \geq 3$.
\begin{enumerate}[(a)]
\item If $\calS$ is $1^\star$-spec then $\dim \calS \leq \dbinom{n}{2}+2$.
\item If $\calS$ is $2$-spec and $n \neq 4$ then $\dim \calS \leq \dbinom{n}{2}+3$.
\item If $\calS$ is $2$-spec and $n = 4$ then $\dim \calS \leq \dbinom{n}{2}+4$.
\end{enumerate}
\end{lemma}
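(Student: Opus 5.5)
By the Splitting Lemma for Hurdles, every $u\in\calS$ leaves $G$ invariant. So in a basis adapted to $G$, the matrix space $\calM$ representing $\calS$ sits inside block upper-triangular matrices of the form
$$\begin{bmatrix} K(M) & [?]_{(n-2)\times 2} \\ [0]_{2\times(n-2)} & s(M)\end{bmatrix}.$$
The plan is to bound $\dim\calS$ by the rank theorem applied to $M\mapsto (K(M),s(M))$. The kernel is contained in the space of upper-right blocks, of dimension $2(n-2)$. Hence
$$\dim\calS\le 2(n-2)+\dim\{(K(M),s(M)) \mid M\in\calM\}.$$
Next, $\{(K(M),s(M))\}$ contains $\{0\}\times\mathfrak{sl}_2(\F)$. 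So it suffices to bound the dimension of the projection $\calK:=\{K(M)\mid M\in\calM\}$ and to control the trace of $s(M)$. The target is $\binom{n}{2}=2(n-2)+\binom{n-2}{2}+1$, so the bounds (a), (b), (c) amount to $\dim\{(K,s)\}\le \binom{n-2}{2}+4$, $+5$, $+6$ respectively.

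\emph{Case (a).} By point (d) of the Splitting Lemma, if $K(M)=0$ then $\tr s(M)=0$. Thus the space of pairs has dimension $\dim\calK+3$. By point (b), $\calK$ is a $0^\star$-spec subspace of $\Mat_{n-2}(\F)$, so Theorem \ref{theo:0starspec} gives $\dim\calK\le\binom{n-2}{2}$ (trivially also for $n-2\le 1$). This yields $\dim\calS\le 2(n-2)+\binom{n-2}{2}+3=\binom{n}{2}+2$.

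\emph{Cases (b), (c).} Again by point (d), the pair space has dimension $\dim\calK+3$. By point (b), $\calK$ is a $1$-spec subspace of $\Mat_{n-2}(\F)$. For $n-2\ge 1$ with $n-2\neq 2$, Theorem \ref{theo:1spec} gives $\dim\calK\le\binom{n-2}{2}+1$, hence $\dim\calS\le\binom{n}{2}+3$. For $n=4$, Theorem \ref{theo:1spec} (with $\car\F=2$) gives $\dim\calK\le 3$, hence $\dim\calS\le 4+3+3=10=\binom42+4$.

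The one place needing care is $n=3$, where $\calK\subseteq\Mat_1(\F)$ is trivially $1$-spec with $\dim\le 1=\binom12+1$; the same counting still works. The step I would double-check is the kernel claim: the kernel of $M\mapsto (K(M),s(M))$ consists exactly of matrices supported in the upper-right block, which is immediate from the block form. I expect no real obstacle here; all the substance is in the Splitting Lemma, which is already established.
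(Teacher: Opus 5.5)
Your argument is correct and is essentially the paper's proof: the paper uses the single map $u\mapsto u_G$, bounds its kernel by $3+2(n-2)$ via point (d) of the Splitting Lemma, and bounds its image via Theorem \ref{theo:0starspec} or Theorem \ref{theo:1spec}; this is your two-step count through the pairs $(K(M),s(M))$ merged into one. One harmless slip: the targets you announce for the pair space should read $\binom{n-2}{2}+3$, $+4$, $+5$ rather than $+4$, $+5$, $+6$, but your final computation in each case is right.
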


\begin{proof}
Let us take a subspace $G$ of codimension $2$ of $V$ such that
$\varphi \otimes x \in \calS$ for all $\varphi \in G^o$ and all $x \in V$ such that $\varphi(x)=0$.
We apply Lemma \ref{lemma:splittinghurdles}.

Assume that $\calS$ is $2$-spec. We consider the mapping $\Phi : u \in \calS \mapsto u_G \in \End(G)$.
By point (d) of Lemma \ref{lemma:splittinghurdles}, every $u \in \Ker (\Phi)$ satisfies $\tr(u^{V/G})=0$, which yields
$\dim(\Ker \Phi) \leq 3+2(n-2)$.

Moreover $\calT=\Phi(\calS)$ is a $1$-spec subspace of $\End(G)$.
Unless $n-2=2$, we know from Theorem \ref{theo:1spec} that $\dim \calT \leq \dbinom{n-2}{2}+1$, and if $n-2=2$ we have $\dim \calT \leq \dbinom{n-2}{2}+2$.
Hence, if $n \neq 4$ then
$$\dim \calS \leq 3+2(n-2)+\dim \calT \leq \dbinom{n}{2}+3,$$
and if $n=4$ then likewise we obtain $\dim \calS \leq \dbinom{n}{2}+4$.

Assume finally that $\calS$ is $1^\star$-spec. Then $\calT$ is $0^\star$-spec and we directly know
from Theorem \ref{theo:0starspec} that $\dim \calT \leq \dbinom{n-2}{2}$, which yields $\dim \calS \leq \dbinom{n}{2}+2$.
\end{proof}

\section{The dimension inequality for fields with large cardinality}\label{section:largefields}

\subsection{The case $n=2$}

Here we solve the case $n=2$, both for the dimension inequality and the structure of optimal spaces, for $1^\star$-spec spaces.

\begin{lemma}\label{lemma:generatesln}
The space $\mathfrak{sl}_n(\F)$ is spanned by its rank $1$ matrices.
\end{lemma}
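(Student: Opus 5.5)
The plan is to show that the rank $1$ matrices of $\mathfrak{sl}_n(\F)$ already span a subspace that contains an explicit basis of $\mathfrak{sl}_n(\F)$. Denote by $E_{i,j}$ the standard elementary matrices. For $i \neq j$, the matrix $E_{i,j}$ has rank $1$ and trace $0$, so all off-diagonal elementary matrices lie in the span of the rank $1$ elements of $\mathfrak{sl}_n(\F)$. It then remains only to recover a basis of the trace-zero diagonal matrices, for instance the matrices $E_{i,i}-E_{i+1,i+1}$ for $i \in \lcro 1,n-1\rcro$. Together with the $E_{i,j}$ with $i\neq j$, these form a basis of $\mathfrak{sl}_n(\F)$, since $n(n-1)+(n-1)=n^2-1$.

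To obtain $E_{i,i}-E_{i+1,i+1}$, we work inside the $2$-by-$2$ block indexed by $\{i,i+1\}$ and exhibit a rank $1$ trace-zero matrix whose diagonal is $(1,-1)$. We take the column vector $y:=e_i+e_{i+1}$ and the linear form $\varphi:=e_i^\star-e_{i+1}^\star$. Then $\varphi(y)=0$, so $\varphi \otimes y$ has rank $1$ and trace zero. Its matrix is
$$E_{i,i}-E_{i,i+1}+E_{i+1,i}-E_{i+1,i+1}.$$
Subtracting the rank $1$ trace-zero matrices $-E_{i,i+1}$ and $E_{i+1,i}$ then yields $E_{i,i}-E_{i+1,i+1}$ in the span. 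This identity holds over every field, including those with characteristic $2$, where it reads $E_{i,i}+E_{i,i+1}+E_{i+1,i}+E_{i+1,i+1}$ minus the off-diagonal terms. Hence no characteristic assumption is needed.

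There is no real obstacle here. The only point requiring attention is to check that the construction does not silently rely on $\car(\F)\neq 2$, for instance by dividing by $2$. The choice above uses only $\pm 1$ coefficients and no division, so it is valid for every field, and the lemma follows.
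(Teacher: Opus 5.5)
Your proof is correct and is essentially the paper's argument: the paper also combines the off-diagonal $E_{i,j}$ with explicit rank $1$ trace-zero matrices, namely $E_{1,1}+E_{1,i}-E_{i,1}-E_{i,i}$ for $i \in \lcro 2,n\rcro$. The only difference is that the paper pivots on index $1$ where you use consecutive indices, and both constructions involve no division, so neither depends on the characteristic.
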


\begin{proof}
This result actually holds for all linear hyperplanes of $\Mat_n(\F)$ (see e.g., \cite{Azoff}, which connects this property to the classical fact that
any $1$-dimensional operator space is algebraically reflexive) but we give an elementary proof.
It suffices to note that the system consisting of the $E_{i,j}$ matrices with distinct $i,j$ in $\lcro 1,n\rcro$ on the one hand,
and the matrices $E_{1,1}+E_{1,i}-E_{i,1}-E_{i,i}$, with $i \in \lcro 2,n\rcro$ on the other hand, spans the vector space
$\mathfrak{sl}_n(\F)$. Obviously, all these matrices have rank $1$.
\end{proof}

\begin{prop}\label{prop:1starn=2}
Assume that $|\F|>2$.
Let $\calM$ be a $1^\star$-spec subspace of $\Mat_2(\F)$.
Then $\dim \calM \leq 3$, and equality holds only if $\calM=\mathfrak{sl}_2(\F)$.
\end{prop}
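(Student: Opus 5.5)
First I show that $\calM \neq \Mat_2(\F)$, and then I show that a $3$-dimensional $1^\star$-spec subspace must be $\mathfrak{sl}_2(\F)$. Since $|\F|>2$, there is $\lambda \in \F\setminus\{0,1\}$. The matrix $\mathrm{diag}(1,\lambda)$ has two distinct nonzero eigenvalues in $\F$, so $\Mat_2(\F)$ is not $1^\star$-spec. Hence $\dim \calM \leq 3$.

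Now assume $\dim\calM=3$, so $\calM$ is a hyperplane of $\Mat_2(\F)$. Then there is a nonzero $A\in\Mat_2(\F)$ with $\calM=\{M : \tr(AM)=0\}$, and the claim is that $A$ is a nonzero scalar multiple of $I_2$, i.e.\ $\calM=\mathfrak{sl}_2(\F)$. I will argue by contradiction, assuming $A\notin \F I_2$. Conjugating $\calM$ corresponds to conjugating $A$, so I may reduce $A$ to a convenient form. A non-scalar $2\times 2$ matrix is cyclic, so $A$ is similar to a companion matrix $\begin{bmatrix}0 & a\\ 1 & b\end{bmatrix}$. Since $\calM$ is only known up to this similarity, what I must show is that the hyperplane $\{M:\tr(AM)=0\}$ contains a matrix with two distinct nonzero eigenvalues in $\F$. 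That property is invariant under similarity, so it can be checked after the reduction.

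For the explicit witness, take $M=\begin{bmatrix}x & y\\ z & w\end{bmatrix}$. Then $\tr(AM)=z+ay+bw$. Setting $y=w=0$ and $z=0$ makes the condition hold, and leaves $M=\mathrm{diag}(x,0)$, which has only one nonzero eigenvalue; so I need a slightly better choice. I choose $y=0$, which makes $M$ lower triangular with eigenvalues $x$ and $w$. The constraint becomes $z=-bw$, and this can always be met by choosing $z$. So taking $x=1$, $w=\lambda$, $y=0$, $z=-b\lambda$ gives a matrix in $\calM$ with the two distinct nonzero eigenvalues $1$ and $\lambda$, which contradicts the $1^\star$-spec property. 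Therefore $A$ is scalar and $\calM=\mathfrak{sl}_2(\F)$.

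The only real point is the reduction step: a non-scalar $A$ can be brought to a form whose $(1,2)$ entry of $A$ is zero, i.e.\ the coefficient of $z$ in $\tr(AM)$ is nonzero, so that lower triangular $M$ can satisfy the linear constraint with arbitrary diagonal. The cyclic/companion form guarantees this, since the coefficient of $z$ is the $(1,2)$ entry of $A$, which equals $1$. Conversely, when $A$ is scalar, $\mathfrak{sl}_2(\F)$ is indeed $1^\star$-spec in characteristic $2$: a trace-zero $2\times 2$ matrix has characteristic polynomial $t^2+\det$, which has at most one root. Equality in the bound is therefore attained.
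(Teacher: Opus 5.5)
Your strategy is sound: write the hyperplane as $\calM=\{M:\tr(AM)=0\}$, reduce a non-scalar $A$ to companion form, and exhibit a triangular witness. However, the computation at its core is wrong. For $A=\begin{bmatrix}0&a\\1&b\end{bmatrix}$ and $M=\begin{bmatrix}x&y\\z&w\end{bmatrix}$ one has $\tr(AM)=\sum_{i,j}A_{i,j}M_{j,i}=az+y+bw$, not $z+ay+bw$. The coefficient of $z=M_{2,1}$ is $A_{1,2}=a$, and it is the coefficient of $y=M_{1,2}$ that equals $A_{2,1}=1$; your last paragraph conflates these entries. Hence your witness $\begin{bmatrix}1&0\\-b\lambda&\lambda\end{bmatrix}$ gives $\tr(AM)=b\lambda(1-a)$, which is generally nonzero, so it need not lie in $\calM$.

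Nor can a lower-triangular witness be rescued by another choice of $z$. If $a=0\neq b$, e.g.\ $A=\begin{bmatrix}0&0\\1&1\end{bmatrix}$, then every lower-triangular $M\in\calM$ satisfies $bw=0$, hence $w=0$, and so has at most one nonzero eigenvalue.

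The repair is immediate: take $M$ upper triangular. With $z=0$ the constraint becomes $y+bw=0$, so $\begin{bmatrix}1&-b\lambda\\0&\lambda\end{bmatrix}\in\calM$. This matrix has the distinct nonzero eigenvalues $1$ and $\lambda$, and the resulting contradiction forces $A\in\F^\times I_2$, i.e.\ $\calM=\mathfrak{sl}_2(\F)$. Equivalently, you could conjugate $A$ to the transpose $\begin{bmatrix}0&1\\a&b\end{bmatrix}$ of the companion matrix, for which your formula is correct.

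So corrected, your proof is complete and takes a different route from the paper's. The paper proves, for each $x\neq 0$, that $x^\bot\otimes x\subseteq\calS$, by analysing the evaluation map $u\mapsto u(x)$ and using an upper-triangular witness in a basis $(x,y)$. It then concludes with Lemma~\ref{lemma:generatesln}, which says the rank $1$ matrices span $\mathfrak{sl}_n(\F)$. Your trace-duality route is shorter and avoids that spanning lemma, since a hyperplane is determined by a single matrix $A$. The paper's route stays within the adapted-vector framework of the article, and shows along the way that no nonzero vector is $\calS$-adapted.
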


\begin{proof}
Consider an arbitrary $2$-dimensional vector space $V$ and a $1^\star$-spec subspace $\calS$ of $\End(V)$.
As there are more than $2$ elements of $\F$, it is clear that $\calS \neq \End(V)$.
Hence $\dim \calS \leq 3$.

Assume now that $\dim \calS=3$.
Let $x \in V \setminus\{0\}$. We wish to prove that $\calS$ contains
a trace zero endomorphism with range $\F x$ (in which case it contains all such endomorphisms).

Assume first that $\hat{x} : u \in \calS \mapsto u(x)\in V$ is non-surjective. Then its kernel has dimension $2$.
Yet this kernel is included in the $2$-dimensional space $\varphi \otimes V$ where $\varphi \in x^\bot \setminus \{0\}$.
Hence these spaces are equal, and we recover $\varphi \otimes x \in \calS$.

Assume next that $\hat{x}$ is surjective.
Take $u_0 \in \calS \setminus \{0\}$ such that $u_0(x)=0$. Hence $u_0$ has rank $1$, to the effect that $\im u_0$ is spanned by an eigenvector of $u_0$. Hence either $x \in \im u_0$, in which case we are done, or $\F x \oplus \im u_0=V$. Assume now that the second case holds, and take $y \in \im u_0 \setminus \{0\}$. Then $u_0(y)=\alpha y$ for some $\alpha \in \F^\times$.
Now, since $\hat{x}$ is surjective we can pick $u_1 \in \calS$ such that $u_1(x)=x$. The respective matrices $M_1$ and $M_0$ of $u_1$ and $u_0$ in $(x,y)$ then have the form
$$M_1=\begin{bmatrix}
1 & ? \\
0 & \beta
\end{bmatrix} \quad \text{and} \quad M_0=\begin{bmatrix}
0 & 0 \\
0 & \alpha
\end{bmatrix} \quad \text{for some $\beta \in \F$.}$$
Since $|\F|>2$, we can pick $\gamma \in \F \setminus \{\alpha^{-1} \beta,\alpha^{-1}(1+\beta)\}$ and note that $M_1+\gamma M_0$ has spectrum
$\{1,\beta+\alpha\gamma\}$, with $\beta+\alpha \gamma \not\in \{0,1\}$.
Therefore $M_1+\gamma M_0$ has two distinct nonzero eigenvalues in $\F$, which is impossible because $u_1+\gamma u_0$ belongs to $\calS$.
Hence in any case we have shown that $x^\bot \otimes x \subseteq \calS$.

By varying $x$, we conclude that $\calS$ contains all the trace zero and rank $1$ endomorphisms of $V$, and by Lemma \ref{lemma:generatesln} it follows that
$\mathfrak{sl}(V) \subseteq \calS$. As the dimensions are equal we conclude that $\calS=\mathfrak{sl}(V)$.
The matrix analogue of the result follows immediately.
\end{proof}

\subsection{Proof of the dimension inequality for $1^\star$-spec spaces}\label{section:endproofinfinite1starspec}

Now, we prove Theorem \ref{theorem:maintheodim1starspec} by induction on $n$ for fields with large cardinality.
The case $n=2$ has just been dealt with in the previous section.
Thus, we take an integer $n \geq 3$ such that $|\F| \geq n$, and we assume that for every $k \in \lcro 2,n-1\rcro$
and every $k$-dimensional vector space $V$ over $\F$, every $1^\star$-spec linear subspace of $\End(V)$
has dimension at most $\dbinom{k}{2}+2$.

Throughout, we let $V$ be an $n$-dimensional vector space over $\F$, and $\calS$ be a $1^\star$-spec linear subspace of $\End(V)$.
We will prove that $\dim \calS \leq \dbinom{n}{2}+2$.

To simplify things, we will perform a \emph{reductio ad absurdum}, by assuming that $\dim \calS \geq \dbinom{n}{2}+3$.
Hence we can extract a linear subspace of $\calS$ with dimension $\dbinom{n}{2}+3$; of course such a subspace remains $1^\star$-spec.
Therefore, in the remainder of the proof we will simply assume that $\dim \calS=\dbinom{n}{2}+3$ and seek to find a contradiction.

Applying point (a) of Lemma \ref{lemma:ineqdimhurdles}, we readily find:

\begin{claim}\label{claim:nohurdle}
The space $\calS$ is not a hurdle.
\end{claim}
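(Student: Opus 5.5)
The claim follows directly from point (a) of Lemma \ref{lemma:ineqdimhurdles}. We argue by contradiction and suppose that $\calS$ is a hurdle.

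The lemma applies because the running assumptions give $n\geq 3$, and $\calS$ is a $1^\star$-spec linear subspace of $\End(V)$ with $\dim V=n$. Point (a) then yields
$$\dim \calS \leq \dbinom{n}{2}+2.$$

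In the present reductio we have arranged that $\dim \calS=\dbinom{n}{2}+3$. This exceeds the bound above, which is a contradiction, so $\calS$ is not a hurdle.

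There is no real obstacle. The substance lies in Lemma \ref{lemma:ineqdimhurdles}, which rests on the Splitting Lemma for Hurdles together with Theorem \ref{theo:0starspec} applied to the induced space on $G$. The only point to check is that its hypotheses, $n\geq 3$ and the $1^\star$-spec property, hold here. They do: $\calS$ was extracted as a subspace of a $1^\star$-spec space, and that property passes to subspaces.
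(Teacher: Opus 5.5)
Your proof is correct and is exactly the paper's argument: point (a) of Lemma \ref{lemma:ineqdimhurdles} bounds a $1^\star$-spec hurdle by $\dbinom{n}{2}+2$ when $n \geq 3$. That contradicts the reductio assumption $\dim \calS=\dbinom{n}{2}+3$.
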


Next:

\begin{claim}\label{claim:noadapted}
The space $\calS$ has no adapted vector.
\end{claim}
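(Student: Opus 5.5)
We argue by contradiction: suppose $x \in V\setminus\{0\}$ is $\calS$-adapted, and run the inductive computation from Section \ref{section:strategy} with the assumption $\dim \calS=\dbinom{n}{2}+3$ in place of the induction conclusion.

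Consider $\calS_x=\{u\in\calS : u(x)\in\F x\}$ and the induced space $\overline{\calS_x}\subseteq \End(V/\F x)$. The quotient $V/\F x$ has dimension $n-1\geq 2$, and $\overline{\calS_x}$ is $1^\star$-spec. By the rank theorem, $\dim\calS \leq \dim\calS_x+(n-1)$. Moreover,
$$\dim \calS_x=\dim\overline{\calS_x}+\dim\bigl(\calS\cap(V^\star\otimes x)\bigr).$$
Since $x$ is adapted, $\calS\cap(x^\bot\otimes x)=\{0\}$. As $x^\bot \otimes x$ is a hyperplane of $V^\star\otimes x$, this forces $\dim(\calS\cap(V^\star\otimes x))\leq 1$. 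We now split into two cases.

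\emph{First case:} $\calS\cap(V^\star\otimes x)=\{0\}$. The induction hypothesis (applicable since $2\leq n-1$) gives $\dim\overline{\calS_x}\leq \dbinom{n-1}{2}+2$. Hence
$$\dim\calS\leq \dbinom{n-1}{2}+2+(n-1)=\dbinom{n}{2}+2,$$
a contradiction.

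\emph{Second case:} $\calS$ contains some $\varphi\otimes x$ with $\varphi(x)\neq 0$. Take $u\in\calS_x$ with $u(x)=\alpha x$. Since $x$ is an eigenvector of $u$ for $\alpha$, every eigenvalue of $\overline u$ in $\F$ is an eigenvalue of $u$. Because $u$ has at most one nonzero eigenvalue in $\F$, $\overline u$ has no nonzero eigenvalue in $\F$ other than possibly $\alpha$. Now replace $u$ by $u+\lambda\,\varphi\otimes x$: this leaves $\overline u$ unchanged and moves the eigenvalue at $x$ to $\alpha+\lambda\varphi(x)$, which takes every value in $\F$. Choosing $\lambda$ so that $\alpha+\lambda\varphi(x)=0$ shows that $\overline u$ has no nonzero eigenvalue in $\F$ at all. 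Thus $\overline{\calS_x}$ is $0^\star$-spec, and Theorem \ref{theo:0starspec} gives $\dim\overline{\calS_x}\leq\dbinom{n-1}{2}$. Therefore
$$\dim\calS\leq \dbinom{n-1}{2}+1+(n-1)=\dbinom{n}{2}+1,$$
again a contradiction.

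The only point needing care is the eigenvalue argument in the second case, namely that an $\F$-eigenvalue of $\overline u$ lifts to an $\F$-eigenvalue of $u$. This holds because the characteristic polynomial of $u$ factors as $(t-\alpha)\chi_{\overline u}$. Everything else is the bookkeeping already laid out in Section \ref{section:strategy}, with the induction hypothesis used only for dimension $n-1$.
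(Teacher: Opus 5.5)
Your route is the same as the paper's: split according to whether $\calS\cap(V^\star\otimes x)$ is zero or a line, use the induction hypothesis in the first case and Theorem \ref{theo:0starspec} in the second. The dimension bookkeeping is correct. However, the eigenvalue argument in the second case is wrong as written.

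The inference ``$\overline u$ has no nonzero eigenvalue in $\F$ other than possibly $\alpha$'' holds only when $\alpha\neq 0$. If $\alpha=0$, the $1^\star$-spec condition on $u$ only tells you that $\overline u$ has at most one nonzero eigenvalue in $\F$; think of $u=E_{2,2}$ with $x=e_1$. You then choose $\lambda$ so that the eigenvalue at $x$ becomes $0$. That is precisely the one value that yields no information: afterwards you only know that $\overline u$ has at most one nonzero eigenvalue, not none. A telltale sign is that your argument never uses $|\F|>2$, yet your intermediate conclusion fails over $\F_2$. There, $\calS=\Vect(E_{1,1},E_{2,2})$ is $1^\star$-spec and $e_1$ is adapted with $E_{1,1}\in\calS$ of nonzero trace, so you are in your second case. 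Nevertheless $\overline{E_{2,2}}$ has the nonzero eigenvalue $1$, so $\overline{\calS_x}$ is not $0^\star$-spec.

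The fix is what the paper does by letting $\lambda$ range over all of $\F$. Suppose $\overline u$ has a nonzero eigenvalue $\beta\in\F$. Since $\varphi(x)\neq 0$ and $|\F|>2$, you can choose $\lambda$ with $\alpha+\lambda\varphi(x)\notin\{0,\beta\}$. Then $u+\lambda\,\varphi\otimes x\in\calS$ has two distinct nonzero eigenvalues in $\F$:
\begin{itemize}
\item $\alpha+\lambda\varphi(x)$, at $x$;
\item $\beta$, which lifts from $\overline u$ by your factorization $\chi=(t-\alpha')\chi_{\overline u}$.
\end{itemize}
This contradicts the $1^\star$-spec property. With this correction, $\overline{\calS_x}$ is $0^\star$-spec and the rest of your proof goes through unchanged.
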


\begin{proof}
Indeed, by Theorem \ref{theo:0starspec} or the induction hypothesis (whether, for a given $\calS$-adapted vector $x$, the space $\calS$ contains an operator with range $\F x$ or not), and as explained in the beginning of Section \ref{section:strategy}, the existence of such an $\calS$-adapted vector would yield $\dim \calS \leq 2+\dbinom{n}{2}$ in any case, contradicting our assumptions.
\end{proof}

As a consequence, the space $\calS^\bot$ is intransitive.

\begin{claim}\label{claim:primitivelyintransitive}
The space $\calS^\bot$ is primitively intransitive.
\end{claim}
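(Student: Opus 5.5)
We argue by contradiction. Suppose $\calS^\bot$ is not primitively intransitive. Since $\calS^\bot$ is intransitive and $V\neq\{0\}$, there is an intransitivity veil: a nonzero proper subspace $W\subset V$, of greatest possible dimension, such that $\pi\calS^\bot$ is intransitive, where $\pi:V\twoheadrightarrow V/W$ is the standard projection. Set $d:=\dim W\in\lcro 1,n-1\rcro$ and $p:=n-d=\dim(V/W)\geq 1$. By maximality, $\pi\calS^\bot$ is primitively intransitive. The goal is to bound $\dim\calS^\bot$ from above and compare with the actual value $\dim\calS^\bot=n^2-\dbinom{n}{2}-3=\dbinom{n+1}{2}-3$.

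By the rank theorem,
$$\dim\calS^\bot=\dim(\pi\calS^\bot)+\dim\bigl((\calS^\bot)^W\bigr).$$
We bound the two terms separately.

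\textbf{First term.} Atkinson's theorem applies because $|\F|\geq n\geq p$, and gives $\dim(\pi\calS^\bot)\leq\dbinom{p}{2}$. If $p=1$, intransitivity of $\pi\calS^\bot$ forces $\pi\calS^\bot=\{0\}$, which is consistent with this bound.

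\textbf{Second term.} Identity \eqref{eq:dualrestriction}, i.e. Lemma \ref{lemma:traceortho1}, gives $\dim(\calS^\bot)^W=nd-\dim(\calS_{|W})$. So we need a lower bound on $\dim\calS_{|W}$. Its kernel is $\{u\in\calS:W\subseteq\Ker u\}$, which has dimension at most $np$. Hence
$$\dim\calS_{|W}\geq\dbinom{n}{2}+3-np.$$
This is the crude estimate. The refined one uses the spectral structure: we split $\calS$ along $W$ using the operators that map $W$ into $W$ and induce $1^\star$-spec spaces on $W$ and on $V/W$. A naive version of this does not work directly, since $W$ need not be invariant.

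With the crude estimate we get
$$\dim\calS^\bot\leq\dbinom{p}{2}+nd-\dbinom{n}{2}-3+np=\dbinom{p}{2}+n^2-\dbinom{n}{2}-3 .$$
This exceeds the actual value by $\dbinom{p}{2}$, so it gives no contradiction.

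\textbf{The real step.} We must therefore extract more information from the veil. First, the maximality of $W$ should force the ranges $\calS^\bot x$ to have a specific shape. Second, non-adaptedness of every $x$ gives $\dim(\calS\cap(V^\star\otimes x))\geq 1$ for all $x$, by \eqref{eq:transrank}. Note that intransitivity of $\pi\calS^\bot$ means that $\dim\pi(\calS^\bot x)<p$ for all $x$. By the dual form (Lemma \ref{lemma:traceortho1}, applied with $V_0$ a complement-related subspace), this means that for every $x$ the space $\calS$ contains a nonzero operator $\varphi\otimes y$ with $\varphi$ vanishing on... We want to translate this into the following statement: for every $x\in V$ there exist $y\notin W$ and $\varphi$ such that $\varphi\otimes y$, or more precisely an operator with range in $\F x+W$, lies in $\calS$.

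With this in hand, the plan is to restrict to $\calS_{|W}$ and to the quotient. The space $\{u\in\calS : u(W)\subseteq W\}$ induces a $1^\star$-spec space on $V/W$ when $p\geq 2$, and the induction hypothesis bounds it by $\dbinom{p}{2}+2$. This should show that $\calS$ is forced to contain too many rank-one operators with range in $W$. Together with the induction hypothesis on $W$ (bounding the restricted part by $\dbinom{d}{2}+2$) and on $V/W$, the goal is the total bound
$$\dim\calS\leq\dbinom{d}{2}+2+\dbinom{p}{2}+dp+\varepsilon<\dbinom{n}{2}+3,$$
with the correction $\varepsilon\leq 0$ obtained from the fact that a $1^\star$-spec space cannot reach the bound on both blocks simultaneously. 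The two traces would combine to create two distinct nonzero eigenvalues, using $|\F|>2$ as in Proposition \ref{prop:1starn=2}.

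The main obstacle is to turn the maximality of the veil into the invariance of $W$, or at least into enough rank-one elements of $\calS$ with range in $W$, so that the block estimate applies. I would try the First Confinement Lemma and Splitting-Lemma-type arguments to obtain this invariance. The cases $d=1$ and $p=1$ would be treated separately, using directly that $\calS$ is not a hurdle (Claim \ref{claim:nohurdle}).
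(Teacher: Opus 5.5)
There is a genuine gap. The estimate you actually carry out cannot yield anything. The ``real step'' is only a programme: you give no argument that $W$ is $\calS$-invariant, and none for the correction $\varepsilon\le 0$. Neither is needed.

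The crude estimate fails for a structural reason. The kernel of $u\mapsto u_{|W}$ on $\calS$ is $\calS_W:=\{u\in\calS : W\subseteq\Ker u\}$, and Lemma~\ref{lemma:traceortho2} (with $U_0=W$) gives exactly $\dim\calS_W=np-\dim(\pi\calS^\bot)$. So your chain $\dim\calS^\bot=\dim(\pi\calS^\bot)+nd-\dim\calS+\dim\calS_W$ is an identity. Replacing $\dim\calS_W$ by $np$ throws away precisely the Atkinson bound, which is why you overshoot by $\binom{p}{2}$. Your decomposition via \eqref{eq:dualrestriction} is equivalent to working with $\calS_W$ directly, as the paper does. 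What is missing is a nontrivial \emph{upper} bound on $\dim\calS_W$.

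That bound involves no invariance issue. Each $u\in\calS_W$ kills $W$, so it induces $u^{V/W}\in\End(V/W)$ with $\chi_u=t^d\chi_{u^{V/W}}$. Thus $\calS^{V/W}:=\{u^{V/W}\mid u\in\calS_W\}$ is $1^\star$-spec, and the kernel of $u\mapsto u^{V/W}$ consists of operators vanishing on $W$ with range in $W$. By induction, $\dim\calS_W\le pd+\binom{p}{2}+2$. On the other hand, Atkinson and Lemma~\ref{lemma:traceortho2} give $\dim\calS_W\ge np-\binom{p}{2}$. The difference between these two bounds is $2-p$, so $p\le 2$.

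If $p=1$, intransitivity forces $\pi\calS^\bot=\{0\}$, so $\calS\supseteq\varphi\otimes V$ for a nonzero $\varphi\in W^\circ$. The First Confinement Lemma then makes every vector outside $W$ adapted, contradicting Claim~\ref{claim:noadapted}.

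If $p=2$, all inequalities are equalities. So $\dim\calS^{V/W}=3$, hence $\calS^{V/W}=\mathfrak{sl}(V/W)$ by Proposition~\ref{prop:1starn=2}, and $\calS_W$ contains every operator vanishing on $W$ with range in $W$. Together these show that $\calS$ contains all trace-zero endomorphisms vanishing on $W$, i.e.\ $\calS$ is a hurdle, contradicting Claim~\ref{claim:nohurdle}.

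So the non-hurdle hypothesis is used when $\dim(V/W)=2$, not in the cases $d=1$ or $p=1$ where you planned to invoke it.
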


\begin{proof}
Assume the contrary. Then we introduce an intransitivity veil $W$ of $\calS^\bot$ (see the definition on page \pageref{def:intransitivityveil}). 
We set $s:=\dim(V/W)$
and denote by $\pi : V \twoheadrightarrow V/W$ the standard projection.
Note that $1 \leq s < n$.
First of all, we eliminate the possibility that $s=1$.
Indeed, assume for a moment that $s=1$. Then, as $\pi \calS^\bot$ is intransitive it consists only of the zero operator,
which means that all the operators in $\calS^\bot$ map into $W$. Then, by double-orthogonality we gather
that $\calS$ contains all the endomorphisms of $V$ that vanish on $W$.
The First Confinement Lemma then implies that every vector in $V \setminus W$ is $\calS$-adapted, and as there
is at least one such vector we would contradict Claim \ref{claim:noadapted}.

Hence $2 \leq s <n$. Because $\pi \calS^\bot$ is primitively intransitive we deduce from Atkinson's theorem that
$\dim (\pi \calS^\bot) \leq \dbinom{s}{2}$ (note that this is a critical point where we use the assumption $|\F| \geq n$).
Next, we consider the subspace
$$\calS_W:=\{u \in \calS: \forall x \in W, \; u(x)=0\},$$ and for
$u \in \calS_W$ we consider the induced endomorphism $u^{V/W}$ of $V/W$, thereby yielding a $1^\star$-spec linear subspace
$$\calS^{V/W}:=\bigl\{u^{V/W} \mid u \in \calS_W\bigr\}$$
of $\End(V/W)$.
By induction we have
$$\dim(\calS^{V/W}) \leq \dbinom{s}{2}+2,$$
and obviously
$$\dim \calS_W \leq s(n-s)+\dim(\calS^{V/W}).$$
Besides, the Second Trace Orthogonality Lemma yields
$$\dim \calS_W=ns-\dim (\pi \calS^\bot) \geq ns-\dbinom{s}{2}.$$
Noting that
$$s(n-s)+\dbinom{s}{2}+2-ns+\dbinom{s}{2}=2-s,$$
we deduce that $s=2$, that $\dim (\calS^{V/W})=3$ and that
$\calS_W$ contains all the endomorphisms of $V$ that vanish on $W$ and map into $W$.
By Proposition \ref{prop:1starn=2}, it follows from the second point that $\calS^{V/W}=\mathfrak{sl}(V/W)$, and we combine this with the third point to find that $\calS$
is a hurdle. This contradicts Claim \ref{claim:nohurdle}.

Therefore $\calS^\bot$ is primitively intransitive.
\end{proof}

\begin{Rem}\label{remark:primintransitive}
For future reference, it is important to stress that the proof of Claim \ref{claim:primitivelyintransitive} does not directly use any assumption on the dimension of $\calS$: it works as long as we know that $\calS$ is $2$-spec, that $\calS^\bot$ is intransitive, that $\dim \calS^{V/W} \leq \dbinom{s}{2}+2$, and
that no linear hyperplane of $V$ contains all the non-$\calS$-adapted vectors (note that in theory the space $\calS^\bot$ can be intransitive without the
absence of $\calS$-adapted vectors: an example is the space of all symmetric matrices over the real numbers).
\end{Rem}

We are ready to obtain a final contradiction.
We can now apply Atkinson's theorem, remembering the crucial assumption that $|\F| \geq n$.
By point (a) in Atkinson's theorem, we find $\dim \calS^\bot \leq \dbinom{n}{2}$.
However, our assumptions show that
$$\dim \calS^\bot =\dbinom{n+1}{2}-3=\dbinom{n}{2}+n-3.$$
Hence $n=3$ and $\dim \calS^\bot=\dbinom{n}{2}$.
Next, point (b) in Atkinson's theorem yields a non-degenerate bilinear form $b : V^2 \rightarrow \F$
that is an alternator of $\calS^\bot$, i.e.,
$$\forall x \in V, \; \forall u \in \calS^\bot, \; b(x,u(x))=0.$$
Let us now represent $\calS$ by a matrix space $\calM$ in an arbitrary basis.
Denote by $P$ the Gram matrix of that basis for $b$ (that is, if the basis is $(e_1,\dots,e_n)$ one has $P=(b(e_i,e_j))_{1 \leq i,j \leq n}$).
The fact that $b$ is an alternator of $\calS^\bot$
then translates into $\calM^\bot \subseteq P^{-1} \Mata_n(\F)$, and since the dimensions are equal we
obtain $\calM^\bot = P^{-1} \Mata_n(\F)$. In turn, this yields
$$\calM=\Mats_n(\F) P.$$
Now, we do not forget the assumption that, for each $x \in \calS$, there is an operator with range $\F x$ and trace $0$ in $\calS$
(i.e., there is no $\calS$-adapted vector). Yet, for all $X \in \F^n$, there is up to scalar multiplication exactly one matrix $S$
of $\Mats_n(\F)$ with range $\F X$, and this has the consequence that $\tr(SP)=0$.
Since $\Mats_n(\F)$ is spanned by its rank $1$ matrices, we deduce that $P \in \Mata_n(\F)$, which is absurd because $n$ is odd and $P$ is invertible!

Hence, thanks to this final contradiction the inductive step is now climbed.

\subsection{Proof of the dimension inequality for $2$-spec spaces}\label{section:endproofinfinite2spec}

The proof pattern here is largely similar to the one of the previous section,
the difference being that we no longer rely on induction, rather we directly use Theorem \ref{theorem:maintheodim1starspec}.

So, we let $V$ be a finite-dimensional vector space with dimension $n \geq 3$, and
$\calS$ be a $2$-spec linear subspace of $\End(V)$.
We assume that $|\F| \geq n$.
We will consider three cases separately, whether $n=3$ or $n=4$ or $n\geq 5$.

\vskip 3mm
\noindent \textbf{Case 1: $n \geq 5$.}
It suffices to prove that the additional assumption $\dim \calS=\dbinom{n}{2}+4$
leads to a contradiction. Following the chain of arguments from the preceding section, we successively obtain:
\begin{itemize}
\item That $\calS$ has no adapted vector: otherwise we take an $\calS$-adapted vector $x$
and obtain $\dim \calS \leq \dim(\overline{\calS_{x,0}})+n$ with the notation from Section \ref{section:strategy},
and $\dim(\overline{\calS_{x,0}}) \leq \dbinom{n-1}{2}+2$ by Theorem \ref{theorem:maintheodim1starspec}, which leads to
$\dim \calS \leq \dbinom{n}{2}+3$.
\item That $\calS$ is not a hurdle, otherwise we would have $\dim \calS \leq \dbinom{n}{2}+3$ by Lemma \ref{lemma:ineqdimhurdles}.
\end{itemize}
Finally, we can follow the proof of Claim \ref{claim:primitivelyintransitive} to find that $\calS^\bot$ is primitively intransitive:
we replace the use of the induction hypothesis for the space $\calS^{V/W}$ with a direct call to Theorem \ref{theorem:maintheodim1starspec},
since $\calS^{V/W}$ is actually $1^\star$-spec. The details are essentially similar.

Finally, we can apply Atkinson's theorem and obtain $\dim \calS^\bot \leq \dbinom{n}{2}$.
Yet $\dim \calS^\bot=\dbinom{n+1}{2}-4=\dbinom{n}{2}+n-4>\dbinom{n}{2}$.
This final contradiction completes the proof when $n \geq 5$.

\vskip 3mm
\noindent \textbf{Case 2: $n=3$.} \\
The trick here is to consider the subspace
$$\calS_0:=\{u \in \calS : \tr u=0\}$$
and to note that it is $1^\star$-spec.
Indeed, let $u \in \calS_0$. Assume that $u$ has two distinct nonzero eigenvalues $\alpha,\beta$ in $\F$.
Then it is triangularizable, and its third eigenvalue $\gamma$ satisfies $\gamma=\tr u-\alpha-\beta=\alpha+\beta$.
Then $\gamma \neq \alpha$ and $\gamma \neq \beta$, which contradicts the assumption that $\calS$ is $2$-spec.
It follows that
$$\dim \calS \leq 1+\dim \calS_0 \leq 1+\dbinom{n}{2}+2=\binom{n}{2}+3.$$

\vskip 3mm
\noindent \textbf{Case 3: $n=4$.} \\
This time around, we assume that $\dim \calS =\dbinom{n}{2}+5$ and seek to find a contradiction.
Now $\dim \calS^\bot=\dbinom{n+1}{2}-5=\dbinom{n}{2}-1 > \dbinom{n-1}{2}$.

Then we note that $\calS$ has no \emph{weakly} adapted vector. Indeed, if $x$ is a weakly $\calS$-adapted vector then
$$\dim \calS \leq n+1+\dim \overline{\calS_{x,0}} \leq n+1+\dbinom{n-1}{2}+3 \leq \binom{n}{2}+4.$$
It follows from Equation \eqref{eq:transrank} that the transitive rank in $\calS^\bot$ is less than $n-1$.
By point (a) of Atkinson's theorem, the space $\calS^\bot$ cannot be primitively intransitive.
However, we can follow the steps of the proof in Section \ref{section:endproofinfinite1starspec}, and we successively find:
\begin{itemize}
\item That $\calS$ is not a hurdle, otherwise we would deduce from Lemma \ref{lemma:ineqdimhurdles}
that $\dim \calS \leq \dbinom{n}{2}+4$;
\item That $\calS$ has no adapted vector (because it has no weakly adapted vector);
\item That $\calS^\bot$ is primitively intransitive (see Remark \ref{remark:primintransitive}).
\end{itemize}
This is contradictory, and the proof of Theorem \ref{theorem:maintheodim2spec} is now complete (still assuming that $|\F| \geq n$, of course).

\subsection{The optimal $2$-spec spaces for $n=4$}

Here we seize the opportunity to give a quick proof of the classification theorem
for the optimal $2$-spec subspaces of $\Mat_4(\F)$, thanks to an easy adaptation of the previous line of reasoning.
So, let $V$ be a $4$-dimensional vector space, and $\calS$ be a $2$-spec linear subspace of $\End(V)$ with dimension $\binom{4}{2}+4$.
We distinguish two cases, whether $\calS$ is a hurdle or not.
In any case, note that $\F \id_V+\calS$ is still $2$-spec, and hence by the optimality of $\calS$ we gather
that $\id_V \in \calS$.

\vskip 3mm
\noindent
\textbf{Case 1: $\calS$ is a hurdle.} \\
We find a matrix space $\calM$ that includes $\{0_2\} \vee \mathfrak{sl}_2(\F)$
and represents $\calS$ in some basis of $V$.
By Lemma \ref{lemma:splittinghurdles}, any matrix $M \in \calM$ has the form
$$M=\begin{bmatrix}
K(M) & [?]_{2 \times 2} \\
[0]_{2 \times 2} & s(M)
\end{bmatrix},$$
where $K(\calM)$ is a $1$-spec subspace of $\Mat_2(\F)$.
Moreover, the kernel of $M \mapsto K(M)$ is precisely $\{0_2\} \vee \mathfrak{sl}_2(\F)$, by point (c) of Lemma
\ref{lemma:splittinghurdles}. By the rank theorem
$$\dim \calS= \dim K(\calS)+3+4$$
and hence $\dim K(\calS)=3$. Hence $K(\calS)=\mathfrak{sl}_2(\F)$, by Theorem \ref{theo:1spec}.
In particular, for every $M$ such that $K(M)$ has rank $1$ (and hence at least one eigenvalue in $\F$), we deduce from point
(c) of Lemma \ref{lemma:splittinghurdles} that $\tr(s(M))=0$.
Since $K(\calM)$ is spanned by its rank $1$ matrices we conclude that $s(\calM) \subseteq \mathfrak{sl}_2(\F)$.
It follows that $\calM \subseteq \mathfrak{sl}_2(\F) \vee \mathfrak{sl}_2(\F)$, and since the dimensions are equal we
conclude that $\calM =\mathfrak{sl}_2(\F) \vee \mathfrak{sl}_2(\F)$.

\vskip 3mm
\noindent
\textbf{Case 2: $\calS$ is not a hurdle.}

In that case we use exactly the same line of reasoning as for the case $n \geq 5$ in the previous section. Indeed, we successively find:
\begin{itemize}
\item That $\calS$ has no adapted vector, otherwise $\dim \calS \leq \dbinom{n}{2}+3$;
\item As a consequence, that $\calS^\bot$ is intransitive;
\item We can then follow once more the proof of Claim \ref{claim:primitivelyintransitive} to see that $\calS^\bot$ is primitively intransitive
(see Remark \ref{remark:primintransitive}).
\end{itemize}
Here we have $\dim \calS^\bot=6=\dbinom{n}{2}$ and $|\F| \geq 4$, so Atkinson's theorem can be applied.
It follows that $\calS^\bot$ has a non-degenerate alternator $b$.
Now, take a basis $\bfB$ of $V$, and denote by $P$ the Gram matrix of that basis for $b$.
Like in Section \ref{section:endproofinfinite1starspec} we obtain that $\calS$ is represented by the matrix space $\Mats_4(\F) P$.
And likewise, we deduce from the lack of $\calS$-adapted vectors that $\tr(SP)=0$ for all $S \in \Mats_4(\F)$.
Hence $P$ is alternating, to the effect that $b$ is a symplectic form. Then we write $\Mats_4(\F)P=(P^T)^{-1}(P^T \Mats_4(\F)P)=(P^T)^{-1} \Mats_4(\F)=P^{-1} \Mats_4(\F)$,
and we conclude that $\calS$ is the space of all $b$-symmetric endomorphisms of $V$.
This yields the second stated conclusion in Theorem \ref{theo:dim4}.

Hence the proof of Theorem \ref{theo:dim4} is now complete.

\section{Preparatory work for the general case}\label{section:preparationforlastpart}

Here, $\F$ denotes an arbitrary field (with no restriction of characteristic nor of cardinality).

\subsection{Complexes of linear subspaces}

\begin{Def}
Let $k \geq 2$ and $V$ be an $n$-dimensional vector space.
A \textbf{$k$-complex} of $V$ is a $(k-1)n$-list $(V_1,\dots,V_{(k-1)n})$ of linear subspaces
of $V$ such that $\dim V_i=1+\lfloor \frac{i-1}{k}\rfloor$ for all $i \in \lcro 1,(k-1)n\rcro$,
i.e., the first $k$ spaces have dimension $1$, the next $k$ spaces have dimension $2$, and so on.
\end{Def}

In \cite{dSPfeweigenvalues}, we used $2$-complexes only. Here, we will need to use $3$-complexes as well.
We recall the following lemma from \cite{dSPfeweigenvalues} (lemma 2.5 there):

\begin{lemma}[Covering Lemma]\label{coveringlemma2}
Let $r$ be a positive integer such that $|\F|>r$.
Let $V$ be an $n$-dimensional vector space over $\F$, and $(V_i)_{i \in I}$
be a family of linear subspaces of $V$ in which:
\begin{enumerate}[(i)]
\item $|I|=(n-1)r+1$ ;
\item For all $k \in \lcro 1,n-2\rcro$, exactly $r$ vector spaces in the family have dimension $k$;
\item Exactly $r+1$ vector spaces in the family have dimension $n-1$.
\end{enumerate}
Then the subspaces $V_i$ do not cover $V$.
\end{lemma}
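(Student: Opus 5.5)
We plan to argue by induction on $n$, using a polynomial vanishing argument. For $n=1$ the family consists of $r+1$ subspaces of dimension $0$ (condition (iii) with $n-1=0$; condition (ii) is empty), so they are all $\{0\}$. Since $|\F|>r\geq 1$ we have $|\F|\geq 2$, and therefore $V\neq\{0\}$ is not covered. A cleaner approach avoids induction on the structure of the family, as follows.

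Choose, for each $V_i$, a nonzero linear form $\ell_i$ on $V$ vanishing on $V_i$, and consider the product $F=\prod_{i\in I}\ell_i$. If the $V_i$ covered $V$, then $F$ would vanish on all of $V$, but $F$ has degree $(n-1)r+1$, which may exceed $|\F|$, so this alone is too weak. Instead we exploit the dimensions more cleverly: we build a point avoiding all $V_i$ by successive choices.

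The strategy I would follow is an inductive one on $n$ with a quotient or hyperplane reduction. Pick a hyperplane $H$ of $V$ distinct from the $r+1$ hyperplanes among the family (possible since there are at least $|\F|+1>r+1$ hyperplanes). For each $V_i$ of dimension $k\leq n-2$, the intersection $V_i\cap H$ has dimension $k$ or $k-1$. Rather than intersecting with $H$, it is better to work inside the pencil. Fix a codimension-$2$ subspace $L$ and consider the $|\F|+1$ hyperplanes containing $L$; choose $L$ generic so that no $V_i$ of dimension $\leq n-2$ is contained in $L$ in a bad way. Each $V_i$ of dimension $\leq n-2$ not contained in $L$ lies in at most one hyperplane through $L$. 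Each hyperplane of the family meets generically in $L$.

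The cleanest route I would try first is to select an affine line. Take a point $a$ and direction $d$, and consider the line $\{a+td\}$. A subspace not containing $d$ and not containing the line meets it in at most one point. The hyperplanes $V_i$ (r+1 of them) each meet the line in at most one point. The lower-dimensional ones are handled by choosing the line, inductively, inside a hyperplane avoiding some structure.

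Concretely, the induction I would run is this. Pick a hyperplane $H\notin\{V_i\}$ that contains none of the $r$ subspaces of dimension $n-2$; this choice is the main obstacle, requiring a counting of hyperplanes through given $(n-2)$-spaces. Each such subspace lies in $|\F|+1$ hyperplanes, which is too many to avoid them all naively. So instead I would project: pick $x\notin\bigcup$ of the $(n-1)$-dimensional ones? That is circular.

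I therefore settle on the following induction. Take $H$ to be one of the $r+1$ family hyperplanes, and apply the induction hypothesis inside... Dimensions then shift correctly: in $V/\ell$ for a line $\ell$ not in any $V_i$ of dimension $\leq n-2$ (exists inductively), the images of the subspaces of dimension $k$ have dimension $k$, and the $(n-2)$-dimensional ones become hyperplanes. Hyperplanes of $V$ containing $\ell$ map to hyperplanes; the others map onto $V/\ell$. The genuinely hard step, which I expect to be the main obstacle, is arranging the counts to land on exactly $r+1$ hyperplanes after passing to the quotient.
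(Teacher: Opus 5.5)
Your proposal does not reach a proof. Every route you open is dropped, and the one you finally settle on cannot be repaired. Quotienting by a line $\ell$ goes in the wrong direction. A vector of $V/\ell$ avoiding all the images $\pi(V_i)$ does lift to a vector avoiding all the $V_i$. However, every hyperplane of the family that does not contain $\ell$ satisfies $\pi(V_i)=V/\ell$, so the image family trivially covers $V/\ell$ unless $\ell$ lies in all $r+1$ hyperplanes, which cannot be arranged in general. Even then, the $r$ members of dimension $n-2$ (which avoid $\ell$ by your choice) would become $r$ further hyperplanes of $V/\ell$. That gives $2r+1$ hyperplanes, which the induction hypothesis cannot handle. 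The quotient preserves the dimension of every member avoiding $\ell$, whereas you need every dimension to drop by exactly one. (Also, your base case $n=1$ is vacuous: (i) then forces $|I|=1$ while (iii) demands $r+1\geq 2$ members. Start at $n=2$ instead, where only $r+1\leq|\F|$ of the $|\F|+1$ lines of $V$ are used.)

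The missing idea is the reduction you rejected: intersect with a hyperplane $H$ of $V$ that contains \emph{none} of the $V_i$, of any dimension, not only those of dimension $n-2$. Then $\dim(V_i\cap H)=\dim V_i-1$ for all $i$. The lines collapse to $\{0\}$, and inside the $(n-1)$-dimensional space $H$ the other traces form exactly $r$ subspaces of each dimension in $\lcro 1,n-3\rcro$ and $r+1$ hyperplanes of $H$. By induction some $x\in H$ lies in no $V_i\cap H$, hence in no $V_i$. Such an $H$ exists. Over an infinite field the lemma is trivial anyway, since $V$ is not a finite union of proper subspaces. Over $\F=\F_q$, your objection that each $(n-2)$-dimensional member lies in $q+1$ hyperplanes is not fatal. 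A $k$-dimensional subspace lies in $\frac{q^{n-k}-1}{q-1}$ hyperplanes, and $r\leq q-1$ yields
\[
\sum_{k=1}^{n-2} r\,\frac{q^{n-k}-1}{q-1}+(r+1)\;\leq\;\sum_{j=2}^{n-1}q^j-(n-2)+q\;=\;\frac{q^n-1}{q-1}-(n-1),
\]
so at least $n-1$ hyperplanes contain no member of the family. For comparison, the paper does not prove this lemma itself; it recalls it from its earlier article. Its later remark that the lemma is the case $d=1$ of the Vanishing Lemma is not an independent proof, because the proof of that lemma calls on the Covering Lemma.
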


\subsection{Complexes vs polynomials}

The next lemma can be seen as a generalization of the Covering Lemma. It will be very useful in the remainder of the article.
Because we are dealing with fields that may be finite, it is useful to state the following definition precisely:

\begin{Def}
Let $d$ be a positive integer and $V$ be a vector space over $\F$.
A \textbf{$d$-homogeneous polynomial function} is an element of the linear span of
the functions of the form $\prod_{k=1}^d \varphi_k$, where $\varphi_1,\dots,\varphi_d$ are linear forms on $V$,
in the vector space $\F^V$ of all functions from $V$ to $\F$.
\end{Def}

Denote by $S(V)$ the \emph{dual} symmetric algebra on $V$, and by $S_d(V)$ its subspace of all $d$-homogenous elements (for an integer $d \geq 0$).
In particular $S_1(V)$ is the dual vector space of $V$.
Classically, if $|\F| \geq d$ then specializing yields a bijection from $S_d(V)$ to the set of all $d$-homogeneous polynomial functions on
$V$.

\begin{lemma}[Vanishing Lemma for Homogeneous Polynomials]\label{lemma:vanishinghomogeneous}
Let $d$ be a positive integer such that $|\F| \geq d$.
Let $V$ be a vector space with finite dimension $n$ over $\F$, and
$p : V \rightarrow \F$ be a $d$-homogeneous polynomial function.

Assume that we have a finite family $(V_i)_{i \in I}$ of nontrivial linear subspaces of $V$ in which:
\begin{enumerate}[(i)]
\item For each $j \in \lcro 1,n-2\rcro$, there are at most $|\F|-1$ indices $i$ such that
$\dim V_i=j$;
\item There are at most $|\F|-d$ indices $i$ such that $\dim V_i=n-1$;
\item The function $p$ vanishes outside of $\underset{i \in I}{\bigcup} V_i$.
\end{enumerate}
Then $p=0$.
\end{lemma}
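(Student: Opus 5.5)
We argue by induction on $n=\dim V$. For $n=1$ the hypotheses say there are no subspaces of dimension $j\in\lcro 1,n-2\rcro$, and at most $|\F|-d$ subspaces of dimension $n-1=0$. These must be nontrivial, so the family is empty, and $p$ vanishes on $V=\F$. A $d$-homogeneous function on $\F$ has the form $x\mapsto c x^d$, and it vanishes at $x=1$, so $p=0$. (If one prefers to start at $n=2$, the family consists of at most $|\F|-d$ lines. Writing $p$ in coordinates as a binary form of degree $d$ vanishing on all but at most $|\F|-d$ of the $|\F|+1$ lines of $\F^2$, it vanishes on at least $d+1$ lines. Hence it has at least $d+1$ pairwise non-proportional linear factors, so $p=0$; we use $|\F|\ge d$ to identify the function with a genuine form.)

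For the inductive step, we choose a linear hyperplane $H$ of $V$ that does not belong to the family and then restrict $p$ to $H$ and to its translates. Concretely, since $V$ has $|\F|^{n-1}+\dots+1$ hyperplanes and at most $|\F|-d$ of them occur among the $V_i$, we can pick $H$ not among them. We also pick a vector $e\notin H$. For each $\lambda\in\F$, consider the affine hyperplane $H+\lambda e$. The key step is to show that $p$ vanishes on all of $H$, and then on all of $H+\lambda e$ for enough values of $\lambda$.

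To handle $H$ itself, we intersect the family with $H$. Each $V_i\cap H$ is a subspace of $H$ of dimension $\dim V_i$ or $\dim V_i-1$. The hyperplanes $V_i$ (all distinct from $H$) give subspaces of dimension $n-2$ of $H$, which are hyperplanes of $H$; there are at most $|\F|-d$ of them. The $V_i$ of dimension $n-2$ not contained in $H$ also become of dimension $n-3$, and so on. The counting conditions do not transfer directly, since the dimension shifts can pile up. This is why we instead exploit the extra flexibility in choosing $H$: we choose $H$ to contain none of the $V_i$ of dimension $\le n-2$ as well. This is possible when $|\F|$ is large relative to the family, since for each $V_i$ of dimension $\le n-2$ the hyperplanes containing it form a proper subset. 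With this choice every $V_i\cap H$ has dimension exactly $\dim V_i-1$, and the family shifts down by one: at most $|\F|-1$ subspaces in each dimension $\le n-3$ of $H$, at most $|\F|-1+|\F|-d$ hyperplanes of $H$. This is too many hyperplanes, so a cleaner route is needed.

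The cleaner route is to argue via a generic line through a point. Take $x\in V$ and a general direction $y$, and consider the function $t\mapsto p(x+ty)$, a polynomial of degree at most $d$ in $t$. The line $x+\F y$ meets each $V_i$ of dimension $\le n-1$ not containing it in at most one point. Our bound on the number of bad points on the line is at most $(n-2)(|\F|-1)+|\F|-d$, which is too many; this is where I expect the main obstacle. What I will try first is a flag-type induction mirroring the Covering Lemma. Pick a hyperplane $H_0=V_{i_0}$ of the family and consider the restriction of $p$ to each of the $|\F|-1$ affine translates $H_0+\lambda e$ with $\lambda\ne0$. On such a translate, which is disjoint from $H_0$, the other subspaces meet it in affine subspaces of dimension $\dim V_i-1$ or not at all. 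Dehomogenizing and rehomogenizing by $\lambda$ turns this into the same problem in dimension $n-1$ with the counts preserved, after discarding the hyperplane $H_0$ and spending one index per dimension level. Induction then gives $p=0$ on each translate, hence $p$ vanishes off $H_0$. Writing $p=\ell\,q+r$ with $\ell$ a defining form of $H_0$ and a degree count, we get $p$ divisible by $\ell$ as a function, and we can iterate with $q$ of degree $d-1$ and one fewer hyperplane. This matches the $|\F|-d$ budget exactly, and I expect this bookkeeping to close the induction.
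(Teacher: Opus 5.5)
\textbf{There is a genuine gap.} Your final route never lowers the dimension, so the inductive step is not proved.

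On a translate $H_0+\lambda e$, the function $h\mapsto p(h+\lambda e)$ is an \emph{inhomogeneous} polynomial of degree at most $d$. The traces $V_i\cap(H_0+\lambda e)$ are affine subspaces, including up to $|\F|-1$ isolated points coming from the lines $V_i$. So this is not an instance of the lemma in dimension $n-1$, which concerns homogeneous functions and nontrivial linear subspaces.

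Worse, the reduction is circular. Since $p(\lambda(h+e))=\lambda^d p(h+e)$ and each $V_i$ is stable under scaling, the statement ``$p$ vanishes on $(H_0+e)\setminus\bigcup_i V_i$'' is equivalent to ``$p$ vanishes outside $\bigcup_i V_i$'' once $H_0$ is in the family. Rehomogenizing just returns the original $n$-dimensional problem, so invoking induction there proves nothing.

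Two smaller points. If $d<|\F|$, vanishing of $p$ on $H_0+e$ already gives $p=0$ at once, because $p(\cdot+e)$ is then a reduced polynomial vanishing everywhere. So the division by $\ell$ is superfluous, and your claim that $q$ satisfies the hypothesis ``with one fewer hyperplane'' is wrong: $p=\ell q$ says nothing about $q$ on $H_0$. Also, the route needs a hyperplane in the family. When $d=|\F|$, condition (ii) allows none, and the paper uses exactly this case ($n=d=4=|\F|$ in the proof of point (iii) for $n\le 4$).

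\textbf{The route you abandoned is the paper's, and you dropped it because of a miscount.} Suppose $H=\Ker f$ contains none of the $V_i$. Then:
\begin{itemize}
\item the lines $V_i$ meet $H$ trivially and disappear;
\item the $(n-2)$-dimensional $V_i$ become $(n-3)$-dimensional, which are \emph{not} hyperplanes of $H$;
\item only the hyperplanes of $V$ produce hyperplanes of $H$.
\end{itemize}
So $H$ receives at most $|\F|-1$ subspaces in each dimension $j\in\lcro 1,n-3\rcro$ and at most $|\F|-d$ hyperplanes, and induction gives $p_{|H}=0$.

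Two ingredients are still missing from that route.

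First, you need an actual existence argument for $f$. Each set $V_i^\circ$ of bad forms is proper, but finitely many proper subspaces can still cover. The paper applies the Covering Lemma in $V^\star$ with $r=|\F|-1$. The dual family has at most $r$ members in each dimension $2,\dots,n-1$ and at most $|\F|-d$ lines.

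Second, one good hyperplane is not enough. The paper extracts $d+1$ pairwise non-proportional forms $f_1,\dots,f_{d+1}$ outside $\bigcup_i V_i^\circ$. It does this by re-applying the Covering Lemma after adjoining the lines $\F f_1,\dots,\F f_k$ to the dual family. The budget $|\F|-d$ leaves exactly enough room, with the $d$-th line placed inside the spare $(r+1)$-th hyperplane. It then gets $p_{|\Ker f_k}=0$ by induction for each $k$. From this, $f_k$ divides the form $\mathbf{p}$ in $S(V)$ by the linear Nullstellensatz, which is where $|\F|\ge d$ is used. Finally $\mathbf{p}=0$, because a nonzero form of degree $d$ cannot be divisible by $d+1$ pairwise coprime linear forms.
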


\begin{proof}
The proof is by induction on $n$.
The case $n=1$ is trivial, as the family $(V_i)$ is void in that case, to the
effect that assumption (iii) requires that $p=0$.

Assume now that $n \geq 2$. Then we consider the dual family $(V_i^\ortho)_i$ in the
dual space $V^\star$. Note that this family contains at most
$|\F|-1$ subspaces of dimension $i$ for all $i \in \lcro 2,n-1\rcro$, and at most
$|\F|-d$ subspaces of dimension $1$.

Then, by induction we can use the Covering Lemma to obtain $d+1$ \emph{pairwise} linearly independent elements in $V^\star$, all of them outside of
$\underset{i \in I}{\bigcup} V_i^\ortho$.
Indeed, suppose that we have such elements $f_1,\dots,f_i$ for some $i \in \lcro 0,d\rcro$.
Then by adjoining the lines $\F f_j$ to the spaces $V_i^{\circ}$, we can reduce the situation to the one of the Covering Lemma 
applied to $r:=|\F|-1$ (by adding some extra subspaces if necessary):
if $i<d$ this is obvious, whereas if $i=d$ we view $\F f_1,\dots,\F f_{d-1}$ as $1$-dimensional spaces but we single out
$\F f_d$ and embed it inside a subspace of dimension $n-1$ (and hence we can take advantage of assumption (iii) in the Covering Lemma).

Now, take linear forms $f_1,\dots,f_{d+1}$ in $V^\star$ that satisfy the previous condition.
Take $f_1$ for instance. Then by assumption none of the $V_j$'s is included in $\Ker f_1$,
so by taking $J:=\{i \in I : \dim V_j \neq 1\}$, we find that
$(V_j \cap \Ker f_1)_{j \in J}$ satisfies assumptions (i) and (ii) with respect to the space $\Ker f_1$, whereas
$V_j \cap \Ker f_1=\{0\}$ for all $j \in I \setminus J$.
By induction, $p$ vanishes on $\Ker f_1$. Likewise $p$ vanishes on $\Ker f_i$ for all $i \in \lcro 1,d+1\rcro$.

We are ready to conclude that $p=0$. Indeed, by taking the homogeneous elements
$\mathbf{p},\mathbf{f_1},\dots,\mathbf{f_{d+1}}$ of the symmetric tensor space $S(V)$ that correspond to $p,f_1,\dots,f_{d+1}$, respectively,
with respective degrees $d,1,\dots,1$, we deduce from the linear Nullstellensatz that $\mathbf{f_i}$ divides $\mathbf{p}$ (because here $|\F| \geq d$)
for all $i \in \lcro 1,p\rcro$. As $\mathbf{f_1},\dots,\mathbf{f_{d+1}}$ are pairwise linearly independent and are irreducible, and hence pairwise relatively prime,
$\mathbf{f_1} \cdots \mathbf{f_{d+1}}$ divides $\mathbf{p}$ because $S(V)$ is a unique factorization domain.
Finally, we must have $\mathbf{p}=0$ because $\mathbf{p}$ has homogeneous degree $d$.
Hence $p=0$ and the inductive step is climbed, which completes the proof.
\end{proof}

We finish by noting that the Covering Lemma can now be viewed as a special case of the Vanishing Lemma for Homogeneous Polynomials (with $d=1$).
Indeed, with the assumptions of the Covering Lemma, we pick one of the hyperplanes $H$ in the family $(V_i)_{i \in I}$
and write it as the kernel of some nonzero linear form $f$ on $V$.
If the union of $H$ and the remaining subspaces were equal to $V$ then
$f$ would vanish  outside of the union of the remaining subspaces of the family,
and Lemma \ref{lemma:vanishinghomogeneous} would yield $f=0$, a contradiction.

\section{The general case}\label{section:anyfield}

In this section, we prove Theorems \ref{theorem:maintheodim1starspec} and \ref{theorem:maintheodim2spec}
in their full generality, thereby removing all the assumptions on the cardinality of $\F$ except that $|\F|>2$.

The proof will actually yield the following more precise results, which will be useful to analyse optimal spaces in a future article:

\begin{theo}\label{theorem:1starspecwithadapted}
Let $\F$ be a field with $\car(\F)=2$ and $|\F| > 2$, and let $n \geq 3$. Let $\calS$ be a $1^\star$-spec linear subspace of endomorphisms of
an $n$-dimensional vector space. Then $\dim \calS \leq \dbinom{n}{2}+2$, and either $\calS$ is a hurdle or $\calS$ has an adapted vector.
\end{theo}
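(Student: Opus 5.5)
The plan is to prove the two assertions (the dimension bound and the dichotomy ``hurdle or adapted vector'') \emph{simultaneously} by induction on $n$, as announced in Section \ref{section:strategy}. The induction hypothesis for all $k \in \lcro 3,n-1\rcro$ is the full statement of Theorem \ref{theorem:1starspecwithadapted}. Proposition \ref{prop:1starn=2} plays that role for $k=2$: every optimal $1^\star$-spec subspace of $\End(\F^2)$ equals $\mathfrak{sl}_2(\F)$, which is a hurdle. For $n=3$ we have $|\F|\geq 3=n$, so the dimension bound is already given by Section \ref{section:endproofinfinite1starspec}. I will nevertheless run the general argument below for the dichotomy, keeping Atkinson's theorem available as a shortcut when $n=3$.

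Fix $\calS$ $1^\star$-spec in $\End(V)$ with $\dim V=n$. The first step is the crude bound. For any $x\neq 0$, the computation at the start of Section \ref{section:strategy} together with the induction hypothesis on $\overline{\calS_x}$ gives
$$\dim \calS\leq \dbinom{n}{2}+2+\dim\bigl(\calS\cap(x^\bot\otimes x)\bigr)\leq \dbinom{n}{2}+n+1.$$
The second step is to produce a weakly-$\calS$-adapted vector. I would argue by contradiction and localize the vectors that are not weakly adapted. The main device is to fix $x$ and apply the induction hypothesis to $\overline{\calS_x}$ on $V/\F x$: either it is a hurdle or it has an adapted vector. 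In the first case the First Confinement Lemma and the Splitting Lemma for Hurdles (pulled back to $V$) should confine the bad vectors. In the second case the set of vectors adapted for $\overline{\calS_x}$ is open, in the sense that the bad ones lie in a union of proper subspaces. Iterating along a flag should confine the non-weakly-adapted vectors to the union of a $3$-complex of subspaces (at most $|\F|-1$ of each dimension, and fewer hyperplanes).

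Then I apply the Vanishing Lemma for Homogeneous Polynomials. By \eqref{eq:transrank}, $x$ fails to be weakly adapted exactly when $\dim(\calS^\bot x)\leq n-2$. That condition says that all $(n-1)\times(n-1)$ minors of $x\mapsto(v(x))_{v\in\calS^\bot}$ vanish, and each such minor is an $(n-1)$-homogeneous polynomial function of $x$. If some minor vanished outside the complex, the Vanishing Lemma would force it to vanish identically. Hence if weakly adapted vectors were too rare, we would get $\trk(\calS^\bot)\leq n-2$ everywhere, i.e.\ every $x$ satisfies $\dim(\calS\cap(x^\bot\otimes x))\geq 2$. The degree condition $|\F|\geq d$ forces me to work with low-degree polynomials, for instance a single linear form $v\mapsto \psi(v(x))$ or quadratic ones, rather than full minors.

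In that situation, counting rank-one trace-zero tensors in $\calS$, we get a set of pairs $(x,\varphi)$ spanning a large space. The goal is to show this space has dimension exceeding the crude bound, or else contains a hurdle configuration. With a weakly adapted $x$ in hand, the estimate above gives $\dim\calS\leq\binom{n}{2}+3$.

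The third step repeats the localization argument with ``adapted'' in place of ``weakly adapted'', now using the sharper bound $\dim \calS\leq \binom{n}{2}+3$. Suppose no hyperplane contains all non-adapted vectors and $\calS$ is not a hurdle. The Vanishing Lemma should then either produce an adapted vector, or force $\calS^\bot$ to be intransitive with structure incompatible with $\dim\calS^\bot\geq \binom{n}{2}+n-3$. When the non-adapted vectors do lie in a hyperplane, I would take an intransitivity veil as in Claim \ref{claim:primitivelyintransitive} (see Remark \ref{remark:primintransitive}). The induction hypothesis on the quotient then makes $\calS$ a hurdle.

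Finally the bound follows. If $\calS$ has an adapted vector, the two-case argument of Section \ref{section:strategy} gives $\dim\calS\leq\binom{n}{2}+2$. If $\calS$ is a hurdle, Lemma \ref{lemma:ineqdimhurdles}(a) gives the same bound.

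I expect the main obstacle to be the localization in the second and third steps. That means showing, from the inductive dichotomy applied to quotients $V/\F x$ for well-chosen $x$, that the bad vectors fit inside a complex satisfying the cardinality constraints (i) and (ii) of the Vanishing Lemma, with $|\F|$ possibly as small as $3$. I would first try to make this work with $3$-complexes built from a flag of successive quotients, falling back on the small cases $n\leq 4$ via Atkinson's theorem when the counts fail.
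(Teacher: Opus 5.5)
\textbf{There is a genuine gap.} Your plan carries the wrong induction hypothesis and has no working localization mechanism.

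The bare dichotomy ``$\dim\calS\le\binom{n}{2}+2$, and hurdle or adapted vector'' is too weak to propagate. When the paper passes to a smaller space, it restricts to hyperplanes $H=\Ker f_i$ and the induced space $\calT$, rather than to quotients $V/\F x$. There, a $\calT$-adapted vector is useful only if it can be forced to avoid a prescribed union of subspaces: the traces $V_j\cap H$ of a given complex, plus one extra subspace $G\supseteq W_i$, where $W_i=\{x\in\Ker f_i : f_i\otimes x\in\calS\}$. That is why Proposition~\ref{prop:inductionprop} is strengthened to two statements. Weakly adapted vectors escape every $3$-complex. Unless $\calS$ is a hurdle, adapted vectors escape every $2$-complex.

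With this strengthening, the mechanism runs as follows:
\begin{enumerate}[(1)]
\item The Covering Lemma gives a dual basis $(f_i)$ with no $f_i$ vanishing on any $V_j$.
\item The hypothesis applied to $\calT$ forces $\dim W_i>\lfloor 2n/3\rfloor$ for every $i$.
\item Summing the independent spaces $f_i\otimes W_i$ gives $\dim\calS\ge n+n\lfloor 2n/3\rfloor$, contradicting the rough bound.
\end{enumerate}
Your ``counting rank-one trace-zero tensors'' has no substitute for this. Knowing $\dim(\calS\cap(x^\bot\otimes x))\ge 2$ for every $x$ yields, from a basis, only about $2n$ independent tensors, far below $\binom{n}{2}+n+1$.

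Your claim that the non-adapted vectors for $\overline{\calS_x}$ lie in a union of proper subspaces is also unfounded. They form the zero locus of minors, which is an arbitrary algebraic set. You also set out to confine the \emph{bad} vectors to a complex, whereas the statement that propagates is that the \emph{good} vectors escape every complex.

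\textbf{The second gap is cardinality, which is the whole point of this section.} Your tools all need $|\F|\ge n$, which is exactly what is unavailable here:
\begin{enumerate}[(1)]
\item Your use of the Vanishing Lemma involves $(n-1)\times(n-1)$ minors, which have degree $n-1$.
\item Your third step relies on Atkinson's theorem for $\calS^\bot$ and on the intransitivity-veil argument of Claim~\ref{claim:primitivelyintransitive}.
\end{enumerate}
Replacing the minors by linear forms $v\mapsto\psi(v(x))$ does not encode the rank condition. Note also that $\dim(\calS^\bot x)\le n-2$ does not by itself imply that $x$ fails to be weakly adapted.

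The paper avoids the cardinality problem as follows:
\begin{enumerate}[(1)]
\item Once induction has made $\calT$ a hurdle with associated $G={}^\circ P$, it studies $\overline{\calU}^\bot\subseteq\Hom(H,V/G)$, whose target has dimension $3$.
\item So only $3\times 3$ minors (degree $3$, harmless since $|\F|\ge 4$) and Atkinson's theorem for a $3$-dimensional target are needed.
\item Claim~\ref{claim:tracezero}, that every element of $\calU$ has trace zero, drives the structure.
\item The non-primitive case leads to the Second Confinement Lemma. The primitive case, via the alternator normal form, leads to the Third Confinement Lemma. Each ends in a Covering Lemma contradiction.
\end{enumerate}

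None of this appears in your plan, so your second and third steps are hopes rather than arguments. Only your final deduction of the bound from the dichotomy is sound: either an adapted vector exists, or Lemma~\ref{lemma:ineqdimhurdles}(a) applies.
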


\begin{theo}\label{theorem:2specwithadapted}
Let $\F$ be a field with $\car(\F)=2$ and $|\F| > 2$, and let $n$ be an integer such that $n=3$ or $n \geq 5$. Let $\calS$ be a $2$-spec linear
subspace of endomorphisms of an $n$-dimensional vector space. Then $\dim \calS \leq \dbinom{n}{2}+3$, and if $n \geq 5$ then either $\calS$ is a hurdle or $\calS$ has an adapted vector.
\end{theo}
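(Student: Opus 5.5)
The plan is to deduce Theorem \ref{theorem:2specwithadapted} from Theorem \ref{theorem:1starspecwithadapted}, taken as already established for every dimension $\geq 3$ (and Proposition \ref{prop:1starn=2} for dimension $2$). The proof is not inductive in $n$. Throughout, I use the decomposition from Section \ref{section:strategy}: for $x\in V\setminus\{0\}$,
$$\dim \calS \leq n+\dim \overline{\calS_{x,0}}+\dim\bigl(\calS\cap(x^\bot\otimes x)\bigr),$$
where $\overline{\calS_{x,0}}\subseteq \End(V/\F x)$ is $1^\star$-spec. Hence $\dim \overline{\calS_{x,0}}\leq \binom{n-1}{2}+2$.

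\textbf{Case $n=3$.} I repeat Case 2 of Section \ref{section:endproofinfinite2spec}, which uses no cardinality assumption. The trace-zero part $\calS_0$ of $\calS$ is $1^\star$-spec, because in characteristic $2$ two distinct nonzero eigenvalues $\alpha,\beta$ of a trace-zero $3\times 3$ matrix force a third eigenvalue $\alpha+\beta$ distinct from both. Theorem \ref{theorem:1starspecwithadapted} for $n=3$ then gives $\dim\calS\leq 1+\binom{3}{2}+2$.

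\textbf{Case $n\geq 5$: reduction to the dichotomy.} If $x$ is $\calS$-adapted, the decomposition gives $\dim\calS\leq \binom{n}{2}+3$. If $\calS$ is a hurdle, point (b) of Lemma \ref{lemma:ineqdimhurdles} gives the same bound. So everything reduces to one claim: if $\calS$ has no adapted vector, then $\calS$ is a hurdle. Following the strategy in Section \ref{section:strategy}, I would first prove the cruder bound $\dim\calS\leq\binom{n}{2}+4$. For this I would find a weakly-$\calS$-adapted vector. Suppose that $\dim(\calS\cap(x^\bot\otimes x))\geq 2$ for every $x$. This supplies a large family of rank $1$, trace zero operators in $\calS$. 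Applying the dual First Confinement Lemma and the dual Splitting Lemma to these operators should either confine the bad vectors $x$ to a union of subspaces, or produce a $2$-dimensional $\calS$-invariant plane with $\calS$ containing the full hurdle pattern on it.

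\textbf{Main obstacle: no adapted vector forces a hurdle.} Assume $\calS$ has no adapted vector and $\dim\calS\geq\binom{n}{2}+3$. Fix $x$. Then $\overline{\calS_{x,0}}$ has dimension at least $\binom{n-1}{2}+1$ or so. By Theorem \ref{theorem:1starspecwithadapted} in dimension $n-1\geq 4$, the space $\overline{\calS_{x,0}}$ is either a hurdle or has an adapted vector $\bar y$.
\begin{itemize}
\item In the hurdle case, I would try to lift the codimension-$2$ subspace $\overline{G}$ of $V/\F x$ to a codimension-$2$ subspace of $V$ containing $x$. I would then use the operators $\varphi\otimes x$ (available since $x$ is not adapted) together with the Splitting Lemma for Hurdles to show that $\calS$ itself is a hurdle.
\item In the adapted case, the set of vectors $y$ that fail to be adapted in the quotients is confined. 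I would encode the condition ``$\calS\cap(y^\bot\otimes y)\neq\{0\}$'' as the vanishing of a homogeneous polynomial function in $y$ (a suitable minor of the map $u\mapsto u(y)$ restricted to operators killing $y$), of degree at most about $n$. Vectors for which the quotient spaces have adapted vectors are trapped in a $3$-complex of subspaces. I would then apply the Vanishing Lemma for Homogeneous Polynomials to conclude that this minor vanishes identically.
\end{itemize}
That conclusion forces either an adapted vector or a large span of trace-zero rank $1$ operators spanning a hurdle pattern. The delicate points are controlling the degree against $|\F|$ and keeping the number of subspaces per dimension within the bounds of the Vanishing Lemma. I expect this step to be the core difficulty and to need several subcases, according to whether the quotients $\overline{\calS_{x,0}}$ are hurdles for many $x$ or for few.
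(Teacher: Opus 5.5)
Your case $n=3$ is correct and is the paper's argument. For $n\geq 5$, your reduction to the implication ``no adapted vector $\Rightarrow$ hurdle'' is also correct. But that implication is the whole content of the theorem, and your plan for it does not go through.

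\textbf{First gap: the inductive input.} Theorem \ref{theorem:1starspecwithadapted} only gives the \emph{existence} of an adapted vector (or a hurdle). The paper's argument needs the stronger points (ii) and (iii) of Proposition \ref{prop:inductionprop}: weakly adapted vectors escape every $3$-complex, and adapted vectors escape every $2$-complex unless the space is a hurdle. This drives a counting argument, which runs as follows.
\begin{enumerate}
\item Assume all (weakly) adapted vectors of $\calS$ lie in a complex, and choose a basis $(f_1,\dots,f_n)$ of $V^\star$ avoiding it.
\item Restrict $\calS\cap\Hom(V,\Ker f_i)$ to $H=\Ker f_i$. This gives a $1^\star$-spec space $\calT$, because its operators are singular.
\item Apply complex-avoidance for $\calT$ to a complex of $H$ containing a subspace that includes $W_i=\{x\in\Ker f_i : f_i\otimes x\in\calS\}$. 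This forces $\dim W_i>\lfloor 2n/3\rfloor$ for all $i$.
\item Hence $\dim\calS\geq n+n\lfloor 2n/3\rfloor$, contradicting the rough bound $\binom{n}{2}+n+1$ coming from Lemma \ref{lemma:diagonalzero}. This is how the paper gets a weakly adapted vector and $\dim\calS\leq\binom{n}{2}+4$.
\item The same counting with $2$-complexes yields some $\dim W_1\leq\lfloor n/2\rfloor$. Then point (iii) makes $\calT$ a hurdle.
\end{enumerate}
Your substitute for the weakly-adapted step does not work. The dual First Confinement Lemma needs $V^\star\otimes x\subseteq\calS$. The dual Splitting Lemma needs the full pattern $\{\varphi\otimes y : y\in P,\ \varphi(y)=0\}$ on a plane $P$. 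Neither follows from $\dim(\calS\cap(x^\bot\otimes x))\geq 2$ for all $x$.

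\textbf{Second gap: the degree of your polynomials.} A minor of degree about $n$ is out of reach of Lemma \ref{lemma:vanishinghomogeneous}, which requires $|\F|\geq d$ and at most $|\F|-d$ hyperplanes. It therefore fails, e.g., for $|\F|=4$ and $n\geq 5$, which is exactly the regime this theorem must cover. The paper uses degree-$n$ minors only for $n\leq 4$. Otherwise it uses only degree-$3$ minors, for $\overline{\calU}^\bot\subseteq\Hom(H,V/G)$ with $\dim(V/G)=3$.

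\textbf{Third gap: the hurdle case.} Your direct lift fails. A lift in $\calS_{x,0}$ of a trace-zero endomorphism of $V/\F x$ vanishing on $\overline{G}$ only maps $G$ into $\F x$. Correcting it requires prescribing $\psi_{|G}\in(G/\F x)^\star$, a space of dimension $n-3$, for tensors $\psi\otimes x\in\calS$. Non-adaptedness of $x$ only guarantees one such tensor, and weak adaptedness allows at most a line of them. This is the hardest part of the paper, handled as follows.
\begin{enumerate}
\item Take $\calT$ a hurdle attached to $P$, and set $G={}^{\circ}P$ and $\calU=\{u\in\calS : \im u\subseteq H,\ G\subseteq\Ker u\}$. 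Show every element of $\calU$ has trace zero.
\item Prove $\overline{\calU}^\bot$ is intransitive using the degree-$3$ Vanishing Lemma.
\item Apply Atkinson's theorem; the target has dimension $3$, so $|\F|\geq 3$ suffices.
\item Conclude with Lemmas \ref{lemma:confinement2} and \ref{lemma:confinement3} together with the Covering Lemma.
\end{enumerate}
As it stands, your argument proves the theorem only for $n=3$, and for spaces already known to be hurdles or to have an adapted vector.
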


The main difficulty is the proof of Theorem \ref{theorem:1starspecwithadapted}, as the proof of Theorem \ref{theorem:2specwithadapted} will follow a similar pattern.
As we have explained in the introduction, the innovation is that here
everything will be proved almost at once, as we will not separate the dimension inequality from the work on adapted vectors in the inductive proof.

From now on, $\F$ denotes a field with characteristic $2$ and more than $2$ elements.

\subsection{The induction hypothesis}

Here the induction hypothesis is quite large, and now we state the precise property as follows:

\begin{prop}\label{prop:inductionprop}
Let $n \geq 2$, and $\calS$ be a $1^\star$-spec linear subspace of endomorphisms of
an $n$-dimensional vector space $V$. Then:
\begin{enumerate}[(i)]
\item $\dim \calS \leq \dbinom{n}{2}+2$;
\item If $n \geq 3$ then the weakly $\calS$-adapted vectors are not all contained in the union of some $3$-complex of $V$.
\item If $n \geq 3$ then either $\calS$ is a hurdle or the $\calS$-adapted vectors are not all contained in the union of some $2$-complex of $V$.
\end{enumerate}
\end{prop}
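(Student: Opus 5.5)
We prove Proposition \ref{prop:inductionprop} by induction on $n$, establishing (i), (ii), (iii) in that order at each rank $n$, and assuming all three at every rank $k \in \lcro 2,n-1\rcro$. For $n=2$ only (i) is required, and it is Proposition \ref{prop:1starn=2}. For $n=3$ we may shortcut through Section \ref{section:largefields} whenever $|\F| \geq 3$, which always holds here. The results for $n=3$ will then serve as a base for the recursive arguments. For the inductive step we fix an $n$-dimensional space $V$ with $n \geq 3$ and a $1^\star$-spec subspace $\calS$ of $\End(V)$.

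\textbf{Step 1: the rough bound and point (ii).} Any nonzero $x$ together with the induction hypothesis (i) gives
\[
\dim \calS \leq \dbinom{n}{2}+2+(n-1)+\dim(\calS\cap(x^\bot\otimes x)).
\]
To prove (ii), assume that all weakly $\calS$-adapted vectors lie in the union of a $3$-complex $(V_1,\dots,V_{2n})$. For $x$ outside this union we have $\dim(\calS \cap (x^\bot \otimes x)) \geq 2$.

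The plan is to turn this into a polynomial identity. For a fixed $2$-dimensional test configuration, we consider a determinant expressing that $\calS \cap (x^\bot\otimes x)$ has dimension at most $1$, namely a suitable minor of the linear map $\varphi \mapsto \varphi\otimes x$ composed with the projection modulo $\calS$. This minor is a homogeneous polynomial function of $x$ of degree $d \leq 2$. By the Vanishing Lemma for Homogeneous Polynomials, with $d=2$, the $3$-complex supplies at most $|\F|-1$ subspaces of each dimension up to $n-2$ (since $3 \leq |\F|-1$ once $|\F| \geq 4$) and at most $3 \leq |\F|-2$ hyperplanes. Hence the polynomial vanishes identically, and every $x$ fails to be weakly adapted.

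Then $\calS$ contains, for every $x$, a $2$-dimensional space of trace-zero rank-one operators with range $\F x$. This produces many rank $1$ operators. We then aim for a contradiction by quotienting by a line $\F x$ and applying the induction hypothesis (ii)/(iii) to $\overline{\calS_x}$, together with the First Confinement Lemma.

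The case $|\F|=3$, where the counts are tight, must be handled separately, probably via the Covering Lemma with $r=2$.

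\textbf{Step 2: the improved bound and point (iii).} With a weakly adapted vector $x$, the argument of Section \ref{section:strategy} gives the sharper bound $\dim \calS \leq \binom{n}{2}+3$. Equality would force $\calS \cap \Hom(V,\F x)$ to contain a trace-zero line. In the nonadapted case we then quotient by $\F x$ and use the induction hypothesis (iii) on $\overline{\calS_x} \subseteq \End(V/\F x)$.

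Either $\overline{\calS_x}$ is a hurdle, and we lift the hurdle structure to $\calS$ using the First Confinement Lemma and the Splitting Lemma for Hurdles. Or $\overline{\calS_x}$ has adapted vectors outside any $2$-complex of $V/\F x$. Pulling these back, and using the same minor-vanishing argument with the Vanishing Lemma for $d=1$, we show that if all $\calS$-adapted vectors lay in a $2$-complex of $V$, then either $\calS$ is a hurdle or a polynomial condition forces adaptedness everywhere. The combinatorics of $2$-complexes under quotient, where a $2$-complex of $V$ projects into one of $V/\F x$ after discarding at most two spaces, is what makes the induction close.

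\textbf{Step 3: point (i).} If $\calS$ is a hurdle, Lemma \ref{lemma:ineqdimhurdles}(a) gives (i). Otherwise (iii) provides an adapted vector, and the standard computation from Section \ref{section:strategy} yields $\dim\calS \leq \binom{n}{2}+2$. Note that (iii) only uses the bound $\binom{n}{2}+3$ from Step 2, so there is no circularity.

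\textbf{Main obstacle.} I expect the main obstacle to be Step 1 and the lifting in Step 2. One must produce a genuinely polynomial function of bounded degree detecting non-adaptedness, and track exactly how complexes and hurdles behave under passage to $V/\F x$. This is especially delicate over small fields such as $\F_4$ and $\F_3$, where the Vanishing Lemma's counting hypotheses are barely met.
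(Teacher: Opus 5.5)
Your Step 3, and the sharpened bound $\dim\calS\leq\dbinom{n}{2}+3$ obtained from a weakly adapted vector, match the paper. However, the two steps that carry the induction have genuine gaps.

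\textbf{Step 1.} The condition $\dim(\calS\cap(x^\bot\otimes x))\geq 2$ says that $\varphi\in V^\star\mapsto(\varphi\otimes x \bmod \calS,\ \varphi(x))$ has rank at most $n-2$. This map is linear in $x$, so the minors detecting the condition have degree $n-1$, not at most $2$. The Vanishing Lemma then fails over $\F_4$ for every $n\geq 3$. Already for $n=3$ your count $3\leq|\F|-2$ is false when $|\F|=4$. Note also that $|\F|=3$ never occurs here, since $\car(\F)=2$ and $|\F|>2$ force $|\F|\geq 4$.

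Moreover, even if every vector were shown to be non-weakly-adapted, you derive no contradiction. The missing idea is the paper's counting argument:
\begin{enumerate}[(a)]
\item First, the rough bound $\dim\calS\leq\dbinom{n}{2}+n+1$. Your displayed inequality carries a spurious extra $n-1$, which would be too weak for this counting.
\item The Covering Lemma in $V^\star$ gives a basis $(f_1,\dots,f_n)$ with $V_j\not\subseteq\Ker f_i$ for all $i,j$.
\item Put $H=\Ker f_i$, let $\calT$ be the space of restrictions to $H$ of $\calS\cap\Hom(V,H)$, and set $W_i:=\{x\in H : f_i\otimes x\in\calS\}$. If $\dim W_i\leq\lfloor 2n/3\rfloor$, embed $W_i$ in some $G$ so that $(V_4\cap H,\dots,V_{2n}\cap H,G)$ is a $3$-complex of $H$.
\item Induction (ii) then yields a weakly $\calT$-adapted vector outside this complex. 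That vector is not weakly $\calS$-adapted, hence is forced into $W_i\subseteq G$, which is absurd.
\item Hence $\dim\calS\geq n+n\lfloor 2n/3\rfloor$, contradicting the rough bound when $n\geq 4$. For $n=3$, a basis of non-weakly-adapted vectors puts all zero-diagonal matrices in $\calS$, contradicting Lemma \ref{lemma:diagonalzero}.
\end{enumerate}

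\textbf{Step 2.} Quotienting by $\F x$ does not transfer adaptedness. If $\bar y$ is $\overline{\calS_x}$-adapted and $\varphi\otimes y\in\calS$ with $\varphi(y)=0$, nothing forces $\varphi(x)=0$, so that operator need not lie in $\calS_x$. What works is restricting to the hyperplane $H=\Ker f_1$, where $W_1$ is the only leak. The bound $\dbinom{n}{2}+3$ is used precisely to find an index with $\dim W_1\leq\lfloor n/2\rfloor$, so that $W_1$ can be absorbed as the last member of the $2$-complex $(V_3\cap H,\dots,V_n\cap H,G)$ of $H$. Also, the minors detecting non-adaptedness have degree $n$, not $1$. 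That route therefore needs $n\leq|\F|$, and the paper uses it only for $n\in\{3,4\}$.

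Above all, passing from ``$\calT$ is a hurdle'' to ``$\calS$ is a hurdle'' does not follow from the First Confinement and Splitting Lemmas; it is the core of the proof. With $G={}^\circ P\subseteq H$ attached to the hurdle $\calT$, the paper proceeds as follows:
\begin{enumerate}[(a)]
\item It studies the space $\calU$ of operators of $\calS$ that map into $H$ and vanish on $G$, and proves they all have trace zero.
\item It shows $\overline{\calU}^\bot\subseteq\Hom(H,V/G)$ is intransitive, using the Vanishing Lemma with $d=3$.
\item It applies Atkinson's theorem with a $3$-dimensional target. If $\overline{\calU}^\bot$ is not primitively intransitive, the Second Confinement Lemma shows that either $\calS$ is a hurdle or the non-adapted vectors lie in three proper subspaces; together with the $2$-complex this contradicts the Covering Lemma.
\item If $\overline{\calU}^\bot$ is primitively intransitive, an alternator puts $\calS$ in the normal form of the Third Confinement Lemma. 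Its two hyperplanes again contradict the Covering Lemma.
\end{enumerate}
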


Note that the requirement $n \geq 3$ in point (ii) is unavoidable, since every $3$-complex of a $2$-dimensional vector space $V$ contains $V$ as its fourth component.

In the remainder of this section, we assume that $n \geq 3$.
In case $n \leq 4$, we will take advantage of the fact that the dimension inequality has already been proved in Section \ref{section:largefields},
since $|\F| \geq 4$.

From now on, we take an $n$-dimensional vector space $V$ and a $1^\star$-spec subspace $\calS$ of $\End(V)$, and we assume that the result
of Proposition \ref{prop:inductionprop} holds over every $\F$-dimensional vector space with dimension $k \in \lcro 2,n-1\rcro$.

\subsection{A first rough upper-bound on the dimension}

Now, we will find a rough estimate for the dimension, which will be useful later on.

\begin{lemma}\label{lemma:diagonalzero}
Let a linear subspace $\calM$ of $\Mat_n(\F)$ contain all the matrices with diagonal zero, with $n \geq 3$. Then
$\calM$ is not $2$-spec.
\end{lemma}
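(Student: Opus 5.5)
The plan is to exhibit explicitly a matrix with zero diagonal that has at least three distinct eigenvalues in $\F$; since $\calM$ contains it, $\calM$ cannot be $2$-spec. The natural candidates are companion matrices. By the displayed form of $C(r)$, for a monic polynomial $r$ of degree $n$ the only possibly nonzero diagonal entry of $C(r)$ is the bottom-right one, which equals $\tr(r(t))$. Hence $C(r)$ has zero diagonal as soon as $\tr(r(t))=0$. Moreover, the characteristic polynomial of $C(r)$ is $r$, so its eigenvalues in $\F$ are exactly the roots of $r$ in $\F$.

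It therefore suffices to find a monic polynomial $r$ of degree $n$ with zero trace and at least three distinct roots in $\F$. Recall the standing assumptions $\car(\F)=2$ and $|\F|>2$. A field of characteristic $2$ has cardinality a power of $2$ when finite, so $|\F|\geq 4$. Pick two distinct nonzero elements $a,b\in\F$ and set $c:=a+b$.

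We check that $a,b,c$ are pairwise distinct:
\begin{itemize}
\item $c\neq a$ because $b\neq 0$;
\item $c\neq b$ because $a\neq 0$;
\item $c\neq 0$ because $a\neq b$.
\end{itemize}
Also, in characteristic $2$ we have $a+b+c=2(a+b)=0$.

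Now set
$$r(t):=t^{n-3}(t-a)(t-b)(t-c),$$
which is monic of degree $n$ since $n\geq 3$. Its coefficient on $t^{n-1}$ is $-(a+b+c)=0$, so $\tr(r(t))=0$.

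Consequently $C(r)$ has zero diagonal and therefore belongs to $\calM$. On the other hand, its spectrum in $\F$ contains the three distinct elements $a,b,c$, which contradicts the $2$-spec property.

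There is no real obstacle here. The only point needing care is the case $n=3$, where the trace-zero constraint must be met by the three chosen roots themselves rather than by an auxiliary extra root. This is precisely why I use the characteristic-$2$ choice $c=a+b$, which needs only $|\F|\geq 4$.
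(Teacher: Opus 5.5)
Your proof is correct and essentially identical to the paper's: both observe that $\calM$ contains the companion matrix of every monic degree-$n$ polynomial with zero trace, and both use $t^{n-3}(t-\alpha)(t-\beta)(t-(\alpha+\beta))$ with $\alpha,\beta$ distinct and nonzero to produce three distinct eigenvalues in $\F$. Your explicit checks that $a$, $b$ and $a+b$ are pairwise distinct and that the trace vanishes in characteristic $2$ are exactly the details the paper leaves implicit.
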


\begin{proof}
Indeed, we note that $\calM$ contains all the companion matrices of polynomials of $\F[t]$
that are monic, with degree $n$ and trace zero. Yet since $n \geq 3$ the polynomial $t^{n-3}(t-\alpha)(t-\beta)(t-(\alpha+\beta))$ is such a polynomial, where we choose
$\alpha$ and $\beta$ as distinct nonzero elements of $\F$.
\end{proof}

\begin{claim}\label{claim:roughbound}
One has $\dim \calS \leq n+1+\dbinom{n}{2}\cdot$
\end{claim}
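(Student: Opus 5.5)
The plan is to run the basic rank-theorem argument from Section~\ref{section:strategy} with an arbitrary nonzero vector, bounding the rank-one part crudely, and to use point (i) of the induction hypothesis (Proposition~\ref{prop:inductionprop} in dimension $n-1$) for the quotient part. No adapted vector is needed for this rough bound.

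First, fix any $x \in V \setminus \{0\}$ and set $\calS_x:=\{u \in \calS : u(x)\in \F x\}$. The linear map $u \in \calS \mapsto u(x) \bmod \F x \in V/\F x$ has kernel $\calS_x$, so the rank theorem gives $\dim \calS \leq \dim \calS_x+(n-1)$.

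Next, consider the map $u \in \calS_x \mapsto \overline{u} \in \End(V/\F x)$. Its kernel is $\calS \cap \Hom(V,\F x)=\calS \cap (V^\star \otimes x)$, and its image is $\overline{\calS_x}$. Since every $u \in \calS_x$ has $x$ as an eigenvector, each eigenvalue of $\overline{u}$ in $\F$ is an eigenvalue of $u$. Hence $\overline{\calS_x}$ is a $1^\star$-spec subspace of $\End(V/\F x)$, and $\dim(V/\F x)=n-1 \geq 2$. Point (i) of the induction hypothesis therefore yields
$$\dim \overline{\calS_x}\leq \dbinom{n-1}{2}+2 .$$
For the kernel we use only the trivial bound $\dim (\calS\cap (V^\star\otimes x)) \leq \dim (V^\star \otimes x)=n$.

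Combining the three estimates gives
$$\dim \calS \leq (n-1)+n+\dbinom{n-1}{2}+2=\dbinom{n}{2}+n+1,$$
because $(n-1)+\binom{n-1}{2}=\binom{n}{2}$. This is the claimed bound.

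There is no real obstacle here: the only nontrivial input is point (i) in dimension $n-1$. The one point to check carefully is that passing to the quotient preserves the $1^\star$-spec property, which holds because eigenvalues of $\overline{u}$ in $\F$ are eigenvalues of $u$. Lemma~\ref{lemma:diagonalzero} is not needed for this bound. If one wanted a slightly sharper estimate, I would use it to show that $\calS$ cannot contain all of $V^\star\otimes x$ for every $x$ simultaneously, but this is not required for Claim~\ref{claim:roughbound}.
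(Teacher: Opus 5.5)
There is a genuine gap, and it is in your final computation. Since $(n-1)+\binom{n-1}{2}=\binom{n}{2}$, the sum $(n-1)+n+\binom{n-1}{2}+2$ equals $\binom{n}{2}+n+2$, not $\binom{n}{2}+n+1$. So with the trivial estimate $\dim(\calS\cap(V^\star\otimes x))\leq n$, your argument only proves $\dim\calS\leq\binom{n}{2}+n+2$. That is one unit short of the claim. The unit matters later: for $n=4$ in Section~\ref{section:inductiveweaklyadapted}, the lower bound $n+n\lfloor 2n/3\rfloor=12$ contradicts $\binom{4}{2}+5=11$ but not $\binom{4}{2}+6$.

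The missing idea is that the kernel term $n$ and the quotient term $\binom{n-1}{2}+2$ cannot both be attained. The paper splits into two cases for an arbitrary $x$.
\begin{enumerate}[(a)]
\item If $\calS$ contains some $\varphi\otimes x$ with $\varphi(x)\neq 0$, adding multiples of it moves the eigenvalue on $x$ freely without changing $\overline{u}$. Using $|\F|>2$, this makes $\overline{\calS_x}$ a $0^\star$-spec space. Theorem~\ref{theo:0starspec} then gives $\dim\overline{\calS_x}\leq\binom{n-1}{2}$, hence $\dim\calS\leq\binom{n}{2}+n$.
\item Otherwise $\calS\cap(V^\star\otimes x)\subseteq x^\bot\otimes x$ has dimension at most $n-1$. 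The induction hypothesis then gives exactly $\binom{n}{2}+n+1$.
\end{enumerate}

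Alternatively, you can keep your one-case argument but choose $x$ with $V^\star\otimes x\not\subseteq\calS$, so that the kernel has dimension at most $n-1$. Such an $x$ exists because $\calS\neq\End(V)$, since $\End(V)$ is not $1^\star$-spec when $|\F|>2$; you could also invoke Lemma~\ref{lemma:diagonalzero}, as you hinted. Contrary to your closing remark, this refinement is exactly what the claimed bound requires.
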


\begin{proof}
Choose an arbitrary vector $x \in V \setminus \{0\}$.
Assume first that $\calS$ contains an element of $V^\star \otimes x$ with nonzero trace.
We can use the line of reasoning outlined in Section \ref{section:strategy} to find
a $0^\star$-spec subspace $\calT$ of $\End(V/\F x)$ such that $\dim \calS \leq n+(n-1)+\dim \calT$, and then we use
Theorem \ref{theo:0starspec} to deduce that $\dim \calS \leq n+(n-1)+\dbinom{n-1}{2}=n+\dbinom{n}{2}$.

Assume now that all the elements in $\calS \cap (V^\star \otimes x)$ have trace zero. Then we find a $1^\star$-spec subspace $\calT$ of $\End(V/\F x)$ such that
$$\dim \calS \leq \dim(x^\bot \otimes x)+(n-1)+\dim \calT \leq 2(n-1)+\dim \calT,$$
and by induction $\dim \calT \leq \dbinom{n-1}{2}+2$. This yields the claimed inequality.
\end{proof}

\subsection{Proof of point (ii) for $n=3$}

Here, we assume $n=3$ and give a direct proof of point (ii).
So, assume that we have a $3$-complex $(V_i)_{1 \leq i \leq 6}$ of $V$ whose union contains all the weakly $\calS$-adapted vectors.
By the Covering Lemma, the subset $E \setminus \underset{i=1}{\overset{6}{\bigcup}} V_i$ is not included in a linear hyperplane of 
$V$ (otherwise, we take such a hyperplane $V_7$ and note that $(V_i)_{1 \leq i \leq 7}$ covers $V$, thereby contradicting the Covering Lemma for $r=3$).
Hence, we can take a basis $(e_1,e_2,e_3)$ of $V$ that consists of vectors that are not weakly $\calS$-adapted. Since $n=3$
it follows that $e_i^\bot \otimes e_i \subseteq \calS$ for all $i \in \{1,2,3\}$.
Hence, the matrix space $\calM$ that represents $\calS$ in the basis $(e_1,e_2,e_3)$ contains all the $3$-by-$3$ matrices with diagonal zero, 
and this contradicts Lemma \ref{lemma:diagonalzero}.

Hence point (ii) holds for $n=3$.

\subsection{Completing the proof of point (ii), and an improved upper-bound on the dimension}\label{section:inductiveweaklyadapted}

We now prove point (ii), assuming $n \geq 4$.
The method is an adaptation of the one featured in \cite{dSPfeweigenvalues}.
We use a \emph{reductio ad absurdum}, by assuming that there is a $3$-complex
$(V_1,\dots,V_{2n})$ of $V$ whose union contains all the weakly $\calS$-adapted vectors.

Since $|\F| \geq 4$, the Covering Lemma (applied to $p=3$) shows that there is no linear hyperplane $H$ of $V^\star$ such that $H \cup \underset{i=1}{\overset{2n}{\bigcup}} V_i^o=V^\star$,
which yields a basis $(f_1,\dots,f_n)$ of $V^\star$ made of vectors that are all outside of $\underset{i=1}{\overset{2n}{\bigcup}} V_i^o$.
In other words, no $f_i$ vanishes on some $V_k$.

Now, we fix an $i \in \lcro 1,n\rcro$ and consider the linear hyperplane $H:=\Ker f_i$.
Let us consider the space $\calS^H=\calS \cap \Hom(U,H)$ and the induced subspace
$$\calT:=\{u_H \mid u \in \calS^H\} \subseteq \End(H).$$
Then $\calT$ is a $1^\star$-spec linear subspace of $\End(H)$.

Moreover the intersection $\calS^H \cap (f_i \otimes V)$ equals
$f_i \otimes W_i$ for some linear subspace $W_i$ of $H$.

We shall prove that
$$\dim W_i> \left\lfloor \frac{2n}{3}\right\rfloor.$$
Assume on the contrary that $\dim W_i \leq \lfloor \frac{2n}{3}\rfloor=1+\lfloor \frac{2n-3}{3}\rfloor$,
and let us embed $W_i$ inside a linear subspace $G$ of $H$ with dimension $1+\lfloor \frac{2n-3}{3}\rfloor$
(indeed, this is possible since clearly $n> \lfloor \frac{2n}{3}\rfloor$).
Since none of the $V_j$'s is included in the kernel of $f_i$, we have
$\dim (V_j \cap H)=\dim(V_j)-1$ for all $j \in \lcro 1,2n\rcro$, and it follows that
$(V_4 \cap H,\dots,V_{2n} \cap H,G)$ is a $3$-complex of $H$.

By induction applied to $\calS^H$, there exists a vector $x \in H$ that is weakly $\calT$-adapted and belongs to none of
$V_4 \cap H,\dots,V_{2n} \cap H,G$. Hence $x$ does not belong to $V_1 \cup \cdots \cup V_{2n}$
(remember that $V_i \cap H=\{0\}$ for all $i \in \{1,2,3\}$), and therefore
$x$ is not weakly $\calS$-adapted. This yields a $2$-dimensional linear subspace $P$ of $x^\bot$ such that $P \otimes x \subseteq \calS$.
Then $\calT$ contains all the operators $\varphi_{|H} \otimes x$ with $\varphi \in P$, and the space of all these operators
has dimension at most $1$ because $x$ is weakly $\calT$-adapted. Hence the linear mapping $\varphi \in P \mapsto \varphi_{|H} \otimes x$,
is noninjective, which yields $\varphi \in P \setminus \{0\}$ such that $\varphi_{|H}=0$. Hence $\varphi=\lambda f_i$ for some $\lambda \in \F^\times$,
and we deduce that $x \in W_i$. This contradicts the fact that $x \not\in G$.

We deduce from the previous step that
$$\dim W_i \geq 1+\left\lfloor \frac{2n}{3}\right\rfloor.$$
Now, since $f_1,\dots,f_n$ are linearly independent the subspaces $f_i \otimes W_i$ are linearly independent in $\End(V)$, and we deduce that
$$\dim \calS \geq \sum_{i=1}^n \dim(f_i \otimes W_i) \geq n+n\,\left\lfloor \frac{2n}{3}\right\rfloor.$$
As we will now see, this contradicts our previous upper bound on $\dim \calS$.
Indeed, the rough upper bound from Claim \ref{claim:roughbound} was $\dbinom{n}{2}+n+1$, whereas
$$n+n\left\lfloor \frac{2n}{3}\right\rfloor-\dbinom{n}{2}-n-1 \geq \frac{n(2n-2)}{3}-\frac{n(n-1)}{2}-1=\frac{n(n-1)}{6}-1>0$$
since $n \geq 4$. This contradiction completes the proof of point (ii).
In turn, by using a weakly $\calS$-adapted vector we can now use
one more round of the basic inductive technique to obtain the improved upper-bound:

\begin{claim}\label{claim:sharperbound}
One has $\dim \calS \leq \dbinom{n}{2}+3$.
\end{claim}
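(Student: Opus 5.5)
We take a weakly $\calS$-adapted vector $x \in V \setminus \{0\}$, whose existence is guaranteed by point (ii) just established (a $3$-complex has nonempty union, so point (ii) yields weakly $\calS$-adapted vectors outside it). Then we run the basic inductive technique of Section \ref{section:strategy} once more. Set $\calS_x:=\{u \in \calS : u(x)\in \F x\}$. The rank theorem gives $\dim \calS \leq (n-1)+\dim \calS_x$. Let $\overline{\calS_x} \subseteq \End(V/\F x)$ be the induced space, which is $1^\star$-spec. The kernel of $u \mapsto \overline{u}$ on $\calS_x$ is $\calS \cap (V^\star \otimes x)$, so
$$\dim \calS \leq (n-1)+\dim \overline{\calS_x}+\dim\bigl(\calS \cap (V^\star \otimes x)\bigr).$$

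We split into two cases according to whether $\calS \cap (V^\star\otimes x)$ contains an element of nonzero trace.

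\emph{First case: some element has nonzero trace.} Then, exactly as in the second bullet of Section \ref{section:strategy}, adding multiples of such an element $\varphi \otimes x$ (with $\varphi(x)\neq 0$) to any $u\in\calS_x$ shows that $\overline{\calS_x}$ is $0^\star$-spec. Hence $\dim\overline{\calS_x}\leq \binom{n-1}{2}$ by Theorem \ref{theo:0starspec}. Moreover the trace-zero part of $\calS\cap(V^\star\otimes x)$ is $\calS\cap(x^\bot\otimes x)$, of dimension at most $1$ by weak adaptedness, so $\dim(\calS\cap(V^\star\otimes x))\leq 2$. Altogether,
$$\dim\calS\leq (n-1)+\binom{n-1}{2}+2=\binom{n}{2}+2.$$

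\emph{Second case: every element has trace zero.} Then $\calS\cap(V^\star\otimes x)=\calS\cap(x^\bot\otimes x)$ has dimension at most $1$. By the induction hypothesis (point (i) in dimension $n-1\geq 2$), $\dim \overline{\calS_x}\leq \binom{n-1}{2}+2$. Thus
$$\dim\calS\leq (n-1)+\binom{n-1}{2}+2+1=\binom{n}{2}+3.$$

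In both cases the claim follows. There is no real obstacle here; the only point requiring care is the first case. There one must check that weak adaptedness bounds the trace-zero part by $1$, so that the full intersection has dimension at most $2$, and that the $0^\star$-spec argument from the introduction applies verbatim with $\varphi(x)\neq 0$.
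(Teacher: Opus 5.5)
Your proposal is correct and is exactly the paper's argument: point (ii), applied to any $3$-complex of $V$ (which exists), supplies a weakly $\calS$-adapted vector $x$. One more round of the basic technique from the proof of Claim~\ref{claim:roughbound} then gives the bound, split according to whether $\calS\cap(V^\star\otimes x)$ contains an operator of nonzero trace, which yields $\binom{n}{2}+2$ or $\binom{n}{2}+3$ respectively, with the explicit bound $\dim(\calS\cap(V^\star\otimes x))\leq 2$ (resp.\ $\leq 1$) that the paper leaves implicit.
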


\subsection{Proof of points (i) and (iii) in case $n \in \{3,4\}$}

Assume that $n \in \{3,4\}$. Then we readily know from Section \ref{section:largefields} that the dimension inequality
$\dim \calS \leq \binom{n}{2}+2$ holds true.

Next, we prove point (iii) directly (i.e., without resorting to an inductive argument).
So, we assume that there is a $2$-complex $(V_1,\dots,V_n)$ of $V$ whose union contains all the
$\calS$-adapted vectors.
Note that $\dim \calS^\bot=n^2-\dim \calS \geq \dbinom{n+1}{2}-2 =\dbinom{n}{2}+(n-2)$ and in particular $\dim \calS^\bot \geq n$.
Since $n \geq 3$, this readily forbids $\calS^\bot$ to be primitively intransitive, because $|\F| \geq 4 \geq n$.

Set $m:=\dim \calS^\bot$.
Now, let us consider a basis $(v_1,\dots,v_m)$ of $\calS^{\bot}$ and a basis $(e_1,\dots,e_n)$ of $V$.
For $x \in V$, we write the matrix of the linear operator $v \in \calS^\bot \mapsto v(x) \in V$ in these bases as $M(x)$.
Finally, let us consider a subset $I \subseteq \lcro 1,m\rcro$ with cardinality $n$, and denote by
$\Delta_I(x)$ the $n \times n$ minor determinant of $M(x)$ obtained by selecting the column indices in $I$.
Note that $\Delta_I$ is an $n$-homogeneous polynomial function on $V$, and that $\Delta_I$ vanishes at every vector of $V$ that is not $\calS$-adapted.
As a consequence, $\Delta_I$ vanishes on the complement of $V_1 \cup \cdots \cup V_n$ in $V$.

By Lemma \ref{lemma:vanishinghomogeneous} applied to $d:=n$, the function $\Delta_I$ actually vanishes.
Indeed here $|\F|-1 \geq 2$, so condition (i) in that lemma is satisfied; and
finally condition (ii) is satisfied because either $n=3$ and there is exactly one hyperplane in the family $(V_1,\dots,V_n)$,
or $n=4$ and there is none.

Varying $I$ yields that $\calS^\bot$ is intransitive. As we have already shown however that $\calS^\bot$
is not primitively intransitive, we can take an intransitivity weil $W$ of it,
and set $s:=\dim (V/W) \in \lcro 1,n-1\rcro$.
This leaves us with three cases to consider.
\begin{itemize}
\item If $s=1$ then we apply the line of reasoning from Claim \ref{claim:primitivelyintransitive},
and we derive that $\calS$ satisfies the conditions of the First Confinement Lemma. This would yield a linear hyperplane $H$ of $V$ that contains all the
vectors that are not $\calS$-adapted, and then $V_1,\dots,V_n,H$ would cover $V$, which obviously contradicts the Covering Lemma.
\item If $s=2$ then we apply the line of reasoning from the proof of Claim \ref{claim:primitivelyintransitive} to find that
$\calS$ is a hurdle (note that this does not really require the induction hypothesis, rather it involves the fact that every $1^\star$-spec subspace of
$\End(V/W)$ has dimension at most $3$, with equality holding only for $\mathfrak{sl}(V/W)$).
\item If $s=3$ and $n=4$ then $\dim \calS^\bot \leq \dim \pi \calS^\bot+\dim \Hom(V,W)$ for the standard projection $\pi : V \twoheadrightarrow V/W$,
and by Atkinson's theorem $\dim(\pi \calS^\bot) \leq \dbinom{n-1}{2}$. Hence in that case
$\dim \calS^\bot \leq n+\dbinom{n-1}{2}= \dbinom{n}{2}+1$, thereby contradicting the fact that $\dim \calS^\bot \geq \dbinom{n}{2}+n-2$ and $n-2 \geq 2$.
\end{itemize}

This completes the proof of point (iii) for $n \leq 4$.

\subsection{Setting up the proof of point (iii) for $n \geq 5$}\label{section:startinductiveadapted}

From now on we systematically assume that $n \geq 5$.

It remains to prove points (i) and (iii). Once point (iii) is obtained, point (i)
follows from it: indeed, by combining point (iii) with the Covering Lemma, we obtain that either
$\calS$ has an adapted vector, in which case we proceed as before to obtain the inequality $\dim \calS \leq \dbinom{n}{2}+2$,
or $\calS$ is a hurdle, in which case this inequality is immediately known by Lemma \ref{lemma:ineqdimhurdles}.
Note that this will also give the second conclusion in Theorem \ref{theorem:1starspecwithadapted}.

Hence, we now only care about proving point (iii), which will turn out to be very technical.
The proof strategy is very similar to the one used in Section \ref{section:inductiveweaklyadapted}
but extra difficulties are created by the possibility of stumbling upon hurdles when trying to apply the induction hypothesis.

We assume throughout that there is a $2$-complex $(V_1,\dots,V_n)$ of $V$ whose union contains all the $\calS$-adapted vectors,
and we aim at proving that $\calS$ is a hurdle.

Just like in Section \ref{section:inductiveweaklyadapted}, we use the Covering Lemma in the dual vector space $V^\star$
to find a basis $(f_1,\dots,f_n)$ of $V^\star$ such that $V_j \not\subseteq \Ker f_i$ for all $i,j$ in  $\lcro 1,n\rcro$.
Again, for $i \in \lcro 1,n\rcro$ we set
$$W_i:=\{x \in \Ker f_i : f_i \otimes x \in \calS\}.$$
Assume temporarily that $\dim W_i>\lfloor \frac{n}{2}\rfloor$ for \emph{all} $i \in \lcro 1,n\rcro$.

As in Section \ref{section:inductiveweaklyadapted}, we deduce that
$$\dim \calS \geq n+n \left\lfloor \frac{n}{2}\right\rfloor.$$
However
$$n+n \left\lfloor \frac{n}{2}\right\rfloor-3-\dbinom{n}{2} \geq n+n \frac{n-1}{2}-3-\frac{n(n-1)}{2}=n-3>0,$$
thereby contradicting Claim \ref{claim:sharperbound}.

We deduce that there exists an index $i \in \lcro 1,n\rcro$ such that $\dim W_i \leq  \lfloor \frac{n}{2}\rfloor$, and without loss of generality we will assume that
$\dim W_1 \leq  \lfloor \frac{n}{2}\rfloor$. From now on, we will completely forget about the other $W_j$ spaces, focusing instead on $W_1$.
Now, as in Section \ref{section:inductiveweaklyadapted} we introduce the space $H:=\Ker f_1$, the subspace $\calS^H=\calS \cap \Hom(V,H)$
and the induced subspace
$$\calT:=\{u_H \mid u \in \calS^H\}.$$
Let us take a linear subspace $G$ of $H$ that includes $W_1$ and has dimension $\lfloor\frac{n}{2}\rfloor$.
This time around, we note that $\calV'=(V_3 \cap H,\dots,V_n \cap H,G)$ is a $2$-complex of $H$.
Now, assume that there exists a $\calT$-adapted vector $x$ outside of the union of $\calV'$.
In particular, $x$ belongs to none of $V_1,\dots,V_n$, so by the starting assumption it is not $\calS$-adapted,
yielding $\varphi \in x^\bot \setminus \{0\}$ such that $\varphi \otimes x \in \calS$.
Then $\varphi_{|H}(x)=\varphi(x)=0$, and $\varphi_{|H} \otimes x \in \calT$.
Since $x$ is $\calT$-adapted this yields $\varphi_{|H}=0$, to the effect that $\varphi \in \F f_1$.
In turn, this shows that $f_1 \otimes x \in \calS$, to the effect that $x \in W_1$. This contradicts our assumption that $x \not\in G$.
Hence, the union of the $2$-complex $\calV'$ contains all the $\calT$-adapted vectors.

Now we use the induction hypothesis, and recover that $\calT$ is a hurdle.
This is where things become much more difficult than before.

\subsection{The proof of point (iii), continued}

We have just obtained that $\calT$ is a hurdle and $\dim W_1 \leq \lfloor n/2\rfloor$, and now we will analyze these new facts in depth.
We take a $2$-dimensional linear subspace $P$ of $H^\star$ that is associated with the hurdle $\calT$, and its pre-orthogonal
$$G:={}^{\circ} P \subseteq H,$$
which is a linear subspace with dimension $n-3$ (note that it has nothing to do with the space $G$ from Section \ref{section:startinductiveadapted},
which we completely forget about from now on).
Hence $\calT$ contains all the operators of the form $\psi \otimes y$ with $y \in H$ and $\psi \in P$.

We introduce the subspace
$$\calU:=\{u \in \calS : \; \im u \subseteq H \quad \text{and} \quad G \subseteq \Ker u\}.$$
Every operator $u \in \calU$ induces a linear operator $\overline{u} : V/G \rightarrow H$,
the set $\overline{\calU}$ of those operators is a linear subspace of $\Hom(V/G,H)$,
and $\overline{\calU}$ is isomorphic to $\calU$ through $u \mapsto \overline{u}$.

Before we move forward, it is useful that we give a matrix interpretation of these spaces.
So, we take a basis $(e_4,\dots,e_n)$ of $G$, extend it to a basis $(e_2,\dots,e_n)$ of $H$, and finally to a basis $(e_1,\dots,e_n)$ of $V$.
If we represent the operators in $\calS$ by matrices in the basis $(e_1,\dots,e_n)$, denote by $\calM$ the corresponding matrix space,
and represent the operators in $\calT$ by matrices in the basis $(e_2,\dots,e_n)$ and denote by $\calM'$ the corresponding matrix space,
then the operators of $\calS^H$ are those of $\calS$ with matrix of the form
$$\begin{bmatrix}
0 & [0]_{1 \times (n-1)} \\
[?]_{(n-1) \times 1} & K(M)
\end{bmatrix},$$
and the linear subspace made of all the $K(M)$ submatrices is precisely $\calM'$.

Next, $\calU$ is the space consisting of all the operators $u$ in $\calS$ with matrix in the previous basis of the form
$$M=\begin{bmatrix}
[0]_{1 \times 3} & [0]_{1 \times (n-3)} \\
J(M) & [0]_{(n-1) \times (n-3)} \\
\end{bmatrix} \quad \text{with $J(M) \in \Mat_{n-1,3}(\F)$,}$$
and the set consisting of all the $J(M)$ matrices is the matrix space that represents $\overline{\calU}$
in the bases $(\overline{e_1},\overline{e_2},\overline{e_3})$ and $(e_2,\dots,e_n)$
(where $\overline{e_i}$ stands for the coset of $e_i$ modulo $G$).
The fact that $\calT$ is a hurdle associated with $P$ then means that, for all $N \in \mathfrak{sl}_2(\F)$ and all $C \in \Mat_{n-2,2}(\F)$,
the space $\calM$ contains a matrix of the form
$$\begin{bmatrix}
0 & [0]_{1 \times 2} & [0]_{1 \times (n-3)} \\
[?]_{2 \times 1} & N & [0]_{2 \times (n-3)} \\
[?]_{(n-3) \times 1} & C & [0]_{(n-3) \times (n-3)}
\end{bmatrix}.$$
In turn, this shows that for all such $N$ and $C$, the matrix space that represents $\overline{\calU}$ in the bases $(\overline{e_1},\overline{e_2},\overline{e_3})$ and $(e_2,\dots,e_n)$
contains a matrix of the form
$$\begin{bmatrix}
[?]_{2 \times 1} & N \\
[?]_{(n-3) \times 1} & C
\end{bmatrix}$$
(and might of course contain several such matrices).

In what follows, we will avoid using matrices when they are cumbersome, but the previous matrix vision will be very useful for the reader to visualize what we are doing.

We move forward with the next point, which will be used repeatedly in what follows:

\begin{claim}\label{claim:tracezero}
All the elements of $\calU$ have trace zero.
\end{claim}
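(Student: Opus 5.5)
The idea is to read off the spectrum of an element of $\calU$ from a single $2\times 2$ block. Then I will use the hurdle structure of $\calT$ to move that block freely within a coset of $\mathfrak{sl}_2(\F)$. If its trace were nonzero, this would produce two distinct nonzero eigenvalues in $\F$, contradicting the $1^\star$-spec property of $\calS$.

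First step: fix $u\in\calU$ and let $M$ be its matrix in the basis $(e_1,\dots,e_n)$. By definition of $\calU$, the first row of $M$ is zero (since $\im u\subseteq H$) and columns $4$ to $n$ are zero (since $G\subseteq\Ker u$). Write $N'$ for the $2\times 2$ submatrix of $M$ on rows and columns $2,3$. In the ordering $(e_1 \mid e_2,e_3 \mid e_4,\dots,e_n)$, the matrix $M$ is block lower triangular: its diagonal blocks are $0_1$, $N'$ and $0_{n-3}$. Hence
$$\chi_u(t)=t^{\,n-2}\,\chi_{N'}(t),$$
so $\tr u=\tr N'$, and the nonzero eigenvalues of $u$ in $\F$ are exactly those of $N'$.

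Second step: apply the matrix description of the hurdle $\calT$ recalled just before the claim. Take any $N\in\mathfrak{sl}_2(\F)$ and $C=0$. Then $\calM$ contains a matrix whose first row is zero, whose columns $4$ to $n$ are zero, and whose block on rows and columns $2,3$ equals $N$. Such a matrix represents an element of $\calU$. Adding it to $M$ gives an element of $\calU\subseteq\calS$ whose characteristic polynomial is $t^{n-2}\chi_{N'+N}(t)$, by the first step. So for every $N\in\mathfrak{sl}_2(\F)$, the matrix $N'+N$ has at most one nonzero eigenvalue in $\F$.

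Final step: suppose, for contradiction, that $\alpha:=\tr N'\neq 0$. Because $|\F|>2$, we can choose $\lambda\in\F\setminus\{0,\alpha\}$. Set $D:=\begin{bmatrix}\alpha+\lambda & 0\\ 0 & \lambda\end{bmatrix}$. Then $\tr D=\alpha+2\lambda=\alpha=\tr N'$ because $\car(\F)=2$, so $N:=D-N'$ lies in $\mathfrak{sl}_2(\F)$ and $N'+N=D$. The eigenvalues $\lambda$ and $\alpha+\lambda$ are distinct, since $\alpha\neq0$, and both are nonzero, since $\lambda\notin\{0,\alpha\}$. This contradicts the second step. Therefore $\tr u=\tr N'=0$.

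The only point needing care is the first step: one must check that the rows and columns of $M$ outside the $N'$ block really make $M$ block triangular, with zero diagonal blocks elsewhere. The vanishing of the first row and of the last $n-3$ columns gives exactly this.
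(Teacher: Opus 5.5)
Your proof is correct and is essentially the paper's argument. The paper applies point (c) of the Splitting Lemma for Hurdles to the hurdle $\calT$: since $u_H$ vanishes on $G$, this gives $\tr u=\tr u_H=0$. The proof of that point is exactly your computation, namely adding an element of $\{0\}\vee\mathfrak{sl}_2(\F)$ to make the $2\times 2$ block diagonal with entries $\alpha+\lambda$ and $\lambda$, where $\lambda\notin\{0,\alpha\}$. Your inlined version is self-contained and does not need the invariance part (a) of that lemma, because $G\subseteq\Ker u$ already makes the block-triangular structure automatic.
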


\begin{proof}
Indeed, every element $u \in \calU$ maps into $H$ and hence induces an endomorphism $u_{H} \in \calT$,
which vanishes on $G$. By point (c) of Lemma \ref{lemma:splittinghurdles}, we
obtain that $\tr(u_H)=0$, and hence $\tr(u)=\tr(u_H)=0$.
\end{proof}

We will now work to decipher the structure of the trace-orthogonal space $\overline{\calU}^\bot \subseteq \Hom(H,V/G)$ of $\overline{\calU}$.

\begin{claim}\label{claim:dimUbarbot3}
One has $\dim \overline{\calU}^\bot \geq 3$.
\end{claim}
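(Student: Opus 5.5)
The plan is to bound $\dim \calU$ from above and then use $\dim \overline{\calU}^\bot=\dim \Hom(H,V/G)-\dim \overline{\calU}=3(n-1)-\dim \calU$, since $\overline{\calU}\simeq\calU$ and $\dim(V/G)=3$, $\dim H=n-1$. So it suffices to prove $\dim \calU \leq 3n-6$.

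To bound $\dim \calU$, I will use the linear map $\Theta : u \in \calU \mapsto u_H \in \calT$. Its kernel consists of the $u\in\calS$ with range in $H$ that vanish on $H=\Ker f_1$. These are exactly the operators $f_1\otimes x$ with $x\in H$ and $f_1\otimes x\in\calS$, that is, with $x\in W_1$. Hence $\dim \Ker\Theta=\dim W_1\leq \lfloor n/2\rfloor$, by the choice of the index $1$ made in Section \ref{section:startinductiveadapted}.

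Next I will control the range of $\Theta$. Every $u_H$ with $u\in\calU$ vanishes on $G$, so it is determined by the induced linear map $H/G\rightarrow H$, which lives in a space of dimension $2(n-1)$. Moreover, $u_H$ leaves $G$ invariant (trivially, since $u_H$ vanishes on $G$) and $(u_H)_G=0$. Because $\calT$ is a hurdle attached to $G$ and is $1^\star$-spec, point (d) of Lemma \ref{lemma:splittinghurdles} shows that the endomorphism of $H/G$ induced by $u_H$ has trace zero. This is a nontrivial linear condition, equivalent to Claim \ref{claim:tracezero}. In the matrix picture it says that the block $N$ lies in $\mathfrak{sl}_2(\F)$. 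Therefore $\dim \Theta(\calU)\leq 2(n-1)-1=2n-3$.

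By the rank theorem, $\dim\calU\leq \lfloor n/2\rfloor+2n-3$. We have $\lfloor n/2\rfloor+2n-3\leq 3n-6$ if and only if $\lfloor n/2\rfloor\leq n-3$, which holds because $n\geq 5$ (for $n=5$ both sides equal $2$). This gives $\dim \overline{\calU}^\bot\geq 3$.

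The only delicate step is the identification of $\Ker\Theta$ with $f_1\otimes W_1$, which relies on the earlier definition of $W_1$, together with the correct use of point (d) to cut one dimension from the range. Beyond that, the arithmetic is tight exactly at $n=5$, so the bound $\dim W_1\leq\lfloor n/2\rfloor$ cannot be weakened. I also expect the explicit element of $\overline{\calU}^\bot$ coming from the trace condition (the restriction to $H$ of the projection $V\twoheadrightarrow V/G$) to be one of the three independent vectors the paper will exhibit next.
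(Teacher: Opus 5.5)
Your proposal is correct and follows essentially the same route as the paper. You identify the kernel of $u \mapsto u_H$ (equivalently $u \mapsto \overline{u}_{|H/G}$) with $f_1 \otimes W_1$, you use the trace-zero condition (Claim \ref{claim:tracezero}, i.e.\ point (d) of Lemma \ref{lemma:splittinghurdles}) to cut one dimension from $\Hom(H/G,H)$, and you obtain $\dim \overline{\calU}^\bot \geq n-\lfloor n/2\rfloor \geq 3$ for $n \geq 5$.
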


\begin{proof}
To start with, we consider the restriction
$\calU':=\{u_H \mid u \in \calU\} \subseteq \End(H)$.
We have just seen that all the operators in $\calU'$ have trace zero, and it follows that
$\calU'$ does not contain all the endomorphisms of $H$ that vanish on $G$.
Setting $\overline{\calU'}:=\{\overline{u}_{|H/G} \mid \overline{u} \in \overline{\calU}\} \subseteq \Hom(H/G,H)$,
this means that $\overline{\calU'} \neq \Hom(H/G,H)$, and hence $\dim \overline{\calU'} <\dim(H/G) \dim H$.

Next, we observe that the kernel of $u \in \calU \mapsto \overline{u}_{|H/G}$ is precisely
$f_1 \otimes W_1$, so its dimension is at most $\left\lfloor \frac{n}{2}\right\rfloor$.
Combining the previous two facts yields
$$\dim \overline{\calU}=\dim \calU \leq \left \lfloor \frac{n}{2}\right\rfloor +\dim(H/G) \dim H - 1,$$
hence
$$\dim \overline{\calU}^\bot \geq 1-\left \lfloor \frac{n}{2}\right\rfloor+(\dim(V/G)-\dim(H/G))\dim H
=n-\left \lfloor \frac{n}{2}\right\rfloor \geq \frac{n}{2} \geq \frac{5}{2}$$
and the conclusion follows.
\end{proof}

\begin{claim}\label{claim:targetreduced}
No linear hyperplane of $V/G$ includes the range of every element of $\overline{\calU}^\bot$.
\end{claim}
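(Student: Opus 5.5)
The plan is to argue by contradiction, using trace duality to turn the hypothetical hyperplane into a large family of rank-one operators inside $\calU$. Claim \ref{claim:tracezero} and the bound $\dim W_1 \leq \lfloor n/2\rfloor$ should then give the contradiction.

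Assume that some linear hyperplane $L$ of $V/G$ includes the range of every element of $\overline{\calU}^\bot$, that is, $\overline{\calU}^\bot \subseteq \Hom(H,L)$. Taking trace-orthogonals in the pairing between $\Hom(V/G,H)$ and $\Hom(H,V/G)$ gives $\overline{\calU}=(\overline{\calU}^\bot)^\bot \supseteq \Hom(H,L)^\bot$.

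Next I will identify $\Hom(H,L)^\bot$ directly. Pick a nonzero linear form $\alpha$ on $V/G$ with kernel $L$. For $y \in H$ and $v \in \Hom(H,L)$ one has $\tr\bigl((\alpha\otimes y)\circ v\bigr)=\alpha(v(y))=0$. Hence $\alpha \otimes y \in \Hom(H,L)^\bot$ for all $y\in H$, and therefore $\overline{\calU}$ contains $\alpha\otimes y$ for every $y \in H$.

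Now lift back to $V$. Let $\varphi:=\alpha \circ p$, where $p : V \twoheadrightarrow V/G$ is the standard projection; then $\varphi$ is a nonzero linear form on $V$ that vanishes on $G$. Because $u \mapsto \overline{u}$ is an isomorphism from $\calU$ onto $\overline{\calU}$, and $\varphi \otimes y$ maps into $H$ and vanishes on $G$, we get $\varphi \otimes y \in \calU$ for all $y \in H$. By Claim \ref{claim:tracezero}, each of these operators has trace zero, i.e. $\varphi(y)=0$ for all $y \in H$. Since $H=\Ker f_1$, this forces $\varphi \in \F f_1 \setminus\{0\}$, so after rescaling we obtain $f_1 \otimes y \in \calS$ for every $y\in H$.

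By the definition of $W_1$, this means $W_1=H$, so $\dim W_1=n-1$. However, $W_1$ was chosen with $\dim W_1 \leq \lfloor n/2\rfloor$, and $n-1>\lfloor n/2\rfloor$ because $n \geq 5$. This contradiction proves the claim.

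The only delicate step is the duality computation of $\Hom(H,L)^\bot$ and the bookkeeping of the identification between $\calU$ and $\overline{\calU}$. Both are routine, so I expect no serious obstacle.
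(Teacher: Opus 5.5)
Your proof is correct and follows the paper's argument. Double-orthogonality gives $g\otimes H\subseteq\calU$ for a nonzero form $g$ vanishing on $G$; Claim \ref{claim:tracezero} then forces $g\in\F f_1$, so $H\subseteq W_1$, which contradicts $\dim W_1\leq\lfloor n/2\rfloor$. The only difference is that you explicitly check that $\alpha\otimes y\in\Hom(H,L)^\bot$, a step the paper leaves implicit.
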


\begin{proof}
Assume the contrary. Then, by double-orthogonality there is a linear form $\varphi$ on $V/G$ such that $\overline{\calU}$ contains all the operators
in $\Hom(V/G,H)$ that vanish on $\Ker \varphi$.
By composing $\varphi$ with the standard projection $\pi : V \twoheadrightarrow V/G$,
we obtain a nonzero linear form $g:=\varphi \circ \pi \in V^\star$ and we deduce that $g \otimes H \subseteq \calU$.

For all $x \in H$, we deduce from Claim \ref{claim:tracezero} that $g \otimes x$ has trace zero, i.e.,
$g(x)=0$. Hence $g=\lambda f_1$ for some $\lambda \in \F^\times$, and we deduce that $H \subseteq W_1$.
Then $\lfloor \frac{n}{2}\rfloor \geq n-1$, which obviously contradicts the assumption that $n \geq 5$.
\end{proof}

\begin{claim}
The space $\overline{\calU}^\bot$ is intransitive.
\end{claim}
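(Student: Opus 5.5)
The plan is to reduce intransitivity to a statement about rank-one operators in $\overline{\calU}$, via identity \eqref{eq:transrank} (lemma 3.3 of \cite{dSPtriangularizable}) applied to $\overline{\calU}\subseteq \Hom(V/G,H)$. For $y \in H\setminus\{0\}$ it reads
$$\dim(\overline{\calU}^\bot y)=\dim(V/G)-\dim\bigl(\overline{\calU}\cap ((V/G)^\star\otimes y)\bigr).$$
So it suffices to show that, for every nonzero $y \in H$, the space $\calU$ contains a nonzero operator of the form $g\otimes y$ with $g \in G^\ortho$. Such an operator automatically has trace zero by Claim \ref{claim:tracezero}.

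\textbf{Step 1 (produce an almost rank-one element).} Let $y \in H\setminus\{0\}$. Since $\dim P=2$, there is some $\psi \in P\setminus\{0\}$ with $\psi(y)=0$. Because $\calT$ is a hurdle attached to $P$, we have $\psi\otimes y\in\calT$. Hence there is $u\in \calS^H$ with $u_H=\psi\otimes y$. As $\psi$ vanishes on $G={}^\ortho P$, this $u$ vanishes on $G$, so $u \in \calU$. In the matrix picture, $u=\tilde\psi\otimes y+f_1\otimes z$, where $\tilde\psi$ is the extension of $\psi$ with $\tilde\psi(e_1)=0$ and $z\in H$ is the unknown first column. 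If $z \in \F y$, we are done: $u=(\tilde\psi+\mu f_1)\otimes y$ with $\tilde\psi+\mu f_1\neq 0$. If $z \in W_1$, we subtract $f_1\otimes z\in\calS$ and are done as well. So the task is to show that $z$ can be chosen in $\F y+W_1$.

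\textbf{Step 2 (kill the parasitic column, the main obstacle).} Suppose $z\notin \F y+W_1$. Note that $u$ itself is nilpotent, since it maps into $H$ and $u_H$ is nilpotent, so the spectrum of $u$ alone gives nothing. The spectral condition has to enter through the rest of $\calS$. Pick $w\in\calS$ with $f_1(w(e_1))\neq 0$. If no such $w$ exists, $\calS$ stabilizes $H$ and the dimension counts already used above yield a contradiction with $\dim W_1\le\lfloor n/2\rfloor$. Then study $w+\lambda u$ for $\lambda\in\F$ on the subspace spanned by $e_1,y,z$, in the spirit of the proofs of the First Confinement Lemma and Proposition \ref{prop:1starn=2}: use $|\F|>2$ to choose $\lambda$ so that the induced operator has two distinct nonzero eigenvalues in $\F$. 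This would contradict the $1^\star$-spec property. This is the delicate point and may require splitting according to whether $y\in G$, where two independent choices of $\psi$ are available.

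\textbf{Fallback.} If Step 2 can only be carried out for $y$ outside a small exceptional set, I would use a polynomial argument instead. Intransitivity at $y$ amounts to the vanishing at $y$ of all $3\times 3$ minors of the matrix of $v\mapsto v(y)$, $v\in\overline{\calU}^\bot$, and these minors are $3$-homogeneous polynomial functions on $H$. It then suffices to establish intransitivity outside a union of linear subspaces satisfying the hypotheses of Lemma \ref{lemma:vanishinghomogeneous} with $d=3$. This needs $|\F|\ge 3$ in general, and at most $|\F|-3$ hyperplanes in the exceptional family, hence none at all when $|\F|=4$. Claims \ref{claim:dimUbarbot3} and \ref{claim:targetreduced} will be kept in reserve: they exclude the degenerate situation in which the ranges of all of $\overline{\calU}^\bot$ collapse into a hyperplane. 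This will matter afterwards, when Atkinson-type structure is extracted from the intransitive space $\overline{\calU}^\bot$.
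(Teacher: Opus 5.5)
Your reduction is correct, and it is in fact an equivalence. By \eqref{eq:transrank} applied to $\overline{\calU}\subseteq\Hom(V/G,H)$, the claim amounts to $\calU$ containing, for every nonzero $y\in H$, a nonzero $g\otimes y$ with $g\in G^\circ$. The gap is Step~2, which carries all the content and is not an argument. As designed it cannot succeed, because it never invokes the hypothesis that makes the claim true: the standing assumption of Section~\ref{section:startinductiveadapted} that every $\calS$-adapted vector lies in $V_1\cup\cdots\cup V_n$.

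The $1^\star$-spec condition, the hurdle structure of $\calT$ and $\dim W_1\leq\lfloor n/2\rfloor$ do not suffice on their own. Let $\ell:\calT\rightarrow H$ be linear, and let $\calS$ be the set of all $u\in\End(V)$ with $\im u\subseteq H$, $u_H\in\calT$ and $u(e_1)=\ell(u_H)$. This space is $1^\star$-spec, since its nonzero eigenvalues are those of $u_H$. It induces $\calT$, and $W_1=\{0\}$. Now take $y\in H\setminus G$ and $\psi\in P\setminus\{0\}$ with $\psi(y)=0$, and choose $\ell$ with $\ell(\psi\otimes y)\notin\F y$. Then $\calU$ contains no nonzero $g\otimes y$, so $\overline{\calU}^\bot$ is transitive at $y$.

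This example is exactly your ``no $w$ exists'' case, which you dismiss by a ``dimension count''. No lower bound on $\dim\calS$ is available while proving point (iii); Claim~\ref{claim:sharperbound} is an upper bound. What really excludes the example is that every vector outside $H$ is $\calS$-adapted there. When $w$ does exist, your argument has a further problem: $\Vect(e_1,y,z)$ is not $w$-invariant, so the ``induced operator'' whose eigenvalues you want to control is not defined.

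The paper avoids your parasitic column by starting from non-adaptedness rather than from the hurdle.
\begin{enumerate}[(1)]
\item Take $y\in H\setminus(G\cup V_1\cup\cdots\cup V_n)$. Then $y$ is not $\calS$-adapted, so $g\otimes y\in\calS$ for some nonzero $g$ with $g(y)=0$.
\item The operator $g_{|H}\otimes y$ lies in $\calT$, so it leaves $G$ invariant by point (a) of Lemma~\ref{lemma:splittinghurdles}. Since $y\notin G$, this forces $g_{|G}=0$, i.e.\ $g\otimes y\in\calU$.
\item Hence the evaluation map of $\overline{\calU}^\bot$ at such $y$ has rank at most $2$.
\end{enumerate}
Only at this point does your Fallback take over, exactly as in the paper. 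The $3\times 3$ minors are $3$-homogeneous and vanish outside $V'_3\cup\cdots\cup V'_n\cup G$, where $V'_i=V_i\cap H$. This family contains no hyperplane of $H$ and at most three members of each dimension. So Lemma~\ref{lemma:vanishinghomogeneous} with $d=3$ and $|\F|\geq 4$ gives the claim for all $y\in H$.
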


\begin{proof}
Let $x \in H$, and assume that $x \not\in G \cup \underset{i=1}{\overset{n}{\bigcup}} V_i$.
Hence $x$ is not $\calS$-adapted, which yields a nonzero element $g \in V^\star$ such that
$g \otimes x \in \calS$ and $g(x)=0$.
Let us consider the restricted operator $(g \otimes x)_{|H}=g_{|H} \otimes x \in \End(H)$, whose range is included in $\F x$,
and which belongs to $\calT$. By point (a) of Lemma \ref{lemma:splittinghurdles},
the operator $g_{|H} \otimes x$ must leave $G$ invariant, and because $x \not\in G$ it follows that
$g_{|H} \otimes x$ vanishes on $G$. Hence $g_{|G}=0$, and we deduce that $g \otimes x \in \calU$.
We have just shown that $\calU$ contains a rank $1$ operator with range $\F x$. Hence, so does $\overline{\calU}$.
In turn, this shows that $\rk \widehat{x} \leq 2$ for the evaluation operator $\widehat{x} : v \in \overline{\calU}^\bot \mapsto v(x) \in V/G$.

Now, set $m:=\dim \overline{\calU}^\bot$, consider arbitrary bases of $\overline{\calU}^\bot$ and $V/G$, and for $x \in H$ write $M(x) \in \Mat_{3,m}(\F)$ the matrix of $\widehat{x}$
in those bases. Let us choose a subset $I \subseteq \lcro 1,m\rcro$ with cardinality $3$, and denote by
$M(x)_I$ the corresponding $3 \times 3$ minor of $M(x)$ (in which the column indices are taken in $I$).
Set $V'_i:=V_i \cap H$ for all $i \in \lcro 1,n\rcro$. The first part of the proof has shown that $M(x)$ has rank at most $2$ for all
$x \in H \setminus (V'_3 \cup \cdots \cup V'_n \cup G)$.
Hence the mapping $q_I : x \mapsto M(x)_I$, which is a $3$-homogeneous polynomial function, vanishes outside of $V'_3 \cup \cdots \cup V'_n \cup G$.
Just like in the previous section, we examine the list
$(V'_3,\dots,V'_n)$ and observe that it contains only nontrivial linear subspaces of $H$,
no linear hyperplane of $H$ (the greatest possible dimension among such spaces is $\left \lfloor \frac{n-1}{2}\right\rfloor$,
and $n-2-\left \lfloor \frac{n-1}{2}\right\rfloor \geq n-2-\frac{n-1}{2}=\frac{n-3}{2}>0$), and at most two
spaces of dimension $i$ for each $i \in \lcro 1,n-2\rcro$.
Hence the list $(V'_3,\dots,V'_n,G)$ satisfies the assumptions of Lemma \ref{lemma:vanishinghomogeneous} for $d=3$ (thanks to $|\F|>3$).
We conclude that $q_I$ vanishes, and by varying $I$ we obtain that $\rk \widehat{x} \leq 2$ for all $x \in H$, which is the claimed statement.
\end{proof}

We are now in the position to apply Atkinson's theorem to $\overline{\calU}^\bot$, noting that $|\F| \geq 3$ and $\dim (V/G)=3$.
We will prove that if $\overline{\calU}^\bot$ is not primitively intransitive then $\calS$ is a hurdle,
and we will finally use Atkinson's theorem to prove that $\overline{\calU}^\bot$ is not primitively intransitive.

\subsection{The case where $\overline{\calU}^\bot$ is not primitively intransitive}\label{section:casenotprimintrans}

In that case, we choose an intransitivity veil $R$ of $\overline{\calU}^\bot$, which we lift to a subspace $G'$ such that $G \subseteq G' \subseteq V$, and we set $s:=\dim((V/G)/R)$.
Here we have $s \in \{1,2\}$, and the case $s=1$ is immediately ruled out by Claim \ref{claim:targetreduced}.
Hence $s=2$, and by taking $\pi : V/G \twoheadrightarrow (V/G)/R \simeq V/G'$
we find by Atkinson's theorem that $\pi \overline{\calU}^\bot$ has dimension $1$ (it cannot have dimension $0$, as this would yield that every operator in $\overline{\calU}^\bot$
maps into $R$, contradicting Claim \ref{claim:targetreduced}).
By double-orthogonality, this yields a linear hyperplane $\calU'$ of
$\{u \in \Hom(V,H) : u_{|G'}=0\}$ such that $\calU' \subseteq \calU$.
Note in particular that $\dim \calU'=2n-3$.

\begin{claim}\label{claim:G'+H}
One has $G'+H=V$.
\end{claim}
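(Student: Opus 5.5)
We argue by contradiction: assume $G'+H \neq V$. Since $H=\Ker f_1$ is a linear hyperplane of $V$, this forces $G' \subseteq H$. The plan is to show that, in that case, $\calU'$ contains too many operators of the form $f_1 \otimes x$, so that $W_1$ becomes too large and contradicts the bound $\dim W_1 \leq \lfloor n/2\rfloor$ obtained in Section \ref{section:startinductiveadapted}.

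First, recall that $\calU'$ is a linear hyperplane of
$$\calX:=\{u \in \Hom(V,H) : u_{|G'}=0\}$$
and that $\calU' \subseteq \calU \subseteq \calS$. Under the assumption $G' \subseteq H=\Ker f_1$, every operator $f_1 \otimes x$ with $x \in H$ vanishes on $G'$ and maps into $H$. Hence the $(n-1)$-dimensional space $f_1 \otimes H$ is included in $\calX$. Since $\calU'$ is a hyperplane of $\calX$, the intersection $\calU' \cap (f_1 \otimes H)$ has dimension at least $n-2$. It is of the form $f_1 \otimes W$ for some linear subspace $W$ of $H$ with $\dim W \geq n-2$.

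Next, for every $x \in W$ we have $x \in \Ker f_1$ and $f_1 \otimes x \in \calU' \subseteq \calS$. By the definition of $W_1$, this gives $W \subseteq W_1$, and therefore $\dim W_1 \geq n-2$. On the other hand, $W_1$ was chosen so that $\dim W_1 \leq \lfloor n/2\rfloor$. Since $n \geq 5$, we have $n-2>\lfloor n/2\rfloor$ (for instance $3>2$ when $n=5$, and the gap only widens as $n$ grows). This is a contradiction, so $G' \not\subseteq H$, and hence $G'+H=V$.

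There is no real obstacle here. The only point to check carefully is that $f_1 \otimes H$ indeed lies in the ambient space $\calX$ of which $\calU'$ is a hyperplane. This is exactly where the hypothesis $G' \subseteq \Ker f_1$ is used.
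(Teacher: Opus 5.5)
Your proof is correct and is essentially the paper's argument. Assuming $G'\subseteq H$, the $(n-1)$-dimensional space $f_1\otimes H$ lies in $\{u\in\Hom(V,H) : u_{|G'}=0\}$, of which $\calU'\subseteq\calS$ is a hyperplane, so $\dim W_1\geq n-2>\lfloor n/2\rfloor$, which is a contradiction for $n\geq 5$.
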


\begin{proof}
Assume the contrary. Then $G' \subseteq H$ because $H$ is a linear hyperplane of $V$.
It follows that $f_1 \otimes H$ is included in $\{u \in \Hom(V,H) : u_{|G'}=0\}$, and
hence $\dim(\calU \cap (f_1 \otimes H)) \geq \dim(\calU' \cap (f_1 \otimes H)) \geq \dim (f_1 \otimes H)-1=n-2$, to the effect that
$\dim W_1 \geq n-2$. Once more, this contradicts the fact that $\dim W_1 \leq \lfloor n/2\rfloor$ and $n \geq 5$.
\end{proof}

\begin{claim}
The space $\calU'$ consists of all the trace-zero endomorphisms of $V$ that map into $H$ and vanish on $G'$.
\end{claim}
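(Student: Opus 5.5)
The statement follows from a dimension count: I will show that both $\calU'$ and the space of trace-zero operators in question are hyperplanes of the same ambient space, and that one is included in the other.

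\textbf{The ambient space.} Set
$$\calX:=\{u \in \Hom(V,H) : u_{|G'}=0\}.$$
Since $G \subseteq G'$ and $\dim(V/G')=s=2$, the space $\calX$ identifies with $\Hom(V/G',H)$. Hence
$$\dim \calX=2(n-1)=2n-2.$$
By construction, $\calU'$ is a linear hyperplane of $\calX$, so $\dim \calU'=2n-3$.

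\textbf{The trace-zero part is a hyperplane.} Let $\calZ$ denote the set of trace-zero operators in $\calX$; this is exactly the set described in the statement. The trace restricts to a linear form on $\calX$, so it suffices to find some element of $\calX$ with nonzero trace. Then $\calZ$ is a linear hyperplane of $\calX$ and $\dim \calZ=2n-3$.

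To produce such an element, I use Claim~\ref{claim:G'+H}, which says $G'+H=V$. This forces $H \not\subseteq G'$: otherwise $V=G'+H=G'$, which is absurd because $G'$ has codimension $2$. So pick $y \in H \setminus G'$. Choose a linear form $\varphi \in V^\star$ that vanishes on $G'$ and satisfies $\varphi(y)=1$; this is possible since $y \notin G'$. Then $\varphi \otimes y$ maps into $\F y \subseteq H$ and vanishes on $G'$, so it lies in $\calX$. Its trace is $\varphi(y)=1 \neq 0$.

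\textbf{Inclusion and conclusion.} We have $\calU' \subseteq \calU$, and every element of $\calU$ has trace zero by Claim~\ref{claim:tracezero}. Hence $\calU' \subseteq \calZ$. Both spaces have dimension $2n-3$, so $\calU'=\calZ$, which is the claimed description.

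\textbf{Main obstacle.} The only delicate point is checking that the trace does not vanish identically on $\calX$. That is precisely where Claim~\ref{claim:G'+H} enters, and no further difficulty is expected.
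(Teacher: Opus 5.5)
Your proof is correct, and its skeleton matches the paper's. In both, $\calU'\subseteq\calU$ consists of trace-zero operators by Claim~\ref{claim:tracezero}. Hence $\calU'$ lies in the space $\calU''$ of trace-zero endomorphisms of $V$ that map into $H$ and vanish on $G'$, and the two spaces have the same dimension $2n-3$.

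The difference is in how you control $\dim\calU''$. The paper only bounds it from above. It restricts each $u \in \calU''$ to $H$ and uses $G'+H=V$ to get injectivity of $u\mapsto u_H$, since an operator vanishing on both $G'$ and $H$ vanishes on $G'+H=V$. The image then lies in the $(2n-3)$-dimensional space of trace-zero endomorphisms of $H$ vanishing on $G$. You instead compute $\dim\calU''$ exactly: it is the kernel of the trace form on the $(2n-2)$-dimensional space $\{u\in\Hom(V,H) : u_{|G'}=0\}$, once you exhibit an element of nonzero trace.

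Your route is shorter, and it shows that this claim does not depend on Claim~\ref{claim:G'+H} at all. A vector $y\in H\setminus G'$ exists simply because $\dim H=n-1>n-2=\dim G'$. So your appeal to $G'+H=V$ is valid but superfluous. In the paper the dependence costs nothing, since $G'+H=V$ is needed right afterwards to meet the hypothesis $G'\not\subseteq H$ of the Second Confinement Lemma.
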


\begin{proof}
Set $\calU'':=\{u \in \mathfrak{sl}(V) : \im u \subseteq H \; \text{and}\; u_{|G'}=0\}$.
Given $u \in \calU''$, the induced endomorphism $u_H \in \End(H)$ has trace zero and vanishes on $G$,
and we observe that the mapping $\Phi : u \in \calU'' \mapsto u_H$ is injective thanks to Claim \ref{claim:G'+H}.

Next, we note that the range of $\Phi$ is included in the space $\calX$ of all trace zero endomorphisms of $H$ that vanish on $G$,
which clearly has dimension $2n-3$.

Finally, every $u \in \calU'$ belongs to $\calU$ and hence has trace zero by Claim \ref{claim:tracezero}.
Hence we successively find $\calU' \subseteq \calU''$ and $2n-3 \geq \rk \Phi \geq \dim \calU'' \geq \dim \calU'=2n-3$. We conclude that
$\calU'=\calU''$, as claimed.
\end{proof}

Sine $\calU' \subseteq \calU$, we are precisely in the situation of the next lemma (where $G$ plays the role of the space $G'$ we are considering here).
The proof of this lemma is independent of the induction process, and we postpone it to
Section \ref{section:confinement} so as to streamline the current proof.

\begin{lemma}[Second Confinement Lemma]\label{lemma:confinement2}
Let $V$ be a vector space with dimension $n \geq 3$.
Let $H$ be a linear hyperplane of $V$, and $G$ be a linear subspace of $V$ with codimension $2$ such that $G \not\subseteq H$.
Let $\calS$ be a $2$-spec linear subspace of $\End(V)$ that contains all the trace-zero endomorphisms of $V$ that
vanish on $G$ and map into $H$.
Then either $\calS$ is a hurdle or there exists a linear hyperplane $H'$ of $V$ such that every
vector of $V$ that is not $\calS$-adapted belongs to the union $G \cup H \cup H'$.
\end{lemma}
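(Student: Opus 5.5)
We begin by fixing convenient coordinates. Since $G \not\subseteq H$, the intersection $G_0:=G\cap H$ has codimension $3$ in $V$. We choose a basis $(e_1,\dots,e_n)$ of $V$ such that $H=\Vect(e_2,\dots,e_n)$, $G=\Vect(e_1,e_4,\dots,e_n)$ and $G_0=\Vect(e_4,\dots,e_n)$. In this basis, the space $\calS$ contains every trace-zero matrix whose columns of indices $1,4,\dots,n$ vanish and whose first row vanishes. Equivalently, $\calS$ contains all tensors $\varphi\otimes y$ with $\varphi\in G^\ortho=\Vect(f_2,f_3)$, $y\in H$ and $\varphi(y)=0$, where $(f_i)$ is the dual basis.

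The proof then splits according to whether these tensors already force the full hurdle structure. The target is to show that, unless $\calS$ contains some $\varphi\otimes e_1$ with $\varphi\in G^\ortho$ (and then all of them, by linearity of the remaining constraint), the non-adapted vectors are confined. In the first case, $\calS$ contains all $\varphi\otimes y$ with $\varphi\in G^\ortho$, $\varphi(y)=0$ and $y\in V$, so $\calS$ is a hurdle by condition (ii) in the definition. So suppose $\calS\cap (G^\ortho\otimes e_1)$ is not all of it.

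Next we mimic the proof of the First Confinement Lemma. Let $x\notin G\cup H$ be non-$\calS$-adapted, and take $\psi\otimes x\in\calS$ with $\psi(x)=0$ and $\psi\neq 0$. We add to $\psi\otimes x$ the available tensors $\varphi\otimes y$ with $\varphi\in G^\ortho$ and $y\in H$. Since $x\notin G$, some $\varphi\in G^\ortho$ satisfies $\varphi(x)\neq0$. Writing $x=e_1$-component plus an $H$-part, we get a cyclic subspace $\Vect(x,\psi\otimes x(\cdot),\dots)$. On it, the Choice Lemma (applied exactly as in the dual version of the Splitting Lemma for Hurdles), or a direct $2\times2$ or $3\times3$ computation as in the dual First Confinement Lemma, lets us adjust the characteristic polynomial so that it has three distinct roots in $\F$. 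This contradicts the $2$-spec property unless $\psi$ satisfies a linear constraint tying $\psi$ to $G$ and $H$.

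The expected outcome is that every such $\psi$ must vanish on $G_0$ and lie in a fixed $2$-dimensional space $\Vect(f_2,f_3)+\F f_1$ restricted suitably. In that case $\psi\otimes x$, combined with the known trace-zero tensors, produces an element of $G^\ortho\otimes e_1$ outside the already-present part, and hence a linear condition $h(x)=0$ for one fixed linear form $h$. We then set $H':=\Ker h$.

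The main obstacle is this middle step. We must show that a non-adapted $x\notin G\cup H$ yields either an eigenvalue contradiction or a single linear constraint independent of $x$. This needs a careful case analysis on whether $\psi$ vanishes on $G_0$. If it does not, we use $n\geq3$, rank considerations (zero is an eigenvalue), and the Choice Lemma on a cyclic space of dimension at least $3$ to produce three distinct eigenvalues, using $|\F|>2$. If it does, we reduce to a $3$-dimensional quotient $V/G_0$. There the claim becomes a finite computation with $3\times3$ matrices, and we would attempt it first by hand: we would show that the set of admissible $(\psi,x)$ forces $x$ into a hyperplane, or that $G^\ortho\otimes e_1\subseteq\calS$ (the hurdle case).
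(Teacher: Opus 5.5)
Your skeleton matches the paper's own. It has three parts:
\begin{itemize}
\item The dichotomy on $\calS\cap(G^\circ\otimes e_1)$: if it is all of $G^\circ\otimes e_1$, then $\calS$ is a hurdle; otherwise it is at most a line $\F\,\theta\otimes e_1$ and one takes $H':=\Ker\theta$.
\item The reduction to proving that every nonzero $\psi$ with $\psi(x)=0$ and $\psi\otimes x\in\calS$, for $x\notin G\cup H$, lies in $G^\circ$. Then subtracting $\psi\otimes(\text{$H$-part of }x)$ leaves a nonzero multiple of $\psi\otimes e_1$ in $\calS$, so $\psi\in\F\theta$ and $\theta(x)=0$.
\item Your case split on whether $\psi$ vanishes on $G_0=G\cap H$. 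Since $H^\circ+G^\circ=G_0^\circ$, this is exactly the paper's split between $\psi\notin H^\circ+G^\circ$ (its Step 7) and $\psi\in H^\circ+G^\circ$ (its Step 6). The paper argues in $\calS^t$ and you in $\calS$, which is cosmetic.
\end{itemize}
There are two inaccuracies along the way. Your parenthetical claim that one $\varphi\otimes e_1\in\calS$ forces all of $G^\circ\otimes e_1$ is false; the one-dimensional case is precisely where $H'$ comes from, though the dichotomy you then adopt is correct. And the target you announce, ``$\Vect(f_2,f_3)+\F f_1$ restricted suitably'', is the wrong one. Your final step needs exactly $\psi\in\Vect(f_2,f_3)=G^\circ$, because a form that merely vanishes on $G_0$ does not let you subtract $\psi\otimes(\text{$H$-part of }x)$ inside $\calS$.

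The genuine gap is that neither case of this central step is proved.

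\emph{Case $\psi(G_0)\neq\{0\}$.} The cyclic space you write, $\Vect(x,(\psi\otimes x)(x),\dots)$, is just $\F x$ because $\psi(x)=0$, so you must first perturb the operator. For instance, pick a basis $(g_1,g_2)$ of $G^\circ$ with $g_1(x)=0$ and $g_2(x)=1$, a vector $h\in H$ with $g_1(h)=1$ and $g_2(h)=0$, and $g\in G_0$ with $\psi(g)=1$. Then $v:=\psi\otimes x+g_2\otimes h\in\calS$ maps $g\mapsto x\mapsto h\mapsto\psi(h)\,x$. Adding the tensors $\varphi\otimes g$ with $\varphi\in G^\circ$ frees the entries $(1,2)$ and $(1,3)$ of the regular Hessenberg matrix of $v$ on the invariant subspace $\Vect(g,x,h)$. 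The Choice Lemma then gives three distinct eigenvalues.

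\emph{Case $\psi\in G_0^\circ\setminus G^\circ$.} Everything relevant vanishes on $G_0$ and descends to $V/G_0$. In the basis $(\overline{e_2},\overline{e_3},\overline{e_1})$ you obtain a rank-one trace-zero $A$ such that $A+(N\oplus 0_1)$ is $2$-spec for every $N\in\mathfrak{sl}_2(\F)$. Its last row is nonzero because $x\notin H$, and its last column is nonzero because $\psi(e_1)\neq 0$. Ruling this out is the real content of the lemma, and it is a specifically characteristic-$2$ computation: Lemma~\ref{lemma:lastblock} of the paper. There one normalizes the last column to $(0,1,\delta)^T$ with $\delta\neq0$ and uses the rank-one condition to force the $(3,2)$ entry to be $\delta^2$. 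A direct computation of the coefficients $c_2,c_3$ of the characteristic polynomial then shows they can be prescribed arbitrarily through $N$, which reaches $(t-z_1)(t-z_2)(t-z_1-z_2)$.

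You leave this as a computation you ``would attempt'', without stating what it must show. As it stands, the proposal is an outline of the correct strategy rather than a proof.
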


Let us admit the validity of this lemma until the end of this section.

Assume that $\calS$ is not a hurdle. Then we obtain a linear hyperplane $H'$ of $V$ such that all vectors
that are not $\calS$-adapted belong to $H \cup H' \cup G'$.
Remembering the starting assumption on the position of the $\calS$-adapted vectors, we
deduce that the (proper) linear subspaces $V_1,\dots,V_n,G',H,H'$ cover $\calS$.
Here $\dim V_n \leq n-2$ and hence there are at most three subspaces in this sequence for each given dimension in $\lcro 1,n-1\rcro$.
By the Covering Lemma, we obtain a contradiction.

We deduce that $\calS$ is a hurdle.

\subsection{Proof that $\overline{\calU}^\bot$ is not primitively intransitive}\label{section:lastcaseprimitively}

In this section, we assume that $\overline{\calU}^\bot$ is primitively intransitive and we prove that this leads to a contradiction.
As $\dim \overline{\calU}^\bot \geq 3$ (Claim \ref{claim:dimUbarbot3}), we can apply the second point in Atkinson's theorem
to find that $\dim \overline{\calU}^\bot=3$ and that there is a right-nondegenerate bilinear form
$b : H \times (V/G) \rightarrow \F$ such that
$$\forall x \in H, \; \forall v \in \overline{\calU}^\bot, \; b(x,v(x))=0.$$
Denote by
$$L:=\{x \in H : \; b(x,-)=0\}$$
the left-radical of $b$, and note that $\dim L=\dim H-\dim (V/G)=n-4$ because $b$ is right-nondegenerate.
By coming back to the proof of Claim \ref{claim:dimUbarbot3}, we could also infer that $n \leq 6$, but this property will turn out to be useless.

Let $v \in \overline{\calU}^\bot$. Polarizing the previous identity yields $b(x,v(x'))+b(x',v(x))=0$ for all $(x,x') \in H^2$,
and we deduce that $v$ maps $L$ into the right-radical of $b$, which equals zero.
In other words, all the elements of $\overline{\calU}^\bot$ vanish on $L$.
By double-orthogonality, it follows that $\calU$ contains all the elements of $\End(V)$ that map into $L$ and vanish on $G$.
We shall deduce:

\begin{claim}
The space $L$ is included in $G$.
\end{claim}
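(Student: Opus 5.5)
The plan is to argue by contradiction, using only the fact just established, namely that $\calU$ contains every endomorphism of $V$ that maps into $L$ and vanishes on $G$, together with Claim \ref{claim:tracezero}, which says that all elements of $\calU$ have trace zero.

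Assume $L \not\subseteq G$ and pick a vector $x \in L \setminus G$. Since $G={}^{\ortho}(G^\ortho)$ and $x \notin G$, some linear form $\varphi \in G^\ortho$ satisfies $\varphi(x) \neq 0$. Consider the rank $1$ operator $u:=\varphi \otimes x$. It vanishes on $G$ because $\varphi \in G^\ortho$. Its range is $\F x \subseteq L$. By the double-orthogonality consequence recorded just before the statement, $u$ therefore belongs to $\calU$ (note that $u$ also maps into $H$, since $L \subseteq H$).

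On the other hand, $\tr u=\varphi(x)\neq 0$, which contradicts Claim \ref{claim:tracezero}. Hence $L \subseteq G$.

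I see no real obstacle here. The only point to check is that the inclusion of all operators $V \to L$ vanishing on $G$ into $\calU$ has been correctly derived from the vanishing of $\overline{\calU}^\bot$ on $L$. This follows from Lemma \ref{lemma:traceortho1} applied to $\overline{\calU}\subseteq \Hom(V/G,H)$ with the subspace $L$ of $H$: the restrictions to $L$ of the elements of $\overline{\calU}^\bot$ are all zero, so $\dim(\overline{\calU}\cap \Hom(V/G,L))=3\dim L$, and therefore $\overline{\calU}\supseteq \Hom(V/G,L)$.
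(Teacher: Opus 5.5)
Your proof is correct and is essentially the paper's argument. The paper takes $x \in L$ and an arbitrary $f \in V^\star$ vanishing on $G$, notes that $f \otimes x \in \calU$ has trace $f(x)=0$ by Claim \ref{claim:tracezero}, and concludes $x \in G$; this is the direct form of your contradiction. Your check of $\Hom(V/G,L) \subseteq \overline{\calU}$ via the First Trace Orthogonality Lemma correctly spells out the paper's ``double-orthogonality'' step.
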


\begin{proof}
Let $x \in L$.
Let $f \in V^\star$ vanish on $G$. Then $f \otimes x$ belongs to $\calU$, so by Claim \ref{claim:tracezero} its trace is zero. Hence $f(x)=0$. Varying $f$ yields $x \in G$.
\end{proof}

Now we consider the induced nondegenerate bilinear form
$$\overline{b} : (H/L) \times (V/G) \rightarrow \F.$$
We take a nonzero vector $e_4 \in G \setminus L$, we extend it to a linearly independent triple $(e_2,e_3,e_4)$ such that $\Vect(e_2,e_3,e_4) \oplus L=H$,
and finally we choose $e_1 \in V \setminus H$. Note that $H=\Vect(e_2,e_3) \oplus G$.
Now, we consider the respective cosets $\overline{e_1},\overline{e_2},\overline{e_3}$ in $V/G$ and the respective cosets
$\widetilde{e_2},\widetilde{e_3},\widetilde{e_4}$ in $H/L$, and the mixed Gram matrix $Q \in \GL_3(\F)$  of the pair
$\bigl((\widetilde{e_2},\widetilde{e_3},\widetilde{e_4}),(\overline{e_1},\overline{e_2},\overline{e_3})\bigr)$ of bases for $\overline{b}$
(i.e., the matrix $(b(\widetilde{e_{i+1}},\overline{e_j}))_{1 \leq i,j \leq 3}$).

\begin{claim}
For some $\lambda \in \F$, the matrix $Q$ has the form
$$Q=\begin{bmatrix}
? & 0 & \lambda \\
? & \lambda & 0 \\
? & 0 & 0
\end{bmatrix}$$
\end{claim}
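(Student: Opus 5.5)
The plan is to extract from $\overline{\calU}^\bot$ one specific, explicitly known operator and then read off the entries of $Q$ from the alternator identity applied to that operator. Set $U_0:=\Vect(\overline{e_2},\overline{e_3})=H/G\subseteq V/G$ and denote by $p : H \twoheadrightarrow H/G=U_0\subseteq V/G$ the standard projection. The candidate operator is $p$ itself, viewed as an element of $\Hom(H,V/G)$.

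\textbf{Step 1: the restrictions of $\overline{\calU}$ to $U_0$.} Let $\calX$ be the space of all $y\in\Hom(U_0,H)$ whose induced endomorphism of $H/G$ (that is, $p\circ y$) has trace zero; it is a hyperplane of $\Hom(U_0,H)$.
\begin{itemize}
\item By the matrix description of the hurdle $\calT$ (the blocks $N\in\mathfrak{sl}_2(\F)$ and arbitrary $C$), every $y\in\calX$ is the restriction to $U_0$ of some $\overline{u}\in\overline{\calU}$.
\item Conversely, for $u\in\calU$ the image lies in $H$. Hence $\tr u=\tr(p\circ \overline{u}_{|U_0})$, and Claim \ref{claim:tracezero} gives $\overline{u}_{|U_0}\in\calX$.
\end{itemize}
Thus $\{\overline{u}_{|U_0}\mid \overline{u}\in\overline{\calU}\}=\calX$, which has codimension $1$ in $\Hom(U_0,H)$.

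\textbf{Step 2: a distinguished element of $\overline{\calU}^\bot$.} Apply the First Trace Orthogonality Lemma in transposed form, exactly as in the derivation of the Second Trace Orthogonality Lemma. The space $\{w\in\overline{\calU}^\bot : \im w\subseteq U_0\}$ then has dimension $\dim U_0\cdot\dim H-\dim\calX=1$.

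Next, $p$ maps into $U_0$ and satisfies $\tr(p\circ\overline{u})=\tr(p\circ\overline{u}_{|U_0})=0$ for all $\overline{u}\in\overline{\calU}$, because $p$ vanishes on $G$ and $\overline{u}$ maps into $H$. Hence $p\in\overline{\calU}^\bot$, and $p$ spans that line. I expect the main obstacle to be this trace bookkeeping: checking that the unknown first column of the matrices in $\overline{\calU}$ contributes nothing to $\tr(p\circ \overline{u})$. It contributes nothing because $p\circ\overline{u}$ is computed on $V/G$ and the $\overline{e_1}$-coordinate of $p(\overline{u}(\overline{e_1}))$ is zero.

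\textbf{Step 3: reading off $Q$.} Since $b$ is an alternator of $\overline{\calU}^\bot$, we have $b(x,p(x))=0$ for all $x\in H$. Polarizing gives $b(x,p(x'))+b(x',p(x))=0$ for all $x,x'\in H$. We now specialize, noting that $p(e_2)=\overline{e_2}$, $p(e_3)=\overline{e_3}$ and $p(e_4)=0$ (as $e_4\in G$).
\begin{itemize}
\item Taking $x=e_2$ and then $x=e_3$ gives $b(e_2,\overline{e_2})=0$ and $b(e_3,\overline{e_3})=0$.
\item Taking $x=e_4$ and $x'=e_2$ (respectively $x'=e_3$) gives $b(e_4,\overline{e_2})=b(e_4,\overline{e_3})=0$.
\item Taking $x=e_2$ and $x'=e_3$ gives $b(e_2,\overline{e_3})=b(e_3,\overline{e_2})$, using $\car(\F)=2$. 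Call this common value $\lambda$.
\end{itemize}
These are exactly the second and third columns of $Q$. The entries of $b$ on the $\overline{e_j}$ do not depend on the chosen representatives in $H/L$, since $L$ is the left radical of $b$. The first column of $Q$ is left unconstrained, which yields the claimed form.
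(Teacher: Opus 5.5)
Your proof is correct and follows essentially the same route as the paper. Because every element of $\calU$ has trace zero (Claim~\ref{claim:tracezero}), the projection $p : H \to H/G \subseteq V/G$ lies in $\overline{\calU}^\bot$; the alternator identity for $p$ then gives exactly the stated zero pattern, with the two off-diagonal entries equal because $\car(\F)=2$. The paper phrases this identity as $QM$ being alternating, while you derive it by polarization, which amounts to the same computation. Your Step 1 and the dimension count showing that $p$ spans $\overline{\calU}^\bot \cap \Hom(H,U_0)$ are correct but unnecessary, since only the membership $p \in \overline{\calU}^\bot$ is used.
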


\begin{proof}
We use the key observation that all the elements in $\calU$ are trace zero endomorphisms, which has the consequence that
$\overline{\calU}^\bot$ contains the linear operator $v : H \rightarrow V/P$ that vanishes on $G$ and maps each one of $e_2$ and $e_3$ to its coset modulo $G$.
Since $v$ is $b$-alternating, we deduce that the product $Q \times M$ is alternating for
$M:=\begin{bmatrix}
0 & 0 & 0 \\
1 & 0 & 0 \\
0 & 1 & 0
\end{bmatrix}$. The claimed statement follows easily.
\end{proof}

Now, we write $Q=(q_{i,j})_{i,j}$. Since $Q$ is invertible, we have $q_{1,3} \neq 0$ and $q_{2,2} \neq 0$.
Then, we successively change the choice of $e_1,e_2,e_3$ and $e_4$ as follows:
\begin{itemize}
\item We replace $e_1$ with $e_1-q_{2,1} q_{2,2}^{-1} e_2- q_{1,1} q_{1,3}^{-1} e_3$, which reduces the situation to the one where $q_{2,1}=q_{1,1}=0$;
\item Then we replace $e_4$ with $q_{3,1}^{-1} e_4$, which further reduces the situation to the one where $q_{3,1}=1$, while keeping all the previous assumptions;
\item Finally, we replace $e_3$ with $q_{2,2}^{-1} e_3$, which further reduces the situation to the one where $q_{2,2}=1$, while keeping all the previous assumptions.
\end{itemize}
After these changes of basis vectors, we are reduced to the situation where
$$Q=\begin{bmatrix}
0 & 0 & 1 \\
0 & 1 & 0 \\
1 & 0 & 0
\end{bmatrix}.$$
In that situation, we take an arbitrary basis $(e_5,\dots,e_n)$ of $L$, and we conclude that every operator in $\overline{\calU}^\bot$
is represented, in the bases $(e_2,\dots,e_n)$ and $(\overline{e_1},\overline{e_2},\overline{e_3})$, by a matrix of the form
$$\begin{bmatrix}
\alpha & \gamma & 0 & [0]_{1 \times (n-4)} \\
\beta & 0 & \gamma & [0]_{1 \times (n-4)}\\
0 & \beta & \alpha & [0]_{1 \times (n-4)}
\end{bmatrix} \quad \text{with $(\alpha,\beta,\gamma) \in \F^3$}.$$
Note that from $\dim \overline{\calU}^\bot \geq 3$ we could infer that these are exactly the matrices that represent the elements of $\overline{\calU}^\bot$
in the said bases, but we will not need this precision in what follows.

Now, we completely forget about the $b$ form, and we find that the subspace $\calU$ contains all the operators that are represented in
the basis $(e_1,\dots,e_n)$ by a matrix of the form
$$\begin{bmatrix}
0 & 0 & 0 & [0]_{1 \times (n-3)} \\
a & \lambda & y & [0]_{1 \times (n-3)} \\
b & x & \lambda & [0]_{1 \times (n-3)} \\
c & b & a & [0]_{1 \times (n-3)} \\
[?]_{(n-4) \times 1} &  [?]_{(n-4) \times 1} &  [?]_{(n-4) \times 1} & [0]_{(n-4) \times (n-3)}
\end{bmatrix}.$$

Then the next and final lemma, to be proved in the final section of the article,
yields two linear hyperplans $H_1$ and $H_2$ of $V$ whose union contains all the vectors of $V$
that are not $\calS$-adapted.
Hence the subspaces $V_1,\dots,V_n,H_1,H_2$ cover $V$.
Yet this contradicts the Covering Lemma, like in the situation considered in the end of Section \ref{section:casenotprimintrans}.

\begin{lemma}[Third Confinement Lemma]\label{lemma:confinement3}
Let $\calM$ be a $2$-spec subspace of $\Mat_n(\F)$ for some $n \geq 5$.
Assume that $\calM$ contains every matrix of the form
$$\begin{bmatrix}
0 & 0 & 0 & [0]_{1 \times (n-3)} \\
a & \lambda & y & [0]_{1 \times (n-3)} \\
b & x & \lambda & [0]_{1 \times (n-3)} \\
c & b & a & [0]_{1 \times (n-3)} \\
[?]_{(n-4) \times 1} &  [?]_{(n-4) \times 1} &  [?]_{(n-4) \times 1} & [0]_{(n-4) \times (n-3)}
\end{bmatrix}.$$
Then the non-$\calM$-adapted vectors all belong to the union $H_1\cup H_2$ of two specific linear hyperplanes $H_1$ and $H_2$ of $\F^n$
(precisely, the linear hyperplanes $\{0\} \times \F^{n-1}$ and $\F^2 \times \{0\} \times \F^{n-3}$).
\end{lemma}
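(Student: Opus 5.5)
The plan is to argue by contradiction, directly on matrices. Let $X=(x_1,\dots,x_n)^T\in\F^n$ lie outside $H_1\cup H_2$, so that $x_1\neq 0$ and $x_3\neq 0$. Suppose $X$ is not $\calM$-adapted. Then $\calM$ contains a matrix $A_0:=X\Phi^T$ with $\Phi\in\F^n\setminus\{0\}$ and $\Phi^TX=0$. Call $\calN$ the space of matrices described in the statement, so $\calN\subseteq\calM$, and consider the affine family
$$A=X\Phi^T+N\qquad (N\in\calN),$$
which lies in $\calM$. The goal is to choose $N$ so that $A$ has three distinct eigenvalues in $\F$, contradicting the $2$-spec assumption. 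Since $\car(\F)=2$ and $|\F|>2$, we have $|\F|\geq 4$, so three distinct elements $t_1,t_2,t_3$ of $\F$ are always available. As in the dual version of the Splitting Lemma for Hurdles, the mechanism for producing such eigenvalues is the Choice Lemma.

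\textbf{Step 1.} First, I will normalise the position of $\Phi$. The parameters of $\calN$ are: the free rows $5,\dots,n$ in columns $1,2,3$; the entries $a,b,c$ in column $1$; and the entries $\lambda,x,y$ in the block $\{2,3\}\times\{2,3\}$. With these I can modify the first three columns almost freely. Beyond that, I will change basis only inside the stabiliser of the pattern of $\calN$ (for instance, by operations on $e_5,\dots,e_n$, and by adding multiples of $e_5,\dots,e_n$ to $e_4$), to simplify $X$ and $\Phi$. Then I split into two cases: either $\Phi$ has a nonzero coordinate among $\varphi_4,\dots,\varphi_n$, or $\Phi$ is supported on the first three coordinates.

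\textbf{Step 2.} In each case I look for a $u$-cyclic vector generating a subspace $W$ of dimension $d\geq 3$ on which $u$ is represented by a regular Hessenberg matrix whose first-row (or last-column) entries are still free parameters coming from $\calN$. Two natural candidates are $e_1$ and $X$ itself. Since $x_1\neq 0$, the image $AX$ picks up the free first column of $N$. Since $x_3\neq 0$, the parameters $y$ and $a$ (column $3$) can be used to push $A^2X$ out of $\Vect(X,AX)$. I then need the induced block structure: $W$ should be $A$-invariant, or at least the characteristic polynomial of $A$ should factor as (characteristic polynomial on $W$)$\,\times\,$(something fixed, typically a power of $t$). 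This factorisation will come from the vanishing of columns $4,\dots,n$ of $N$ together with the rank-one shape of $X\Phi^T$.

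\textbf{Step 3.} Once such a structure is in place, the Choice Lemma lets me prescribe the characteristic polynomial on $W$ to be any monic polynomial of degree $d$ with the right trace. For $d\geq 4$ I will take $(t-\tau)t^{d-4}(t-t_1)(t-t_2)(t-t_3)$, with $\tau$ chosen so that the trace matches. For $d=3$ I will shift the roots by a suitable constant, as in the proof of the dual Splitting Lemma; dividing by $3$ is harmless in characteristic $2$. The resulting matrix of $\calM$ has at least three eigenvalues in $\F$, which is the desired contradiction. Hence every vector outside $H_1\cup H_2$ is $\calM$-adapted.

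\textbf{Main obstacle.} I expect the hard part to be Step 2 in the subcase where $\Phi$ is supported on the coordinates $4,\dots,n$. Then $X\Phi^T$ lives in the columns that $\calN$ cannot touch, so the interaction between the rank-one part and the free block is indirect: it passes only through $AX=(\Phi^TX)X+NX=NX$ and through powers of $A$. The identity $\Phi^TX=0$, the coupling of the entries $b$ and $a$ (each appears twice in the pattern), and the hypothesis $x_3\neq 0$ must then all be used carefully to certify cyclicity of dimension at least $3$. If a uniform cyclic-vector argument fails here, I will fall back on explicit $3\times 3$ or $4\times 4$ compressions and compute characteristic polynomials by hand, choosing $\lambda,x,y$ to separate roots.
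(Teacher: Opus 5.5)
There is a genuine gap, and it lies in a configuration your case split does not isolate. That configuration is $\Phi$ supported on the first four coordinates with $\varphi_4\neq 0$. You group it with the case where some $\varphi_k$, $k\geq 5$, is nonzero, which is actually the easy case. It is also not the subcase you flag as the main obstacle.

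Take $X=e_1+e_3$ and $\Phi=e_2+e_4$, so $\Phi^TX=0$ and $X\notin H_1\cup H_2$. For every $N$ in the pattern space $\mathcal{N}$, columns $5,\dots,n$ of $A=X\Phi^T+N$ vanish. Hence $\chi_A=t^{n-4}\chi_K$, where
$$K=\begin{bmatrix} 0 & 1 & 0 & 1\\ a & \lambda & y & 0\\ b & x+1 & \lambda & 1\\ c & b & a & 0\end{bmatrix}.$$
Let $Q$ be the antidiagonal permutation matrix, which is alternating in characteristic $2$. A direct check shows that $QK$ is symmetric. By the proposition on $s$-symmetric endomorphisms, $\chi_K$ is even, hence a square in $\Fbar[t]$, so $K$ has at most two eigenvalues in $\Fbar$ whatever the parameters.

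Now suppose $W$ is an invariant subspace of dimension $d\geq 3$ on which the family acts by a fixed regular Hessenberg matrix plus a fully free block, as the Choice Lemma requires. The Choice Lemma would then realize $t^{d-3}(t^3+\tau t^2+1)$, where $\tau$ is the required trace. Its cubic factor has derivative $t^2$, so it is separable with three distinct nonzero roots in $\Fbar$. Such a polynomial cannot divide $t^{n-4}\chi_K$. So the cyclic-subspace-plus-Choice-Lemma setup of your Steps 2 and 3 cannot exist here, however the coupled parameters $a,b,\lambda$ are handled.

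In this configuration, three distinct eigenvalues can only arise as two distinct nonzero roots of $\chi_K$ together with the eigenvalue $0$ supplied by the zero columns $5,\dots,n$. That is precisely where $n\geq 5$ must enter, and it cannot be avoided. For $n=4$ the statement is false: $Q\Mats_4(\F)$, which is similar to $\calB_4(\F)$, contains the pattern and is $2$-spec. It has no adapted vector at all, since $QZZ^T$ has range $\F QZ$ and trace $Z^TQZ=0$. Your argument never uses $n\geq 5$, which signals that it cannot close.

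The paper devotes its Step 2 to exactly this subcase.
1. It loosens the hypothesis by a scalar $\eta$, so that a change of basis inside $\Vect(e_2,e_3)$ is allowed, and normalizes $X$ to $(1,0,1,\ast)$.
2. It shows $\varphi_1=0$ by producing the eigenvalues $\varphi_1,\lambda,0$, with the $0$ again coming from $n\geq 5$.
3. It applies Lemma \ref{lemma:lastblock} to the $3\times 3$ block in rows and columns $2,3,4$ to force $x_4=0$, which reduces $\Phi$ to $\varepsilon e_2+e_4$.
4. It computes the even characteristic polynomial $t^4+(a(\eta\varepsilon+1)+\lambda\mu)t^2+a^2\eta(\varepsilon+\lambda)$ of an explicit $4\times 4$ block. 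It then tunes $a,\lambda,\mu$ to get $(t^2-\gamma_1^2)(t^2-\gamma_2^2)$ with $\gamma_1\neq\gamma_2$ nonzero.

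Your cyclic-subspace idea does match the paper's Step 1, where some $\varphi_k\neq 0$ with $k\geq 5$. There, in the transposed space, a $3$-dimensional invariant subspace carries companion matrices with free last column. When $\Phi$ is supported on the first three coordinates, the paper uses the bound $\binom{3}{2}+2=5$ for $1^\star$-spec subspaces of $\Mat_3(\F)$. This again relies on the zero eigenvalue contributed by the remaining coordinates.
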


\subsection{The proof for $2$-spec subspaces}

So far, we have only dealt with $1^\star$-spec subspaces!
Fortunately, adapting the previous proof to $2$-spec subspaces is very easy.

We shall prove Theorem \ref{theorem:2specwithadapted} directly.

First of all, the cases $n=3$ and $n=4$ have already been dealt with in Section \ref{section:largefields}
(note that we require no adapted vector for $n=3$, although the existence of such a vector can be proved).

So, from now on we take an $n$-dimensional vector space $V$, with $n \geq 5$, and a $2$-spec linear subspace
$\calS$ of $\End(V)$. We have to prove that $\calS$ is a hurdle or that it has an adapted vector.

Instead of using an induction hypothesis, we directly rely upon Proposition \ref{prop:inductionprop}.

First of all, we deduce from Lemma \ref{lemma:diagonalzero} that
there exists a nonzero vector $x$ in $V$ such that $x^\bot \otimes x$ is not included in $\calS$.
As $\dim \overline{\calS_{x,0}} \leq \dbinom{n-1}{2}+2$ by Theorem \ref{theorem:maintheodim1starspec}, we find
$$\dim \calS \leq \dbinom{n-1}{2}+2+\dim(\calS \cap (x^\bot \otimes x))+n \leq \dbinom{n}{2}+n+1.$$
Next, we assume that there is a $3$-complex of $V$ whose union contains all the weakly $\calS$-adapted vectors.
Combining this assumption with point (ii) in Proposition \ref{prop:inductionprop} and the same strategy as in Section \ref{section:inductiveweaklyadapted} -- which works because the $\calT$ space
is actually $1^\star$-spec -- we recover that
$$\dim \calS \geq n+n \left\lfloor 2n/3\right\rfloor.$$
Yet
$$n \left\lfloor 2n/3\right\rfloor- \dbinom{n}{2}-1 \geq n\,\frac{2n-2}{3}-n\,\frac{n-1}{2}-1
= \frac{n(n-1)}{6}-1>0$$
because $n \geq 4$. This contradicts the previous rough upper-bound, and we deduce that
the union of a $3$-complex of $V$ cannot contain all the weakly $\calS$-adapted vectors.
In particular, there is at least one such vector, and as usual we derive the improved inequality
$$\dim \calS \leq \dbinom{n}{2}+4.$$
In the final round of proof, we assume that the union of some $2$-complex of $V$ contains all the $\calS$-adapted vectors.
Once more, it is critical that we can prove that $n+n \left \lfloor \frac{n}{2}\right\rfloor$ is greater than the previous bound
$\dbinom{n}{2}+4$, but this is easily obtained by noting that
$$n+n \left \lfloor \frac{n}{2}\right\rfloor \geq n+\frac{n(n-1)}{2} > 4+\dbinom{n}{2}$$
since $n > 4$.
Hence we can follow the rest of the proof from Sections \ref{section:startinductiveadapted} to \ref{section:lastcaseprimitively},
by relying upon the validity of point (iii) of Proposition \ref{prop:inductionprop} for the subspace $\calT$, which is $1^\star$-spec
(even though $\calS$ is only assumed to be $2$-spec here).

This proves that either $\calS$ is a hurdle or the union of $2$-complex of $V$ cannot contain all the $\calS$-adapted vectors.
In the former case, we obtain the inequality $\dim \calS \leq \dbinom{n}{2}+3$ by applying Lemma \ref{lemma:ineqdimhurdles};
in the latter case, there is at least one $\calS$-adapted vector, and we conclude that
$\dim \calS \leq \dbinom{n}{2}+3$ by applying Theorem \ref{theorem:maintheodim1starspec}, as explained in the end of Section \ref{section:strategy}.
Hence Theorem \ref{theorem:2specwithadapted} is proved, and in particular we have also proved Theorem \ref{theorem:maintheodim2spec}.

\section{Proof of the remaining confinement lemmas}\label{section:confinement}

As seen earlier, the previous results rely, for $n \geq 5$, on two confinement lemmas
(Lemmas \ref{lemma:confinement2} and \ref{lemma:confinement3}) of which we will now give
proofs, independently of the induction process that we have performed
(however, the very last step of the proof of Lemma \ref{lemma:confinement3} will use the case $n=3$ in Theorem \ref{theorem:maintheodim1starspec} to speed things up).

\subsection{A basic lemma}

To start with, we need an additional result on $3$-by-$3$ matrices, which will be used in the proof of both confinement lemmas.

\begin{lemma}\label{lemma:lastblock}
Let $A \in \Mat_3(\F)$ be a rank $1$ and trace zero matrix.
Assume that the sum of $A$ with any matrix of the form
$$\begin{bmatrix}
N & [0]_{2 \times 1} \\
[0]_{1 \times 2} & 0
\end{bmatrix} \quad \text{with $N \in \mathfrak{sl}_2(\F)$,}$$
 is $2$-spec. Then the last column of $A$ is zero or the last row of $A$ is zero.
\end{lemma}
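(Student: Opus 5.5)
We argue by contradiction: assume that the last column and the last row of $A$ are both nonzero, and produce a matrix $A+\begin{bmatrix} N & 0\\ 0 & 0\end{bmatrix}$, with $N \in \mathfrak{sl}_2(\F)$, that has three distinct eigenvalues in $\F$. Write $A=XY^T$ with $X,Y \in \F^3$ nonzero and $Y^TX=0$ (rank $1$, trace zero). The last column of $A$ is $y_3X$ and the last row is $x_3Y^T$, so the assumption means $x_3\neq 0$ and $y_3\neq 0$. Let $(e_1,e_2,e_3)$ be the standard basis of $\F^3$ and $P:=\Vect(e_1,e_2)$. The perturbations act on $P$ only. The idea is the same as in the proof of the dual Splitting Lemma for Hurdles: show that some perturbed matrix is cyclic with $e_3$ as a cyclic vector, then apply the Choice Lemma.

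The first step is a normalization. Conjugating by block-diagonal matrices $\operatorname{diag}(Q,1)$ with $Q\in \GL_2(\F)$ preserves the family of perturbations, because $Q\,\mathfrak{sl}_2(\F)\,Q^{-1}=\mathfrak{sl}_2(\F)$. It also preserves the hypotheses on the last row and column. Under such conjugation $X$ and $Y$ transform as $(QX_{12},x_3)$ and $(Q^{-T}Y_{12},y_3)$. We may also rescale so that $x_3=1$.

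The second step exploits the trace-zero relation $y_1x_1+y_2x_2=-y_3x_3\neq 0$. It shows that $Y_{12}\neq 0$ and that $X_{12}$ is not $Y_{12}$-orthogonal. The reduction I will aim for is to a basis in which $A e_3$ and the perturbation together make the Hessenberg structure appear. Concretely, I want the perturbed matrix $M=A+\begin{bmatrix}N&0\\0&0\end{bmatrix}$ to be similar, in the basis $(e_3, Me_3, M^2e_3)$, to a companion matrix, with the remaining freedom in $N$ adjusting the characteristic polynomial.

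A cleaner route is the following. First pick $N$ so that $e_3,Me_3,M^2e_3$ are independent. Here $Me_3=y_3X$, which is independent of $e_3$ since $X_{12}\neq 0$: indeed $X_{12}=0$ would force $Y^TX=y_3x_3\neq 0$, contradicting trace zero. Then $M^2e_3=y_3(A X+N'X_{12})$, and $N$ can be varied to break the dependence. Next, use the freedom in $N$ along the directions that only modify columns "above" in the Hessenberg form, so as to hit any monic cubic with trace $\tr M=\tr N+\tr A=0$. By the Choice Lemma this reduces to checking that the matrices $\psi\otimes(\text{stuff in }P)$ coming from $\mathfrak{sl}_2$ supply the needed top-right block.

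Finally, choose three distinct $t_1,t_2,t_3\in\F$ with $t_1+t_2+t_3=0$. This is possible since $|\F|>2$ and $\car\F=2$: take $t_3=t_1+t_2$ with $t_1,t_2$ distinct and nonzero. The resulting matrix is not $2$-spec, a contradiction.

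I expect the main obstacle to be the middle step, namely verifying that the $3$-dimensional family $\mathfrak{sl}_2(\F)$ is large enough to realize every trace-zero characteristic polynomial. The Choice Lemma applies to perturbations supported in a top-right block in the cyclic basis, whereas ours are supported on $P\times P$. If the direct route is messy, I would instead parametrize explicitly. After normalization, take $X=(1,0,1)^T$, which is reachable using $Q$ and the rescaling, and then $Y=(y_1,y_2,y_3)$ with $y_1=y_3$ (from $y_1+y_3=0$ in characteristic $2$). Then compute $\det(tI-M)$ with $N=\begin{bmatrix}a&b\\c&a\end{bmatrix}$. Since $a,b,c$ are free, the coefficients of $t$ and $1$ should be seen to vary over all of $\F^2$ affinely, using $y_3\neq 0$. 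Checking this surjectivity, possibly with a case split on whether $y_2=0$, is the computational heart of the proof.
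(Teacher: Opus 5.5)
Your fallback route is correct, and it is essentially the paper's proof. The paper also normalizes the image vector of $A$ by a change of basis inside $P=\F^2\times\{0\}$: it sends it to $(0,1,\delta)^T$ with $\delta\neq 0$, whereas you take $X=(1,0,1)^T$ and push the parameter into $Y$. It then writes $N$ with equal diagonal entries and computes the characteristic polynomial by hand. The Choice Lemma route, however, should be dropped. The perturbations $N\oplus 0$ kill $e_3$ and map into $P$, so in the cyclic basis $(e_3,Me_3,M^2e_3)$ they do not form a top-right block. Moreover $M^2e_3=y_3\,(N\oplus 0)X$ (because $AX=(Y^TX)X=0$) moves with $N$. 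So there is no fixed regular Hessenberg matrix to which the lemma could be applied.

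The real shortfall is that you leave the decisive computation undone, and your description of it is inaccurate: the coefficients are quadratic, not affine, in the entries of $N$. With $X=(1,0,1)^T$, $s:=y_1=y_3\neq 0$ and $N=\begin{bmatrix}a&b\\c&a\end{bmatrix}$, one finds
\[
M=\begin{bmatrix} s+a & y_2+b & s\\ c & a & 0\\ s & y_2 & s\end{bmatrix},\qquad
\chi_M(t)=t^3+\bigl(a^2+as+c(b+y_2)\bigr)\,t+s\,(a^2+bc).
\]
Write $c_2$ and $c_3$ for the coefficients on $t$ and $1$. The choice $c:=1$ gives $c_3+s\,c_2=s^2a+sy_2$, which is affine in $a$ with nonzero slope. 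So for any target $t^3+pt+q$:
\begin{enumerate}[(i)]
\item first choose $a$ with $c_3+s\,c_2=q+sp$;
\item then set $b:=a^2+q/s$, so that $c_3=q$, which forces $c_2=p$.
\end{enumerate}
No case split on $y_2$ is needed. This is exactly the paper's mechanism, where the identity $c_3+\delta c_2=\alpha y+\delta^2\lambda$ is used after setting $y:=1$. Taking the target $(t-z_1)(t-z_2)(t-(z_1+z_2))$ with $z_1\neq z_2$ nonzero then concludes as you planned.
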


\begin{proof}
We assume that the last column of $A$ is nonzero and the last row of $A$ is nonzero.

We use an operator interpretation of the situation.
Set $P:=\F^2 \times \{0\}$ and denote by $(e_1,e_2,e_3)$ the standard basis of $\F^3$.
We consider the operator $u_0 : X \in \F^3 \mapsto AX \in \F^3$, which has rank $1$ and trace zero,
and the linear subspace $S$ of all $v \in \mathfrak{sl}(\F^3)$ such that $v(e_3)=0$ and $\im v \subseteq P$.
The assumptions mean that the subspace $\F u_0+S$ is $2$-spec.
We have $u_0(e_3) \neq 0$, hence $\im u_0=\F u_0(e_3)$ and the assumption that the last row of $A$ is nonzero yields $u_0(e_3) \not\in P$.
Then we decompose $u_0(e_3)=y+\delta e_3$ for some $\delta \in \F^\times$ and some $y \in P$.
If $y=0$ then $\delta$ would be an eigenvalue of $u_0$, and since $u_0$ has rank $1$ we would find $\delta=\tr(u_0)=0$.
Hence $y \neq 0$. Then we extend $y$ into a basis $(e'_1,e'_2)$ of $P$ with $e'_2=y$, and we consider the matrices that represent
the elements of $u_0+S$ in the basis $(e'_1,e'_2,e_3)$. This essentially replaces $A$ with a matrix in which the last column is
$\begin{bmatrix}
0 & 1 & \delta
\end{bmatrix}$.
\emph{From now on we consider only this reduced situation.} Since $A$ has rank $1$ and trace zero, we now have
$$A=\begin{bmatrix}
0 & 0 & 0 \\
? & \delta & 1 \\
\alpha & \beta & \delta
\end{bmatrix} \quad \text{for some $(\alpha,\beta) \in \F^2$.}$$
Because $A$ has rank $1$, we find $\beta=\delta^2$ by considering the lower-right $2 \times 2$ minor.

Let $x,y,\lambda$ belong to $\F$. Then the matrix
$$M=\begin{bmatrix}
\lambda & y & 0 \\
x & \delta+\lambda & 1 \\
\alpha & \beta & \delta
\end{bmatrix}$$
is $2$-spec, and we compute that the respective coefficients of the characteristic polynomial of $M$ on $t$ and $1$ are
$$c_2(M)=xy+\beta+(\lambda+\delta)^2+\lambda \delta=xy+\lambda^2+\lambda \delta$$
and
$$c_3(M)=\alpha y+\delta xy+\lambda(\delta^2+\lambda \delta+\beta)=\alpha y +\delta xy+\lambda^2 \delta.$$
We combine these to see that
$$c_3(M)+\delta c_2(M) = \alpha y+\delta^2 \lambda.$$
Of course we also have $\tr(M)=0$.
Now, we choose two distinct nonzero elements $z_1,z_2$ in $\F$, and we note that $(t-z_1)(t-z_2)(t-(z_1+z_2))$ has trace zero
and three distinct roots in $\F$.
Let us write this polynomial as $t^3+pt-q$.
Then, we set $y:=1$, and since $\delta \neq 0$ we can adjust $\lambda$ so that $c_3(M)+\delta c_2(M)=q+\delta p$,
and finally we adjust $x$ so that $c_2(M)=p$. With these choices, we have $\chi_M=(t-z_1)(t-z_2)(t-(z_1+z_2))$,
which contradicts the assumption that $M$ is $2$-spec.
\end{proof}

\subsection{Proof of the Second Confinement Lemma}

Let us recall the situation. We take a vector space $V$ with dimension $n \geq 3$, a linear hyperplane $H$ of $V$ and
a linear subspace $G$ of $V$ with codimension $2$ such that $G \not\subseteq H$.
Finally, we let $\calS$ be a $2$-spec linear subspace of $\End(V)$ that contains all the trace-zero endomorphisms of $V$ that
vanish on $G$ and map into $H$. We set $P:=G^\circ$ (a $2$-dimensional linear subspace of the dual space $V^\star$).
The latter assumption means that $\varphi \otimes x \in \calS$ for all $x \in H$ and all $\varphi \in P$ such that $\varphi(x)=0$.

We assume that $\calS$ is not hurdle, and we will construct a linear hyperplane $H'$ of $V$ such that every
vector of $V$ that is not $\calS$-adapted belongs to the union $G \cup H \cup H'$.
Throughout, we fix a vector $z \in G \setminus H$.

\vskip 2mm
\noindent \textbf{Step 1. The intersection $\calS \cap (P \otimes z)$ has dimension at most $1$.} \\
To see this, a matrix interpretation will help. Note that $G':=G \cap H$ is a linear hyperplane of $G$
and that $G' \oplus \F z=G$. Then, we choose a basis $(e_1,\dots,e_{n-3})$ of $G'$, extend it into a basis $(e_1,\dots,e_{n-3},e_{n-1},e_n)$ of $H$,
and finally we note that $(e_1,\dots,e_{n-3},z,e_{n-1},e_n)$ is a basis of $V$ and $(e_1,\dots,e_{n-3},z)$ is a basis of $G$.
Denote by $\calM$ the matrix space that represents $\calS$ in the basis $(e_1,\dots,e_{n-3},z,e_{n-1},e_n)$. The starting assumption means that
$\calM$ contains every trace zero matrix of the form
$$\begin{bmatrix}
[0]_{(n-3) \times (n-2)} & [?]_{(n-3) \times 2} \\
[0]_{1 \times (n-2)} & [0]_{1 \times 2} \\
[0]_{2 \times (n-2)} & [?]_{2 \times 2}
\end{bmatrix}.$$
Now, if $\calS \cap (P \otimes z)$ had dimension more than $1$ then $P \otimes z \subseteq \calS$, and the matrix interpretation of this is that
$\calM$ would contain every matrix of the form
$$\begin{bmatrix}
[0]_{(n-3) \times (n-2)} & [0]_{(n-3) \times 2} \\
[0]_{1 \times (n-2)} & [?]_{1 \times 2} \\
[0]_{2 \times (n-2)} & [0]_{2 \times 2}
\end{bmatrix}.$$
But then, summing the matrices of the above type would yield that $\calM$ contains every trace zero matrix of the form
$$\begin{bmatrix}
[0]_{(n-3) \times (n-2)} & [?]_{(n-3) \times 2} \\
[0]_{1 \times (n-2)} & [?]_{1 \times 2} \\
[0]_{2 \times (n-2)} & [?]_{2 \times 2}
\end{bmatrix},$$
thereby contradicting the assumption that $\calS$ is not a hurdle. Therefore $\dim \bigl(\calS \cap (P \otimes z)\bigr) \leq 1$.

\vskip 2mm
\noindent \textbf{Step 2. Defining $H'$.} \\
Now, either $\calS \cap (P \otimes z)=\F (\theta \otimes z)$ for some $\theta \in P \setminus \{0\}$,
in which case we set $H':=\Ker \theta$, or $\calS \cap (P \otimes z)=\{0\}$, and then we set
$H':=H$ and take an arbitrary $\theta \in P \setminus\{0\}$ (and note that $\theta(z)=0$ in any case).

\vskip 2mm
\noindent \textbf{Step 3. A basic property of some non-$\calS$-adapted vectors.} \\
Let $\psi \in P \setminus \{0\}$ and $x \in V \setminus (G \cup H)$ be
such that $\psi \otimes x \in \calS$ and $\psi(x)=0$. Then we shall prove that $x \in H'$.
To see this, we split $x=\lambda z+y$ for some $\lambda \in \F^\times$ and some $y \in H$.
Then $\psi(z)=0$, and hence $\psi(y)=0$. It follows that
$\psi \otimes y$ has trace zero and maps into $H$ with $\psi \in P$, so by our starting assumption it must belong to $\calS$.
By linearly combining, we deduce that $\psi \otimes z \in \calS$. We are then in the first situation stated in Step 2,
and we recover $\psi=\mu \theta$ for some $\mu \in \F^\times$. Then we find $\theta(x)=\mu^{-1} \psi(x)=0$,
that is $x \in H'$.

Our aim in the remainder of the proof is to extend this property to all the trace zero tensors $\psi \otimes x$ that belong to $\calS$
and in which $x \in V \setminus (G \cup H)$ (and not only those for which $\psi \in P \setminus \{0\}$). To obtain this, we will find it more convenient to work with the transposed operator space
$\calS^t$.

\vskip 2mm
\noindent \textbf{Step 4. Dual translation of the situation.} \\
We will write $U:=V^\star$ for greater convenience, as well as $D:=H^\circ$, and we recall that $P=G^\circ$.
Note that $D$ is a $1$-dimensional linear subspace of $V^\star$ (whereas $P$ is a $2$-dimensional subspace). The assumption $G \not\subseteq H$
translates into $D \not\subseteq P$. We will need the following observation at some point:
the mapping $\varphi \in D^\circ \mapsto \varphi_{|P} \in P^\star$ is surjective. Indeed, if not
its range is included in a $1$-dimensional linear subspace of $P^\star$, which we can then interpret as the dual orthogonal of some nonzero vector $z'$ of $P$,
which yields that $D^\circ \subseteq (z')^\bot$ and hence $z' \in D$. But then as $D$ has dimension $1$, $D=\F z'$ would be included in $P$, which we have just disproved.

Now, we come back to $\calS^t$, which is a $2$-spec linear subspace of $\End(V^\star)$.
The starting assumptions mean that $\calS^t$ contains $\varphi \otimes y$ for all $y \in P$ and all $\varphi \in D^\circ$ such that $\varphi(y)=0$.

\vskip 2mm
\noindent \textbf{Step 5. Preparing the final step.} \\
Let $y$ be a non-$\calS$-adapted vector outside of $G \cup H$.
Setting $\varphi_0:=\mathfrak{i}_V(y)$, this translates into the existence of some vector $x \in U \setminus \{0\}$
such that $\varphi_0(x)=0$ and $\varphi_0 \otimes x \in \calS^t$, and the assumption on $y$ means that $\varphi_0 \not\in P^\circ \cup D^\circ$.
Our goal in the next three steps is to prove that $x \in P$, and then by Step 3 we will deduce that $y\in H'$.

So, we assume that $x \not\in P$ and we show that it leads to a contradiction (which will be obtained in the end of Step 7 below).
Since $\varphi_0 \not\in P^\circ$, we can choose a basis $(f_1,f_2)$ of $P$ such that
$\varphi_0(f_1)=0$ and $\varphi_0(f_2)=1$. Then $\varphi_0 \otimes x$ vanishes at $f_1$ and maps $f_2$ to $x$.

Next, the surjectivity of $\varphi \in D^\circ \mapsto \varphi_{|P} \in P^\star$ allows us to find
$\psi \in D^\circ$ such that $\psi(f_1)=1$ and $\psi(f_2)=0$, and hence $\psi \otimes f_2 \in \calS^t$,
and $\psi \otimes f_2$ vanishes at $f_2$ and maps $f_1$ to $f_2$. Now, we note that
$$u:=\varphi_0 \otimes x+\psi \otimes f_2$$ 
maps $f_1$ to $f_2$ and $f_2$ to $x$.
Finally, since $u$ has rank at most $2$ we have just proved that its range is $\Vect(f_2,x)$, and
hence $\Vect(f_1,f_2,x)$ is invariant under $u$. We deduce that $\Vect(f_1,f_2,x)$ is also invariant under any sum of $u$ with tensors of the form
$\gamma \otimes z$ with $z \in P$ and $\gamma \in D^\circ$ such that $\gamma(z)=0$ (as their range is included in $P=\Vect(f_1,f_2)$).

\vskip 2mm
\noindent \textbf{Step 6. Proof that $x \not\in D+P$.} \\
Assume on the contrary that $x \in D+P$. Choose $f_3 \in D \setminus \{0\}$.
Since $x \not\in P$, we have $\Vect(f_1,f_2,x)=D+P=\Vect(f_1,f_2,f_3)$.
Denote by $A$ the matrix of the endomorphism of $D+P$ induced by $\varphi_0 \otimes x$, in the basis $(f_1,f_2,f_3)$.
Note that $A$ has rank $1$ and trace zero. The corresponding matrices of endomorphisms induced by the tensors of the form
$\theta \otimes z$, with $z \in P$ and $\theta \in D^\circ$ such that $\theta(z)=0$, are exactly the matrices of the form
$N \oplus 0_1$ with $N \in \mathfrak{sl}_2(\F)$, and hence we can use Lemma \ref{lemma:lastblock}.
Yet we note that $(\varphi_0 \otimes x)(f_2)=x$ and hence the last row of $A$ is nonzero, whereas
$(\varphi_0 \otimes x)(f_3)=\varphi_0(f_3)\, x$ is nonzero because $\varphi_0(f_3) \neq 0$ (indeed, we have $\varphi_0 \not\in D^\circ$ from the start).
Hence the last column of $A$ is nonzero. This contradicts Lemma \ref{lemma:lastblock}, and we conclude that
$x \not\in D+P$.

\vskip 2mm
\noindent \textbf{Step 7. The final contradiction.} \\
Now we have $x \not\in D+P$, to the effect that $D^\circ \cap P^\circ \not\subseteq x^\bot$.
which yields a linear form $f \in D^\circ \cap P^\circ$ such that $f(x) \neq 0$. As a consequence $\calS^t$ contains $f \otimes z$ for all $z \in P$.
By adding these tensors to $u$ and by considering the induced endomorphisms of $\Vect(f_1,f_2,x)$, we obtain representing matrices in the basis $(f_1,f_2,x)$ of the form
$$\begin{bmatrix}
? & ? & \alpha \\
1 & ? & \beta \\
0 & 1 & ?
\end{bmatrix}$$
where the question marks are fixed values (depending only on $\varphi_0 \otimes x$) and the parameters $\alpha$ and $\beta$
can be chosen at will. Moreover, these matrices must have trace zero.
Then, we pick distinct elements $z_1,z_2$ in $\F \setminus \{0\}$, we note that the polynomial $(t-z_1)(t-z_2)(t-(z_1+z_2))$
has trace zero, and hence we can use the Choice Lemma to see that $\alpha$ and $\beta$ can be adjusted so that the characteristic polynomial
of the above matrix equals $(t-z_1)(t-z_2)(t-(z_1+z_2))$, which has three distinct roots in $\F$. This contradicts the assumption that $\calS^t$ is $2$-spec.

This final contradiction yields $x \in P$, and by Step 3 we conclude that $y \in H'$. Hence we have proved that all the $\calS$-adapted vectors
belong to $G \cup H \cup H'$. The Second Confinement Lemma is now proved.

\subsection{Proof of the Third Confinement Lemma}

Here we consider an integer $n \geq 5$ and a matrix space $\calM \subseteq \Mat_n(\F)$ that satisfies the assumptions of the
Third Confinement Lemma. We take an arbitrary vector space $V$ and a basis $(e_1,\dots,e_n)$ of $V$, and we consider the operator space
$\calS \subseteq \End(V)$ that is represented by $\calM$ in $(e_1,\dots,e_n)$. We denote by $(e_1^\star,\dots,e_n^\star)$ the dual basis.

One important key is that a simple basis change is possible without fundamentally altering our assumptions.
First of all, we can allow a less precise assumption, where we only suppose that for some nonzero scalar $\eta \in \F^\times$
the space $\calM$ contains all the matrices of the form
$$\begin{bmatrix}
0 & 0 & 0 & [0]_{1 \times (n-3)} \\
\eta a & \lambda & y & [0]_{1 \times (n-3)} \\
\eta b & x & \lambda & [0]_{1 \times (n-3)} \\
c & b & a & [0]_{1 \times (n-3)} \\
[?]_{(n-4) \times 1} &  [?]_{(n-4) \times 1} &  [?]_{(n-4) \times 1} & [0]_{(n-4) \times (n-3)}
\end{bmatrix} \quad \text{with $(a,b,c,\lambda,x,y)\in \F^6$.}$$

Now, let $P \in \GL_2(\F)$. If we let $a,b$ in $\F$ and set $L:=\begin{bmatrix}
b & a
\end{bmatrix}$, then $\begin{bmatrix}
a \\
b
\end{bmatrix}=K L^T$ for $K:=\begin{bmatrix}
0 & 1 \\
1 & 0
\end{bmatrix}$, which is alternating.
Then replacing
$\calM$ with $\calM':=(\alpha I_1 \oplus P \oplus I_{n-3}) \calM (\alpha I_1 \oplus P \oplus I_{n-3})^{-1}$
leaves the looser assumption entirely unchanged, owing to the following observations:
\begin{enumerate}[(i)]
\item The space $\mathfrak{sl}_2(\F)$ is invariant under conjugation;
\item One has $P K P^T=\gamma K$ for some $\gamma \in \F^\times$, because $PKP^T$ is alternating (being congruent to an alternating matrix) and nonzero.
\end{enumerate}
This means that we can change the pair $(e_2,e_3)$ at will, as long as it still spans the same $2$-linear subspace
and the other basis vectors are unchanged.
We can also change $e_1$ at will by replacing it with another element of $\F^\times e_1$.

That being said, for the first part of the proof it will be more convenient to work with the transposed operator space $\calS^t$.
Let $y \in V \setminus \{0\}$ be non-adapted to $\calS$. Assume that $y$ belongs to neither
$H_1:=\Vect(e_1,e_2,e_4,\dots,e_n)$ nor $H_2:=\Vect(e_2,e_3,\dots,e_n)$.
We set $U:=V^\star$ and $\varphi:=\mathfrak{i}_V(y) \in U^\star$.
Then we have a nonzero vector $x \in U$ such that $\varphi \otimes x \in \calS^t$ and $\varphi(x)=0$.

The restriction of $\varphi$ to the subspace $\Vect(e_2^\star,e_3^\star)$ does not vanish, otherwise
$y \in H_1$. Hence we can perform a change of basis of the kind we have explained earlier so as to reduce the situation to the one where
$\varphi(e_2^\star)=0$. We immediately derive that $\varphi(e_3^\star) \neq 0$.
By scaling $\varphi$ we can actually assume that $\varphi(e_3^\star)=1$.
Finally we note that $\varphi(e_1^\star) \neq 0$ because $y \not\in H_2$, and by rescaling $e_1$ we can assume that
$\varphi(e_1^\star)=1$. To sum up:
$$\varphi(e_1^\star)=\varphi(e_3^\star)=1 \quad \text{and} \quad \varphi(e_2^\star)=0.$$
There will be three main steps from where we are.
We will successively prove that $x \in \Vect(e_1^\star,e_2^\star,e_3^\star,e_4^\star)$,
that $x \in \Vect(e_1^\star,e_2^\star,e_3^\star)$, and in the last step we will show that the latter point leads to a contradiction.

\vskip 3mm
\noindent \textbf{Step 1: Proving that $x \in \Vect(e_1^\star,e_2^\star,e_3^\star,e_4^\star)$.} \\
We assume on the contrary that $x \not\in \Vect(e_1^\star,e_2^\star,e_3^\star,e_4^\star)$.

We will use the same line of reasoning as in the proof of the Second Confinement Lemma. First of all we note that
$\calS$ contains $e_3^\star \otimes e_2$, and hence $\varphi_0 :=\mathfrak{i}_V(e_2)$
is such that $\varphi_0 \otimes e_3^\star \in \calS^t$, $\varphi_0(e_2^\star)=1$ and $\varphi_0(e_3^\star)=0$.
Then we consider the sum  $\varphi_0 \otimes e^\star_3+\varphi \otimes x$ and we note that it leaves $\Vect(e_2^\star,e_3^\star,x)$ invariant and that
the resulting matrix in the basis
$(e_2^\star,e_3^\star,x)$ is of the form $A=\begin{bmatrix}
0 & 0 & ? \\
1 & 0 & ? \\
0 & 1 & 0
\end{bmatrix}$. Next, since $x \not\in \Vect(e_1^\star,e_2^\star,e_3^\star,e_4^\star)$ we find $\theta \in \{e_1^\star,e_2^\star,e_3^\star,e_4^\star\}^\circ$ such that $\theta(x)=1$.
Then $\theta=\mathfrak{i}_V(z)$ for some $z \in \Vect(e_5,\dots,e_n)$, and it is then clear from the starting assumptions
on $\calS$ that $e_2^\star \otimes z$ and $e_3^\star \otimes z$ belong to $\calS$, to the effect that
$\theta \otimes e_2^\star$ and $\theta \otimes e_3^\star$ belong to $\calS^t$.
These endomorphisms leave $\Vect(e_2^\star,e_3^\star,x)$ invariant, and the corresponding resulting matrices in the basis $(e_2^\star,e_3^\star,x)$
are the unit matrices $E_{1,3}$ and $E_{2,3}$. Since $\calS^t$ is $2$-spec, by linearly combining the matrices $A,E_{1,3},E_{2,3}$ we deduce that all
$3$-by-$3$ companion matrices with trace zero are $2$-spec, which is obviously false
(as used several times earlier, there is a trace zero polynomial of degree $3$ with three distinct roots in $\F$).
We deduce that $x \in \Vect(e_1^\star,e_2^\star,e_3^\star,e_4^\star)$.

\vskip 3mm
\noindent \textbf{Step 2: Proving that $x \in \Vect(e_1^\star,e_2^\star,e_3^\star)$.} \\
This is the most difficult part of the proof. This time around, we assume that $x \not\in \Vect(e_1^\star,e_2^\star,e_3^\star)$.
By rescaling $x$ we can assume that the coefficient of $x$ on $e_4^\star$ equals $1$ in the basis $(e_1^\star,e_2^\star,e_3^\star,e_4^\star)$.
Now, we come back to matrices for greater convenience.
By considering the representing matrix of $x \otimes y$ in the basis $(e_1,\dots,e_n)$,
we deduce that $\calM$ contains a matrix $M$ with rank $1$, trace $0$, and of the shape
$$M=\begin{bmatrix}
A_0 & [0]_{4 \times (n-4)} \\
C_1 & [0]_{(n-4) \times (n-4)}
\end{bmatrix} \quad \text{with}
\quad
A_0=\begin{bmatrix}
\delta & ? & ? & 1 \\
0 & 0 & 0 & 0 \\
\beta & ? & ? & 1 \\
\gamma & ? & ? & ?
\end{bmatrix} \quad \text{for some $(\delta,\beta,\gamma) \in \F^3$,}$$
and $C_1 \in \Mat_{n-4,4}(\F)$.

Now, thanks to our loosened assumption on $\calM$ we can subtract from $M$ a matrix
of the form
$$\begin{bmatrix}
A_1 & [0]_{4 \times (n-4)} \\
C_1 & [0]_{(n-4) \times (n-4)}
\end{bmatrix} \quad \text{with}
\quad
A_1=\begin{bmatrix}
0 & 0 & 0 & 0 \\
0 & 0 & 0 & 0 \\
\beta & 0 & 0 & 0 \\
\gamma & ? & ? & 0
\end{bmatrix}$$
and deduce that $\calM$ contains a trace zero matrix of the form
$$M'=\begin{bmatrix}
\delta & [?]_{1 \times 3} & [0]_{1 \times (n-4)} \\
[0]_{3 \times 1} & B_0 & [0]_{3 \times (n-4)} \\
[0]_{(n-4) \times 1} & [0]_{(n-4) \times 3} & [0]_{(n-4) \times (n-4)}
\end{bmatrix},$$
where
$$B_0=\begin{bmatrix}
0 & 0 & 0 \\
? & ? & 1 \\
? & ? & ?
\end{bmatrix}.$$
By choosing $\lambda \in \F \setminus \{0,\delta\}$ and by adding to $M'$ the diagonal matrix $0_1 \oplus \lambda I_2 \oplus 0_{n-3}$,
we obtain a matrix in $\calM$ for which $\delta,\lambda,0$ are eigenvalues (this uses the assumption $n \geq 5$ in a critical way), which requires that $\delta=0$.
Since $A_0$ has rank $1$, it readily follows that $\beta=\gamma=0$, to the effect that no modification was actually necessary
at the start: hence the matrix $B_0$ is simply the lower-right block of $A_0$. We note in particular that $B_0$ has rank $1$ and trace zero.
Then we observe, thanks to the starting assumptions on $\calM$,
that the matrix $B_0+(N \oplus 0_1)$ has at most two eigenvalues in $\F$ for all $N \in \mathfrak{sl}_2(\F)$.
Hence Lemma \ref{lemma:lastblock} yields that the last row of $B_0$ is zero (obviously the last column is not zero).
In particular, since $A_0$ has rank $1$ and trace zero we now have the much simplified form
$$A_0=\begin{bmatrix}
0 &  \varepsilon & 0 & 1 \\
0 & 0 & 0 & 0 \\
0 & \varepsilon & 0 & 1 \\
0 & 0 & 0 & 0
\end{bmatrix} \quad \text{for some $\varepsilon \in \F$.}$$
Now, we deduce that there exists a nonzero scalar $\eta \in \F^\times$ (remembering the loosened assumption from the start of the proof) such that
for all $(\lambda,\mu,a) \in \F^3$, the matrix
$$K=\begin{bmatrix}
0 & \varepsilon & 0 & 1 \\
\eta a & 0 & \mu & 0 \\
0 & \lambda & 0 & 1 \\
0 & 0 & a & 0
\end{bmatrix}$$
has at most one nonzero eigenvalue in $\F$. Yet one computes that its characteristic polynomial is
$$t^4+ (a(\eta\varepsilon+1)+\lambda \mu) t^2+a^2 \eta (\varepsilon+\lambda).$$
In order to conclude, it will suffice to observe that the parameters can be chosen so that this polynomial has two distinct nonzero roots in $\F$.
To do so, we take arbitrary distinct nonzero elements $\gamma_1$ and $\gamma_2$ in $\F$ and we
write $(t^2-\gamma_1^2)(t^2-\gamma_2^2)=t^4+gt^2+h$ for some $(g,h)\in \F \times \F^\times$.
Then we choose $a \in \F^\times$ that is not a root of $\eta \varepsilon t^2 - h$
(this is possible because $|\F|>3$ and the polynomial under consideration is nonzero, as $h \neq 0$).
Then we adjust $\lambda \in \F$ so that $\eta a^2 (\varepsilon+\lambda)=h$, and the previous constraint on $a$ requires that
$\lambda \neq 0$. Finally, since $\lambda \neq 0$, we can then adjust $\mu$ so that $a(\eta\varepsilon+1)+\lambda \mu=g$,
thereby obtaining a contradiction.

\vskip 3mm
\noindent \textbf{Step 3: The final contradiction.} \\
Now we know that $x \in \Vect(e_1^\star,e_2^\star,e_3^\star)$. Hence
the matrix of $\calM$ that corresponds to $x \otimes y$ is of the form
$\begin{bmatrix}
R_0 & [0]_{3 \times (n-3)} \\
[?]_{(n-3) \times 3} & [0]_{(n-3) \times (n-3)}
\end{bmatrix}$, where the first row of $R_0 \in \Mat_3(\F)$ is nonzero (because $y \not\in \Vect(e_2,\dots,e_n)$).
By using the starting assumption on $\calM$ once more, and by extracting, we deduce that the linear combinations of
$R_0$ and of the matrices of the form
$$\begin{bmatrix}
0 & [0]_{1 \times 2} \\
C & N
\end{bmatrix}, \quad \text{with $N \in \mathfrak{sl}_2(\F)$ and $C \in \F^2$,}$$
are $1^\star$-spec.
Yet as the first row of $R_0$ is nonzero the space of all such matrices has dimension $6$: this contradicts Theorem \ref{theorem:maintheodim1starspec}
(which has already been proved in Section \ref{section:largefields} for $3$-by-$3$ matrices).

This final contradiction shows that all the non-$\calS$-adapted vectors belong to the union $H_1 \cup H_2$.
This completes the proof of the Third Confinement Lemma.

Note that we have proved a slightly more precise result: the non-$\calS$-adapted vectors
all belong to the union $\Vect(e_1,e_4,\dots,e_n) \cup \Vect(e_2,e_3,e_4,\dots,e_n)$, but it turns out we do not need such a precise result.

\end{document}